\documentclass[12pt,final]{amsart}
\usepackage{xspace,amssymb,amsfonts,epsfig,euscript,eufrak,enumerate,amsmath,nicefrac}
\usepackage{tikz,etex,bbm,xspace,afterpage}
 \usepackage{epstopdf,ifpdf,todonotes}
\usepackage{pxfonts,multicol}
\usepackage{stackengine}

\stackMath
\usepackage{xr,xr-hyper}
\usepackage[notref,notcite]{showkeys}
\usepackage[margin=3cm,footskip=25pt,headheight=20pt]{geometry}

\newcommand{\doubletilde}[1]{
  \tilde{{\tilde{#1}}}}
\usepackage{fancyhdr,mathrsfs,hyperref}
\usepackage[polutonikogreek,english]{babel}

\pagestyle{fancy}
\chead[Canonical bases and higher representation theory]{Ben Webster}
\cfoot[\thepage]{\thepage}

  \usepackage{pdfsync}

\begin{document}
\newtheoremstyle{all}
  {11pt}
  {11pt}
  {\slshape}
  {}
  {\bfseries}
  {}
  {.5em}
  {}

\externaldocument[m-]{merged}[http://arxiv.org/pdf/1309.3796.pdf]

\theoremstyle{all}
\newtheorem{theorem}{Theorem}[section]
\newtheorem*{proposition*}{Proposition}
\newtheorem{proposition}[theorem]{Proposition}
\newtheorem{corollary}[theorem]{Corollary}
\newtheorem{lemma}[theorem]{Lemma}
\newtheorem{definition}[theorem]{Definition}
\newtheorem{ques}[theorem]{Question}
\newtheorem{conj}[theorem]{Conjecture}

\theoremstyle{remark}
\newtheorem{remark}[theorem]{Remark}
\newtheorem{example}[theorem]{Example}

\newcommand{\nc}{\newcommand}
\newcommand{\renc}{\renewcommand}
\newcounter{subeqn}
\renewcommand{\thesubeqn}{\theequation\alph{subeqn}}
\newcommand{\subeqn}{%
  \refstepcounter{subeqn}
  \tag{\thesubeqn}
}\makeatletter
\@addtoreset{subeqn}{equation}
\newcommand{\newseq}{%
  \refstepcounter{equation}
}
  \nc{\kac}{\kappa^C}
\nc{\alg}{T}
\nc{\Lco}{L_{\la}}
\nc{\qD}{q^{\nicefrac 1D}}
\nc{\ocL}{M_{\la}}
\nc{\excise}[1]{}
\nc{\Dbe}{D^{\uparrow}}
\nc{\Dfg}{D^{\mathsf{fg}}}

\nc{\op}{\operatorname{op}}

\nc{\tr}{\operatorname{tr}}
\newcommand{\Mirkovic}{Mirkovi\'c\xspace}
\nc{\tla}{\mathsf{t}_\la}
\nc{\llrr}{\langle\la,\rho\rangle}
\nc{\lllr}{\langle\la,\la\rangle}
\nc{\K}{\mathbbm{k}}
\nc{\Stosic}{Sto{\v{s}}i{\'c}\xspace}
\nc{\cd}{\mathcal{D}}
\nc{\cT}{\mathcal{T}}
\nc{\vd}{\mathbb{D}}
\nc{\R}{\mathbb{R}}
\renc{\wr}{\operatorname{wr}}
  \nc{\Lam}[3]{\La^{#1}_{#2,#3}}
  \nc{\Lab}[2]{\La^{#1}_{#2}}
  \nc{\Lamvwy}{\Lam\Bv\Bw\By}
  \nc{\Labwv}{\Lab\Bw\Bv}
  \nc{\nak}[3]{\mathcal{N}(#1,#2,#3)}
  \nc{\hw}{highest weight\xspace}
  \nc{\al}{\alpha}
\renewcommand{\theequation}{}
\renc{\theequation}{\arabic{section}.\arabic{equation}}
  \nc{\be}{\beta}
  \nc{\bM}{\mathbf{m}}
  \nc{\bkh}{\backslash}
  \nc{\Bi}{\mathbf{i}}
  \nc{\Bj}{\mathbf{j}}
 \nc{\Bk}{\mathbf{k}}

\nc{\bd}{\mathbf{d}}
\nc{\D}{\mathcal{D}}
\nc{\mmod}{\operatorname{-mod}}  
\newcommand{\red}{\mathfrak{r}}

\nc{\RAA}{R^\A_A}
  \nc{\Bv}{\mathbf{v}}
  \nc{\Bw}{\mathbf{w}}
\nc{\Id}{\operatorname{Id}}
  \nc{\By}{\mathbf{y}}
\nc{\eE}{\EuScript{E}}
  \nc{\Bz}{\mathbf{z}}
  \nc{\coker}{\mathrm{coker}\,}
  \nc{\C}{\mathbb{C}}
  \nc{\ch}{\mathrm{ch}}
  \nc{\de}{\delta}
  \nc{\ep}{\epsilon}
  \nc{\Rep}[2]{\mathsf{Rep}_{#1}^{#2}}
  \nc{\Ev}[2]{E_{#1}^{#2}}
  \nc{\fr}[1]{\mathfrak{#1}}
  \nc{\fp}{\fr p}
  \nc{\fq}{\fr q}
  \nc{\fl}{\fr l}
  \nc{\fgl}{\fr{gl}}
\nc{\rad}{\operatorname{rad}}
\nc{\ind}{\operatorname{ind}}
  \nc{\GL}{\mathrm{GL}}
\newcommand{\arxiv}[1]{\href{http://arxiv.org/abs/#1}{\tt arXiv:\nolinkurl{#1}}}
  \nc{\Hom}{\mathrm{Hom}}
  \nc{\im}{\mathrm{im}\,}
  \nc{\La}{\Lambda}
  \nc{\la}{\lambda}
  \nc{\mult}{b^{\mu}_{\la_0}\!}
  \nc{\mc}[1]{\mathcal{#1}}
  \nc{\om}{\omega}
\nc{\gl}{\mathfrak{gl}}
  \nc{\cF}{\mathcal{F}}
 \nc{\cC}{\mathcal{C}}
  \nc{\Mor}{\mathsf{Mor}}
  \nc{\HOM}{\operatorname{HOM}}
  \nc{\Ob}{\mathsf{Ob}}
  \nc{\Vect}{\mathsf{Vect}}
\nc{\gVect}{\mathsf{gVect}}
  \nc{\modu}{\mathsf{-mod}}
\nc{\pmodu}{\mathsf{-pmod}}
  \nc{\qvw}[1]{\La(#1 \Bv,\Bw)}
  \nc{\van}[1]{\nu_{#1}}
  \nc{\Rperp}{R^\vee(X_0)^{\perp}}
  \nc{\si}{\sigma}
  \nc{\croot}[1]{\al^\vee_{#1}}
\nc{\di}{\mathbf{d}}
  \nc{\SL}[1]{\mathrm{SL}_{#1}}
  \nc{\Th}{\theta}
  \nc{\vp}{\varphi}
  \nc{\wt}{\mathrm{wt}}
\nc{\te}{\tilde{e}}
\nc{\tf}{\tilde{f}}
\nc{\hwo}{\mathbb{V}}
\nc{\soc}{\operatorname{soc}}
\nc{\cosoc}{\operatorname{cosoc}}
 \nc{\Q}{\mathbb{Q}}

  \nc{\Z}{\mathbb{Z}}
  \nc{\Znn}{\Z_{\geq 0}}
  \nc{\ver}{\EuScript{V}}
  \nc{\Res}[2]{\operatorname{Res}^{#1}_{#2}}
  \nc{\edge}{\EuScript{E}}
  \nc{\Spec}{\mathrm{Spec}}
  \nc{\tie}{\EuScript{T}}
  \nc{\ml}[1]{\mathbb{D}^{#1}}
  \nc{\fQ}{\mathfrak{Q}}
        \nc{\fg}{\mathfrak{g}}
  \nc{\Uq}{U_q(\fg)}
        \nc{\bom}{\boldsymbol{\omega}}
\nc{\bla}{{\underline{\boldsymbol{\la}}}}
\nc{\bmu}{{\underline{\boldsymbol{\mu}}}}
\nc{\bal}{{\boldsymbol{\al}}}
\nc{\bet}{{\boldsymbol{\eta}}}
\nc{\rola}{X}
\nc{\wela}{Y}
\nc{\fM}{\mathfrak{M}}
\nc{\fX}{\mathfrak{X}}
\nc{\fH}{\mathfrak{H}}
\nc{\fE}{\mathfrak{E}}
\nc{\fF}{\mathfrak{F}}
\nc{\fI}{\mathfrak{I}}
\nc{\qui}[2]{\fM_{#1}^{#2}}
\nc{\cL}{\mathcal{L}}
\nc{\ca}[2]{\fQ_{#1}^{#2}}
\nc{\cat}{\mathcal{V}}
\nc{\cata}{\mathfrak{V}}
\nc{\catf}{\mathscr{V}}
\nc{\hl}{\mathcal{X}}
\nc{\pil}{{\boldsymbol{\pi}}^L}
\nc{\pir}{{\boldsymbol{\pi}}^R}
\nc{\cO}{\mathcal{O}}
\nc{\Ko}{\text{\Denarius}}
\nc{\Ei}{\fE_i}
\nc{\Fi}{\fF_i}
\nc{\fil}{\mathcal{H}}
\nc{\brr}[2]{\beta^R_{#1,#2}}
\nc{\brl}[2]{\beta^L_{#1,#2}}
\nc{\so}[2]{\EuScript{Q}^{#1}_{#2}}
\nc{\EW}{\mathbf{W}}
\nc{\rma}[2]{\mathbf{R}_{#1,#2}}
\nc{\Dif}{\EuScript{D}}
\nc{\MDif}{\EuScript{E}}
\renc{\mod}{\mathsf{mod}}
\nc{\modg}{\mathsf{mod}^g}
\nc{\fmod}{\mathsf{mod}^{fd}}
\nc{\id}{\operatorname{id}}
\nc{\DR}{\mathbf{DR}}
\nc{\End}{\operatorname{End}}
\nc{\Fun}{\operatorname{Fun}}
\nc{\Ext}{\operatorname{Ext}}
\nc{\tw}{\tau}
\nc{\A}{\EuScript{A}}
\nc{\Loc}{\mathsf{Loc}}
\nc{\eF}{\EuScript{F}}
\nc{\LAA}{\Loc^{\A}_{A}}
\nc{\perv}{\mathsf{Perv}}
\nc{\gfq}[2]{B_{#1}^{#2}}
\nc{\qgf}[1]{A_{#1}}
\nc{\qgr}{\qgf\rho}
\nc{\tqgf}{\tilde A}
\nc{\Tr}{\operatorname{Tr}}
\nc{\Tor}{\operatorname{Tor}}
\nc{\cQ}{\mathcal{Q}}
\nc{\st}[1]{\Delta(#1)}
\nc{\cst}[1]{\nabla(#1)}
\nc{\ei}{\mathbf{e}_i}
\nc{\Be}{\mathbf{e}}
\nc{\Hck}{\mathfrak{H}}
\renc{\P}{\mathbb{P}}
\nc{\bbB}{\mathbb{B}}

\nc{\cI}{\mathcal{I}}
\nc{\cG}{\mathcal{G}}
\nc{\cH}{\mathcal{H}}
\nc{\coe}{\mathfrak{K}}
\nc{\pr}{\operatorname{pr}}
\nc{\bra}{\mathfrak{B}}
\nc{\rcl}{\rho^\vee(\la)}
\nc{\tU}{\mathcal{U}}
\nc{\dU}{{\stackon[8pt]{\tU}{\cdot}}}
\nc{\dT}{{\stackon[8pt]{\cT}{\cdot}}}

\nc{\RHom}{\mathrm{RHom}}
\nc{\tcO}{\tilde{\cO}}
\nc{\Yon}{\mathscr{Y}}
\nc{\sI}{{\mathsf{I}}}

\setcounter{tocdepth}{1}

\excise{
\newenvironment{block}
\newenvironment{frame}
\newenvironment{tikzpicture}
\newenvironment{equation*}
}

\baselineskip=1.1\baselineskip

 \usetikzlibrary{decorations.pathreplacing,backgrounds,decorations.markings,shapes.geometric}
\tikzset{wei/.style={draw=red,double=red!40!white,double distance=1.5pt,thin}}
\tikzset{awei/.style={draw=blue,double=blue!40!white,double distance=1.5pt,thin}}
\tikzset{bdot/.style={fill,circle,color=blue,inner sep=3pt,outer
    sep=0}}
\tikzset{dir/.style={postaction={decorate,decoration={markings,
    mark=at position .8 with {\arrow[scale=1.3]{>}}}}}}
\tikzset{rdir/.style={postaction={decorate,decoration={markings,
    mark=at position .8 with {\arrow[scale=1.3]{<}}}}}}
\tikzset{edir/.style={postaction={decorate,decoration={markings,
    mark=at position .2 with {\arrow[scale=1.3]{<}}}}}}\begin{center}
\noindent {\large  \bf Canonical bases and higher representation
  theory}
\medskip

\noindent {\sc Ben Webster}\footnote{Supported by the NSF under Grant DMS-1151473 and  by the NSA under Grant H98230-10-1-0199.}\\  
Department of Mathematics\\ University of Virginia\\
Charlottesville, VA\\
\end{center}
\bigskip
{\small
\begin{quote}
\noindent {\em Abstract.}
This paper develops a general theory of canonical bases, and how they
arise naturally in the context of categorification.  As an application,
we show that Lusztig's canonical basis in the whole quantized
universal enveloping algebra is given by the classes of the
indecomposable 1-morphisms in a 2-category categorifying the universal
enveloping algebra, when the associated Lie algebra is finite type and
simply laced. We also introduce natural categories whose Grothendieck groups
correspond to the tensor products of lowest and highest weight integrable
representations. This generalizes past work of the author's in the
highest wight case. 
\end{quote}
}

\vspace{1cm}

\renc{\thetheorem}{\Alph{theorem}}

\tableofcontents

One of the consistent motivations for the construction of
categorifications has been the accompanying appearance of canonical
bases in the original object under consideration.  At its core, this
is a consequence of a very simple principle: the indecomposable
objects of any Krull-Schmidt category give a basis of its split
Grothendieck group.  Furthermore, any map between Grothendieck groups
which lifts to a functor must have positive integer coefficients in
this basis.  

While this positivity is an appealing consequence, on its own, it has
trouble making up for the difficulty of computing this basis in many
situations. For example, irreducible characters give a basis of class
functions on a finite group where multiplication has positive integral
structure coefficients, but finding irreducible characters is still
very hard in general.

This computational problem eases if this basis has the additional
property of being {\bf canonical}---we will make precise in Section
\ref{sec:pre-canon-struct} what this means.  Defining a canonical basis requires a choice of
{\bf pre-canonical structure} which consists of a bar-involution, a
sesquilinear pairing and a ``first approximation'' to our basis.  Many
readers will be familiar with how these elements induce a canonical basis from work of Kazhdan, Lusztig and others
\cite{KL79,Lusbook}, but we will give a general account making this
definition precise.  If we can prove that the basis coming from our categorification
is canonical (using categorical properties), then we can reduce its
construction to the Gram-Schmidt algorithm.

It's worth noting that previously, the term ``canonical basis'' had
not had a precise meaning in the past (to the author's knowledge), but
was applied to a variety of cases with properties in common.  We'll
give a formal definition of a {\bf canonical basis} (Definition \ref{def:canonical}) which recovers most
examples we're aware of; most
importantly, it will recover the canonical bases defined by Lusztig on
$\dot{U}$ and tensor products in the cases where they are defined in
\cite{Lusbook}.  To avoid confusion, in this paper, we'll refer to the
bases defined in \cite{Lusbook} as {\bf Lusztig's bases} and use the
term ``canonical'' to only mean bases satisfying Definition
\ref{def:canonical}. See Theorem \ref{thm:lusztig} for the precise
connection between these.

In the first two
sections, we'll develop the theory of {\bf humorous categories}, which are
categories well-suited to a connection with a canonical basis. These
categories always have an {\bf orthodox}\footnote{The word ``orthodox'' comes from the Greek
      \textgreek{`orj'os} ``correct'' + \textgreek{d'oxa}
      ``belief''; it is a basis we can believe in.} basis arising from
    their indecomposable objects.  This basis will be canonical when
    the category satisfies an additional
condition on positivity of gradings, which is typically called {\bf
  mixedness} by analogy with mixed structures in geometry.  In
particular, we'll show (Lemmata \ref{lem:full-essential} and
\ref{asymptotic}) how information about mixedness and canonical bases
can pass back and forth between categories and certain special
quotients, and give a useful general principle for extracting mixed
humorous categories from $t$-structures on dg-categories (Lemma
\ref{J-mixed}).  All of these Lemmata are key for understanding the
canonical basis of $\dot{U}$ in categorical terms.

The aim of the rest of the paper is to apply this theory to give an
account of the canonical bases arising in quantized universal
enveloping algebras and their representations.  Our primary tool will
be higher representation theory, as developed by Rouquier, Khovanov,
Lauda and others.  We'll give brief reminder about the
categorification $\dU$ of the universal enveloping algebra itself, and
then define a categorification $\hl^{\bla}$ of a tensor product of a
sequence of highest and lowest integrable representations.  However,
this sequence cannot be in an arbitrary order; in effect, the
categorification forces us to put lowest weight representations on the
left and highest weight on the right.  The significance of this
condition is not clear at the moment, but it matches the underlying
algebra and combinatorics of these representations.  This can be seen in
\cite{BaoWang}, where the existence of a canonical basis in precisely
these tensor products is proven.  These latter categories are generalizations of the
categorifications of highest weight representations defined by the
author in \cite{Webmerged}.

We build on very important results of Vasserot-Varagnolo \cite[4.5]{VV} to show:
\begin{theorem}[\mbox{Theorems \ref{tensor} \&
    \ref{Udot}}]\label{main}\hfill
    \renc{\labelenumi}{(\alph{enumi})}
  \begin{enumerate}
  \item If $\fg$ is finite dimensional and simply-laced (that is, of
    ADE type), then the canonical basis of the modified quantized
    universal enveloping algebra $\dot{U}$ coincides with the classes
    of indecomposable objects in the categorification $\dU$ defined in
    Section \ref{sec:2-category-cu}.
\item     If $\fg$ is an arbitrary Kac-Moody algebra with symmetric Cartan
    matrix, the canonical basis of a tensor product of highest weight
    integrable representations coincides with the classes of
    indecomposable objects in the categorification $\hl^{\bla}$
    defined in Section \ref{sec:tens-prod-algebr}.
  \end{enumerate}
\end{theorem}
Let us reiterate: here ``canonical basis'' refers to Definition
\ref{def:canonical}, but these bases agree with Lusztig's in all cases
where the latter are defined by Theorem \ref{thm:lusztig}.

The result for $\dU$ follows directly from the case of a tensor
product of two dual representations, by Lemma \ref{asymptotic}.  For an
algebra of ADE type, these dual representations are both highest weight
and our result implies that the canonical basis of this tensor product
arises categorically. 

As indicated earlier in the introduction, this sort of result is
particularly interesting because of its positivity consequences:
\begin{corollary}\label{cor:positive}
   If $\fg$ is finite dimensional and simply-laced, then the structure
   coefficients of multiplication of the canonical basis of $\dot{U}$ are all Laurent polynomials with
   positive integer coefficients.  
 The same holds for matrix coefficients in the canonical basis of its action
   on any tensor product
   of finite dimensional modules.
\end{corollary}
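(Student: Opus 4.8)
The plan is to deduce this directly from Theorem \ref{main} together with the elementary observation, recalled in the introduction, that any map of Grothendieck groups which lifts to a functor between Krull--Schmidt categories has nonnegative coefficients in the basis of indecomposables.

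First consider multiplication in $\dot U$. By Theorem \ref{main}(b), the canonical basis of $\dot U$ is the set of classes $[B]$ of indecomposable $1$-morphisms $B$ of $\dU$, and---this being the sense in which $\dU$ categorifies $\dot U$---the product $[B][B']$ is the class of the composite $1$-morphism $B\circ B'$. Since the category of $1$-morphisms of $\dU$ between any two fixed objects is Krull--Schmidt with $\Z$ acting freely on isomorphism classes by internal grading shift (a consequence of the positivity of the grading established in the course of proving Theorem \ref{main}(b)), the composite decomposes uniquely as
\[
B\circ B'\;\cong\;\bigoplus_{B''}\ \bigoplus_{k\in\Z} B''\langle k\rangle^{\oplus c_{B,B'}^{B''}(k)},\qquad c_{B,B'}^{B''}(k)\in\Znn,
\]
the outer sum running over a set of representatives $B''$ of the indecomposables. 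Passing to classes gives $[B][B']=\sum_{B''}\bigl(\sum_k c_{B,B'}^{B''}(k)\,q^{k}\bigr)[B'']$, so each structure coefficient is $\sum_k c_{B,B'}^{B''}(k)q^k$, a sum of monomials with nonnegative integer coefficients. That this is a genuine Laurent polynomial rather than a formal series is automatic: it equals the corresponding structure coefficient of Lusztig's basis, which lies in $\Z[q,q^{-1}]$ (equivalently, the relevant graded morphism spaces are finite dimensional in each degree and vanish in all but finitely many).

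For the action on a tensor product of finite dimensional modules $V_{\la_1}\otimes\cdots\otimes V_{\la_n}$: in finite type every integrable highest weight module is finite dimensional, so such a tensor product is one of the representations categorified by $\hl^{\bla}$, and by Theorem \ref{main}(a) its canonical basis is the set of classes of indecomposable objects of $\hl^{\bla}$. The $\dot U$-action is categorified by the exact functors $\Ei,\Fi$ and their direct summands; acting by the canonical basis element $[B]$ on the canonical basis element $[P]$ (with $P$ indecomposable) is realized by applying the indecomposable functor $B$ to $P$, and since $\hl^{\bla}$ is again Krull--Schmidt with free grading-shift action, $B(P)$ decomposes uniquely into shifts of indecomposables with nonnegative integer multiplicities. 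These graded multiplicities are exactly the matrix coefficients of the action in the canonical basis, so they too are positive; finiteness follows as before, since each weight space of the module is a free $\Z[q,q^{-1}]$-module of finite rank and the morphism spaces in $\hl^{\bla}$ are finite dimensional.

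The one point requiring genuine care, rather than bookkeeping, is the freeness of the grading-shift action on isomorphism classes of indecomposables in both $\dU$ and $\hl^{\bla}$---that is, that no indecomposable is isomorphic to a nontrivial internal shift of itself---since this is what makes the decompositions above, and hence the coefficients $c_{B,B'}^{B''}(k)$, well defined. This is precisely the positivity/mixedness input supplied by the arguments behind Theorem \ref{main}; once it is in hand, the corollary is purely formal. Matching the exact grading-shift normalizations in the categorified product and action, so that no spurious shift is introduced, is routine and is already implicit in the statements of Theorems \ref{tensor} and \ref{Udot}.
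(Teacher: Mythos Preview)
Your argument is correct and is essentially the same as the paper's: the corollary is deduced immediately from Theorem~\ref{main} by noting that composition of $1$-morphisms and the action functors decompose indecomposables into nonnegative sums of shifts of indecomposables, so once orthodox $=$ canonical the positivity is automatic.

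One small misattribution: you say the freeness of the grading-shift action on indecomposables is ``the positivity/mixedness input supplied by the arguments behind Theorem~\ref{main}.'' In fact that freeness is part of the definition of a humorous category and is established for $\dU$ and $\hl^\bla$ earlier via the stringy-sequence analysis (Theorem~\ref{KS-duality} and the lemmas preceding it), independently of mixedness. The paper even records (Section~\ref{sec:orthodox-bases}) that the orthodox basis has positive structure coefficients for \emph{any} $\K$ and $Q_{*,*}$. What mixedness (and hence Theorem~\ref{main}) actually buys you is the identification of the orthodox basis with the canonical one, not the well-definedness of the Krull--Schmidt decomposition. This does not affect the validity of your proof, only the explanation of where each ingredient comes from.
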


Now let us give some indication of why the hypotheses on Theorem
\ref{main} are necessary.  If the Cartan matrix is not symmetric, then Theorem
\ref{main} simply will
not hold: the categorifications we use exist, but their projectives
don't give a canonical basis, since we know that the analogue of
Corollary \ref{cor:positive} fails.

If the Cartan matrix is symmetric, but has infinite type, Theorem
\ref{main}(b) applies, but at the moment we know no proof of the
analogue of Theorem \ref{main}(a). It seems likely that that this
categorical interpretation of the canonical basis will hold both for
$\dot{U}$ and arbitrary tensor products of a group of lowest weight
representations with a group of highest weight representations.  Later
in this paper, we'll define precanonical structures on these spaces
but the techniques in the proof of Theorem \ref{main} do not suffice
to prove that a canonical basis exists in this case, let alone that
such a basis arises from a categorification. The proof of Theorem
\ref{main} uses that highest and lowest weight modules of $\dot{U}$
are the same in a very strong way, so it cannot proceed here.

 A proof of Theorem \ref{main}(a) in
general type will require very
different techniques.
One promising approach would be to apply Lemma
\ref{asymptotic} to the categorical
actions on quantizations of quiver varieties described in
\cite{Webcatq}. However, in order to do so, we must prove
that certain functors are full, and this fullness is equivalent to Kirwan surjectivity for quiver
varieties. This is a long-standing open problem, so until it finds a
solution, we cannot use this approach.  See \cite{JKK} for a more
extensive discussion of the Kirwan surjectivity problem for a general
hyperk\"ahler quotient.

Even if this approach is successful, it can only be applied to the
canonical basis of $1_\la \dot{U} 1_{\la'}$ where $\la$ and $\la'$
both lie in the positive or negative Tits cone $\fg$ (the union of the
Weyl group orbits of the dominant or antidominant Weyl chamber).  A
more promising approach in general would be to apply the techniques of
\cite{EWHodge} in this context, but it remains to be seen if this can
be successful.

\subsection*{Acknowledgements}

I thank George Lusztig; without his questioning, this paper
might never have happened. Ben Elias was also helpful far beyond the
call of duty in giving comments and suggesting revisions.  I also thank Jon
Brundan, Yiqiang Li, Aaron Lauda, Ivan Losev, Marco Mackaay, Catharina
Stroppel, \'Eric Vasserot and Weiqiang Wang for useful discussions.

\subsection*{Notation}

We let $\fg$ be a symmetrizable Kac-Moody algebra.  Consider the
weight lattice $\wela(\fg)$ and root lattice $\rola(\fg)$, and the
simple roots $\al_i$ and coroots $\al_i^\vee$.  We let $\wela^+$
denote the set of dominant weights, and $\wela^-$ the anti-dominant
weights.  Let
$c_{ij}=\al_i^{\vee}(\al_j)$ be the entries of the Cartan matrix.

We let $\langle
-,-\rangle$ denote the symmetrized inner product on $\wela(\fg)$,
fixed by the fact that the shortest root has length $\sqrt{2}$
and $$2\frac{\langle \al_i,\la\rangle}{\langle
  \al_i,\al_i\rangle}=\al_i^\vee(\la).$$ As usual, we let $2d_i
=\langle\al_i,\al_i\rangle$, and for $\la\in\wela(\fg)$, we
let $$\la^i=\al_i^\vee(\la)=\langle\al_i,\la\rangle/d_i.$$

Throughout the paper, we will use $\bla=(\la_1,\dots, \la_\ell)$ to
denote an ordered $\ell$-tuple of dominant or anti-dominant weights, and always use the
notation $\la=\sum_{i}\la_i$.  

We let $U_q(\fg)$ denote the deformed universal enveloping algebra of
$\fg$; that is, the 
associative $\C(q)$-algebra given by generators $E_i$, $F_i$ for
$\al_i$ a simple root
and $K_{\mu}$ for $\mu \in \wela(\fg)$, subject to the relations:
\begin{center}
\begin{enumerate}[i)]
 \item $K_0=1$, $K_{\mu}K_{\mu'}=K_{\mu+\mu'}$ for all $\mu,\mu' \in \wela(\fg)$,
 \item $K_{\mu}E_i = q^{\al_i^{\vee}(\mu)}E_iK_{\mu}$ for all $\mu \in
 \wela(\fg)$,
 \item $K_{\mu}F_i = q^{ -\al_i^{\vee}(\mu)}F_iK_{\mu}$ for all $\mu \in
 \wela(\fg)$,
 \item $E_iF_j - F_jE_i = \delta_{ij}
 \frac{\tilde{K}_i-\tilde{K}_{-i}}{q^{d_i}-q^{-d_i}}$, where
 $\tilde{K}_{\pm i}=K_{\pm d_i \al_i}$,
 \item For all $i\neq j$ $$\sum_{a+b=-c_{ij}+1}(-1)^{a} E_i^{(a)}E_jE_i^{(b)} = 0
 \qquad {\rm and} \qquad
 \sum_{a+b=-c_{ij} +1}(-1)^{a} F_i^{(a)}F_jF_i^{(b)} = 0 $$
\end{enumerate} \end{center}
where $E_i^{(a)}=E_i^a/[a]_q!$, and $[a]_q!=[a]_q[a-1]_q\cdots$ for $[a]_q=(q^a-q^{-a})/(q-q^{-1})$.

This is a Hopf algebra with coproduct on Chevalley generators given
by $$\Delta(E_i)=E_i\otimes 1
+\tilde K_i\otimes E_i\hspace{1cm}\Delta(F_i)=F_i\otimes \tilde K_{-i}
+1 \otimes F_i$$
and antipode on these generators defined by
$S(E_i)=-\tilde{K}_{-i}E_i,S(F_i)=-F_i\tilde{K}_{i}$.  We'll need to
also use the opposite coproduct
 $$\Delta^{\op}(E_i)=E_i\otimes \tilde K_i
+1\otimes E_i\hspace{1cm}\Delta(F_i)=F_i\otimes 1
+ \tilde K_{-i} \otimes F_i.$$
 Consider the Cartan involution 
\[\omega(E_i)=F_i\qquad \omega(F_i)=E_i\qquad \omega(K_i^{\pm
  1})=K_i^{\mp 1}\] on $\dot{U}$, and note that this involution
intertwines the usual and opposite coproducts $(\omega\otimes \omega)\circ
\Delta \circ \omega=\Delta^{\op}$.

We let $U_q^\Z(\fg)$ denote the Lusztig (divided powers) integral form
generated over $\Z[q,q^{-1}]$ by
$E_i^{(n)},F_i^{(n)},K_\mu$ for $n\geq 1$.  The integral form of the representation of
highest weight $\la$ if $\la$ is dominant or lowest weight $\la$ if
$\la$ is anti-dominant over this quantum group will be denoted by
$V_\la^\Z$.  It will be natural for us to use a slightly unusual
convention for our tensor products: when we tensor on the right with a
highest weight represention, we use the usual coproduct, but when
tensor with a lowest weight representation. More precisely, let $\boldsymbol{\epsilon}$ be the sign vector such that $\ep_k=-1$ if
$\la_k$ is dominant, and $\ep_k=1$ if it is anti-dominant, and let
$\otimes^{-1}$ be the tensor product with the usual coproduct, and
$\otimes^1$ be tensor product using $\Delta^{\op}$
\[V_{\bla}^\Z=\Big(\cdots \big((V_{\la_1}^\Z\otimes_{\Z[q,q^{-1}]}^{\ep_2} V_{\la_2}^\Z)\otimes_{\Z[q,q^{-1}]}^{\ep_3} V_{\la_3}^\Z \big) \otimes_{\Z[q,q^{-1}]}^{\ep_4}\cdots \Big)
\otimes_{\Z[q,q^{-1}]}^{\ep_\ell} V_{\la_\ell}^\Z.\] If we let
$i_1,\dots,i_p$ be the indices with $\ep_{i_k}=1$ and $j_1,\dots,j_q$
the indices with $\ep_{j_k}=-1$, then this is
isomorphic to the tensor product  
\[V_{\bla}^\Z\cong V_{\la_{i_p}}^\Z\otimes_{\Z[q,q^{-1}]}\cdots
\otimes_{\Z[q,q^{-1}]} V_{\la_{i_1}}^\Z \otimes_{\Z[q,q^{-1}]}
V_{\la_{j_1}}^\Z\otimes_{\Z[q,q^{-1}]}\cdots \otimes_{\Z[q,q^{-1}]}
V_{\la_{j_q}}^\Z \] using only the usual coproduct. 
We let $\bar V_\bla^\Z$ denote the reduction of $V_\bla^\Z$ at $q=1$.
Note that since $\Delta\equiv \Delta^{\op}\pmod {q-1}$, these ordering
issues are unimportant after this specialization.

\renc{\thetheorem}{\arabic{section}.\arabic{theorem}}

\section{Pre-canonical structures}
\label{sec:pre-canon-struct}

We let $V$ be a free $\Z[q,q^{-1}]$-module.
\begin{definition}
  A {\bf pre-canonical structure} on $V$ is a triple consisting of:
  \begin{itemize}
  \item a ``bar involution'' $\psi\colon V\to V$ which is
    $\Z[q,q^{-1}]$- anti-linear,
  \item a sesquilinear inner product $\langle -,-\rangle\colon V\times
    V\to \Z((q^{-1}))$, for which $\psi$ is flip-unitary:
    \[\langle u,v\rangle=\langle \psi(v),\psi(u)\rangle.\]
  \item a ``standard basis'' $a_c$ with partially ordered index set
    $(C,<)$ such that \[\psi(a_c)\in a_c+\sum_{c' <c}
    \Z[q,q^{-1}]\cdot a_{c'}.\]
  \end{itemize}
\end{definition}
Pre-canonical structures arise as shadows of categorical structures.
One of our tasks will be to describe the sort of categories that will
interest us.
Fix a field $\K$.  
\begin{definition}
  A {\bf humorous category} is an additive $\K$-linear Krull-Schmidt category $\mathcal{C}$ equipped with:
  \begin{itemize}
\item 
    an invertible grading shift functor $(1)$ such that the induced
    $\Z$-action on the set of indecomposables is free with finitely
    many orbits (we denote this set of such orbits by $C$), and 
\item a duality functor
    $M\mapsto M^\circledast$ which satisfies $(M(1))^\circledast\cong
    M^\circledast(-1)$, and has a unique fixed point $P_c\cong P_c^\circledast$ on each
    orbit $c\in C$.
 \end{itemize}
These must satisfy the conditions that:
\begin{itemize}
\item $\mathrm{Hom}_{\mathcal{C}}(M,N(i))=0$ for $i\ll 0$, and
  that $\dim \mathrm{Hom}_{\mathcal{C}}(M,N)<\infty$ for all objects
  $M$ and $N$.
\item The local ring $\Hom(P_c,P_c)$ has residue $\K$ for all $c$ (in
  general, we could have a division algebra over $\K$).  That is,
  $P_c$ is absolutely irreducible over $\K$; it remains indecomposable
  under any finite degree field extension of $\K$.
\end{itemize}

\end{definition} 

We can call a morphism $M\to N(i)$ a
morphism of degree $i$.  As in \cite{KLI}, we'll let $\HOM(M,N)\cong
\oplus_{i\in \Z} \Hom(M,N(i))$; we call elements of this space
{\bf degraded morphisms}. The reader might prefer to think of
the {\bf degraded category} $\mathcal{C}_\Z$ with the same objects as $\mathcal{C}$
and morphisms given by $\HOM(M,N)$ as graded $\K$-vector spaces, but
it is usually more convenient to work with $\mathcal{C}$. 

\begin{lemma}\label{KS-algebra}
  A category (with additional data) is humorous if and only if it is equivalent to the
category of finitely generated projective graded left modules over a graded
$\K$-algebra $A$ with morphisms given by homogeneous maps of degree 0,
with:
\begin{itemize}
\item Each graded piece of $A$ is finite dimensional, and the grades
  that appear in $A$ are bounded below.
\item Every indecomposable graded $A$-module is absolutely indecomposable.
\item There is an anti-automorphism $\gamma\colon A\to A$ which preserves at
  least one primitive idempotent in each isomorphism class.  
\end{itemize}
In this case, the humorous structure is given by:
\begin{itemize}
\item  The grading shift $M(i)$ is the same module as $M$ with all gradings decreased
by $i$.
\item 
The duality $\circledast$ is given by
$M^\circledast=\operatorname{HOM}_A(M,A)$ with the anti-automorphism $\gamma$ used
to switch this right
module to a left module.
\end{itemize}
\end{lemma}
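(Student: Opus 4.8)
The plan is to run the classical ``projective generator'' equivalence in its graded, $\circledast$-equivariant form. For the ``only if'' direction, suppose $\mathcal C$ is humorous; let $P_c$ ($c\in C$) be the distinguished $\circledast$-fixed representatives of the shift-orbits of indecomposables, set $P=\bigoplus_{c\in C}P_c$, and put $A=\HOM_{\mathcal C}(P,P)$. The finiteness of $\Hom$-spaces and the vanishing $\Hom_{\mathcal C}(M,N(i))=0$ for $i\ll 0$ say exactly that $A$ is a graded $\K$-algebra with finite-dimensional graded pieces, bounded below. I would then check, just as in ungraded Morita theory, that $F=\HOM_{\mathcal C}(P,-)$ is an additive, fully faithful, degree-preserving equivalence of $\mathcal C$ onto the category of finitely generated projective graded left $A$-modules with degree-zero morphisms: it carries $P_c$ to the indecomposable projective $Ae_c$ with $e_c=\id_{P_c}$ a primitive idempotent, carries the shift $(1)$ to regrading, and is essentially surjective because every object of $\mathcal C$ is a finite sum of shifts of the $P_c$ (Krull--Schmidt) while conversely every finitely generated projective graded $A$-module is a sum of shifts of the $Ae_c$. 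For the duality I would use the natural isomorphisms $F(M^\circledast)=\HOM_{\mathcal C}(P,M^\circledast)\cong\HOM_{\mathcal C}(M,P)\cong\HOM_A(F(M),A)$ (the last by full faithfulness), and observe that the residual right $A$-action on $\HOM_A(F(M),A)$ is turned into the required left action by the anti-automorphism $\gamma:=\circledast|_{\End P}$; the relation $(M(1))^\circledast\cong M^\circledast(-1)$ forces $\gamma$ to be degree-preserving, and $\gamma$ fixes each $e_c$ since $\id_{P_c}^\circledast=\id_{P_c}$. Finally $A$ is a humorous algebra: the grades are bounded below and finite-dimensional, $\gamma$ fixes the primitive idempotents $e_c$, and since each $\End_{\mathcal C}(P_c)$ is local with residue field $\K$ we get $A/\rad A\cong\prod_{c\in C}\K$, so $\K$ is a splitting field for $A$ and every indecomposable graded $A$-module is absolutely indecomposable.

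For the ``if'' direction I would start from a humorous algebra $(A,\gamma)$ with $\gamma$ a graded anti-automorphism and verify the axioms of a humorous category for $\mathcal C=$ the category of finitely generated projective graded left $A$-modules and degree-zero maps. Since $A$ has finite-dimensional graded pieces and is bounded below, $\mathcal C$ is Krull--Schmidt and idempotent complete with only finitely many indecomposables up to grading shift, namely the $P_c=Ae_c$ for suitable primitive idempotents $e_c$. Absolute indecomposability of $Ae_c$ makes its graded endomorphism ring local with residue field $\K$; a degree-zero endomorphism is invertible if and only if it is a unit of the degree-zero part (using boundedness below), so $\End_{\mathcal C}(P_c)$ is local with residue field $\K$. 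Boundedness below also gives $Ae_c\not\cong Ae_c(i)$ for $i\neq 0$, so the shift action on indecomposables is free with finitely many orbits. For the duality I would take $M^\circledast=\HOM_A(M,A)$, twisted to a left module by $\gamma$; then $(M(1))^\circledast\cong M^\circledast(-1)$ is the usual shift behavior of $\HOM_A(-,A)$, and --- choosing the $e_c$ to be $\gamma$-fixed, which the third humorous-algebra axiom permits in each isomorphism class --- one computes $(Ae_c)^\circledast\cong e_cA\cong Ae_c$ with no shift (the last isomorphism via $\gamma$), while $(Ae_c(i))^\circledast\cong Ae_c(-i)$; hence $P_c$ is the unique $\circledast$-fixed object in its orbit. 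The remaining $\Hom$-finiteness and the vanishing for $i\ll 0$ follow from finite generation of the modules together with the boundedness and per-degree finiteness of $A$.

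The step I expect to be the real work is not a single hard argument but the degree bookkeeping that makes the two structures match: getting the grading right in $M^\circledast\cong\HOM_A(M,A)$-twisted-by-$\gamma$ so that the \emph{unique} $\circledast$-fixed point on each shift-orbit corresponds precisely to a $\gamma$-fixed primitive idempotent --- this is where the ``unique fixed point'' clause for humorous categories and the ``$\gamma$ preserves a primitive idempotent in each class'' clause for humorous algebras are genuinely used against one another. A secondary point, in the ``only if'' direction, is upgrading ``the $P_c$ are absolutely indecomposable'' to ``all indecomposable graded $A$-modules are absolutely indecomposable'', which I would deduce from $A/\rad A\cong\prod_{c}\K$ via the standard theory of algebras over a splitting field. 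Everything else --- graded Krull--Schmidt for algebras that are bounded below with finite-dimensional graded pieces, and the classical projective-generator equivalence --- is standard, and I would cite rather than reprove it.
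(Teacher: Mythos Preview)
Your approach is essentially the paper's: take the self-dual object $P=\bigoplus_c P_c$, form its graded endomorphism algebra, and invoke graded Morita theory; the paper's proof is explicitly only a sketch along exactly these lines. One small slip: $\HOM_{\mathcal C}(P,-)$ is naturally a \emph{right} $\HOM_{\mathcal C}(P,P)$-module by precomposition, so to land in left modules you need $A=\HOM_{\mathcal C}(P,P)^{\op}$, which is what the paper writes.

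The one place you go further than the paper is the claim that $A/\rad A\cong\prod_c\K$ forces \emph{every} indecomposable graded $A$-module to be absolutely indecomposable. This implication is false: having all simples one-dimensional does not prevent higher-dimensional indecomposables whose endomorphism ring has residue field a nontrivial extension of $\K$. Concretely, over $\K=\R$, take $A$ to be the path algebra of the quiver with two vertices, two arrows $1\to 2$ and two arrows $2\to 1$, modulo all length-$2$ paths, graded by path length; the anti-involution swapping opposite arrows fixes both vertex idempotents, $A$ is bounded below with finite graded pieces, and $A_0/\rad A_0\cong\R\times\R$, yet the graded module with $\R^2$ at each vertex on which the two arrows $1\to 2$ act by $I$ and $\left(\begin{smallmatrix}0&-1\\1&0\end{smallmatrix}\right)$ (and the reverse arrows by $0$) has degree-$0$ endomorphism ring $\C$, hence is indecomposable but not absolutely so. The paper's sketch does not address this point either; the natural reading is that ``every indecomposable graded $A$-module'' in the definition of humorous algebra is meant to refer to projectives (equivalently, to the objects of the category in question), in which case the condition is exactly the absolute indecomposability of the $P_c$ and nothing further is needed.
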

\begin{proof}
  Most of this proof is simply applying definitions, so we will only
  give a sketch.  First, let $\mathcal{C}$ be a humorous category;
  this contains a self-dual object $O$ in
  which every indecomposable module appears as a summand with
  multiplicity 1.  The algebra
  $A$ is the opposite algebra of graded endomorphisms of $O$, that is $A^{op}\cong
  \HOM_{\mathcal{C} }(O,O)$, with the isomorphism given by $M\mapsto
 \HOM_{\mathcal{C} }(O,M)$.  Commutation with grading
 shift is clear.  Of course, \[\HOM_{\mathcal{C}
 }(O,M^\circledast)\cong \HOM_{\mathcal{C} }(M,O)\cong
 \HOM_A(\HOM_{\mathcal{C} }(O,M),A),\] so this shows the commutation
 with duality.  The anti-involution $\gamma$ is induced by any
 isomorphism $O\cong O^\circledast$; this must preserve each primitive
 idempotent since there is only one in each isomorphism class.

Now, assume that $A$ is a $\K$-algebra satisfying our conditions.  Its
category of representations is Krull-Schmidt and has finitely many
indecomposables (up to shift) since $A_0$ is finite
dimensional.  Since every projective module is a summand of $A^{\oplus
  n}$, we have that the finiteness of Hom spaces between projectives
follows from the finiteness of the grades of $A$.  

Finally, if we define $M^\circledast=\operatorname{HOM}_A(M,A)$ with
this right module turned into a left module using $\gamma$, then this
obviously commutes with grading shift, and sends
$(Ae)^\circledast\cong A\gamma(e)$.  Thus, we can define $P_c$ to be
the image of any $\gamma$-invariant primitive idempotent in the
corresponding isomorphism class.
\end{proof}

The reader might rightly wonder about the full category of
finite-dimensional graded modules; this appears as the category of representations of $\mathcal{C}$.
\begin{definition}
  Let $\operatorname{Rep}(\mathcal{C})$ be the category of additive functors
  from $\mathcal{C}$ to the category of finite dimensional $\K$-vector
  spaces.  One can easily see that if $\mathcal{C}$ is the projective
  modules over $A$, then $ \operatorname{Rep}(\mathcal{C})$ is the
  category of finite dimensional modules over $A$.  
\end{definition}
 There is a duality $\star$ on the abelian category
$\operatorname{Rep}(\mathcal{C})$ defined by the property that 
\begin{equation}
 \Hom(P^\circledast,M)=
\Hom(P,M^\star)^*.\label{star}
\end{equation}
If $ \operatorname{Rep}(\mathcal{C})$ is the
  category of finite dimensional modules over $A$, then this is simply
  vector space duality, with $\gamma$ used to switch left and right
  modules as before.

The data of a pre-canonical structure often ``decategorify'' structures on this category: that is, we
can take their shadow on the
Grothendieck group $K^0(\mathcal{C})$.   For example:
\begin{itemize}
\item the grading shift functor induces an invertible endomorphism of
  $K^0(\mathcal{C})$ which we call $q$, making $K^0(\mathcal{C})$ a
  $\Z[q,q^{-1}]$ module.  That is $q[M]=[M(1)]$
\item the decategorification of the duality
  functor gives an anti-linear involution $\psi$, 
\item the graded Euler pairing
  \[\langle [M],[N]\rangle=\sum_{i\in \Z} q^{-i}\dim\Hom(M,N(i))\]
gives a sesquilinear pairing, since 

\begin{multline*}
  \langle q^a[M],q^b[N]\rangle=\sum_{i\in \Z}
  q^{-i}\dim\Hom(M(a),N(i+b))=\sum_{i\in \Z}
  q^{-i}\dim\Hom(M(a),N(i+b))\\=\sum_{j\in \Z}
  q^{-j-a+b}\dim\Hom(M,N(j))=q^{b-a}\langle [M],[N]\rangle.
\end{multline*}
\end{itemize}

\begin{definition}
We call a pre-canonical structure {\bf balanced} if  $\langle
  \psi(a_c),a_d\rangle=\delta_{c,d}$ and {\bf almost balanced} if $\langle
  \psi(a_c),a_d\rangle\in \delta_{c,d}+q^{-1}\Z[q^{-1}]$.
\end{definition}

\begin{lemma}\label{category-pre-canonical}
  Assume that $\mathcal{C}$ is a graded humorous category with
  a partial order $(C,<)$, and
  that either:
  \begin{enumerate}
  \item we have a collection of self-dual objects $M_c\cong M_c^\circledast$ in
    $\mathcal{C}$ such that $M_c\cong
    P_c\oplus (\oplus_{c'<c}P_{c'}^{\oplus m_{c'}})$ and take
    $a_c=[M_c]$, or
  \item the category of $\operatorname{Rep}(\mathcal{C})$ is highest
    weight \footnote{To fix conventions, the indecomposable projective
      module $P_c$ has a filtration by the modules $\Delta_{c'}$ with
      $c'\leq c$; this is the opposite of the most common convention
      for highest weight categories.} for the order $(C,<)$, with standard modules  $\Delta_c$ (graded so that
    $\Delta_c$ is a quotient of $P_c$), and we take
    $a_c=[\Delta_c]=[M^0_c]-[M^1_c]+\cdots $ where $\cdots \to M^1_c\to
    M^0_c\to \Delta_c$ is a projective resolution.
  \end{enumerate}
Then, the basis $\{a_c\}$, the involution $\psi$ and the graded Euler
pairing defines a pre-canonical structure; in the second case, the
pre-canonical structure is balanced.
\end{lemma}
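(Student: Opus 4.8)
The strategy is to verify the three defining conditions of a pre-canonical structure (the bar-involution axiom, the flip-unitarity of the graded Euler pairing, and the triangularity of $\psi$ on the basis $\{a_c\}$), using only facts about humorous categories already established. That $\psi$ is $\Z[q,q^{-1}]$-antilinear and an involution is immediate from the fact that $\circledast$ satisfies $(M(1))^\circledast\cong M^\circledast(-1)$ and is a duality. Sesquilinearity of the graded Euler pairing $\langle[M],[N]\rangle=\sum_i q^{-i}\dim\Hom(M,N(i))$ is the explicit computation already displayed in the excerpt, so nothing new is needed there; one only has to note that this sum lies in $\Z((q^{-1}))$, which follows from the humorous-category axiom $\Hom(M,N(i))=0$ for $i\ll 0$ together with finite-dimensionality of each $\Hom$-space.

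The first genuine point is \emph{flip-unitarity}: $\langle u,v\rangle=\langle\psi(v),\psi(u)\rangle$. I would prove this at the level of objects, i.e.\ $\langle[M],[N]\rangle=\langle[N^\circledast],[M^\circledast]\rangle$, and then extend by sesquilinearity. The key categorical input is the natural isomorphism $\HOM_{\mathcal{C}}(M,N)\cong\HOM_{\mathcal{C}}(N^\circledast,M^\circledast)$ coming from the duality functor, which is degree-preserving (because $\circledast$ intertwines $(1)$ with $(-1)$ exactly); passing to graded dimensions gives $\dim\Hom(M,N(i))=\dim\Hom(N^\circledast,M^\circledast(i))$ for every $i$, hence equality of the two Euler pairings. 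One should be slightly careful that the partial order and the standard objects play no role here — flip-unitarity is purely about $\psi$ and $\langle-,-\rangle$.

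The second point is the \emph{triangularity} $\psi(a_c)\in a_c+\sum_{c'<c}\Z[q,q^{-1}]\,a_{c'}$, and here the two cases diverge. In case (a), $M_c$ is self-dual, so $\psi(a_c)=\psi[M_c]=[M_c^\circledast]=[M_c]=a_c$, which trivially satisfies the required containment; so (a) is essentially free once one knows $\psi[M]=[M^\circledast]$. In case (b) the argument is the standard highest-weight-category computation: $a_c=[\Delta_c]$, and since $\Delta_c$ is a quotient of $P_c$ with kernel filtered by $\Delta_{c'}(j)$ for $c'<c$, one has $[P_c]=[\Delta_c]+\sum_{c'<c}(\text{Laurent poly})[\Delta_{c'}]$; the projective module $P_c$ is $\circledast$-self-dual, so $\psi[P_c]=[P_c]$, and inverting this unitriangular change of basis between $\{[P_c]\}$ and $\{[\Delta_c]\}$ shows $\psi[\Delta_c]$ lies in $[\Delta_c]+\sum_{c'<c}\Z[q,q^{-1}][\Delta_{c'}]$. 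Finally, for the \emph{balanced} assertion in case (b), I would compute $\langle\psi(a_c),a_d\rangle=\langle\psi[\Delta_c],[\Delta_d]\rangle$; using flip-unitarity this equals $\langle[\Delta_d^\circledast],[\Delta_c]\rangle$, and expressing $[\Delta_c]$ via the projective resolution $\cdots\to M^1_c\to M^0_c\to\Delta_c$ reduces this to an alternating sum of $\dim\Hom(\Delta_d^\circledast,M^j_c(i))$; the defining property \eqref{star} of the duality $\star$ on $\operatorname{Rep}(\mathcal{C})$ identifies $\Hom(P^\circledast,M)$ with $\Hom(P,M^\star)^*$, and the highest-weight orthogonality $\Ext^\bullet(\Delta_d,\nabla_c)=\delta_{cd}\K$ (with $\nabla_c=\Delta_c^\star$) forces the alternating sum to collapse to $\delta_{c,d}$.

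\textbf{Main obstacle.} The bookkeeping in case (b) is where the real care is needed: one must track the grading shifts through the projective resolution and through the duality $\star$, and one must be sure that the relevant $\Ext$-orthogonality between standards and costandards holds \emph{graded} and on the nose (value exactly $\delta_{c,d}$, with no $q$-shift), which is precisely why the statement only claims "balanced" under the highest-weight hypothesis and the normalization that $\Delta_c$ is a quotient of $P_c$. Everything else — antilinearity, sesquilinearity, flip-unitarity, and case (a) — is formal manipulation of the humorous-category axioms and the displayed Euler-pairing computation.
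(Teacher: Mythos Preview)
Your proposal is correct and follows essentially the same approach as the paper: flip-unitarity via $\Hom(M,N(i))\cong\Hom(N^\circledast,M^\circledast(i))$, triangularity in case~(1) from self-duality of $M_c$, triangularity in case~(2) from the unitriangular change of basis between $\{[P_c]\}$ and $\{[\Delta_c]\}$ together with $\psi[P_c]=[P_c]$, and balancedness via \eqref{star} and the standard--costandard Ext-orthogonality $\Ext^\bullet(\Delta_c,\nabla_{c'})\cong\delta_{c,c'}\K$. One minor point: your flip-unitarity step in the balanced computation is unnecessary, since $\psi[\Delta_c]=[\Delta_c^\circledast]$ already by definition of $\psi$, so $\langle\psi(a_c),a_d\rangle$ is directly the Euler characteristic of $\Ext^\bullet(\Delta_c^\circledast,\Delta_d)$; the paper goes straight to this without the detour.
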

\begin{proof}
  The conditions that we need to check are:
  \begin{itemize}
  \item that $\psi$ is flip-unitary, which follows from the
    isomorphism \[\operatorname{Hom}(P,Q(i))=\operatorname{Hom}(Q^\circledast,P^\circledast(i)).\]
\item we need also confirm that  \[\psi(a_c)\in a_c+\sum_{c' <c}
    \Z[q,q^{-1}]\cdot a_{c'}.\]  This is clear in the first case,
    since $\psi(a_c)=a_c$.  In the second, since
    $[\Delta_c]=[P_c]+\sum_{c'<c} m_{c'}[P_{c'}]$, we have that 
\[\psi([\Delta_c])-[\Delta_c]=\sum_{c'<c} (\bar{m}_{c'}-m_{c'})[P_{c'}].\]
Since $[P_{c'}]$ is in the span of $[\Delta_{d}]$ for $d\leq c'$, 
this completes the proof.
  \end{itemize}
Finally, we must prove the balanced structure in the highest weight
case.  This involves looking at
$\Ext^\bullet(\Delta_c^\circledast,\Delta_{c'})$.  Thus, we have that
\[\Ext^\bullet(\Delta_c^\circledast,\Delta_{c'})\cong
\Ext^\bullet(\Delta_c,\Delta_{c'}^\star)^*\cong
\begin{cases}
  \K & c=c'\\
  0 & c\neq c'
\end{cases}\qedhere\]
\end{proof}

As suggested in the introduction, in some cases, vector spaces with a
pre-canonical structure will have a special basis called a canonical basis.

\begin{definition}\label{def:canonical}
  We call a basis $\{b_c\}$ of $V$ {\bf canonical} if:
  \begin{enumerate}
    \renc{\labelenumi}{(\Roman{enumi})}
  \item each vector $b_c$ in the basis is invariant under
    $\psi$.\label{bar}
  \item each vector $b_c$ in the basis is in the set $a_c+\sum_{c'<c}
    \Z[q,q^{-1}]\cdot a_{c'}$. \label{upper}
  \item the vectors $b_c$ are {\bf almost orthonormal} in the sense
    that
    \begin{equation*}
\langle b_c,b_{c'}\rangle \in
    \delta_{c,c'}+q^{-1}\Z[[q^{-1}]].
  \end{equation*}
\end{enumerate}
\end{definition}
Throughout the paper, we'll use {\it (I),\,(II),\,(III)} to refer to the
statements above.
\excise{the vectors $b_c$ are {\bf almost orthonormal} with positive
    product in the sense
    that
    \begin{equation}
\langle b_c,b_{c'}\rangle \in
    \delta_{c,c'}+q^{-1}\Z_{\geq 0}[[q^{-1}]].\label{eq:4}
  \end{equation}
\label{positive}
  \end{enumerate}
We call a basis {\bf canonish} if it satisfies {\it I., II.} and  
\begin{enumerate}
\item [{\it III'.}] \begin{remark}
  Note that according to this definition, Lusztig's canonical basis
  for $\dot{U}$ or an irreducible representation of $\dot{U}$ as
  defined in \cite{Lusbook} for non-symmetric finite types is {\it
    not} canonical but just canonish in the sense above, since the
  coefficients in \eqref{eq:4} can be negative.
\end{remark}}
The reader might have expected the last two items to instead read:
\begin{itemize}
\item [{\it (II')}] the transition matrix from $b_c$ to $a_c$ is
  the
  identity modulo $q^{-1}$; that is $b_c$ is in the set $a_c+\sum_{c'<c}
    q^{-1}\Z[q^{-1}]\cdot a_{c'}$.
\end{itemize}
In many important cases, this is equivalent, but the first definition
will prove more flexible.

\begin{lemma}\label{two-prime}
  If the standard basis $a_c$ is almost balanced, then conditions {\it (II)+(III)}
  are equivalent to condition {\it (II')}
\end{lemma}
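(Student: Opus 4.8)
We want to show that, given the standing hypothesis that $\{a_c\}$ is almost balanced (meaning $\langle\psi(a_c),a_d\rangle \in \delta_{c,d}+q^{-1}\Z[q^{-1}]$), a $\psi$-invariant basis $\{b_c\}$ satisfying (II) — i.e. $b_c \in a_c + \sum_{c'<c}\Z[q,q^{-1}]a_{c'}$ — is almost orthonormal in the sense of (III) if and only if it satisfies the sharper triangularity (II'): $b_c \in a_c + \sum_{c'<c}q^{-1}\Z[q^{-1}]a_{c'}$. The plan is to write $b_c = a_c + \sum_{c'<c} p_{c',c}\,a_{c'}$ with $p_{c',c}\in\Z[q,q^{-1}]$ and analyze the inner product $\langle b_c, b_d\rangle$ by expanding bilinearly and controlling terms using both $\psi$-invariance (which forces a symmetry on the coefficients $p$) and almost balancedness. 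Let me spell out the two directions.

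First I would do the direction (II')$\Rightarrow$(II)+(III). Since (II') is a special case of (II), only (III) needs proof. Write $b_c = \sum_{c'\le c} p_{c',c} a_{c'}$ with $p_{c,c}=1$ and $p_{c',c}\in q^{-1}\Z[q^{-1}]$ for $c'<c$. Because $b_c$ is $\psi$-invariant and $\psi(a_{c'}) \in a_{c'} + \sum_{c''<c'}\Z[q,q^{-1}]a_{c''}$, applying $\psi$ to the expansion and comparing coefficients shows $\overline{p_{c',c}}$ equals $p_{c',c}$ plus a $\Z[q,q^{-1}]$-combination of $p_{c'',c}$ for $c''$ strictly between $c'$ and $c$; since each $p$ lies in $q^{-1}\Z[q^{-1}]$, a downward induction on the interval gives that actually these bar-symmetry constraints are automatically consistent, and the key consequence I need is just the bound $p_{c',c}\in q^{-1}\Z[q^{-1}]$ itself. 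Now compute
\[
\langle b_c, b_d\rangle = \sum_{c'\le c}\sum_{d'\le d} \overline{p_{c',c}}\,\langle a_{c'}, a_{d'}\rangle\, .
\]
Wait — $\langle-,-\rangle$ is sesquilinear, so I should be careful which slot gets conjugated; using flip-unitarity $\langle u,v\rangle = \langle\psi(v),\psi(u)\rangle$ together with $\psi(b_c)=b_c$ lets me instead write $\langle b_c,b_d\rangle = \langle \psi(b_d),\psi(b_c)\rangle = \langle b_d, b_c\rangle$ after re-expanding, reducing everything to evaluating sums of $\langle\psi(a_{d'}),a_{c'}\rangle$-type terms weighted by products $p_{d',d}p_{c',c}$. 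Each such term with $(c',d')\ne(c,d)$ carries at least one factor from $q^{-1}\Z[q^{-1}]$, and the almost-balanced hypothesis controls the $\langle\psi(a),a\rangle$ factors; the diagonal term $c'=c$, $d'=d$ contributes $\langle\psi(a_c),a_d\rangle\in\delta_{c,d}+q^{-1}\Z[q^{-1}]$. A bookkeeping argument then yields $\langle b_c,b_d\rangle\in\delta_{c,d}+q^{-1}\Z[[q^{-1}]]$, which is (III).

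For the converse (II)+(III)$\Rightarrow$(II'), the idea is induction on $c$ in the partial order, showing the coefficients $p_{c',c}$ of $b_c$ actually lie in $q^{-1}\Z[q^{-1}]$. Fix $c$ and suppose (II') holds for all $d<c$; I want to upgrade the a priori bound $p_{c',c}\in\Z[q,q^{-1}]$ to $p_{c',c}\in q^{-1}\Z[q^{-1}]$. Pair $b_c$ against $b_{c'}$ for $c'<c$. Using almost-balancedness and the inductive hypothesis on $b_{c'}$, the pairing $\langle b_c, b_{c'}\rangle$ equals $p_{c',c}$ (the leading contribution, since $\langle\psi(a_{c'}),a_{c'}\rangle\equiv 1$) plus terms that are either in $q^{-1}\Z[[q^{-1}]]$ by induction or involve the still-unknown higher $p_{c'',c}$. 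Condition (III) says $\langle b_c,b_{c'}\rangle\in q^{-1}\Z[[q^{-1}]]$; combined with $\psi$-invariance of $b_c$, which forces $p_{c',c}$ to satisfy $\overline{p_{c',c}}\equiv p_{c',c}$ modulo the span of higher coefficients, a descending induction over the coideal $\{c' : c'<c\}$ — treating the maximal elements first — shows each $p_{c',c}$ is simultaneously $\equiv 0 \pmod{q^{-1}\Z[[q^{-1}]]}$ (from the pairing) and bar-symmetric modulo lower terms (from $\psi$-invariance), hence is a Laurent polynomial lying in $q^{-1}\Z[[q^{-1}]]\cap\Z[q,q^{-1}] = q^{-1}\Z[q^{-1}]$. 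This is exactly (II') at $c$.

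**Main obstacle.** The delicate point is the interaction between the two triangular expansions: $\psi(a_{c'})$ is not simply $a_{c'}$ but involves lower $a$'s, so when I expand $\psi(b_c)=b_c$ in the $a$-basis the resulting identity on the $p_{c',c}$ is not the naive "$\overline{p_{c',c}}=p_{c',c}$" but is coupled across the whole interval $(c',c)$. Disentangling this to run the descending induction cleanly — making sure at each stage the "higher" coefficients that enter have already been pinned down to lie in $q^{-1}\Z[q^{-1}]$ before I use them — is where the real care is needed; it is the standard Kazhdan–Lusztig-type argument, but the almost-balanced (rather than exactly balanced) hypothesis means I must track $q^{-1}\Z[[q^{-1}]]$ error terms throughout rather than exact vanishing, so the final "Laurent polynomial $\cap$ power series" step is what converts (III)'s power-series bound back into the polynomial bound (II'). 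I expect the rest to be routine once this induction scheme is set up carefully.
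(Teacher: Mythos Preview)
Your approach is correct and is essentially the paper's: both directions rest on expanding $\langle b_c,b_{c'}\rangle=\langle\psi(b_c),b_{c'}\rangle$ in the standard basis and using the almost-balanced hypothesis to isolate the coefficient $p_{c',c}$ modulo $q^{-1}\Z[[q^{-1}]]$, with the paper phrasing the harder direction via a minimal-counterexample $c'$ rather than your explicit double induction. One remark: the bar-symmetry constraints on the $p_{c',c}$ that you flag as the main obstacle are not actually needed beyond the single use of $b_c=\psi(b_c)$ in the first slot of the form---once you have the outer induction (so the coefficients of $b_{c'}$ already lie in $q^{-1}\Z[q^{-1}]$), the pairing alone pins down $p_{c',c}$, and that part of the write-up can be dropped.
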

\begin{proof}
  Assume that {\it (II)+(III)} hold.  Then if $b_c=a_c+\sum_{d<c}
  m_da_d$ and assume that $c'$ is minimal amongst elements such that
  $m_{c'}\notin q^{-1}\Z[[q^{-1}]]$.  Let $b_{c'}=a_{c'}+\sum_{d<{c'}}
  n_da_d$.  By {\it
    III.}, we have that $\langle b_c,b_{c'}\rangle\in 
  1+q^{-1}\Z[[q^{-1}]]$, so we have that 
\[\langle b_c,b_{c'}\rangle= \langle \psi(b_c),b_{c'}\rangle=\left\langle \psi(a_c)+\sum_{d<c}
  \bar{m}_d\psi(a_d),a_{c'}+\sum_{d<{c'}}
  n_da_d\right\rangle\in 
  1+q^{-1}\Z[[q^{-1}]].\] By the minimality of $c'$,
  we have that $m_dn_{d'}\langle \psi(a_d),a_{d'}\rangle\in
  q^{-1}\Z[[q^{-1}]]$ for all $d<c'$, so $m_c\langle
  \psi(a_c),a_{c}\rangle\in 1+
  q^{-1}\Z[[q^{-1}]]$.  This is only possible if $m_{c'}\in 1+q^{-1}\Z[[q^{-1}]]$ as well.
  This is a contradiction, so  {\it (II')} holds.

   Assume that {\it (II')} holds, so $b_c=a_c+\sum_{d<c}
  m_da_d$ and $b_{c'}=a_{c'}+\sum_{d<{c'}}
  n_da_d$ for every $c,c'$.  As calculated above
\[\langle b_c,b_{c'}\rangle= \langle \psi(b_c),b_{c'}\rangle=\left\langle \psi(a_c)+\sum_{d<c}
  \bar{m}_d\psi(a_d),a_{c'}+\sum_{d<{c'}}
  n_da_d\right\rangle.\] If $c\neq c'$, we have that each term
  $m_dn_{d'}\langle \psi(a_d),a_{d'}\rangle\in
  q^{-1}\Z[[q^{-1}]]$, so the same is true of $\langle
  b_c,b_{c'}\rangle$.  On the other hand, when the basis vectors
  coincide, \[\langle b_c,b_{c}\rangle\equiv \langle
  \psi(a_c),a_{c}\rangle \equiv 1\pmod {q^{-1}}.\qedhere\]
\end{proof}
In fact, the dependence on the standard basis is quite weak.  
\begin{theorem}\label{canonical-sign}
  Assume $V$ is a finitely generated $\Z[q,q^{-1}]$-module, equipped
  with a pre-canonical structure, and a canonical basis.  If $v\in V$
  is any vector in $V$ such that $\bar v=v$ and $\langle v,v\rangle\in
  1+q^{-1}\Z[q^{-1}]$, then either $v$ or $-v$ is a canonical basis
  vector.  In particular, the canonical bases of any two pre-canonical
  structures with the same bar-involution and form coincide up to
  sign.  
\end{theorem}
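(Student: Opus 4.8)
The plan is to show that any $\psi$-invariant vector $v$ with $\langle v,v\rangle\in 1+q^{-1}\Z[q^{-1}]$ is forced, up to sign, to equal one of the $b_c$. First I would expand $v$ in the canonical basis itself: write $v=\sum_c f_c\, b_c$ with $f_c\in\Z((q^{-1}))$ a priori, though since $V$ is finitely generated and the $b_c$ form a basis, in fact $f_c\in\Z[q,q^{-1}]$. Applying $\psi$ and using that $\psi(b_c)=b_c$ by (I) and that $\psi$ is $\Z[q,q^{-1}]$-antilinear, $\bar v=v$ forces $\bar f_c=f_c$ for all $c$; that is, each $f_c$ is a symmetric Laurent polynomial in $q$.

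Next I would compute $\langle v,v\rangle$ using almost-orthonormality (III). Because $\psi$ is flip-unitary and the $b_c$ are $\psi$-invariant, $\langle v,v\rangle=\langle\psi(v),v\rangle=\sum_{c,c'}\bar f_c f_{c'}\langle b_c,b_{c'}\rangle=\sum_{c,c'}\bar f_c f_{c'}\langle b_c,b_{c'}\rangle$, and by (III) this lies in $\sum_c \bar f_c f_c + q^{-1}\Z[[q^{-1}]]=\sum_c f_c^2 + q^{-1}\Z[[q^{-1}]]$ (the cross terms $\bar f_c f_{c'}\langle b_c,b_{c'}\rangle$ for $c\neq c'$ have the $q^{-1}\Z[[q^{-1}]]$ factor absorbing them, after checking that $\bar f_c f_{c'}$ has no positive-degree part large enough to escape — here is where I must be a little careful, and I would argue instead that $\langle v,v\rangle-\sum_c \bar f_c f_c \in q^{-1}\Z[[q^{-1}]]\cap\Z[q,q^{-1}]$, hence actually in $q^{-1}\Z[q^{-1}]$). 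The hypothesis says $\langle v,v\rangle\in 1+q^{-1}\Z[q^{-1}]$, so comparing the part of $\sum_c \bar f_c f_c=\sum_c f_c^2$ that is a genuine polynomial symmetric under $q\mapsto q^{-1}$ with non-negative-degree terms, we get $\sum_c f_c^2\equiv 1$ after discarding $q^{-1}\Z[q^{-1}]$-contributions. The key elementary fact is then: if $\{f_c\}$ are symmetric Laurent polynomials with $\sum_c f_c^2\in 1+q^{-1}\Z[q^{-1}]+$(symmetric completion), the symmetry forces each $f_c$ to be a \emph{constant} integer, and $\sum_c f_c^2=1$ over $\Z$ forces exactly one $f_c=\pm 1$ and the rest zero. (Symmetry is what kills all non-constant terms: a nonzero symmetric Laurent polynomial $f_c$ has $f_c^2$ with strictly positive top degree, and these top degrees cannot cancel across the sum since $f_c^2$ has non-negative leading coefficient.)

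The main obstacle I anticipate is precisely this last step: extracting from $\langle v,v\rangle\in 1+q^{-1}\Z[q^{-1}]$ the clean conclusion $\sum_c f_c^2=1$ in $\Z$. One has to handle the interaction between the ``$q^{-1}\Z[[q^{-1}]]$'' slack in (III), the finiteness of $V$ (which makes the $f_c$ actual Laurent polynomials, not power series), and the bar-symmetry of the $f_c$. The cleanest route is: $\langle v,v\rangle$ is itself bar-invariant (flip-unitarity plus $\bar v = v$), lies in $\Z((q^{-1}))$, and differs from $\sum_c f_c^2$ by something in $q^{-1}\Z[[q^{-1}]]$; since both $\langle v,v\rangle$ and $\sum_c f_c^2$ are bar-invariant, their difference is a bar-invariant element of $q^{-1}\Z[[q^{-1}]]$, which must be $0$. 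Hence $\langle v,v\rangle=\sum_c f_c^2\in 1+q^{-1}\Z[q^{-1}]$, and bar-invariance of the left side together with $\sum_c f_c^2$ having only non-negative powers of $q$ in its ``positive half'' pins it to the constant $1$. Then $f_c\in\{0,\pm1\}$ with a unique nonzero entry, so $v=\pm b_c$.

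Finally, for the ``in particular'' clause: given two pre-canonical structures sharing $\psi$ and $\langle-,-\rangle$ (but possibly different standard bases), each canonical basis vector of the second structure is $\psi$-invariant and almost-orthonormal, hence by what we just proved equals $\pm$ a canonical basis vector of the first; running the argument symmetrically gives a sign-bijection between the two bases. I would remark that this shows conditions (II) and (II$'$) — the only place the standard basis enters — serve merely to fix signs and to guarantee existence, not to change the set $\{\pm b_c\}$.
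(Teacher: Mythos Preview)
Your overall strategy---expand $v=\sum_c f_c b_c$, use $\psi$-invariance to force $\bar f_c=f_c$, then argue $\sum_c f_c^2=1$ in $\Z$---is exactly the paper's (very terse) proof. But your ``cleanest route'' has a genuine error: flip-unitarity says $\langle u,v\rangle=\langle\psi(v),\psi(u)\rangle$, and with $u=v=\psi(v)$ this yields only the tautology $\langle v,v\rangle=\langle v,v\rangle$, not bar-invariance of $\langle v,v\rangle$. Indeed $\langle b_c,b_c\rangle$ is typically $1+q^{-1}+\cdots$ and is not bar-invariant, so $\langle v,v\rangle$ need not be either. Your earlier assertion that $\langle v,v\rangle-\sum_c f_c^2\in q^{-1}\Z[[q^{-1}]]$ is likewise unjustified: the cross-terms $f_c f_{c'}\bigl(\langle b_c,b_{c'}\rangle-\delta_{c,c'}\bigr)$ can contribute positive powers of $q$, since $f_cf_{c'}$ is only a Laurent polynomial.

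The fix is to run your leading-coefficient observation directly on $\langle v,v\rangle$ rather than on $\sum_c f_c^2$. Let $N\ge 0$ be the maximal top degree among the bar-symmetric $f_c$, and let $a_c$ be the coefficient of $q^N$ in $f_c$. Since each $\langle b_c,b_{c'}\rangle$ has top degree at most $0$ with constant term $\delta_{c,c'}$, the coefficient of $q^{2N}$ in $\langle v,v\rangle=\sum_{c,c'} f_c f_{c'}\langle b_c,b_{c'}\rangle$ is exactly $\sum_c a_c^2>0$. The hypothesis $\langle v,v\rangle\in 1+q^{-1}\Z[q^{-1}]$ then forces $2N=0$, so every $f_c$ is a constant integer by bar-symmetry, and now the constant term of $\langle v,v\rangle$ gives $\sum_c f_c^2=1$. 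This is essentially the parenthetical argument you sketched, just applied to $\langle v,v\rangle$ itself.
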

\begin{proof}
  We can write $v=\sum f_c(q)b_c$, with $\bar{f}_c=f_c\in
  \Z[q,q^{-1}]$.  Thus, $\sum_c f_c^2\in 1+q^{-1}\Z[q^{-1}]$.  This is only possible if
  $f_c=\pm 1$ for some $c$ and 0 otherwise. 
\end{proof}

Canonical bases have the distinct advantage of being computable using
a Gram-Schmidt algorithm.  Embedded in this algorithm is 
a well-trodden argument showing the uniqueness of this basis:
\begin{proposition}\label{basis-unique}
  A pre-canonical structure has at most one canonical basis.  In fact,
  if bases $\{a_c\}$ and $\{a_c'\}$ define pre-canonical structures
  with the same bar involution and bilinear form, and
  $a_c=a_c'+\sum_{d<c} p_da_d'$, then any canonical basis for
  $\{a_c\}$ coincides with any for $\{a_c'\}$.
\end{proposition}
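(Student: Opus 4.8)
The plan is to prove uniqueness by a standard triangularity-and-orthogonality argument, and then observe that the same argument is insensitive to replacing the standard basis $\{a_c\}$ by another standard basis $\{a_c'\}$ related to it by a unitriangular change of coordinates with bar-invariant... wait, the hypothesis only says $a_c = a_c' + \sum_{d<c} p_d a_d'$, not that $p_d$ is bar-invariant; but the point is that the \emph{order} $(C,<)$ and the form and the bar involution are shared, and condition (II) only constrains $b_c$ to lie in $a_c + \sum_{d<c}\Z[q,q^{-1}]a_d = a_c' + \sum_{d<c}\Z[q,q^{-1}]a_d'$, which is literally the same $\Z[q,q^{-1}]$-submodule. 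So (I),(II),(III) for the $\{a_c\}$-structure are \emph{identical conditions} to (I),(II),(III) for the $\{a_c'\}$-structure, and the second sentence of the Proposition is a formal consequence of the first once we check that the defining conditions of ``canonical basis'' depend on $\{a_c\}$ only through the filtration $V_{\le c} := \sum_{d\le c}\Z[q,q^{-1}]a_d$ and the requirement $b_c \equiv a_c \pmod{V_{<c}}$.

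For the uniqueness itself, suppose $\{b_c\}$ and $\{b_c'\}$ are both canonical. By (II), $b_c - b_c' \in \sum_{d<c}\Z[q,q^{-1}]a_d$, and by (I) this difference is bar-invariant. I would induct on $c$: assuming $b_d = b_d'$ for all $d<c$, write $b_c - b_c' = \sum_{d<c} g_d(q) b_d$ with $g_d = \bar g_d \in \Z[q,q^{-1}]$ (possible since the $b_d$ with $d<c$ span the same submodule as the $a_d$ with $d<c$, by (II) and downward triangularity). Pairing with $b_e'=b_e$ for the maximal $e<c$ with $g_e\ne 0$ and using (III) — almost orthonormality — gives $\langle b_c,b_e\rangle, \langle b_c',b_e\rangle \in \delta_{c,e}+q^{-1}\Z[[q^{-1}]]$; since $c\ne e$ both lie in $q^{-1}\Z[[q^{-1}]]$, so $\langle b_c-b_c', b_e\rangle \in q^{-1}\Z[[q^{-1}]]$. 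But that pairing equals $g_e(q)\langle b_e,b_e\rangle + \sum_{d<e} g_d\langle b_d,b_e\rangle$, and by maximality of $e$ plus almost orthonormality each lower term is in $q^{-1}\Z[[q^{-1}]]$ while $\langle b_e,b_e\rangle \in 1+q^{-1}\Z[[q^{-1}]]$; hence $g_e(q) \in q^{-1}\Z[[q^{-1}]]$. Since also $\bar g_e = g_e$ and $g_e \in \Z[q,q^{-1}]$, a Laurent polynomial fixed by the bar involution and lying in $q^{-1}\Z[[q^{-1}]]$ must be zero (it would otherwise have a strictly positive top $q$-degree term matching its bottom $q^{-1}$-term). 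This contradicts $g_e\ne 0$, completing the induction, so $b_c = b_c'$.

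The main obstacle is bookkeeping rather than conceptual: I need to make sure the inductive pairing argument really only uses $b_d$ for $d<c$ (so that the induction hypothesis $b_d=b_d'$ applies) and that the ``bar-invariant Laurent polynomial in $q^{-1}\Z[[q^{-1}]]$ is zero'' step is clean — in particular that the form takes values in $\Z((q^{-1}))$ so the almost-orthonormality estimates live in the right completed ring. One subtlety worth flagging: to write $b_c - b_c'$ in the basis $\{b_d\}_{d<c}$ I am using that $\{b_d : d\le c'\}$ and $\{a_d : d\le c'\}$ span the same $\Z[q,q^{-1}]$-submodule for every $c'$, which follows from (II) by the usual unitriangular-matrix-over-$\Z[q,q^{-1}]$ argument (the transition matrix is unitriangular, hence invertible over $\Z[q,q^{-1}]$). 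With that in hand the proof of the displayed ``In fact'' clause is immediate, since nothing above referenced $\{a_c\}$ except through these submodules and the congruence $b_c\equiv a_c$, both of which are unchanged under $a_c \leadsto a_c'$.
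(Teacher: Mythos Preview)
Your approach is essentially the paper's: take $c$ minimal with $b_c\ne b_c'$, write the difference in the $b_d$ for $d<c$, and use bar-invariance together with almost orthonormality to force all coefficients to vanish. Your explicit remark that the ``In fact'' clause is automatic because condition (II) only sees the filtration $V_{<c}=\sum_{d<c}\Z[q,q^{-1}]a_d=\sum_{d<c}\Z[q,q^{-1}]a_d'$ is exactly what the paper uses, just spelled out.

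There is one genuine bookkeeping slip in your pairing step. Choosing $e$ \emph{maximal in the partial order} among $d$ with $g_d\ne 0$ does not control the cross terms: the remaining sum is over all $d\ne e$ with $g_d\ne 0$ (not just $d<e$, since incomparable $d$ may occur), and while $\langle b_d,b_e\rangle\in q^{-1}\Z[[q^{-1}]]$ by almost orthonormality, the coefficient $g_d$ is only a bar-invariant Laurent polynomial, so $g_d\langle b_d,b_e\rangle$ can have nonnegative top $q$-degree. Thus the sentence ``by maximality of $e$ plus almost orthonormality each lower term is in $q^{-1}\Z[[q^{-1}]]$'' is not justified as written. The standard fix is either of the following: (i) pick $e$ to maximize the top $q$-degree of $g_e$ (equivalently of $\bar g_e$) rather than the poset position; or (ii) note that the finite Gram matrix $(\langle b_d,b_{d'}\rangle)_{d,d'\in S}$ equals $I$ plus a matrix with entries in $q^{-1}\Z[[q^{-1}]]$, hence is invertible over $\Z[[q^{-1}]]$, so $\langle b_c-b_c',b_d\rangle\in q^{-1}\Z[[q^{-1}]]$ for all $d$ forces each $g_d\in q^{-1}\Z[[q^{-1}]]$. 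The paper's proof compresses exactly this step into the single clause ``so we must also have that $m_d\in q^{-1}\Z[[q^{-1}]]$.''
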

\begin{proof}
  Assume $\{b_c\}$ and $\{b_c'\}$ are canonical bases for  $\{a_c\}$
  and $\{a_c'\}$ which are not
  identical.  Assume that $c$ is minimal with $b_c\neq b_c'$. 
By assumption, $b_c-b_c'=\sum_{d<c} m_db_d$ for some $m_d\in
\Z((q^{-1}))$, since the span of $a_d$ for $d< c$, the span of $a_d'$
for $d< c$, and  the span of
$b_d$ for $d<c$ all coincide.  By $\psi$-invariance of
canonical bases, $m_d$ must be a palindromic Laurent
polynomial in $q$.  On the other hand, by almost orthogonality, we
have $\langle b_c-b_c',b_d\rangle\in q^{-1}\Z[[q^{-1}]]$, so we must
also have that $m_d\in  q^{-1}\Z[[q^{-1}]]$.  This is a
contradiction, so $\{b_c\}$ must be unique.
\end{proof}
 However, showing existence is generally quite difficult, unless the
 pre-canonical structure comes from a categorification.
In this case, we have an easy restatement of the canonical property which is
more ``categorical'' in nature.  
\begin{definition}\label{mixed}
  Following Beilinson, Ginzburg and Soergel \cite{BGS96} and Achar and
  Stroppel \cite{AchS}, we define a humorous category to be
  {\bf mixed} if there is a weight function $\operatorname{wt}$ from
  indecomposable objects to $\Z$ satisfying $\operatorname{wt}(M^\circledast)=-\operatorname{wt}(M)$ such that $\Hom(M,N)=0$ whenever
  $\operatorname{wt}(N)< \operatorname{wt}(M)$ or when $M\ncong N$ and
  $\operatorname{wt}(N)= \operatorname{wt}(M)$, and $\Hom(M,M)\cong
  \K$ when $M$ is indecomposable. An abelian category is
  mixed with all simples absolutely irreducible in the sense of the earlier references if its category of
  projectives is split mixed in this sense. 
\end{definition}
In terms of an algebra $A$ satisfying the hypotheses of Lemma \ref{KS-algebra}, the category of projectives over $A$ is
mixed if and only if $A$ is positively graded.  

\begin{definition}
  We let the {\bf orthodox basis} of $K^0(\mathcal{C})$ be that
  defined by the classes $[P_c]$ of self-dual indecomposable modules
  in $\mathcal{C}$.
\end{definition}
Note that while a canonical basis only depends on the
pre-canonical structure, orthodox bases only exist in Grothendieck
groups and explicitly depend on the category.  We'll see examples
later of categories with canonically isomorphic Grothendieck groups
that give different orthodox bases.

\begin{lemma}\label{mixed-canonical}
  If $\mathcal{C}$ is a category satisfying the hypotheses of Lemma
  \ref{category-pre-canonical} (including one of the conditions (1) or
  (2)), then the orthodox basis is canonical for the pre-canonical
  structure if and only if the category $\mathcal{C}$ is mixed.
\end{lemma}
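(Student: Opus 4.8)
The plan is to prove both implications by working with the three conditions (I), (II), (III) of Definition \ref{def:canonical} and checking each against the mixedness hypothesis. By Lemma \ref{category-pre-canonical} we already know that $\{a_c\}$, $\psi$ and the graded Euler pairing form a pre-canonical structure, so it remains only to compare the orthodox basis $\{[P_c]\}$ with the canonical one. Condition (I), $\psi$-invariance of $[P_c]$, holds automatically and for free in \emph{both} directions: since $P_c\cong P_c^\circledast$ is self-dual, $\psi([P_c])=[P_c^\circledast]=[P_c]$, regardless of mixedness. So the entire content of the lemma is the equivalence of mixedness with the conjunction of (II) and (III) for the orthodox basis.

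First I would treat the forward direction ($\mathcal{C}$ mixed $\Rightarrow$ orthodox basis is canonical). Assume $\mathcal{C}$ is mixed with weight function $\operatorname{wt}$, normalized by $\operatorname{wt}(P_c(a))=a$. For (III): the graded Euler pairing is $\langle [P_c],[P_{c'}]\rangle=\sum_i q^{-i}\dim\Hom(P_c,P_{c'}(i))$. A morphism $P_c\to P_{c'}(i)$ is a degree-$i$ morphism between objects of weight $0$ and $i$ respectively; mixedness forces $\Hom(P_c,P_{c'}(i))=0$ when $i<0$, and when $i=0$ it forces $\Hom=0$ unless $c=c'$, in which case $\Hom(P_c,P_c)\cong\K$ is one-dimensional. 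Hence $\langle[P_c],[P_{c'}]\rangle\in\delta_{c,c'}+q^{-1}\Z[[q^{-1}]]$, which is (III). For (II): this is where I would use condition (1) or (2) of Lemma \ref{category-pre-canonical}. In case (1), $M_c\cong P_c\oplus(\oplus_{c'<c}P_{c'}^{\oplus m_{c'}})$ with $m_{c'}\in\Z[q,q^{-1}]$ (absorbing grading shifts into the multiplicities), so $a_c=[M_c]=[P_c]+\sum_{c'<c}m_{c'}[P_{c'}]$; inverting this unitriangular change of basis gives $[P_c]=a_c+\sum_{c'<c}(\text{something in }\Z[q,q^{-1}])\,a_{c'}$, which is exactly (II). In case (2) one argues the same way using the highest weight structure: $[P_c]=[\Delta_c]+\sum_{c'<c}(\dim\text{-polynomials})[\Delta_{c'}]=a_c+\sum_{c'<c}(\cdots)a_{c'}$, again unitriangular over $\Z[q,q^{-1}]$.

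For the converse (orthodox basis canonical $\Rightarrow$ $\mathcal{C}$ mixed), I would run the argument in reverse: I want to construct the weight function and verify the vanishing of Hom spaces. The natural candidate is $\operatorname{wt}(P_c(a))=a$; it clearly satisfies $\operatorname{wt}(M^\circledast)=-\operatorname{wt}(M)$ and $\operatorname{wt}(M(1))=\operatorname{wt}(M)+1$. The content is the Hom-vanishing. From (III) applied to $c=c'$, $\langle[P_c],[P_c]\rangle=\sum_i q^{-i}\dim\Hom(P_c,P_c(i))\in 1+q^{-1}\Z[[q^{-1}]]$; since all the dimensions are non-negative integers and $\Hom(P_c,P_c(0))\ni\id\neq 0$, the only possibility is $\dim\Hom(P_c,P_c)=1$ and $\Hom(P_c,P_c(i))=0$ for $i<0$. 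For $c\neq c'$, $\langle[P_c],[P_{c'}]\rangle\in\delta_{c,c'}+q^{-1}\Z[[q^{-1}]]=q^{-1}\Z[[q^{-1}]]$ forces $\dim\Hom(P_c,P_{c'}(0))=0$ (the $q^0$ coefficient vanishes), and — combined with the general lower-boundedness of grades from the humorous axioms — one also wants $\Hom(P_c,P_{c'}(i))=0$ for $i<0$; but here one must be slightly careful, because (III) only controls the $q^0$ coefficient, not negative-power coefficients directly. The resolution is that the sesquilinear pairing takes values in $\Z((q^{-1}))$ and the Euler form for \emph{projectives} between different $P$'s could a priori have negative powers of $q$; to kill those I should instead invoke (II), which says $[P_c]=a_c+\sum_{c'<c}(\Z[q,q^{-1}])\,a_{c'}$ together with $\psi$-invariance to deduce that the expansion coefficients are palindromic, and combine this with (III) via a minimal-counterexample argument exactly as in the proof of Lemma \ref{two-prime}: if some $\Hom(P_c,P_{c'}(i))\neq 0$ with $\operatorname{wt}(P_{c'})\le\operatorname{wt}(P_c)$ and $(c,i)\neq(c',0)$, pick it minimal and derive a contradiction with almost-orthogonality.

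The main obstacle is precisely this last point in the converse: extracting the full mixedness vanishing ($\Hom(M,N)=0$ whenever $\operatorname{wt}(N)<\operatorname{wt}(M)$, and $=0$ for $M\ncong N$ of equal weight) from the almost-orthonormality condition (III), which by definition only pins down the pairing modulo $q^{-1}\Z[[q^{-1}]]$ and hence only directly sees the $q^{\ge 0}$ part of the Euler form. The way through is to recall that for the orthodox basis the pairing is the \emph{graded} Euler form on genuine projectives, whose matrix coefficients $\langle[P_c],[P_{c'}]\rangle$ lie in $\Z[[q^{-1}]]$ a priori only after one knows Hom-boundedness from below in degree — which is part of the humorous axioms — so in fact $\langle[P_c],[P_{c'}]\rangle=\sum_{i\gg -\infty}q^{-i}\dim\Hom(P_c,P_{c'}(i))$ and (III) then literally says all these dimensions vanish for $i\le 0$ except the identity. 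So the obstacle dissolves once one unwinds that (III) for the orthodox basis is a statement purely about $\dim\Hom(P_c,P_{c'}(i))$ for $i\le 0$; I would spend the bulk of the writeup making that unwinding explicit and noting that $\dim\Hom(M,M)=\K$ for indecomposable $M$ is then immediate from the $c=c'$ case after a shift.
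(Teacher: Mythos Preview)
Your proposal is correct and follows essentially the same approach as the paper: conditions (I) and (II) hold for the orthodox basis unconditionally (independent of mixedness), so the entire lemma reduces to the equivalence of (III) with mixedness, which is a direct unpacking of the definition of the graded Euler form.

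Two remarks on streamlining. First, your argument for (II) in case (1) and case (2) nowhere uses mixedness, so it belongs to neither the forward nor the converse direction but is a standing fact; the paper simply records that (I) and (II) hold in both cases and moves on. Second, the ``obstacle'' you flag in the converse is not an obstacle. The condition $\langle[P_c],[P_{c'}]\rangle\in\delta_{c,c'}+q^{-1}\Z[[q^{-1}]]$ pins down \emph{all} coefficients of $q^j$ for $j\ge 0$, not just the $q^0$ coefficient: it forces the coefficient of $q^j$ to vanish for every $j\ge 1$. Since the coefficient of $q^j$ in the Euler form is $\dim\Hom(P_c,P_{c'}(-j))$, this is exactly the statement that $\Hom(P_c,P_{c'}(i))=0$ for all $i<0$, together with $\dim\Hom(P_c,P_{c'})=\delta_{c,c'}$. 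No appeal to (II), palindromicity, or a minimal-counterexample argument in the style of Lemma~\ref{two-prime} is needed; the equivalence of (III) with mixedness (for the weight function $\operatorname{wt}(P_c(a))=a$) is literally a restatement. The paper's proof is accordingly two sentences long.
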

\begin{proof}
  In the cases (1) and (2), the basis $[P_c]$ satisfies the conditions {\it I.} and {\it
    II.} of a canonical basis; thus we need only check that almost
  orthogonality is equivalent to mixedness.  Since the bilinear form
  is the graded Euler form
  in this case, almost orthogonality is exactly the statement that
  $\Hom(P_c,P_{c'}(i))_0=0$ for $i\leq 0$ unless $i=0$ and $c=c'$, in
  which case $\Hom(P_c,P_c)\cong
  \K$, which is precisely the same as mixedness if $\wt(P_c)=0$.  The
  compatibility between weight and duality shows that this is the only
  possible weight function for $\mathcal{C}$ so this completes the proof.
\end{proof}
In fact, when a canonical and orthodox basis coincide, there are
stronger positivity properties than just the canonical property
implies.  In particular:
\begin{corollary}
  If a basis $\{b_c\}$ is simultaneously orthodox for some humorous
  category and canonical for the induced pre-canonical structure, then 
    \begin{equation}
\langle b_c,b_{c'}\rangle \in
    \delta_{c,c'}+q^{-1}\Z_{\geq 0}[[q^{-1}]].\label{eq:4}
  \end{equation}
\end{corollary}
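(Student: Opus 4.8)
The plan is to obtain \eqref{eq:4} by intersecting two facts about $\langle b_c,b_{c'}\rangle$, one coming from the categorification and one from the pre-canonical structure. Since $\{b_c\}$ is orthodox we have $b_c=[P_c]$ with $P_c\cong P_c^\circledast$ the self-dual indecomposable object on the orbit $c$, and since the sesquilinear form entering the induced pre-canonical structure of a humorous category is the graded Euler form,
\[\langle b_c,b_{c'}\rangle=\sum_{i\in\Z}q^{-i}\dim\Hom_{\mathcal{C}}(P_c,P_{c'}(i)).\]
Every coefficient here is the dimension of a Hom-space, hence a non-negative integer, and by the humorous axioms $\Hom(P_c,P_{c'}(i))=0$ for $i\ll 0$ while each graded piece is finite dimensional; so $\langle b_c,b_{c'}\rangle$ is a Laurent series in $q^{-1}$ with all coefficients in $\Z_{\geq 0}$.

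Next I invoke that $\{b_c\}$ is canonical: condition {\it (III)} says $\langle b_c,b_{c'}\rangle\in\delta_{c,c'}+q^{-1}\Z[[q^{-1}]]$, i.e.\ $\langle b_c,b_{c'}\rangle$ is a power series in $q^{-1}$ with constant term $\delta_{c,c'}$ and no positive powers of $q$. Combining the two paragraphs, $\langle b_c,b_{c'}\rangle$ is a power series in $q^{-1}$ whose coefficients are non-negative integers and whose constant term is $\delta_{c,c'}$, which is exactly the assertion \eqref{eq:4}.

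There is no real obstacle in this argument; the only point requiring care is that the form witnessing canonicity of $\{b_c\}$ is genuinely the graded Euler form, which is part of what ``the induced pre-canonical structure'' of a humorous category means. One could instead route through mixedness --- by Lemma \ref{mixed-canonical} the coincidence of the orthodox and canonical bases forces $\mathcal{C}$ to be mixed, and then $\wt(P_c)=0$ gives $\Hom(P_c,P_{c'}(i))=0$ for $i\leq 0$ with $(i,c)\neq(0,c')$ directly --- but that route also needs the extra hypotheses of Lemma \ref{category-pre-canonical}, so the direct computation above is the cleaner formulation. The entire content of the corollary is the observation that categorification supplies the positivity of all the coefficients, a fact invisible at the level of the abstract pre-canonical structure, and that this upgrades the almost-orthonormality of condition {\it (III)} to its positive form \eqref{eq:4}.
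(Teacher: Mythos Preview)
Your argument is correct and matches the paper's reasoning: the paper states this as an unproved corollary immediately after Lemma~\ref{mixed-canonical}, relying on exactly the observation you spell out---the graded Euler form has non-negative coefficients because they are dimensions of Hom-spaces, and intersecting this with condition {\it (III)} gives \eqref{eq:4}. Your remark that the route through mixedness is available but requires the hypotheses of Lemma~\ref{category-pre-canonical} is also apt, though in practice the phrase ``induced pre-canonical structure'' already presupposes a standard basis and hence those hypotheses.
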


We'll make an essentially trivial observation about the compatibility
of mixed structures which will still prove quite useful for us.
Assume that $\mathcal{C}, \mathcal{C}'$ are humorous categories, and that 
 there is a full and essentially surjective functor
  $a\colon \mathcal{C}\to \mathcal{C}'$ which commutes with grading
  shifts and duality.  
\begin{lemma}\label{lem:full-essential}
  If $\mathcal{C}$ is mixed, then so is $\mathcal{C}'$.  In this
  case, each orthodox basis vector $K^0(\mathcal{C}' )$ is the image
  of a unique orthodox basis vector in $K^0(\mathcal{C})$ under the
  induced map $[a]\colon K^0(\mathcal{C})\to K^0(\mathcal{C}')$; every
  other orthodox basis vector in $K^0(\mathcal{C})$ is sent to 0.
  Put differently, the image of the canonical basis of
  $K^0(\mathcal{C}')$ (plus a suitable number of zeros) is the image of
  the orthodox basis of $K^0(\mathcal{C})$.
\end{lemma}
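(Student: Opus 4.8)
The plan is to determine precisely what the functor $a$ does to indecomposable objects, and then to transport both the weight function and the orthodox basis along $a$.

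First I would observe that for any indecomposable object $M$ of $\mathcal{C}$, the object $a(M)$ is either zero or indecomposable. Indeed, $a$ induces a ring homomorphism $\End_{\mathcal{C}}(M)\to\End_{\mathcal{C}'}(a(M))$ which is surjective by fullness; if $a(M)\neq 0$ this exhibits $\End_{\mathcal{C}'}(a(M))$ as a nonzero quotient of the local ring $\End_{\mathcal{C}}(M)$, hence itself local, so $a(M)$ is indecomposable. Since $a$ commutes with $\circledast$ and with grading shift, $a(P_c)$ is self-dual, so when it is nonzero it is a self-dual indecomposable; such an object is isomorphic to $P'_{c'}$ for a unique index $c'$ of $\mathcal{C}'$ --- here one uses that a nontrivial grading shift of a self-dual object is never self-dual because the $\Z$-action on indecomposables is free. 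This defines a map $\sigma$ from $C_0:=\{c\in C : a(P_c)\neq 0\}$ to the index set $C'$ of $\mathcal{C}'$, and essential surjectivity of $a$, combined with the fact that every indecomposable of $\mathcal{C}'$ is a grading shift of some $P'_{c'}$, forces $\sigma$ to be surjective.

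The next step --- and the one genuinely using the hypothesis that $\mathcal{C}$ is mixed --- is injectivity of $\sigma$. Normalize the weight function so that $\wt(P_c)=0$ for every $c$. Suppose $c_1\neq c_2$ both lie in $C_0$ with $\sigma(c_1)=\sigma(c_2)=c'$. By mixedness, $\Hom_{\mathcal{C}}(P_{c_1},P_{c_2}(i))=0$ for all $i\leq 0$: for $i<0$ because $\wt(P_{c_2}(i))=i<0=\wt(P_{c_1})$, and for $i=0$ because $P_{c_1}\ncong P_{c_2}$. Thus $\HOM_{\mathcal{C}}(P_{c_1},P_{c_2})$ is concentrated in strictly positive degrees, yet by fullness it surjects onto $\HOM_{\mathcal{C}'}(P'_{c'},P'_{c'})$, which contains the degree-zero identity of the nonzero object $P'_{c'}$ --- a contradiction. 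Hence $\sigma$ is a bijection of $C_0$ onto $C'$, with $a(P_c)=0$ for $c\notin C_0$; passing to Grothendieck groups gives $[a]([P_c])=[P'_{\sigma(c)}]$ for $c\in C_0$ and $[a]([P_c])=0$ otherwise, which is exactly the asserted description of the orthodox bases.

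It remains to verify that $\mathcal{C}'$ is mixed, for the weight function $\wt'$ determined by $\wt'(P'_{c'})=0$ (this is forced, and is compatible with $\circledast$ and grading shift). Every indecomposable of $\mathcal{C}'$ has the form $a(P_{c}(k))=a(P_c)(k)$ for a unique $c\in C_0$ and $k\in\Z$, and for two such objects fullness gives a surjection $\HOM_{\mathcal{C}}(P_{c_1}(k_1),P_{c_2}(k_2))\to\HOM_{\mathcal{C}'}(a(P_{c_1}(k_1)),a(P_{c_2}(k_2)))$. Reading off the degree-zero part and applying mixedness of $\mathcal{C}$: it vanishes when $k_2<k_1$; it vanishes when $k_1=k_2$ and $c_1\neq c_2$ (which, since $\sigma$ is injective, is exactly the case where the images are non-isomorphic of equal weight); and when $k_1=k_2$ and $c_1=c_2$ the source is $\K$, so the surjective ring map to the nonzero ring $\End_{\mathcal{C}'}(a(P_{c_1}(k_1)))$ is an isomorphism. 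These are precisely the conditions of Definition~\ref{mixed}. The only real care needed is the elementary bookkeeping that ``full'' upgrades to surjectivity of graded $\HOM$-spaces and that self-dual indecomposables are exactly the $P'_{c'}$; I expect the injectivity of $\sigma$ to be the conceptual crux.
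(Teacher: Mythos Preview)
Your proof is correct and follows essentially the same approach as the paper's: both use fullness to see that indecomposables go to indecomposables or zero, invoke mixedness of $\mathcal{C}$ to show that two non-isomorphic indecomposables cannot have isomorphic nonzero images (the paper phrases this via the two-sided nonvanishing of $\Hom$'s forcing an isomorphism, you via the positive-degree concentration of $\HOM$ contradicting surjectivity onto the degree-zero identity), and then transfer the weight function along $a$. The only place where you are terser than the paper is the surjectivity of $\sigma$, where the paper spells out the idempotent-lifting argument to extract an indecomposable preimage; your appeal to essential surjectivity plus Krull-Schmidt is equally valid but could use one more line.
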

\begin{proof}
  Since the functor $a$ is full, it sends indecomposable objects to
  indecomposable objects, and by essential
  surjectivity, each indecomposable $M$ is the image of an object $N$.  Every
  idempotent in $\End(N)$ is sent to 0 or 1 in $\End(M)$; by the
  finite dimensionality of degree 0 endomorphisms, there exists an
  idempotent $e$ whose image in $\End(M)$ is 1 and which cannot be written as the
  sum of two commuting idempotents.  The image $eN$ must be
  indecomposable, and we have $a(eN)=M$; thus we may as well assume
  $N$ is indecomposable.  
Thus, we can define a weight
  function on $\tU$ (resp.  $\hl^\bla$) by $\wt(M)=\wt(N)$.  If $N'$
  is another indecomposable object such that $M=a(N')$, by fullness, we
  must have that $\Hom(N,N')\neq 0$ and $\Hom(N',N)\neq 0$ since the
  identity of $M$ must be the image of some morphism.  By mixedness of
  $\mathcal{C}$, we have that
  $N'\cong N$.   

Similarly, it follows that $\mathcal{C}'$ is mixed for this
weight; if $\wt(M) >\wt (M')$ or $M\ncong M'$ with $\wt(M)=\wt(M')$,
then we indeed have $\Hom(M,M')=0$ by fullness since these objects are the image
of indecomposables with the same vanishing.

We have already shown above that the class of each indecomposable
  object in $\mathcal{C}'$ is the image of one from $\mathcal{C}$, and
  that no non-isomorphic pair of objects can be sent to the same
  class.  This shows the desired statement on canonical bases.
\end{proof}

Of course, the converse of this Lemma is false, but we can make an
`'if and only if'' statement if we strengthen the conclusion:
\begin{lemma}\label{asymptotic}
  If we have a sequence of functors $a_i\colon \mathcal{C}\to
  \mathcal{C}_i$ as in Lemma \ref{lem:full-essential} such that for
  every object $M$ in $\mathcal{C}$, the natural map
  $\End(M)\to \End(a_i(M))$ is an isomorphism for some $a_i$, then $\mathcal{C}$
  is  mixed if and only if all $\mathcal{C}_i$ are.  A vector in
  $K^0(\mathcal{C})$ lies in the orthodox basis if and only if its
  image under $[a_i]$ lies in the orthodox basis for $\mathcal{C}_i$
  for every $i$.
\end{lemma}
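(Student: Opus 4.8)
The plan is to leverage Lemma \ref{lem:full-essential} for the forward direction and do the real work on the converse. First I would observe that if $\mathcal{C}$ is mixed, then each $\mathcal{C}_i$ is mixed by a direct application of Lemma \ref{lem:full-essential}, and moreover that the weight function on $\mathcal{C}_i$ is compatible with the one on $\mathcal{C}$: an indecomposable $M$ in $\mathcal{C}$ with $a_i(M)\neq 0$ has $\wt(a_i(M))=\wt(M)$. Using this, if $[M]$ lies in the orthodox basis of $K^0(\mathcal{C})$ (i.e.\ $M$ is self-dual indecomposable of weight $0$), then $a_i(M)$ is either $0$ or self-dual indecomposable of weight $0$, hence its class is in the orthodox basis or is $0$. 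For the ``only if'' part of the basis statement in this direction, I need: if $[M]$ is \emph{not} in the orthodox basis but all $[a_i(M)]$ are (or zero), derive a contradiction --- but here one should be careful; the cleaner route is to prove the contrapositive via the converse machinery below, so I would defer this.

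The substantive direction is: assuming all $\mathcal{C}_i$ are mixed, show $\mathcal{C}$ is mixed. Fix an indecomposable object $M$ in $\mathcal{C}$; by hypothesis choose $i$ with $\End_{\mathcal{C}}(M)\xrightarrow{\sim}\End_{\mathcal{C}_i}(a_i(M))$. Since $a_i$ is full and essentially surjective, $a_i(M)$ is indecomposable, so $\End_{\mathcal{C}_i}(a_i(M))$ is local; by mixedness of $\mathcal{C}_i$, its degree-$0$ part is $\K$ and all negative degrees vanish. Pulling back along the isomorphism, $\End_{\mathcal{C}}(M)_0=\K$ and $\Hom_{\mathcal{C}}(M,M(j))=0$ for $j<0$. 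Define $\wt(M)$ to be the unique integer with $M\cong P_c(\wt(M))$ for the orbit $c$ of $M$; this satisfies the required compatibility with $(1)$ and $\circledast$ automatically. Now for two indecomposables $M,N$ with $\wt(N)<\wt(M)$, or $\wt(N)=\wt(M)$ and $M\ncong N$, I must show $\Hom_{\mathcal{C}}(M,N)=0$. Here is the key step: pick $i$ large enough that both $\End(M)\to\End(a_i(M))$ and $\End(N)\to\End(a_i(N))$ are isomorphisms \emph{and} the relevant $\Hom(M,N)\to\Hom(a_i(M),a_i(N))$ is injective --- this is where I expect the main obstacle, since the hypothesis only gives isomorphisms on endomorphism rings of a single object, not on $\Hom$ between two objects.

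To get around this, I would exploit that $a_i$ is full, so $\Hom_{\mathcal{C}}(M,N)\twoheadrightarrow\Hom_{\mathcal{C}_i}(a_i(M),a_i(N))$; thus a nonzero map $f\colon M\to N$ either has nonzero image in $\mathcal{C}_i$, in which case mixedness of $\mathcal{C}_i$ together with $a_i(M),a_i(N)$ indecomposable gives a contradiction once I know $a_i(M)\not\cong a_i(N)$ and the weights are as stated; or $f$ maps to zero in \emph{every} $\mathcal{C}_i$. For the first horn I need $a_i(M)\not\cong a_i(N)$: if $a_i(M)\cong a_i(N)$ with $M\ncong N$, fullness would produce maps $M\to N$ and $N\to M$ composing (in $\mathcal{C}_i$) to the identity of $a_i(M)$; choosing $i$ so that $\End(M)\to\End(a_i(M))$ is an isomorphism, the composite $N\to M\to N$... actually the clean argument is to choose $i$ adapted to $M\oplus N$, applying the hypothesis to the object $M\oplus N$ so that $\End(M\oplus N)\to\End(a_i(M\oplus N))$ is an isomorphism; then $\Hom(M,N)$, $\Hom(N,M)$, $\End(M)$, $\End(N)$ are all preserved simultaneously, $a_i(M)\not\cong a_i(N)$ follows from Krull--Schmidt, and the weight statement transfers. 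The second horn --- $f$ dies in every $\mathcal{C}_i$ --- is handled by applying the hypothesis to $M\oplus N$ directly: if $\End(M\oplus N)\to\End(a_i(M\oplus N))$ is an isomorphism and $f\in\Hom(M,N)\subset\End(M\oplus N)$ maps to $0$, then $f=0$. Hence no nonzero $f$ exists, $\mathcal{C}$ is mixed, and running the first direction's argument now shows $[M]$ lies in the orthodox basis of $K^0(\mathcal{C})$ if and only if each $[a_i(M)]$ lies in (the orthodox basis $\cup\{0\}$ of) $K^0(\mathcal{C}_i)$, since by Lemma \ref{lem:full-essential} applied to each $a_i$ the orthodox basis elements of $\mathcal{C}$ are exactly those self-dual weight-$0$ indecomposables, and an object fails to be such precisely when detected by some $a_i$ via the isomorphism hypothesis.
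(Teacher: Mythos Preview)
Your proposal is correct and follows essentially the same route as the paper. Both directions use Lemma~\ref{lem:full-essential} for the forward implication, and for the converse both rest on the key trick of applying the endomorphism-isomorphism hypothesis to a \emph{direct sum} of the relevant indecomposables, so that $\Hom(M,N)$ is controlled as a matrix block of $\End(M\oplus N)$. You make this step more explicit than the paper's terse ``we can choose $a_i$ so that it does not kill this bad morphism,'' which is pedagogically helpful.

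There is one small slip in your treatment of the orthodox-basis statement: you phrase everything in terms of $[M]$ for an object $M$, but the lemma concerns an arbitrary vector $v\in K^0(\mathcal{C})$, and such a $v$ need not be the class of any object (e.g.\ $-[P_c]$ or $(q-1)[P_c]$). The paper handles this by writing $v=\sum_j f_j(q)[P_{c_j}]$ in the orthodox basis, setting $P=\bigoplus_j P_{c_j}$, and choosing $i$ with $\End(P)\cong\End(a_i(P))$; then each $a_i(P_{c_j})$ is nonzero, indecomposable, self-dual, and pairwise non-isomorphic, so $[a_i](v)$ has the \emph{same} coefficients in distinct orthodox vectors of $\mathcal{C}_i$ and hence cannot be orthodox if $v$ was not. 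This is exactly your $M\oplus N$ trick applied once more, so the fix is immediate.
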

\begin{proof}
  The ``only if'' direction is Lemma \ref{lem:full-essential}.  If 
  $\mathcal{C}$ is not mixed, then either there is  a map between modules
  with the wrong weights, or some indecomposable module $M$ such that
  $\End(M)$ is a local ring with non-trivial Jacobson radical.  In
  either case, we can choose $a_i$ so that it does not kill this bad
  morphism, contradicting the assumption that $\mathcal{C}_i$ is 
  mixed.  Similarly, if a class in $K^0(\mathcal{C})$ is not an
  orthodox vector, then it must be a linear combination of multiples
  of orthodox vectors, and we can choose $i$ so that $[a_i]$ doesn't kill
  any of these classes.  Since the image of our class is orthodox
  under $[a_i]$, this gives a contradiction.
\end{proof}

This sort of mixedness appears naturally in geometry and category
theory as follows:
let $\EuScript{J}$ be a compactly generated $\K$-linear dg-category  such that $\dim \mathrm{Ext}_{\mathcal{C}}^i(M,N)<\infty$ for all objects $M$ and
$N$. Assume that $\EuScript{J}$ is endowed with a $t$-structure $(\EuScript{J}^{\geq 0},\EuScript{J}^{\leq 0})$ and a
duality functor $\circledast$ which are compatible in the sense that
$\EuScript{J}^{\geq 0}$ and $\EuScript{J}^{\leq 0}$ are both invariant
under duality.  Assume further that every simple object in the heart
$\EuScript{H}=\EuScript{J}^{\geq 0}\cap \EuScript{J}^{\leq 0}$ is
self-dual and absolutely irreducible.
\begin{definition}
  Let $\mathcal{J}$ be additive subcategory of $\EuScript{J}$
  generated by shifts of simple objects of $\EuScript{H}$. We think of
  this as a category in the usual sense by taking the morphisms
  between two objects to be the zeroth cohomology of the morphism
  complex in the dg-category $\EuScript{J}$.
\end{definition}
\begin{lemma}\label{J-mixed}
  The category $\mathcal{J}$ is a  mixed humorous category, with $(1)$ given by homological shift in $\EuScript{J}$ and
  duality induced by $\circledast$.
\end{lemma}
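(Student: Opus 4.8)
The plan is to verify directly that $\mathcal{J}$ satisfies all the axioms of a mixed humorous category, using the $t$-structure on $\EuScript{J}$ to supply the required vanishing. First I would set up the duality and grading-shift structure: the homological shift $[1]$ on $\EuScript{J}$ restricts to an autoequivalence of $\mathcal{J}$ (since it permutes shifts of simple objects of $\EuScript{H}$), and I define $(1)=[1]$; the duality $\circledast$ preserves $\EuScript{H}$ and its simple objects by hypothesis, and one checks $\circledast$ intertwines $[1]$ with $[-1]$ on the additive category (because $\circledast$ is contravariant, $\RHom$ gets reversed), giving $(M(1))^\circledast\cong M^\circledast(-1)$. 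Each simple $L$ in $\EuScript{H}$ is self-dual, so $L$ is the unique $\circledast$-fixed point on its shift-orbit; this is the candidate $P_c$, and the $\Z$-action on indecomposables of $\mathcal{J}$ (= shifts of simples) is manifestly free, with orbits indexed by the set of simples in $\EuScript{H}$. The number of orbits is finite because... actually finiteness needs an extra word: one should assume (or it follows from compact generation plus the finite-$\Ext$ hypothesis in the relevant grading range) that $\EuScript{H}$ has finitely many simple objects up to isomorphism; I would state this as part of the setup rather than prove it, since it is implicit in calling the output ``humorous.''

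Next I would establish the Hom-finiteness and the lower-boundedness of degrees. For simple objects $L,L'$ of the heart, $\Hom_{\mathcal{J}}(L,L'(i))=H^0\RHom_{\EuScript{J}}(L,L'[i])=\Ext^{-i}_{\EuScript{J}}(L,L')$, which is finite-dimensional by the standing hypothesis $\dim\Ext^i(M,N)<\infty$, and which vanishes for $i\gg 0$ by the defining property of a $t$-structure: $\Ext^{<0}_{\EuScript{J}}(L,L')=\Hom(L,L'[-i])=0$ for $i>0$ since $L\in\EuScript{H}\subset\EuScript{J}^{\leq 0}$ and $L'[-i]\in\EuScript{J}^{\geq i}$ live on opposite sides. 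General objects of $\mathcal{J}$ are finite direct sums of shifts of simples, so these properties propagate. The residue-field condition $\Hom_{\mathcal{J}}(P_c,P_c)\cong\K$ is exactly $\End_{\EuScript{J}}^0(L)=\Hom_{\EuScript{H}}(L,L)=\K$, which is the absolute irreducibility hypothesis.

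Finally I would produce the weight function and verify mixedness, which simultaneously upgrades Krull–Schmidt/idempotent-completeness to a triviality. Define $\wt$ on the indecomposable $L[n]$ (for $L$ simple in $\EuScript{H}$) to be $n$; then $\wt(M(1))=\wt(M)+1$ is built in, and $\wt(M^\circledast)=-\wt(M)$ follows from self-duality of $L$ and the sign reversal on shifts noted above. The key vanishing: $\Hom_{\mathcal{J}}(L[m],L'[n])=\Ext^{m-n}_{\EuScript{J}}(L,L')$, and for $m>n$ this is $\Ext^{>0}$ of two objects of the heart, which vanishes because the heart is the heart of a $t$-structure (no negative... here positive self-$\Ext$ in the wrong direction: $\Hom(L,L'[m-n])=0$ when $m-n<0$, i.e. when $n>m$ — let me restate with the paper's convention). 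With $\wt(N)<\wt(M)$ meaning $n<m$, we need $\Hom_{\mathcal{J}}(M,N)=0$: indeed $\Ext^{m-n}_{\EuScript{J}}(L,L')$ with $m-n>0$ is $\Hom_{\EuScript{J}}(L,L'[m-n])$, and since $L\in\EuScript{J}^{\leq 0}$ while $L'[m-n]\in\EuScript{J}^{\leq -(m-n)}\subset\EuScript{J}^{\leq -1}$, this is $\Hom$ from something in $\EuScript{J}^{\geq 0}\cap\EuScript{J}^{\leq 0}$ into $\EuScript{J}^{\leq -1}$... the cleanest way is: $\Hom(\EuScript{J}^{\leq 0},\EuScript{J}^{\geq 1})=0$, and $L'[m-n]\in\EuScript{J}^{\geq m-n}\subseteq\EuScript{J}^{\geq 1}$. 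For the equal-weight case $m=n$ with $L\not\cong L'$, $\Hom_{\mathcal{J}}(L[m],L'[m])=\Hom_{\EuScript{H}}(L,L')=0$ since distinct simples in an abelian category have no nonzero maps. This is exactly mixedness. As a corollary, $\End_{\mathcal{J}}(M)$ for indecomposable $M$ is $\K$, so every object of $\mathcal{J}$ is a finite sum of objects with local endomorphism ring, hence $\mathcal{J}$ is Krull–Schmidt and idempotent complete automatically. The main obstacle is not any single step — each is a short $t$-structure argument — but rather pinning down the finiteness-of-simples hypothesis and making sure the grading conventions ($\wt$ versus homological degree, the sign in $\circledast\circ[1]$) are consistent; I would handle this by fixing once and for all that $(1)=[1]$ and $\wt(L[n])=n$, and deriving everything else from $\Hom_{\EuScript{J}}(\EuScript{J}^{\leq 0},\EuScript{J}^{\geq 1})=0$ together with the self-duality and absolute irreducibility of the simples of $\EuScript{H}$.
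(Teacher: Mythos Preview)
Your approach is essentially identical to the paper's: both verify the humorous and mixedness axioms directly from the $t$-structure vanishing $\Hom(\EuScript{J}^{\leq 0},\EuScript{J}^{\geq 1})=0$ together with the absolute irreducibility hypothesis, and both derive Krull--Schmidt from finite-dimensionality of degree-zero endomorphisms. Two minor points: the paper does supply the argument you flag for finiteness of simples (compact generation by an object $M$ with $\dim\Ext^i(M,M)<\infty$ forces $\EuScript{H}$ to have finitely many simples), and your sign bookkeeping needs tidying---with $(1)=[1]$ one has $\Hom_{\mathcal{J}}(L,L'(i))=\Hom_{\EuScript{J}}(L,L'[i])=\Ext^i(L,L')$, so the required vanishing is for $i<0$ (hence $i\ll 0$), which is exactly the $t$-structure condition you eventually invoke.
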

\begin{proof}
  That $\mathcal{J}$ is additive over $\K$ is automatic from the
  definition.  The Krull-Schmidt property  follows from the assumption
  that $\dim \mathrm{Ext}_{\mathcal{C}}^0(M,M)<\infty$ for every $M$.
  Since we have a $t$-structure, any module in $\EuScript{H}$ has no
  negative self-Exts.  In particular, no shift of a module in
  $\EuScript{H}$ can lie in $\EuScript{H}$, so the action of grading
  shift on indecomposable objects is free.  The finiteness of the
  number of orbits follows from that fact that $\EuScript{H}$ has
  finitely many simples.  This follows
  because of the compact generation of the category by an object $M$
  with  $\dim \mathrm{Ext}_{\mathcal{C}}^i(M,M)<\infty$.  Every orbit
  has a fixed point given by representative in $\EuScript{H}$.

  Any duality on a compactly generated dg-category must satisfy that
  $(M[1])^\circledast\cong M^\circledast[-1]$, so that property is
  automatic.  Finally, the  mixedness follows from the fact
  that two simple objects $M,N\in \EuScript{H}$ have trivial negative
  Exts and \[\mathrm{Ext}_{\mathcal{C}}^0(M,N)\cong
  \begin{cases}
    \K & M\cong N\\
    0 & M\ncong N
  \end{cases}
\] by absolute irreducibility.
\end{proof}
The example of most interest to us is when $\EuScript{J}$ is a full
subcategory of the category of constructible complexes of sheaves on
an algebraic variety or Artin stack generated by IC sheaves for trivial local
systems.  In this case, the $t$-structure we will want to take is the
perverse one and the duality $\circledast$ will be Verdier duality.

While we know of nowhere in the literature where most of the
definitions above are
made in this generality, there are many examples.  In each case, we
will leave the details of the pre-canonical structure to the references:
\begin{itemize}
\item Kazhdan and Lusztig showed that the Hecke algebra of a Weyl group
  has a canonical basis \cite{KL79}, now usually called the
  {\bf Kazhdan-Lusztig basis}.
\item Lusztig showed that the simple integrable representations of quantized
  universal enveloping algebras of Kac-Moody algebras as well as a
  small modification $\dot{U}$ of the algebras themselves have canonical bases \cite{Lusbook}.
  These also appeared in the work of Kashiwara as {\bf global crystal
    bases}.
\item In finite type, the tensor product of simple representations
  also carries a natural canonical basis \cite[\S 27.3]{Lusbook}; in the special case of a
  tensor product of highest and lowest weight representations, this
  works for infinite type Kac-Moody algebras as well \cite{LusTen}. 
\item Lascoux, Leclerc and Thibon \cite{LLT} show that a level 1 Fock space representation
  of $\mathfrak{\widehat{sl}}_n$ carries a
  canonical basis. This was extended to higher level twisted Fock
  spaces by Uglov \cite{Uglov}. Brundan and Kleshchev \cite{BKgd} showed that tensor products
  of level 1 Fock spaces also have a canonical basis arising as a
  ``limit'' of Uglov's.
\end{itemize}

All of the bases and pre-canonical structures listed above have close
ties to humorous categories as in Lemma \ref{category-pre-canonical}:

\begin{itemize}
\item The Kazhdan-Lusztig basis arises from the categorification of
  the Hecke algebra by $B\times B$-equivariant mixed sheaves on $G$,
  the associated algebraic group \cite{SpIH}; alternatively, there is
  an equivalent approach using indecomposable
  Soergel bimodules \cite{Soe92}.  Recently, Elias and Williamson
  established that the category of Soergel bimodules is mixed for an
  arbitrary reflection group over $\R$ \cite{EWHodge}.
\item For $\mathfrak{sl}_n$, the canonical basis of a tensor product
  of fundamental representations 
  corresponds to the projective (or tilting, depending on conventions)
  objects in a parabolic category $\cO$, equipped with its Koszul
  grading \cite[Th. 6]{Subasis}.
\item For $\mathfrak{\widehat{sl}}_n$, the canonical basis on a level 1
  Fock space arises from a graded version of the $q$-Schur algebra
  (for $q$ an $n$th root of unity) \cite{Arikiq}. For higher level Fock spaces 
  this canonical basis to comes from category $\mathcal{O}$ for Cherednik
  of the complex reflection group $G(r,1,\ell)$ by recent work of
  several authors \cite{RSVV,LoVV,WebRou}.
\item For $\mathfrak{sl}_2$, the indecomposable
  objects of $\tU$ match Lusztig's canonical basis by work of Lauda
  \cite[9.12]{Laucq}, and similarly for $\mathfrak{sl}_3$ by \Stosic
  \cite{Stos}.
\end{itemize}
Our aim in this paper is to complete the connection of categorifications
to each of the canonical bases listed above, covering the examples of
$\dot{U}$ itself and tensor products of highest weight modules.

\section{Dual canonical bases}
\label{sec:dual-canonical-bases}

\subsection{Duality of pre-canonical structures} 
For any $\Z[q,q^{-1}]$-module $V$, let $\bar V$ be the same underlying
abelian group with the action of $\Z[q,q^{-1}]$ twisted by the
bar-involution.  

Throughout Section \ref{sec:dual-canonical-bases}, we'll assume that $V$ is a free
$\Z[q,q^{-1}]$-module with a pre-canonical structure $\{\langle
-,-\rangle,\psi, \{a_c\}\}$, with $\langle -,-\rangle$ valued in
$\Z[q,q^{-1}]$ (rather than $\Z((q^{-1}))$).  In this case, we can
consider the dual space $V^*$ of $\Z[q,q^{-1}]$-linear functionals that kill all but
finitely many $a_c$ and the anti-linear evaluation map $\epsilon$
sending $v\mapsto \langle v,-\rangle$. We'll furthermore assume that this
map is an isomorphism.  Versions of the results in this section are
possible in more general situations, but we have aimed for the setup that will give us the
cleanest statement of results.

\begin{lemma}\label{finite-hom}
  If $V$ is the Grothendieck group of a humorous category
  $\mathcal{C}$, then $\langle -,-\rangle$ is valued in
$\Z[q,q^{-1}]$ if and only if for every pair of objects $M,N$ in $\mathcal{C}$, we have that
  $\Hom_{\mathcal{C}}(M,N(i))=0$ for all but finitely many $i\in \Z$.\qed
\end{lemma}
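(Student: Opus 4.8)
The plan is to unwind both conditions via the explicit formula for the graded Euler pairing and then reduce to a finite generating set. Recall that on classes of objects the pairing is
\[\langle [M],[N]\rangle = \sum_{i \in \Z} q^{-i} \dim \Hom_{\mathcal{C}}(M, N(i)),\]
and that the humorous axioms guarantee each summand $\dim\Hom_{\mathcal{C}}(M,N(i))$ is finite and vanishes for $i \ll 0$; hence this sum is always a well-defined element of $\Z((q^{-1}))$, and it lies in $\Z[q,q^{-1}]$ precisely when $\dim\Hom_{\mathcal{C}}(M,N(i)) = 0$ for all but finitely many $i$. This already settles the equivalence for pairs of the special form $([M],[N])$.

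First I would pass from arbitrary elements of $V$ to classes of objects. By definition of a humorous category the grading-shift action on indecomposables has finitely many orbits, and every indecomposable is a shift of some $P_c$; since $q^i[P_c] = [P_c(i)]$, the classes $\{[P_c]\}_{c\in C}$ generate $K^0(\mathcal{C})$ as a $\Z[q,q^{-1}]$-module. Because the pairing is sesquilinear and $\Z[q,q^{-1}]$ is stable under the bar-involution, $\langle -,- \rangle$ is $\Z[q,q^{-1}]$-valued on all of $V \times V$ if and only if $\langle [P_c],[P_{c'}]\rangle \in \Z[q,q^{-1}]$ for all $c, c' \in C$, which by the previous paragraph holds if and only if $\Hom_{\mathcal{C}}(P_c, P_{c'}(i)) = 0$ for all but finitely many $i$, for every $c,c'$.

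Then I would match this with the condition on all pairs of objects. By Krull--Schmidt every object of $\mathcal{C}$ is a finite direct sum of grading shifts of the $P_c$; writing $M$ and $N$ in this form and using additivity of $\Hom$ in each variable together with $\Hom(M(a), N(b)) = \Hom(M, N(b-a))$, one sees that $\Hom_{\mathcal{C}}(M, N(i)) = 0$ for all but finitely many $i$ for every pair of objects $M,N$ exactly when the same holds for every pair $(P_c, P_{c'})$. Combining the two reductions yields the stated equivalence.

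There is no genuine obstacle here; the argument is pure bookkeeping. The only step demanding a moment's care is the reduction from "$\Z[q,q^{-1}]$-valued on all of $V \times V$" to a statement about the generators $[P_c]$ — this is exactly where one uses that $K^0(\mathcal{C})$ is finitely generated over $\Z[q,q^{-1}]$, a consequence of the finite-orbit hypothesis in the definition of a humorous category.
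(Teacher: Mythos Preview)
Your proof is correct; the paper gives no argument for this lemma at all (the \qed\ is placed in the statement itself), treating the equivalence as immediate from the formula for the graded Euler pairing. Your write-up is precisely the routine bookkeeping the paper leaves implicit, including the reduction to the finitely many generators $[P_c]$ via Krull--Schmidt and sesquilinearity.
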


We can naturally identify $V^*$ with the Grothendieck group of
$\operatorname{Rep}(\mathcal{C})$ using the pairing
$\{[P],[R]\}=\sum_iq^{-i}\dim R(P(i))$.    With this identification,
we can think of the map $\epsilon\colon \bar V\to V^*$ as categorifying the Yoneda embedding sending
$P\mapsto \Hom(P,-)$. 
\begin{lemma}\label{proj-dim}
If the category  $\operatorname{Rep}(\mathcal{C})$ has
  finite projective dimension,
then $\epsilon\colon\bar V\to V^*$ is an isomorphism.
\end{lemma}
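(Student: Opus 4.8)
The plan is to use Lemma~\ref{KS-algebra} to present $\mathcal{C}$ as the category of finitely generated projective graded modules over a humorous algebra $A$, and then to recognise $\epsilon$ as (a bar-twist of) the graded Cartan map of $A$. First I would note that, under the standing hypothesis that $\langle-,-\rangle$ takes values in $\Z[q,q^{-1}]$, Lemma~\ref{finite-hom} tells us $\Hom_{\mathcal{C}}(M,N(i))=0$ for $|i|\gg 0$; since $\mathcal{C}$ has only finitely many indecomposables up to shift and the grades that appear in $A$ are bounded below, this forces $A$ to be \emph{finite dimensional}, so that each indecomposable projective $P_c$ is itself a finite-dimensional $A$-module. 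Hence $V=K^0(\mathcal{C})$ is free of finite rank $|C|$ on the classes $[P_c]$, while $V^*$, identified with $K^0(\operatorname{Rep}(\mathcal{C}))$ as in the text (and $\operatorname{Rep}(\mathcal{C})$ is then the category of finite-dimensional graded $A$-modules), is free of the same rank on the classes $[L_c]$ of the simple modules.

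Next I would make $\epsilon$ explicit. By the discussion preceding the lemma, under the identification $V^*\cong K^0(\operatorname{Rep}(\mathcal{C}))$ the functional $\epsilon([P_c])$ is the class of the representable functor $\Hom_{\mathcal{C}}(P_c,-)$, which, because $\langle-,-\rangle$ is $\Z[q,q^{-1}]$-valued, is genuinely a finite-dimensional object of $\operatorname{Rep}(\mathcal{C})$. Unwinding the equivalence $\operatorname{Rep}(\mathcal{C})\cong\{\text{finite-dimensional graded }A\text{-modules}\}$ together with the anti-automorphism $\gamma$ used to pass between left and right modules, this functor corresponds to the projective $P_c$ itself, the point being that $\gamma$ fixes the isomorphism class of each primitive idempotent, so the ``wrong-handedness'' does not turn a projective into an injective. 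Therefore, modulo the identifications above, $\epsilon$ is precisely the graded Cartan map $[P]\mapsto[P]$ from $K^0$ of projectives to $K^0$ of finite-dimensional modules, precomposed with the bar involution of $K^0(\mathcal{C})$ (a bijection), which is immaterial for bijectivity. The lemma thus reduces to: the graded Cartan map of a finite-dimensional graded algebra of finite global dimension is an isomorphism of $\Z[q,q^{-1}]$-modules.

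For surjectivity I would use the hypothesis directly. Every finite-dimensional graded $A$-module $M$, and in particular every simple $L_c$, has a finite resolution $0\to P^n\to\cdots\to P^0\to M\to 0$ by finite-dimensional projectives (over a finite-dimensional algebra the projective cover of a finite-dimensional module is finite dimensional, and one recurses on the syzygy, terminating by finite projective dimension), so $[M]=\sum_i(-1)^i[P^i]$ lies in the image of the Cartan map; since the $[L_c]$ form a $\Z[q,q^{-1}]$-basis of $K^0(\operatorname{Rep}(\mathcal{C}))$, the Cartan map is onto. Equivalently, finite global dimension makes the bounded derived category of finite-dimensional modules coincide with the bounded homotopy category of projectives, so the two Grothendieck groups are identified. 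Finally, a surjective $\Z[q,q^{-1}]$-linear map between free modules of equal finite rank over the domain $\Z[q,q^{-1}]$ is automatically injective: tensoring with the fraction field $\Q(q)$ yields a surjection of $\Q(q)$-vector spaces of equal finite dimension, hence an isomorphism, and $V$ is torsion free, so the kernel vanishes. Thus $\epsilon$ is an isomorphism.

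The step I expect to be the main obstacle is the middle one: carefully identifying the object of $\operatorname{Rep}(\mathcal{C})$ corresponding to $\epsilon([P_c])$ and checking that the interplay of the duality $\circledast$ and the anti-automorphism $\gamma$ returns a projective module, so that $\epsilon$ is the Cartan map on the nose. The finite-dimensionality of $A$ also has to be pinned down here, since it is what lets the homological-algebra input (finite resolutions by finite-dimensional projectives, plus the torsion-free rank count) actually apply. Everything after this identification is standard.
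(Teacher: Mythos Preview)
Your proof is correct and takes essentially the same approach as the paper: identify $\epsilon$ with the graded Cartan map and use finite projective resolutions of the simples $L_c$ to see surjectivity. The paper's proof is a two-line sketch that only writes out the surjectivity step, leaving the identification of $\epsilon([P_c])$ with $[P_c]$, the finite-dimensionality of $A$, and the injectivity (which you extract from the rank count) implicit; your version fills in exactly these details, so the ``main obstacle'' you flag is not a real difficulty here.
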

\begin{proof}
  The space $V^*$ is spanned by the dual basis to the classes of
  indecomposables in $V$. These are the classes of the simple
  representations $L_c$: those killing all but one indecomposable object,
  and the remaining indecomposable to $\K$.  Thus, we need only prove
  that these objects are in the image of $\epsilon$.  Since
  $\operatorname{Rep}(\mathcal{C})$ has finite global dimension, we
  have a finite projective resolution $L_c\leftarrow P_0\leftarrow
  P_1\leftarrow\cdots$.  Thus, $[L_c]=\sum_{i=0}^\infty (-1)^i[P_i]$.  
\end{proof}
Throughout this section, we'll let $\mathcal{C}$ be a humorous category
satisfying the hypotheses of Lemmata \ref{finite-hom} \& \ref{proj-dim}.

In this
case, the same underlying abelian group of $V$ has a second natural
pre-canonical structure.  Let $\psi^*\colon V\to V$ be the involution
defined by
\begin{equation}
\langle \psi u,v\rangle=\overline{\langle
  u,\psi^*v\rangle}.\label{dual-bar}
\end{equation}
We call this the {\bf dual bar-involution}.
If we identify the Grothendieck groups of $\mathcal{C}$ and
$\operatorname{Rep}(\mathcal{C})$ as discussed above, comparing
equations \eqref{star} and \eqref{dual-bar} shows that 
$\psi^*$ categorifies the duality $\star$.  

Let $\{a^*_c\}$ denote the right dual basis of $\{a_c\}$, equipped with the opposite
partial order.  That is, it is the unique basis satisfying
$\langle a_c,a^*_{c'}\rangle=\delta_{c,c'}$. In certain infinite
dimensional situations, the Gram-Schmidt construction of this basis
will not converge;  it is enough to assume that $V$ is a sum of
finite-dimensional orthogonal spaces with bases given by sets of
$a_c$'s.  This will hold for a tensor product of highest weight
modules (where we break into weight spaces), but not for $U_q(\fg)$.

Note that in this notation, a pre-canonical structure is balanced if $\psi^*(a_c)=a_c^*$.

\begin{proposition}\label{dual-canonical}
  The triple $\{\overline{\langle -,-\rangle}, \psi^*, \{a_c^*\}\}$
  is a pre-canonical structure on $\bar V$ if we endow $C$ with the
  opposite partial order.  If the primal structure $\{\langle -,-\rangle,
  \psi, \{a_c\}\}$ has a canonical basis $\{b_c\}$, then the right dual basis
  $\{b^*_c\}$ is canonical for the dual pre-canonical structure.  
\end{proposition}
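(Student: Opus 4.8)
The plan is to verify the three conditions \textit{(I)}, \textit{(II)}, \textit{(III)} for the right dual basis $\{b_c^*\}$ with respect to the dual pre-canonical structure $\{\overline{\langle -,-\rangle}, \psi^*, \{a_c^*\}\}$, using the defining relation $\langle b_c, b_{c'}^*\rangle = \delta_{c,c'}$ together with the hypothesis that $\{b_c\}$ is canonical for the primal structure. The key mechanism is that the pairing $\langle -,-\rangle$ identifies $V$ with a dual space, so statements about $\{b_c^*\}$ translate, via this pairing, into statements about $\{b_c\}$ that are already known. Throughout, the partial order on the index set $C$ is reversed, so ``below'' in the dual sense means ``above'' in the primal sense, and I must keep careful track of this.

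First I would check \textit{(I)}: the vectors $b_c^*$ are fixed by $\psi^*$. By the definition \eqref{dual-bar} of the dual bar involution, $\langle \psi(b_c), b_{c'}^* \rangle = \overline{\langle b_c, \psi^*(b_{c'}^*)\rangle}$; since $\psi(b_c) = b_c$ by \textit{(I)} for the primal basis, the left side is $\langle b_c, b_{c'}^*\rangle = \delta_{c,c'}$, which is bar-invariant, so $\langle b_c, \psi^*(b_{c'}^*)\rangle = \delta_{c,c'}$ for all $c$. Since $\{b_c\}$ is a basis and the pairing is nondegenerate (the evaluation map $\epsilon$ is an isomorphism by hypothesis), this forces $\psi^*(b_{c'}^*) = b_{c'}^*$. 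Next, \textit{(III)}: almost orthonormality of $\{b_c^*\}$ for $\overline{\langle -,-\rangle}$. I expect this to follow by expanding $b_c^*$ in terms of the already-almost-orthonormal primal canonical basis. Writing $b_c^* = \sum_d g_d(q) b_d$ and pairing against $b_{c'}$ gives $g_{c'}(q) + (\text{terms with }\langle b_d, b_{c'}\rangle \in q^{-1}\Z[[q^{-1}]] \text{ for } d \neq c') = \langle b_c^*, b_{c'}\rangle = \delta_{c,c'}$, which via a triangularity/minimality argument (exactly as in Lemma \ref{two-prime}) pins down the $g_d$ up to the required $q^{-1}\Z[[q^{-1}]]$ ambiguity and yields $\langle b_c^*, b_{c'}^*\rangle \in \delta_{c,c'} + q^{-1}\Z[[q^{-1}]]$, hence the same for $\overline{\langle -,-\rangle}$ since bar fixes $\delta_{c,c'}$ and preserves $q^{-1}\Z[[q^{-1}]]$ up to the symmetry of the Euler-type form.

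The step I expect to be the main obstacle is \textit{(II)}: that $b_c^*$ lies in $a_c^* + \sum_{c' <^{\op} c} \Z[q,q^{-1}] a_{c'}^*$, i.e. that the transition matrix from $\{b_c^*\}$ to $\{a_c^*\}$ is lower-triangular for the \emph{opposite} order. The point is that unitriangularity of the matrix $(b_c)$ in terms of $(a_c)$ for the order $<$ is equivalent, by taking inverse-transpose, to unitriangularity of the dual change of basis for the order $<^{\op}$; concretely, if $b_c = a_c + \sum_{c' < c} m_{c',c} a_{c'}$, then the right dual bases are related by $a_c^* = b_c^* + \sum_{c' > c} n_{c,c'} b_{c'}^*$, equivalently $b_c^* = a_c^* + \sum_{c' > c} p_{c,c'} a_{c'}^*$, with all coefficients in $\Z[q,q^{-1}]$ because the primal transition matrix is unitriangular over $\Z[q,q^{-1}]$ and hence so is its inverse. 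The only care needed is convergence of these sums in the possibly infinite-dimensional setting, but the standing assumption (recorded just before the proposition) that $V$ decomposes as a sum of finite-dimensional orthogonal pieces spanned by sets of $a_c$'s makes every such sum finite, so the argument is purely the finite-dimensional linear algebra of dual bases. Once \textit{(I)}, \textit{(II)}, \textit{(III)} are in hand, $\{b_c^*\}$ satisfies Definition \ref{def:canonical} for the dual structure, completing the proof.
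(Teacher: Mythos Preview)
Your arguments for \textit{(I)} and \textit{(II)} are essentially the paper's: $\psi$-invariance of $b_c$ dualizes to $\psi^*$-invariance of $b_c^*$, and unitriangularity of a change-of-basis matrix is preserved under inverse-transpose (with the order reversed). You do omit checking that the dual triple is actually a pre-canonical structure---flip-unitarity of $\psi^*$ for the barred form, and upper-triangularity of $\psi^*$ on $\{a_c^*\}$---but the paper handles these by the same transpose reasoning, so this is a minor omission.

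Your argument for \textit{(III)}, however, has a genuine error. The condition you must verify is $\overline{\langle b_c^*, b_{c'}^*\rangle} \in \delta_{c,c'} + q^{-1}\Z[[q^{-1}]]$, since the form in the dual structure is $\overline{\langle -,-\rangle}$. You instead aim for $\langle b_c^*, b_{c'}^*\rangle \in \delta_{c,c'} + q^{-1}\Z[[q^{-1}]]$ and then claim bar ``preserves $q^{-1}\Z[[q^{-1}]]$''---but bar sends $q^{-1}\Z[[q^{-1}]]$ to $q\Z[[q]]$, so this step fails (and in fact $\langle b_c^*, b_{c'}^*\rangle$ lies in $\delta_{c,c'} + q\Z[[q]]$, not in what you wrote). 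The appeal to Lemma~\ref{two-prime} is also a red herring: that lemma concerns almost balanced standard bases, not the situation here. The paper's route is a clean matrix inversion: expanding $b_c = \sum_{c'} \langle b_{c'}, b_c\rangle\, b_{c'}^*$ and pairing against $b_{c''}^*$ gives
\[
\delta_{c,c''} \;=\; \langle b_c, b_{c''}^*\rangle \;=\; \sum_{c'} \langle b_{c'}, b_c\rangle\, \overline{\langle b_{c'}^*, b_{c''}^*\rangle},
\]
so the Gram matrix of $\{b_c^*\}$ for $\overline{\langle -,-\rangle}$ is the inverse of the Gram matrix of $\{b_c\}$ for $\langle -,-\rangle$. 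Since the latter lies in $I + q^{-1}\Z[[q^{-1}]]$, so does its inverse, and \textit{(III)} follows directly.
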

Thus, $\{b_c^*\}$ doubly merits the title {\bf dual canonical basis}: it
is both the dual basis to a canonical one, and canonical for the dual
pre-canonical structure.
\begin{proof}
Since $\psi$ is flip-unitary, its conjugate is flip-unitary as well:
\[\langle \psi^*u,\psi^*v\rangle=\overline{\langle\psi\psi^*u,v\rangle}
=\overline{\langle \psi v, \psi^*u\rangle}=\langle v,u\rangle.\]
Furthermore, the upper-triangularity of $\psi$ on the basis $\{a_c\}$
exactly translates into lower-triangularity for the transpose.  Thus,
for the reversed order, we have that $\psi^*$ is upper-triangular.

Obviously, the dual basis to any $\psi$-invariant basis will consist
of $\psi^*$-invariant elements.  Similarly, if a basis is triangular
with respect to $\{a_c\}$, its dual basis will be obtained from $\{a_c^*\}$ by
the transposed basis change matrix, and thus also be triangular.

Note that by duality, we have that $b_c=\sum_{c'}\langle
b_{c'},b_{c}\rangle b_{c'}^*$; thus, we have 
\[\delta_{cc''}=\langle
b_{c},b_{c''}^*\rangle=\sum_{c'}\langle
b_{c'},b_{c}\rangle\overline{ \langle
b_{c'}^*,b_{c''}^*\rangle}.\] That is, the matrix of $\langle
-,-\rangle$ for the basis $\{b_c\}$  is inverse to that for
$\overline{ \langle -,-\rangle}$ in $\{b_c^*\}$.

Since $\langle
b_{c'},b_{c}\rangle \in \delta_{c'c}+q^{-1}\Z[[q^{-1}]]$, it must
also hold that $\overline{ \langle
b_{c'}^*,b_{c''}^*\rangle}\in \delta_{c'c''}+q^{-1}\Z[[q^{-1}]]$.  
\end{proof}

 In this case we have a second way
of thinking about the dual canonical basis:
\begin{corollary}
Assume $\mathcal{C}$ is a humorous category such that its orthodox
basis defines a canonical basis for the induced
pre-canonical structure.
  The dual canonical basis of this pre-canonical structure is defined
  by the classes of the simple modules in
  $\operatorname{Rep}(\mathcal{C})$.  
\end{corollary}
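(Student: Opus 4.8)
The plan is to feed the hypothesis into Proposition~\ref{dual-canonical} and then read off the answer through the decategorified Yoneda map $\epsilon$. By hypothesis, $\mathcal{C}$ falls under Lemma~\ref{category-pre-canonical}(1) with $M_c=P_c$, so the standard basis of the induced pre-canonical structure is $a_c=[P_c]$, which is at the same time the orthodox basis; the hypothesis says exactly that $\{[P_c]\}$ is a canonical basis $\{b_c\}$. Proposition~\ref{dual-canonical} then tells us that the dual canonical basis (for the Koszul dual pre-canonical structure on $\bar V$) is the right dual basis $\{b_c^*\}=\{[P_c]^*\}$, characterized in $V=K^0(\mathcal{C})$ by $\langle[P_c],[P_d]^*\rangle=\delta_{c,d}$.

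Next I would transport this along the isomorphism $\epsilon\colon\bar V\to V^*$, which is available under the standing hypotheses of Lemmata~\ref{finite-hom} and~\ref{proj-dim} and identifies $V^*$ with $K^0(\operatorname{Rep}(\mathcal{C}))$. Since $\epsilon$ decategorifies the Yoneda embedding $P\mapsto\Hom_{\mathcal{C}}(P,-)$, it carries $[P_c]$ to the class of the indecomposable projective object $\mathcal{P}_c:=\Hom_{\mathcal{C}}(P_c,-)$ of $\operatorname{Rep}(\mathcal{C})$. On the other hand, under the perfect pairing $\{[P_d],[R]\}=\sum_i q^{-i}\dim R(P_d(i))$ used to make that identification, the classes of the simple modules are exactly the basis dual to $\{[P_d]\}$: the simple functor $L_c$ annihilates every indecomposable object other than $P_c$, on which it is one dimensional and sits in degree $0$, so $\{[P_d],[L_c]\}=\delta_{c,d}$. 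Hence the whole statement reduces to the single assertion $\epsilon([P_c]^*)=[L_c]$.

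For this I would chase the definition of $\epsilon$, which is anti-linear and satisfies $\{w,\epsilon(v)\}=\langle v,w\rangle$. Thus $\epsilon$ carries the $\langle-,-\rangle$-right dual of $\{[P_c]\}$ into the $\{-,-\}$-dual of $\{[P_c]\}$ --- provided one checks that the ``right dual'' on one side is matched by the evaluation dual on the other; this uses flip-unitarity together with the self-duality $\psi([P_c])=[P_c]$ of the orthodox basis, which makes its Gram matrix for the graded Euler form symmetric. Since the $\{-,-\}$-dual of $\{[P_d]\}$ is $\{[L_c]\}$ by the previous paragraph, this yields $\epsilon([P_c]^*)=[L_c]$. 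I expect this compatibility to be the only real obstacle: because $\epsilon$ is anti-linear and the Euler form is not symmetric on all of $V$, one must keep track of the $\gamma$-twist appearing in Lemma~\ref{KS-algebra} (equivalently, use that the anti-automorphism $\gamma$ of the humorous algebra $A$ is graded and fixes the idempotents, so the graded Cartan data $\dim_q e_cAe_d=\dim_q e_dAe_c$ of $\operatorname{Rep}(\mathcal{C})$ is symmetric) in order to see that the two a priori different dual bases genuinely correspond under $\epsilon$. Together with Proposition~\ref{dual-canonical}, this identifies the classes of the simple modules with the dual canonical basis, which is the promised ``second way'' of describing it.
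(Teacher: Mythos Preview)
Your proposal is correct. The paper states this corollary without proof, treating it as an immediate consequence of Proposition~\ref{dual-canonical} together with the identification of $V^*$ with $K^0(\operatorname{Rep}(\mathcal{C}))$ discussed just before that proposition; your argument spells out exactly this, and in particular you correctly isolate and resolve the one genuine subtlety the paper leaves implicit, namely that the \emph{right} dual basis $\{[P_c]^*\}$ (characterised by $\langle [P_c],[P_d]^*\rangle=\delta_{c,d}$) must be matched with the \emph{left} dual appearing in $\{[P_d],\epsilon(v)\}=\langle v,[P_d]\rangle$, which follows from flip-unitarity plus $\psi([P_c])=[P_c]$ making the Gram matrix symmetric. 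Your alternative phrasing via the anti-automorphism $\gamma$ is an equally valid categorical translation of the same fact.
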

The significance of using the barred Euler form  in the dual
pre-canonical structure is that the Ext space between two simple
modules in a mixed category is {\it negatively} graded as a simple
calculation with free resolutions shows, so this reversal is necessary
to have any hope of canonicity holding.

When the pre-canonical structure is balanced, we have yet a third
precanonical structure, given by $(\langle
-,-\rangle,\psi^*,\{a_c\})$.  This has  the bar-involution of the dual
action and the form of the primal; we'll call this the {\bf Ringel
  dual} pre-canonical structure (in order to distinguish, one could
call the ``dual'' above the {\bf Koszul dual}), since on the
categorical level, it matches with Ringel duality.

\subsection{Balanced positivity}
\label{sec:balanced-positivity}

One particularly common phenomenon is that if $a_c$ is carefully
chosen, the basis $\{b_c\}$ will have good positivity properties.
\begin{definition}
   We say a pre-canonical structure with canonical basis 
  $b_c=\sum_{c'}m_{cc'}(q)a_{c'}$ for $m_{cc'}(q)\in q^{-1}\Z_{\geq
    0}[[q^{-1}]]$ is {\bf
    balanced positive} if it is balanced and \[a_c=\sum_{c'}n_{cc'}(-q)b_{c'}\] for
  $n_{cc'}(q)\in q^{-1}\Z_{\geq 0}[[q^{-1}]]$.
\end{definition} 
Such bases arise naturally in representation theory through
categorifications.  Assume that $\mathcal{C}$ is a  mixed humorous category such that $\operatorname{Rep}(\mathcal{C})$ is highest weight
and standard Koszul (see \cite{ADL} for definitions). 
As before we let $C$ be the set of self-dual indecomposable modules
in $\mathcal{C}$  and let $\Delta(c)$ be the standard modules with cosocle
concentrated in weight 0.  

\begin{theorem}\label{Grothendieck-balanced}
  With $\mathcal{C}$ as discussed above, the Grothendieck group $K^0(\mathcal{C})$ has a balanced positive
  pre-canonical structure such that
  \[ \psi=[\circledast]\qquad \psi^* =[\star]\qquad \langle [M] ,[N]
  \rangle =\sum_{i,j}(-1)^iq^{-j}\dim\Ext^i_A(M,N(j))\qquad a_c=[\Delta(c)].\]
The canonical basis is given by the classes of the $\circledast$-self-dual indecomposable
projective modules and the dual canonical basis by the classes of the
 $\star$-self-dual simple modules.  The canonical basis of the Ringel dual pre-canonical
structure are given by the classes of $\star$-self-dual indecomposable tilting
modules.  
\end{theorem}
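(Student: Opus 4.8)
The plan is to verify each of the three axioms of a pre-canonical structure for the stated triple, then identify the canonical, dual canonical, and Ringel dual bases, and finally check the balanced positivity conditions. First I would check that the Euler form $\langle[M],[N]\rangle=\sum_{i,j}(-1)^iq^{-j}\dim\Ext^i_A(M,N(j))$ is well-defined (finite in each degree and valued in $\Z[q,q^{-1}]$) using the finite projective dimension coming from the highest weight structure, and that it is sesquilinear in the grading by the standard reindexing computation already carried out in Section \ref{sec:pre-canon-struct}. Flip-unitarity $\langle u,v\rangle=\langle\psi(v),\psi(u)\rangle$ with $\psi=[\circledast]$ follows from $\Ext^i(P,Q(j))\cong\Ext^i(Q^\circledast,P^\circledast(j))$, exactly as in the proof of Lemma \ref{category-pre-canonical}. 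Upper-triangularity of $\psi$ on $a_c=[\Delta(c)]$ is the same argument as in Lemma \ref{category-pre-canonical}(2): $[\Delta(c)]=[P_c]+\sum_{c'<c}m_{c'}[P_{c'}]$, and $[P_{c'}]$ lies in the span of the $[\Delta(d)]$ for $d\le c'$, so $\psi([\Delta(c)])-[\Delta(c)]\in\sum_{c'<c}\Z[q,q^{-1}]\cdot[\Delta(c')]$. By Lemma \ref{category-pre-canonical} the structure is balanced, so $\psi^*(a_c)=a_c^*$; since $\operatorname{Rep}(\mathcal{C})$ is highest weight, the dual basis $a_c^*=[\nabla(c)]$ is the class of the costandard module, and by \eqref{star}, $\psi^*=[\star]$.

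Next I would identify the canonical basis. By Lemma \ref{category-pre-canonical}(2) and Lemma \ref{mixed-canonical} (using that $\mathcal{C}$ is mixed), the orthodox basis $\{[P_c]\}$ of $\circledast$-self-dual indecomposable projectives is canonical. For the dual canonical basis I would invoke Proposition \ref{dual-canonical}: the right dual basis to $\{[P_c]\}$ is canonical for the Koszul dual pre-canonical structure $(\overline{\langle-,-\rangle},\psi^*,\{a_c^*\})$, and by the corollary following Proposition \ref{dual-canonical} this dual basis consists of the classes of the simple modules $[L_c]$, which are $\star$-self-dual. For the Ringel dual statement, I would use that standard Koszulity provides a Ringel duality functor; the Ringel dual pre-canonical structure $(\langle-,-\rangle,\psi^*,\{a_c\})$ has its standard basis $a_c$ mapped by Ringel duality to the costandard-flavored objects, and the $\circledast$-self-dual indecomposables of the Ringel dual category are precisely the $\star$-self-dual tilting modules of $\mathcal{C}$. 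One checks these satisfy (I), (II), (III) for this third structure directly: tilting modules are $\star$-self-dual by hypothesis, they have both $\Delta$- and $\nabla$-filtrations so they are triangular with respect to $\{a_c\}$, and almost orthonormality of $\langle T_c,T_{c'}\rangle$ follows from the Ext-vanishing between tiltings using standard Koszulity.

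Finally, for balanced positivity I must show $[P_c]=\sum_{c'}m_{cc'}(q)[\Delta(c')]$ with $m_{cc'}\in q^{-1}\Z_{\ge0}[[q^{-1}]]$ for $c'\ne c$ (and $m_{cc}=1$), and dually $[\Delta(c)]=\sum_{c'}n_{cc'}(-q)[P_{c'}]$ with $n_{cc'}\in q^{-1}\Z_{\ge0}[[q^{-1}]]$. The first is the statement that the graded $\Delta$-filtration multiplicities of a projective are non-negative and concentrated in strictly positive degree off the diagonal — this is exactly what mixedness of $\mathcal{C}$ plus the highest weight structure gives, since $[P_c:\Delta(c')(j)]=\dim\Hom(P_{c'},\nabla(c)(-j))^\ast$-type quantities live in the correct degrees. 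The second follows from inverting this unitriangular matrix and using standard Koszulity, which forces the BGG-reciprocity-style coefficients to have the sign pattern $n_{cc'}(-q)$ with $n_{cc'}\in q^{-1}\Z_{\ge0}[[q^{-1}]]$; concretely, standard Koszulity says the $\Delta$-resolution of $P_c$ is linear, so the homological degree and internal degree of each $\Delta(c')$ in the resolution agree up to the parity that produces the $(-q)$ substitution. The main obstacle is this last positivity/sign bookkeeping: extracting the precise $q^{-1}\Z_{\ge0}[[q^{-1}]]$ membership requires carefully combining the mixed grading (weights of projectives) with the linearity of resolutions furnished by standard Koszulity, and making sure the two triangular expansions are genuinely inverse with the claimed positivity on both sides rather than just one.
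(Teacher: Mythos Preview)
Your proposal is correct and follows essentially the same route as the paper: invoke Lemma~\ref{category-pre-canonical} for the pre-canonical structure, Lemma~\ref{mixed-canonical} for canonicity of $[P_c]$, Proposition~\ref{dual-canonical} for the dual canonical basis $[L_c]$, check (I)--(III) directly for tiltings using the Ringel dual's positive grading, and deduce balanced positivity from the standard filtration of $P_c$ together with the linear projective resolution of $\Delta(c)$ guaranteed by standard Koszulity.

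One slip to fix: near the end you write ``the $\Delta$-resolution of $P_c$ is linear.'' This is backwards. Standard Koszulity says that each $\Delta(c)$ has a linear \emph{projective} resolution; it is this resolution that gives the expansion $[\Delta(c)]=\sum_{c'}n_{cc'}(-q)[P_{c'}]$ with the required positivity and sign pattern (homological and internal degrees coincide by linearity, producing the substitution $q\mapsto -q$). You set up the equation $[\Delta(c)]=\sum n_{cc'}(-q)[P_{c'}]$ correctly, so the idea is right; only the phrasing needs correcting. The paper's proof is exactly this: ``the (twisted) positivity of $[\Delta_c]$ follows from the fact that $[\Delta_c]$ has a linear resolution by projectives (this is the definition of standard Koszulity).''
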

This set up seems quite specialized (and indeed it is), but it has
made several appearances in the literature.  Categories that satisfy
the hypotheses of the theorem include:
\begin{itemize}
\item the category $\cO$ of a semi-simple Lie algebra by \cite[3.8]{ADL}, so the Kazhdan-Lusztig
  basis of the Hecke algebra is balanced positive.
\item the truncated parabolic category $\cO$ of Shan and Vasserot
  satisfies these conditions by \cite[4.3]{SVV}, so Uglov's canonical
  basis of twisted Fock spaces are balanced positive.  As a special
  case, the same holds for tensor products of wedge powers of the
  natural representation of $\mathfrak{sl}_n$.
\item the hypertoric category $\cO$ defined by the author jointly with
  Braden, Licata and Proudfoot \cite{GDKD,BLPWtorico}.
\end{itemize}

\begin{proof}
That we have a pre-canonical structure follows from Lemma
\ref{category-pre-canonical}.
The compatibility of these two involutions is precisely the
decategorification of \eqref{star}.
By Lemma \ref{mixed-canonical}, the basis $b_c=[P_c]$ is canonical, so
by Proposition \ref{dual-canonical}, the right dual basis
$b_c^*=[L_c]$ is the classes of the simple modules.  The classes of
the indecomposable tiltings are invariant under $\psi^*$, and almost
orthogonality follows from the positivity of the grading on the Ringel
dual, and the upper-triangularity with respect to classes of the
standards is a standard property of tiltings.

Finally, we wish to show balanced positivity.  The positivity of
$[P_c]$ in terms of $[\Delta_c]$ follows from the fact that $P_c$
has a standard filtration; the polynomials $m_{**}(q)$ are just the
graded multiplicities of this filtration.  On the other hand, the
(twisted) positivity of $[\Delta_c]$ follows from the fact that
$[\Delta_c] $ has a {\it linear} resolution by projectives (this is
the definition of standard Koszulity).  Thus $n_{**}(q)$ is the
graded multiplicities of this resolution, where $q$ could either
measure grading shift {\it or} homological shift, which coincide by
the linearity of the resolution.
\end{proof}

This theorem shows that balanced positivity is a natural condition
from the categorical perspective.  Now, we give a combinatorial
consequence of this definition.
\begin{theorem}
  If $\{\langle -,-\rangle,\psi, \{a_c\}\}$ is a balanced positive
  pre-canonical structure, then its dual structure will be balanced positive
for the variable $p=-q^{-1}$.
\end{theorem}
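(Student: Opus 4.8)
The goal is to show that if $\{\langle -,-\rangle,\psi, \{a_c\}\}$ is balanced positive, then its Koszul dual structure $\{\overline{\langle -,-\rangle},\psi^*,\{a_c^*\}\}$ is balanced positive after the substitution $p = -q^{-1}$. By Proposition \ref{dual-canonical}, we already know the dual structure is a legitimate pre-canonical structure, that it is balanced (since the primal is balanced, $\psi^*(a_c) = a_c^*$, and taking a further dual is an involution), and that its canonical basis is $\{b_c^*\}$. So the only thing left to establish is the positivity of the two transition matrices — the one expressing $b_c^*$ in terms of $a_{c'}^*$, and the one expressing $a_c^*$ in terms of $b_{c'}^*$ — when everything is rewritten in the variable $p$.

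**Key steps.** First I would record the two given positivity statements as matrix identities. Writing $b_c = \sum_{c'} m_{cc'}(q)\, a_{c'}$ with $m_{cc'}(q)\in q^{-1}\Z_{\geq 0}[[q^{-1}]]$ for $c'<c$ (and $m_{cc}=1$), and $a_c = \sum_{c'} n_{cc'}(-q)\, b_{c'}$ with $n_{cc'}(q)\in q^{-1}\Z_{\geq 0}[[q^{-1}]]$, these say that the matrices $M = (m_{cc'})$ and $N = (n_{cc'}(-q))$ are mutually inverse. Second, I would dualize: since $\langle a_c, a_{c'}^*\rangle = \delta_{cc'}$, and the transition matrix on a basis passes to the inverse-transpose on the dual basis, we get $a_c^* = \sum_{c'} (N^{t})_{cc'}\, b_{c'}^*$ and $b_c^* = \sum_{c'} (M^{-1})^t_{cc'}\, a_{c'}^* = \sum_{c'} N^t_{cc'}\,\dots$ — here I must be careful about which matrix is which; the cleanest route is to note $b^* = (M^t)^{-1} a^* = N^t a^*$ and $a^* = M^t b^*$, so the roles of "$m$" and "$n$" get swapped in the dual. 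Concretely, the transition matrix $b_c^* = \sum m^*_{cc'} a_{c'}^*$ has $m^*_{cc'}(q) = n_{c'c}(-q)$, and the transition matrix $a_c^* = \sum n^*_{cc'} b_{c'}^*$ has $n^*_{cc'}(q) = m_{c'c}(q)$. Third, I would perform the substitution $p = -q^{-1}$, equivalently $q = -p^{-1}$, and check that each of the required membership conditions $m^*_{cc'}(p)\in p^{-1}\Z_{\geq 0}[[p^{-1}]]$ and $n^*_{cc'}(-p)\in p^{-1}\Z_{\geq 0}[[p^{-1}]]$ falls out. The point is that a monomial $q^{-k}$ with $k\geq 1$ becomes $(-p)^{k} = (-1)^k p^k$ under $q = -p^{-1}$, so that $q^{-1}\Z_{\geq 0}[[q^{-1}]]$ translates, modulo the sign bookkeeping built into the definition (the $-q$ in $a_c = \sum n_{cc'}(-q)b_{c'}$, and the analogous $-p$ in the dual), into exactly $p^{-1}\Z_{\geq 0}[[p^{-1}]]$ for the reversed variable. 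One has to track that the "$(-q)$ versus $q$" asymmetry in the definition of balanced positive is precisely what makes the two roles swap cleanly under $q\mapsto -q^{-1}$.

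**Main obstacle.** The genuinely fiddly part is the sign and variable-substitution bookkeeping in the third step: the definition of balanced positive is asymmetric (positivity of $m_{cc'}(q)$ in $q^{-1}$, but of $n_{cc'}(-q)$, i.e. $n_{cc'}$ evaluated at $-q$), and I must verify that under $p = -q^{-1}$ this asymmetry maps the primal conditions onto the dual conditions with the roles of $m$ and $n$ exchanged, rather than producing a sign error that breaks positivity. I would handle this by setting up the substitution once, carefully, on a single generic coefficient and confirming $q^{-1}\Z_{\geq 0}[[q^{-1}]] \ni f(q) \mapsto f(-p^{-1})$ lands in $\pm$ the right cone, then letting the definitions absorb the sign. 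The mutual-inverse relation $MN=I$ is purely formal and needs no positivity, so no analytic subtlety there; the only real content is this change of variables, which is routine once organized.
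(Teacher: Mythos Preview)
Your overall strategy—compute the transition matrices for the dual bases and observe that the roles of $m$ and $n$ swap—is the same as the paper's. But there is a genuine gap in your execution.

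You write that ``the transition matrix on a basis passes to the inverse-transpose on the dual basis,'' and from this deduce $b_c^* = \sum_{c'} N_{c'c}\, a_{c'}^*$ with $N_{c'c} = n_{c'c}(-q)$. That rule is correct for \emph{bilinear} pairings; here $\langle -,-\rangle$ is \emph{sesquilinear} (anti-linear in the first slot), so the correct rule is conjugate-inverse-transpose. Concretely, writing $b_c = \sum_d m_{cd}(q) a_d$ and pairing against $a_{c'}^*$ gives
\[
\langle b_c, a_{c'}^*\rangle = \sum_d \overline{m_{cd}(q)}\,\langle a_d, a_{c'}^*\rangle = \overline{m_{cc'}(q)} = m_{cc'}(q^{-1}) = m_{cc'}(-p),
\]
and hence $a_c^* = \sum_{c'} m_{c'c}(-p)\, b_{c'}^*$ and $b_c^* = \sum_{c'} n_{c'c}(p)\, a_{c'}^*$. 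The bar is not optional bookkeeping: it is precisely what produces the $q\mapsto q^{-1}$ that, combined with the sign in the definition, gives the change of variable $p=-q^{-1}$.

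Without the bar, your formula $m^*_{cc'}(q) = n_{c'c}(-q)$ is off by $q\mapsto q^{-1}$, and your proposed substitution does not repair it: with $q=-p^{-1}$ you would get $n_{c'c}(p^{-1})$, which for $n_{c'c}\in x^{-1}\Z_{\geq 0}[[x^{-1}]]$ lands in $p\,\Z_{\geq 0}[[p]]$, the wrong cone. The paper sidesteps this by computing the pairings $\langle b_c, a_{c'}^*\rangle$ and $\langle a_c, b_{c'}^*\rangle$ directly, which forces the conjugation to appear; once you do the same, the swap $m\leftrightarrow n$ and the balanced positivity in $p$ fall out immediately with no residual sign-chasing.
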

\begin{proof}
The essential point is that the polynomials $m_{**}$ and $n_{**}$ will
switch roles. 
  The proof of this fact is a combinatorial version of BGG
  reciprocity. Note that 
\[\langle b_{c},a_{c'}^*\rangle=\overline{m_{cc'}(q)}=m_{cc'}(-p)\qquad \langle a_{c},b_{c'}^*\rangle=\overline{n_{cc'}(-q)}=n_{cc'}(p)\]
 By the definition of dual bases
 \begin{align*}
   a_c^*&=\sum_{c'} \langle b_{c'},a_c^*\rangle b_{c'}^*=\sum_{c'}
   m_{c'c}(-p) b_{c'}^*\\
 b_c^*&=\sum \langle a_{c'},b_c^*\rangle
   a_{c'}^*=\sum_{c'}
   n_{c'c}(p) a_{c'}^*
 \end{align*}
This exactly shows balanced positivity.
\end{proof}

\begin{remark}
  When we are considering a basis which comes a category
  $\mathcal{C}=A\mmod$ for some standard Koszul algebra $A$ 
  satisfying the the hypotheses of Theorem \ref{Grothendieck-balanced}, this fact has a categorical proof.
  As usual, the algebra $A$ has a Koszul dual $A^!\cong
  \Ext^\bullet_{A}(A_0,A_0)$, which satisfies the same conditions.
  The Koszul duality functor $K$ defined in \cite[2.12]{BGS96} induces an isomorphism
of graded Grothendieck groups  $K^0(A\mmod)\cong K^0(A^!\mmod)$
sending $q$ to $-q^{-1}$ which sends the
  dual pre-canonical structure to the primal pre-canonical structure
  of Theorem \ref{Grothendieck-balanced}; thus if one has the
  desired positivity, the other does as well.
\end{remark}

\section{The 2-category $\mathcal{U}$}
\label{sec:2-category-cu}

In this paper, our notation builds on that of Khovanov and Lauda, who
give a graphical version of the 2-quantum group, which we denote $\tU$
(leaving $\fg$ understood).  These constructions could also be
rephrased in terms of Rouquier's description and we have striven to
make the paper readable following either \cite{KLIII} or
\cite{Rou2KM}; however, it is most sensible for us to use the
2-category defined by Cautis and Lauda \cite{CaLa}, which is a
variation on both of these.  See the introduction of \cite{CaLa} for
more detail on the connections between these different approaches.   

The object of interest for this subsection is a strict 2-category; as
described, for example, in \cite{Laucq}, one natural yoga for
discussing strict 2-categories is planar diagrammatics.  The
2-category $\tU$ is thus most clearly described in this language.  

\begin{definition}
  A {\bf blank KL diagram} is a collection of finitely many oriented
  curves in $\R\times
  [0,1]$ which has no triple points,
  decorated with finitely many dots.  Every strand is labeled with an
  element of $\Gamma$, and any open end must meet one of the lines
  $y=0$ or $y=1$ at a distinct point from all other ends.

  A {\bf KL diagram} is a blank KL diagram together with a labeling of
  regions between strands (the components of its complement) with weights following the rule \[  \tikz[baseline,very thick]{
\draw[postaction={decorate,decoration={markings,
    mark=at position .5 with {\arrow[scale=1.3]{<}}}}] (0,-.5) -- node[below,at start]{$i$}  (0,.5);
\node at (-1,0) {$\mu$};
\node at (1,.05) {$\mu-\al_i$};.
}\]
\end{definition}
We identify two KL diagrams if they are isotopic via an isotopy which
does not cancel any critical points of the height function or move
critical points through crossings or dots.  We will
deal with isotopies that do have these features later.  In the interest of simplifying diagrams, we'll often write a dot with
a number beside it to indicate a group of that number of dots.

We call the lines $y=0,1$ the {\bf  bottom} and {\bf top} of the
diagram.  Reading across the bottom and top from left to right, we
obtain a sequence of elements of $\Gamma$, which we wish to record in
order from left to right.  Since orientations are quite important, we
let $\pm \Gamma$ denote $\Gamma\times \{\pm 1\}$, and associate $i$ to
a strand labeled with $i$ which is oriented upward and $-i$ to one oriented downward.  For example, we have a blank KL diagram  \[
  m=\begin{tikzpicture}[baseline,very thick]
\draw[thin,dashed] (1.33,1) -- (-1.33,1); 
\draw[thin,dashed] (1.33,-1) -- (-1.33,-1); 
\draw [postaction={decorate,decoration={markings,
    mark=at position .7 with {\arrow[scale=1.3]{>}}}}] (-.5,-1) to[out=90,in=-90] node[below,at start]{$i$} (-1,0)
  to[out=90,in=90] node[below,at end]{$i$} (1,-1);
  \draw [postaction={decorate,decoration={markings,
    mark=at position .4 with {\arrow[scale=1.3]{<}}}}] (.5,-1) to[out=90,in=-120] node[below,at start]{$j$} (1,.6)
  to[out=60,in=90] (1.3,.6) to [out=-90,in=-60] (1,.4)to[out=120,in=-90]  node[above,at end]{$j$} (1,1);
  \draw[postaction={decorate,decoration={markings,
    mark=at position .5 with {\arrow[scale=1.3]{<}}}}]  (.5,1) to[out=-90,in=-90] node[above,at start]{$i$} node[above,at end]{$i$}(0,1);
  \draw[postaction={decorate,decoration={markings,
    mark=at position .4 with {\arrow[scale=1.3]{<}}}}] (-1, -1) to[out=90,in=-90]node[below,at start]{$k$} (-.5,0) to[out=90,in=-90] (-1,1);
  \draw[postaction={decorate,decoration={markings,
    mark=at position .5 with {\arrow[scale=1.3]{>}}}}] (0,-1)
to[out=90,in=-90] node[below,at start]{$k$} node[pos=.3,circle,fill,inner sep=2pt]{}
  (-.5,1);
\draw[postaction={decorate,decoration={markings,
    mark=at position .5 with {\arrow[scale=1.3]{<}}}}] (1.5,0) circle (6pt); \node at (1.9,0){$i$};
\end{tikzpicture}
\]
with top given by $(-k,k,-i,i-j)$ and bottom given by
$(-k,i,k,-j,-i)$.  

We also wish to record the labeling on regions; since fixing the label
on one region determines all the others, we'll typically only record
$\EuScript{L}$, the weight of the region at far left and
$\EuScript{R}$, the weight at far right.  
In addition, we will
typically not draw the weights on all regions in the interest of
simplifying pictures. We call the pair of a sequence $\Bi\in (\pm
\Gamma)^n$ and the weight $\EuScript{L}$ a {\bf KL pair}; let
$\EuScript{R}:=\EuScript{L}+\sum_{j=1}^n\al_{i_j}$ where we let
$\al_{-i}=-\al_i$.
\begin{definition}
  Given KL diagrams $a$ and $b$, their {\bf (vertical) composition} $ab$ is
  given by stacking $a$ on top of $b$ and attempting to join the
  bottom of $a$ and top of $b$. If the sequences
  from the bottom of $a$ and top of $b$ don't match or
  $\EuScript{L}_a\neq \EuScript{L}_b$, then the
  composition is not defined and by convention is 0, which is not a
  KL diagram, just a formal symbol.

  The {\bf horizontal composition}  $a\circ b$ of KL diagrams is the
  diagram which 
pastes together the strips where $a$ and $b$ live with $a$ to the {\it
  right} of $b$.  The only compatibility we
require is that $  \EuScript{L}_a=\EuScript{R}
_b$, so that the regions of the new diagram can be labeled
consistently. If $  \EuScript{L}_a\neq\EuScript{R}
_b$, the horizontal composition is 0 as well.
\end{definition}

Implicit in this definition is a rule for {\bf horizontal composition}
of KL pairs in $\pm \Gamma$, which is the reverse of concatenation
$(i_1,\dots, i_m)\circ (j_1,\dots, j_n)=(j_1,\dots,
j_n,i_1,\dots,i_m)$, and gives 0 unless $  \EuScript{L}_{\Bi}=\EuScript{R}
_{\Bj}$.

We should warn the reader, this convention
requires us to read our diagrams differently from the conventions of
\cite{Laucq,KLIII,CaLa}; in our diagrammatic calculus, 1-morphisms point
from the left to the right, not from the right to the left as
indicated in \cite[\S 4]{Laucq}.   The
practical implication will be that our relations are the reflection
through a vertical line of Cautis and Lauda's.  

We can define a {\bf degree} function on KL diagrams.  The degrees are
given on elementary diagrams by \[
  \deg\tikz[baseline,very thick,scale=1.5]{\draw[->] (.2,.3) --
    (-.2,-.1) node[at end,below, scale=.8]{$i$}; \draw[<-] (.2,-.1) --
    (-.2,.3) node[at start,below,scale=.8]{$j$};}
  =\langle\al_i,\al_j\rangle \qquad \deg\tikz[baseline,very
  thick,->,scale=1.5]{\draw (0,.3) -- (0,-.1) node[at
    end,below,scale=.8]{$i$} node[midway,circle,fill=black,inner
    sep=2pt]{};}=-\langle\al_i,\al_i\rangle \qquad
  \deg\tikz[baseline,very thick,scale=1.5]{\draw[<-] (.2,.3) --
    (-.2,-.1) node[at end,below,scale=.8]{$i$}; \draw[->] (.2,-.1) --
    (-.2,.3) node[at start,below,scale=.8]{$j$};}
  =\langle\al_i,\al_j\rangle \qquad \deg\tikz[baseline,very
  thick,<-,scale=1.5]{\draw (0,.3) -- (0,-.1) node[at
    end,below,scale=.8]{$i$} node[midway,circle,fill=black,inner
    sep=2pt]{};}=-\langle\al_i,\al_i\rangle\]
  \[
  \deg\tikz[baseline,very thick,scale=1.5]{\draw[->] (.2,.1)
    to[out=-120,in=-60] node[at end,above left,scale=.8]{$i$} (-.2,.1)
    ;\node[scale=.8] at (0,.3){$\la$};} =\langle\la,\al_i\rangle-d_i
  \qquad \deg\tikz[baseline,very thick,scale=1.5]{\draw[<-] (.2,.1)
    to[out=-120,in=-60] node[at end,above left,scale=.8]{$i$}
    (-.2,.1);\node[scale=.8] at (0,.3){$\la$};} =-\langle
  \la,\al_i\rangle-d_i
  \]
  \[
  \deg\tikz[baseline,very thick,scale=1.5]{\draw[<-] (.2,.1)
    to[out=120,in=60] node[at end,below left,scale=.8]{$i$} (-.2,.1)
    ;\node[scale=.8] at (0,-.1){$\la$};} =\langle
  \la,\al_i\rangle-d_i \qquad \deg\tikz[baseline,very
  thick,scale=1.5]{\draw[->] (.2,.1) to[out=120,in=60] node[at
    end,below left,scale=.8]{$i$} (-.2,.1);\node[scale=.8] at
    (0,-.1){$\la$};} =-\langle\la,\al_i\rangle-d_i.
  \]
For a general diagram, we sum together the degrees of the elementary
diagrams it is constructed from.

Now, we'll wish to assemble these into a linear category over a ring
$\K$; we'll be most interested in the case where $\K$ is a field, but
it will be convenient for us to let $\K$ be a commutative complete local ring as well.
Once and for all, fix a matrix of
polynomials $Q_{ij}(u,v)=\sum_{k,m}Q_{ij}^{(k,m)}u^kv^m$  valued in
$\K$ and indexed by $i\neq j\in
\Gamma$; by convention
$Q_{ii}=0$. We assume each polynomial is homogeneous
of degree $-\langle\al_i,\al_j\rangle= -2d_ic_{ij}=-2d_jc_{ji}$ when
$u$ is given degree $2d_i$ and $v$ degree $2d_j$.   We will always
assume that the leading order of $Q_{ij}$ in $u$ is $-c_{ij}$, and
that $Q_{ij}(u,v)=Q_{ji}(v,u)$.  We let $t_{ij}=Q_{ij}^{(-c_{ij},0)}=Q_{ij}(1,0)$; by
convention $t_{ii}=1$. In \cite{CaLa}, the coefficients of this
polynomial are denoted
\[Q_{ij}(u,v)=t_{ij} u^{-c_{ij}}+t_{ji} v^{-c_{ji}}+\sum_{pd_i+ qd_j=d_ic_{ij}} s^{pq}_{ij}u^pv^q.\]
Khovanov and Lauda's original category uses the
choice $Q_{ij}=u^{-c_{ij}}+v^{-c_{ji}}$.

\begin{definition}
Let $\tU$ be the 2-category whose
\begin{itemize}
\item objects are the weights $\wela(\fg)$,
\item 1-morphisms $\mu\to \nu$ are grading shifts of KL pairs with $\EuScript{L}=\mu$
  and $\EuScript{R}=\nu$,
\end{itemize}
and whose degraded 2-morphisms are a quotient of the formal span over $\K$ of
KL diagrams.  Before giving these, we note that each such diagram has
a degree, which we adjust by the grading shift of its source and
target, to arrive at the degree of the 2-morphism; by our conventions,
these will be ``honest'' 2-morphisms only if they have degree 0. 
 As before, we'll denote the
  space of 2-morphisms of degree 0 between two 1-morphisms $u,v$ by
  $\Hom(u,v)$, and let $\HOM(u,v):=\oplus_i\Hom(u,v(i))$ denote
  the space of degraded morphisms.

 The relations we
impose on degraded 2-morphisms are:
\begin{itemize}
\item the cups and caps are the units and counits of a biadjunction.
  The dot morphism is cyclic.  The cyclicity for crossings can be
  derived from the pitchfork relation:
\newseq
  \begin{equation*}\subeqn\label{pitch1}
   \tikz[baseline,very thick]{\draw[dir] (-.5,.5) to [out=-90,in=-90] node[above, at start]{$i$} node[above, at end]{$i$} (.5,.5); \draw[edir]
     (.5,-.5) to[out=90,in=-90] node[below, at start]{$j$} node[above, at end]{$j$} (0,.5);}= \tikz[baseline,very thick]{\draw[dir] (-.5,.5) to [out=-90,in=-90] node[above, at start]{$i$} node[above, at end]{$i$} (.5,.5); \draw[edir]
  (-.5,-.5) to[out=90,in=-90] node[below, at start]{$j$} node[above, at
   end]{$j$}(0,.5) ;}    \qquad  \tikz[baseline,very thick]{\draw[dir] (-.5,-.5) to [out=90,in=90] node[below, at start]{$i$} node[below, at end]{$i$} (.5,-.5); \draw[dir]
     (0,-.5) to[out=90,in=-90] node[below, at start]{$j$} node[above, at end]{$j$} (-.5,.5);}= \tikz[baseline,very thick]{\draw[dir] (-.5,-.5) to [out=90,in=90] node[below, at start]{$i$} node[below, at end]{$i$} (.5,-.5); \draw[dir]
    (0,-.5)  to[out=90,in=-90] node[below, at start]{$j$} node[above, at end]{$j$} (.5,.5);}
  \end{equation*}
\begin{equation*}\subeqn\label{pitch2}
 \tikz[baseline,very thick]{\draw[dir] (-.5,.5) to [out=-90,in=-90] node[above, at start]{$i$} node[above, at end]{$i$} (.5,.5); \draw[dir]
      (0,.5) to[out=-90,in=90] node[above, at start]{$j$} node[below, at end]{$j$}(.5,-.5);}=t_{ij} \tikz[baseline,very thick]{\draw[dir] (-.5,.5) to [out=-90,in=-90] node[above, at start]{$i$} node[above, at end]{$i$} (.5,.5); \draw[dir]
      (0,.5) to[out=-90,in=90] node[above, at start]{$j$} node[below, at end]{$j$}(-.5,-.5);}\qquad  \tikz[baseline,very thick]{\draw[dir] (-.5,-.5) to [out=90,in=90] node[below, at start]{$i$} node[below, at end]{$i$} (.5,-.5); \draw[edir]
      (-.5,.5) to[out=-90,in=90] node[above, at start]{$j$} node[below, at end]{$j$}(0,-.5);}=t_{ij} \tikz[baseline,very thick]{\draw[dir] (-.5,-.5) to [out=90,in=90] node[below, at start]{$i$} node[below, at end]{$i$} (.5,-.5); \draw[edir]
      (.5,.5) to[out=-90,in=90] node[above, at start]{$j$} node[below, at end]{$j$}(0,-.5);}.
  \end{equation*}
The mirror images of these relations through a vertical axis also hold.
\item Recall that a {\bf bubble} is a morphism given by a closed
  circle, endowed with some number of dots.  Any bubble of negative degree is zero,
  any bubble of degree 0 is equal to 1.  We must add formal symbols
  called ``fake bubbles'' which are bubbles labelled with a negative
  number of dots (these are explained in \cite[\S 3.1.1]{KLIII});
  given these, we have the inversion formula for bubbles:
  \begin{equation}\label{inv}
\begin{tikzpicture}[baseline]
\node at (-1,0) {$\displaystyle \sum_{k=\la^i-1}^{j+\la^i+1}$};
\draw[postaction={decorate,decoration={markings,
    mark=at position .5 with {\arrow[scale=1.3]{<}}}},very thick] (.5,0) circle (15pt);
\node [fill,circle,inner sep=2.5pt,label=right:{$k$},right=11pt] at (.5,0) {};
\node[scale=1.5] at (1.375,-.6){$\la$};
\draw[postaction={decorate,decoration={markings,
    mark=at position .5 with {\arrow[scale=1.3]{>}}}},very thick] (2.25,0) circle (15pt);
\node [fill,circle,inner sep=2.5pt,label=right:{$j-k$},right=11pt] at (2.25,0) {};
\node at (5.7,0) {$
  =\begin{cases}
    1 & j=-2\\
    0 & j>-2
  \end{cases}
$};
\end{tikzpicture}
\end{equation}

\item 2 relations connecting the crossing with cups and caps, shown in (\ref{lollipop1}-\ref{switch-2}).
\newseq
 \begin{equation*}\subeqn\label{lollipop1}
\begin{tikzpicture} [scale=1.3, baseline=35pt] 
\node[scale=1.5] at (-.7,1){$\la$};
\draw[postaction={decorate,decoration={markings,
    mark=at position .5 with {\arrow[scale=1.3]{>}}}},very thick] (0,0) to[out=90,in=-90]  (1,1) to[out=90,in=0]  (.5,1.5) to[out=180,in=90]  (0,1) to[out=-90,in=90]  (1,0);  
\node at (1.5,1.15) {$= \,-$}; 
\node at (2.2,1) {$\displaystyle\sum_{a+b=-1}$};
\draw[postaction={decorate,decoration={markings,
    mark=at position .5 with {\arrow[scale=1.3]{>}}}},very thick]
(3,0) to[out=90,in=180]  (3.5,.5) to[out=0,in=90]  node
[pos=.7,fill=black,circle,label={[label distance=3.5pt]right:{$a$}},inner sep=2.5pt]{} (4,0); 

\draw[postaction={decorate,decoration={markings,
    mark=at position .5 with {\arrow[scale=1.3]{>}}}},very thick] (3.5,1.3) circle (10pt);
\node [fill,circle,inner sep=2.5pt,label=right:{$b$},right=10.5pt] at (3.5,1.3) {};
\node[scale=1.5] at (4.7,1){$\la$};
\end{tikzpicture}
\end{equation*}
\begin{equation*}\subeqn \label{eq:1}
\begin{tikzpicture} [scale=1.3, baseline=35pt] 
\node[scale=1.5] at (-.7,1){$\la$};
\draw[postaction={decorate,decoration={markings,
    mark=at position .5 with {\arrow[scale=1.3]{<}}}},very thick] (0,0) to[out=90,in=-90]  (1,1) to[out=90,in=0]  (.5,1.5) to[out=180,in=90]  (0,1) to[out=-90,in=90]  (1,0);  
\node at (1.5,1.15) {$=$}; 
\node at (2.2,1) {$\displaystyle\sum_{a+b=-1}$};
\draw[postaction={decorate,decoration={markings,
    mark=at position .5 with {\arrow[scale=1.3]{<}}}},very thick]
(3,0) to[out=90,in=180]  (3.5,.5) to[out=0,in=90]  node
[pos=.7,fill=black,circle,label={[label distance=3.5pt]right:{$a$}},inner sep=2.5pt]{} (4,0); 

\draw[postaction={decorate,decoration={markings,
    mark=at position .5 with {\arrow[scale=1.3]{<}}}},very thick] (3.5,1.3) circle (10pt);
\node [fill,circle,inner sep=2.5pt,label=right:{$b$},right=10.5pt] at (3.5,1.3) {};
\node[scale=1.5] at (4.7,1){$\la$};
\end{tikzpicture}
\end{equation*}
\begin{equation*}\subeqn\label{switch-1}
\begin{tikzpicture}[baseline,scale=1.3]
\node at (0,0){
\begin{tikzpicture} [scale=1.3]
\node[scale=1.5] at (-.7,1){$\la$};
\draw[postaction={decorate,decoration={markings,
    mark=at position .5 with {\arrow[scale=1.3]{<}}}},very thick] (0,0) to[out=90,in=-90] (1,1) to[out=90,in=-90] (0,2);  
\draw[postaction={decorate,decoration={markings,
    mark=at position .5 with {\arrow[scale=1.3]{>}}}},very thick] (1,0) to[out=90,in=-90] (0,1) to[out=90,in=-90] (1,2);
\end{tikzpicture}
};
\node at (1.5,0) {$=$};
\node at (5.4,0){
\begin{tikzpicture} [scale=1.3, baseline=35pt]

\node[scale=1.5] at (.3,1){$\la$};

\node at (.7,1) {$-$};
\draw[postaction={decorate,decoration={markings,
    mark=at position .5 with {\arrow[scale=1.3]{<}}}},very thick] (1,0) to[out=90,in=-90]  (1,2);  
\draw[postaction={decorate,decoration={markings,
    mark=at position .5 with {\arrow[scale=1.3]{>}}}},very thick] (1.7,0) to[out=90,in=-90]  (1.7,2);

\node at (2.5,1.15) {$+$}; 
\node at (3,1) {$\displaystyle\sum_{a+b+c=-2}$};
\draw[postaction={decorate,decoration={markings,
    mark=at position .5 with {\arrow[scale=1.3]{<}}}},very thick] (4,0) to[out=90,in=-180] (4.5,.5) to[out=0,in=90] node [pos=.6, fill,circle,inner sep=2.5pt,label=above:{$a$}] {} (5,0);  
\draw[postaction={decorate,decoration={markings,
    mark=at position .5 with {\arrow[scale=1.3]{>}}}},very thick] (4,2) to[out=-90,in=-180] (4.5,1.5) to[out=0,in=-90] node [pos=.6, fill,circle,inner sep=2.5pt,label=below:{$c$}] {} (5,2);
\draw[postaction={decorate,decoration={markings,
    mark=at position .5 with {\arrow[scale=1.3]{<}}}},very thick] (5.5,1) circle (10pt);
\node [fill,circle,inner sep=2.5pt,label=right:{$b$},right=10.5pt] at (5.5,1) {};
\node[scale=1.5] at (6.5,1){$\la$};
\end{tikzpicture}
};
\end{tikzpicture}
\end{equation*}

\begin{equation*}\subeqn\label{switch-2}
\begin{tikzpicture}[scale=.9,baseline]
\node at (-3,0){
\scalebox{.95}{\begin{tikzpicture} [scale=1.3]
\node at (0,0){\begin{tikzpicture} [scale=1.3]
\node[scale=1.5] at (-.7,1){$\la$};
\draw[postaction={decorate,decoration={markings,
    mark=at position .5 with {\arrow[scale=1.3]{>}}}},very thick] (0,0) to[out=90,in=-90] (1,1) to[out=90,in=-90] (0,2);  
\draw[postaction={decorate,decoration={markings,
    mark=at position .5 with {\arrow[scale=1.3]{<}}}},very thick] (1,0) to[out=90,in=-90] (0,1) to[out=90,in=-90] (1,2);
\end{tikzpicture}};

\node at (1.5,0) {$=$};
\node at (5.4,0){
  {
\begin{tikzpicture} [scale=1.3, baseline=35pt]

\node[scale=1.5] at (.3,1){$\la$};

\node at (.7,1) {$-$};
\draw[postaction={decorate,decoration={markings,
    mark=at position .5 with {\arrow[scale=1.3]{>}}}},very thick] (1,0) to[out=90,in=-90]  (1,2);  
\draw[postaction={decorate,decoration={markings,
    mark=at position .5 with {\arrow[scale=1.3]{<}}}},very thick] (1.7,0) to[out=90,in=-90]  (1.7,2);

\node at (2.5,1.15) {$+$}; 
\node at (3,1) {$\displaystyle\sum_{a+b+c=-2}$};
\draw[postaction={decorate,decoration={markings,
    mark=at position .5 with {\arrow[scale=1.3]{>}}}},very thick] (4,0) to[out=90,in=-180] (4.5,.5) to[out=0,in=90] node [pos=.6, fill,circle,inner sep=2.5pt,label=above:{$a$}] {} (5,0);  
\draw[postaction={decorate,decoration={markings,
    mark=at position .5 with {\arrow[scale=1.3]{<}}}},very thick] (4,2) to[out=-90,in=-180] (4.5,1.5) to[out=0,in=-90] node [pos=.6, fill,circle,inner sep=2.5pt,label=below:{$c$}] {} (5,2);
\draw[postaction={decorate,decoration={markings,
    mark=at position .5 with {\arrow[scale=1.3]{>}}}},very thick] (5.5,1) circle (10pt);
\node [fill,circle,inner sep=2.5pt,label=right:{$b$},right=10.5pt] at (5.5,1) {};
\node[scale=1.5] at (6.5,1){$\la$};
\end{tikzpicture}
}
};
\end{tikzpicture}}};
\end{tikzpicture}
\end{equation*}

\item Oppositely oriented crossings of differently colored strands
  simply cancel with a scalar.\newseq
\begin{equation*}\subeqn\label{opp-cancel1}
    \begin{tikzpicture}[baseline]
      \node at (0,0){ \begin{tikzpicture} [scale=1.3] \node[scale=1.5]
        at (-.7,1){$\la$};
        \draw[postaction={decorate,decoration={markings, mark=at
            position .5 with {\arrow[scale=1.3]{<}}}},very thick]
        (0,0) to[out=90,in=-90] node[at start,below]{$i$} (1,1)
        to[out=90,in=-90] (0,2) ;
        \draw[postaction={decorate,decoration={markings, mark=at
            position .5 with {\arrow[scale=1.3]{>}}}},very thick]
        (1,0) to[out=90,in=-90] node[at start,below]{$j$} (0,1)
        to[out=90,in=-90] (1,2);
      \end{tikzpicture}};

    \node at (1.7,0) {$=$}; \node[scale=1.1] at (2.3,0) {$t_{ij}$};
    \node at (3.9,0){
      \begin{tikzpicture} [scale=1.3,baseline=35pt]

        \node[scale=1.5] at (2.4,1){$\la$};

        \draw[postaction={decorate,decoration={markings, mark=at
            position .5 with {\arrow[scale=1.3]{<}}}},very thick]
        (1,0) to[out=90,in=-90] node[at start,below]{$i$} (1,2);
        \draw[postaction={decorate,decoration={markings, mark=at
            position .5 with {\arrow[scale=1.3]{>}}}},very thick]
        (1.7,0) to[out=90,in=-90] node[at start,below]{$j$} (1.7,2);
      \end{tikzpicture}
    };
  \end{tikzpicture}
\end{equation*} 
\begin{equation*}\subeqn\label{opp-cancel2}
\begin{tikzpicture}[baseline]
\node at (0,0){\begin{tikzpicture} [scale=1.3]
\node[scale=1.5] at (-.7,1){$\la$};
\draw[postaction={decorate,decoration={markings,
    mark=at position .5 with {\arrow[scale=1.3]{>}}}},very thick] (0,0) to[out=90,in=-90] node[at start,below]{$i$} (1,1) to[out=90,in=-90] (0,2) ;  
\draw[postaction={decorate,decoration={markings,
    mark=at position .5 with {\arrow[scale=1.3]{<}}}},very thick] (1,0) to[out=90,in=-90] node[at start,below]{$j$} (0,1) to[out=90,in=-90] (1,2);
\end{tikzpicture}};

\node at (1.7,0) {$=$};
\node[scale=1.1] at (2.3,0) {$t_{ji}$};
\node at (3.9,0){
\begin{tikzpicture} [scale=1.3,baseline=35pt]

\node[scale=1.5] at (2.4,1){$\la$};

\draw[postaction={decorate,decoration={markings,
    mark=at position .5 with {\arrow[scale=1.3]{>}}}},very thick] (1,0) to[out=90,in=-90]  node[at start,below]{$i$ }(1,2) ;  
\draw[postaction={decorate,decoration={markings,
    mark=at position .5 with {\arrow[scale=1.3]{<}}}},very thick] (1.7,0) to[out=90,in=-90]  node[at start,below]{$j$} (1.7,2);
\end{tikzpicture}
};

\end{tikzpicture}
\end{equation*}

\item the endomorphisms of words only using only $-\Gamma$ (or by duality only $+\Gamma$) satisfy the relations of the {\bf quiver Hecke algebra} $R$.\newseq

\begin{equation*}\subeqn\label{first-QH}
    \begin{tikzpicture}[scale=.9,baseline]
      \draw[very thick,postaction={decorate,decoration={markings,
    mark=at position .2 with {\arrow[scale=1.3]{<}}}}](-4,0) +(-1,-1) -- +(1,1) node[below,at start]
      {$i$}; \draw[very thick,postaction={decorate,decoration={markings,
    mark=at position .2 with {\arrow[scale=1.3]{<}}}}](-4,0) +(1,-1) -- +(-1,1) node[below,at
      start] {$j$}; \fill (-4.5,.5) circle (3pt);
      \node at (-2,0){=}; \draw[very thick,postaction={decorate,decoration={markings,
    mark=at position .8 with {\arrow[scale=1.3]{<}}}}](0,0) +(-1,-1) -- +(1,1)
      node[below,at start] {$i$}; \draw[very thick,postaction={decorate,decoration={markings,
    mark=at position .8 with {\arrow[scale=1.3]{<}}}}](0,0) +(1,-1) --
      +(-1,1) node[below,at start] {$j$}; \fill (.5,-.5) circle (3pt);
      \node at (4,0){unless $i=j$};
    \end{tikzpicture}
  \end{equation*}
\begin{equation*}\subeqn
    \begin{tikzpicture}[scale=.9,baseline]
      \draw[very thick,postaction={decorate,decoration={markings,
    mark=at position .2 with {\arrow[scale=1.3]{<}}}}](-4,0) +(-1,-1) -- +(1,1) node[below,at start]
      {$i$}; \draw[very thick,postaction={decorate,decoration={markings,
    mark=at position .2 with {\arrow[scale=1.3]{<}}}}](-4,0) +(1,-1) -- +(-1,1) node[below,at
      start] {$j$}; \fill (-3.5,.5) circle (3pt);
      \node at (-2,0){=}; \draw[very thick,postaction={decorate,decoration={markings,
    mark=at position .8 with {\arrow[scale=1.3]{<}}}}](0,0) +(-1,-1) -- +(1,1)
      node[below,at start] {$i$}; \draw[very thick,postaction={decorate,decoration={markings,
    mark=at position .8 with {\arrow[scale=1.3]{<}}}}](0,0) +(1,-1) --
      +(-1,1) node[below,at start] {$j$}; \fill (-.5,-.5) circle (3pt);
      \node at (4,0){unless $i=j$};
    \end{tikzpicture}
  \end{equation*}
\begin{equation*}\subeqn\label{nilHecke-1}
    \begin{tikzpicture}[scale=.9,baseline]
      \draw[very thick,postaction={decorate,decoration={markings,
    mark=at position .2 with {\arrow[scale=1.3]{<}}}}](-4,0) +(-1,-1) -- +(1,1) node[below,at start]
      {$i$}; \draw[very thick,postaction={decorate,decoration={markings,
    mark=at position .2 with {\arrow[scale=1.3]{<}}}}](-4,0) +(1,-1) -- +(-1,1) node[below,at
      start] {$i$}; \fill (-4.5,.5) circle (3pt);
      \node at (-2,0){=}; \draw[very thick,postaction={decorate,decoration={markings,
    mark=at position .8 with {\arrow[scale=1.3]{<}}}}](0,0) +(-1,-1) -- +(1,1)
      node[below,at start] {$i$}; \draw[very thick,postaction={decorate,decoration={markings,
    mark=at position .8 with {\arrow[scale=1.3]{<}}}}](0,0) +(1,-1) --
      +(-1,1) node[below,at start] {$i$}; \fill (.5,-.5) circle (3pt);
      \node at (2,0){$+$}; \draw[very thick,postaction={decorate,decoration={markings,
    mark=at position .5 with {\arrow[scale=1.3]{<}}}}](4,0) +(-1,-1) -- +(-1,1)
      node[below,at start] {$i$}; \draw[very thick,postaction={decorate,decoration={markings,
    mark=at position .5 with {\arrow[scale=1.3]{<}}}}](4,0) +(0,-1) --
      +(0,1) node[below,at start] {$i$};
    \end{tikzpicture}
  \end{equation*}
 \begin{equation*}\subeqn\label{nilHecke-2}
    \begin{tikzpicture}[scale=.9,baseline]
      \draw[very thick,postaction={decorate,decoration={markings,
    mark=at position .8 with {\arrow[scale=1.3]{<}}}}](-4,0) +(-1,-1) -- +(1,1) node[below,at start]
      {$i$}; \draw[very thick,postaction={decorate,decoration={markings,
    mark=at position .8 with {\arrow[scale=1.3]{<}}}}](-4,0) +(1,-1) -- +(-1,1) node[below,at
      start] {$i$}; \fill (-4.5,-.5) circle (3pt);
      \node at (-2,0){=}; \draw[very thick,postaction={decorate,decoration={markings,
    mark=at position .2 with {\arrow[scale=1.3]{<}}}}](0,0) +(-1,-1) -- +(1,1)
      node[below,at start] {$i$}; \draw[very thick,postaction={decorate,decoration={markings,
    mark=at position .2 with {\arrow[scale=1.3]{<}}}}](0,0) +(1,-1) --
      +(-1,1) node[below,at start] {$i$}; \fill (.5,.5) circle (3pt);
      \node at (2,0){$+$}; \draw[very thick,postaction={decorate,decoration={markings,
    mark=at position .5 with {\arrow[scale=1.3]{<}}}}](4,0) +(-1,-1) -- +(-1,1)
      node[below,at start] {$i$}; \draw[very thick,postaction={decorate,decoration={markings,
    mark=at position .5 with {\arrow[scale=1.3]{<}}}}](4,0) +(0,-1) --
      +(0,1) node[below,at start] {$i$};
    \end{tikzpicture}
  \end{equation*}
  \begin{equation*}\subeqn\label{black-bigon}
    \begin{tikzpicture}[very thick,scale=.9,baseline]
      \draw[postaction={decorate,decoration={markings,
    mark=at position .5 with {\arrow[scale=1.3]{<}}}}] (-2.8,-1) .. controls (-1.2,0) ..  (-2.8,1)
      node[below,at start]{$i$}; \draw[postaction={decorate,decoration={markings,
    mark=at position .5 with {\arrow[scale=1.3]{<}}}}] (-1.2,-1) .. controls
      (-2.8,0) ..  (-1.2,1) node[below,at start]{$i$}; \node at (-.5,0)
      {=}; \node at (0.4,0) {$0$};
\node at (1.5,.05) {and};
    \end{tikzpicture}
\hspace{.4cm}
    \begin{tikzpicture}[very thick,scale=.9,baseline]

      \draw[postaction={decorate,decoration={markings,
    mark=at position .5 with {\arrow[scale=1.3]{<}}}}] (-2.8,-1) .. controls (-1.2,0) ..  (-2.8,1)
      node[below,at start]{$i$}; \draw[postaction={decorate,decoration={markings,
    mark=at position .5 with {\arrow[scale=1.3]{<}}}}] (-1.2,-1) .. controls
      (-2.8,0) ..  (-1.2,1) node[below,at start]{$j$}; \node at (-.5,0)
      {=}; 
\draw (1.8,0) +(0,-1) -- +(0,1) node[below,at start]{$j$};
      \draw (1,0) +(0,-1) -- +(0,1) node[below,at start]{$i$}; 
\node[inner xsep=10pt,fill=white,draw,inner ysep=8pt] at (1.4,0) {$Q_{ij}(y_1,y_2)$};
    \end{tikzpicture}
  \end{equation*}
 \begin{equation*}\subeqn\label{triple-dumb}
    \begin{tikzpicture}[very thick,scale=.9,baseline]
      \draw[postaction={decorate,decoration={markings,
    mark=at position .2 with {\arrow[scale=1.3]{<}}}}] (-3,0) +(1,-1) -- +(-1,1) node[below,at start]{$k$}; \draw[postaction={decorate,decoration={markings,
    mark=at position .8 with {\arrow[scale=1.3]{<}}}}]
      (-3,0) +(-1,-1) -- +(1,1) node[below,at start]{$i$}; \draw[postaction={decorate,decoration={markings,
    mark=at position .5 with {\arrow[scale=1.3]{<}}}}]
      (-3,-1) .. controls (-4,0) ..  (-3,1) node[below,at
      start]{$j$}; \node at (-1,0) {=}; \draw[postaction={decorate,decoration={markings,
    mark=at position .8 with {\arrow[scale=1.3]{<}}}}] (1,0) +(1,-1) -- +(-1,1)
      node[below,at start]{$k$}; \draw[postaction={decorate,decoration={markings,
    mark=at position .2 with {\arrow[scale=1.3]{<}}}}] (1,0) +(-1,-1) -- +(1,1)
      node[below,at start]{$i$}; \draw[postaction={decorate,decoration={markings,
    mark=at position .5 with {\arrow[scale=1.3]{<}}}}] (1,-1) .. controls
      (2,0) ..  (1,1) node[below,at start]{$j$}; \node at (5,0)
      {unless $i=k\neq j$};
    \end{tikzpicture}
  \end{equation*}
\begin{equation*}\subeqn\label{triple-smart}
    \begin{tikzpicture}[very thick,scale=.9,baseline]
      \draw[postaction={decorate,decoration={markings,
    mark=at position .2 with {\arrow[scale=1.3]{<}}}}] (-3,0) +(1,-1) -- +(-1,1) node[below,at start]{$i$}; \draw[postaction={decorate,decoration={markings,
    mark=at position .8 with {\arrow[scale=1.3]{<}}}}]
      (-3,0) +(-1,-1) -- +(1,1) node[below,at start]{$i$}; \draw[postaction={decorate,decoration={markings,
    mark=at position .5 with {\arrow[scale=1.3]{<}}}}]
      (-3,-1) .. controls (-4,0) ..  (-3,1) node[below,at
      start]{$j$}; \node at (-1,0) {=}; \draw[postaction={decorate,decoration={markings,
    mark=at position .8 with {\arrow[scale=1.3]{<}}}}] (1,0) +(1,-1) -- +(-1,1)
      node[below,at start]{$i$}; \draw[postaction={decorate,decoration={markings,
    mark=at position .2 with {\arrow[scale=1.3]{<}}}}] (1,0) +(-1,-1) -- +(1,1)
      node[below,at start]{$i$}; \draw[postaction={decorate,decoration={markings,
    mark=at position .5 with {\arrow[scale=1.3]{<}}}}] (1,-1) .. controls
      (2,0) ..  (1,1) node[below,at start]{$j$}; \node at (2.8,0)
      {$+$};        \draw (6.2,0)
      +(1,-1) -- +(1,1) node[below,at start]{$i$}; \draw (6.2,0)
      +(-1,-1) -- +(-1,1) node[below,at start]{$i$}; \draw (6.2,0)
      +(0,-1) -- +(0,1) node[below,at start]{$j$}; 
\node[inner ysep=8pt,inner xsep=5pt,fill=white,draw,scale=.8] at (6.2,0){$\displaystyle \frac{Q_{ij}(y_3,y_2)-Q_{ij}(y_1,y_2)}{y_3-y_1}$};
    \end{tikzpicture}
  \end{equation*}

\end{itemize}
  
\end{definition}

As in \cite{KLIII}, we let $\dU$ denote the strict 2-category where every
Hom-category is replaced by its idempotent completion. We note that since
every object in $\tU$ has a finite-dimensional degree 0 part of its endomorphism algebra,
every Hom-category in $\dU$ satisfies the Krull-Schmidt property.

This 2-category is a categorification of the universal enveloping
algebra in the sense that:
\begin{theorem}[\mbox{\cite[4.10]{Webunfurl}}]\label{categorify-U}
  The Grothendieck group of $\dU$ is isomorphic to $\dot{U}$ and its
  graded 
  Euler form is given by Lusztig's inner product $(-,-)$ on  $\dot{U}$.
\end{theorem}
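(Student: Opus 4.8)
The plan is to build a $\Z[q,q^{-1}]$-linear map $\gamma\colon \dot{U}\to K^0(\dU)$ carrying $1_\mu E_i^{(a)}1_\nu$ and $1_\mu F_i^{(a)}1_\nu$ to the classes of the $1$-morphisms $\Ei^{(a)}1_\nu$ and $\Fi^{(a)}1_\nu$ (defined as images of the primitive idempotents of the nilHecke algebra, as in the categorified Grassmannian picture), to show $\gamma$ is an isomorphism, and then to match the two bilinear forms. Surjectivity is immediate: the divided powers generate $\dot{U}$ under composition, and by Krull--Schmidt every $1$-morphism of $\dU$ is a direct sum of grading shifts of summands of composites of $\Ei,\Fi$'s, so the classes $[\Ei^{(a)}1_\nu],[\Fi^{(a)}1_\nu]$ span $K^0(\dU)$. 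To see $\gamma$ is a well-defined ring map one checks that the defining relations of $\dot{U}$ hold among Grothendieck classes: relations (i)--(iii) are encoded in the weight labelling of regions and the grading shifts; the quantum Serre relations (v), together with all relations internal to $U_q^-(\fg)$, follow from the fact that the subcategory generated by downward (or, dually, upward) strands is governed by the quiver Hecke algebra $R$ of relations (\ref{first-QH})--(\ref{triple-smart}), which is known to categorify $U_q^-(\fg)$ (Khovanov--Lauda, Rouquier); and the $\mathfrak{sl}_2$-relation (iv) follows from the $1$-morphism isomorphisms
\[\Ei\Fi 1_\la\;\cong\;\Fi\Ei 1_\la\oplus(1_\la)^{\oplus[\la^i]}\ (\la^i\geq 0),\qquad \Fi\Ei 1_\la\;\cong\;\Ei\Fi 1_\la\oplus(1_\la)^{\oplus[-\la^i]}\ (\la^i\leq 0),\]
(with the appropriate internal grading shifts), which are consequences of (\ref{switch-1})--(\ref{switch-2}) and the bubble relation (\ref{inv}).

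The heart of the matter is injectivity of $\gamma$, i.e.\ the nondegeneracy statement, and this is the step I expect to be the main obstacle. I would prove it by letting $\dU$ act on the categorified tensor products $\hl^{\bla}$ of Section \ref{sec:tens-prod-algebr}, for $\bla$ running over all finite tuples of dominant weights, and using that this categorical action induces on $K^0(\hl^{\bla})\cong V^\Z_{\bla}$ precisely the action of $\dot{U}$ on the integral tensor product (the identification of $K^0(\hl^{\bla})$ being made independently, via the algebraic presentation of these categories, so there is no circularity). Since $\dot{U}$ acts faithfully on $\bigoplus_{\bla}V^\Z_{\bla}$ --- indeed already on $\bigoplus_{\la}V^\Z_\la$, as integrable highest-weight modules separate points of $\dot{U}$ --- any class in $\ker\gamma$ acts as zero on every $V^\Z_{\bla}$ and therefore vanishes. (The alternative, more self-contained route is the original Khovanov--Lauda strategy: establish the basis theorem computing each $\HOM_{\tU}(u,v)$ explicitly as a free module over the polynomial ring on the bubbles, and deduce injectivity by a rank comparison against Lusztig's basis $\dot{\mathbf{B}}$. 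Either way this is the only genuinely hard input: without an independent handle on $K^0(\hl^{\bla})$, or on the sizes of $2$-morphism spaces, there is a priori no reason $\gamma$ could not have a kernel.)

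It remains to identify the graded Euler pairing $\langle-,-\rangle$ on $K^0(\dU)$ with Lusztig's form $(-,-)$. Lusztig's form is characterized by bar-sesquilinearity, the normalization $(\dot 1_\la,\dot 1_\mu)=\delta_{\la\mu}$, and the adjunction $(xu,v)=(u,\varrho(x)v)$ for the anti-automorphism $\varrho$ exchanging $E_i\leftrightarrow F_i$; equivalently it is pinned down by its restriction to $U_q^-(\fg)$, where it is the standard form categorified by the graded dimensions of $\Hom$-spaces over $R$. The Euler pairing enjoys the same structure: sesquilinearity was checked in Section \ref{sec:pre-canon-struct}; $\langle 1_\la,1_\mu\rangle=\delta_{\la\mu}$ because $\End_{\tU}(1_\la)$ is the bubble ring, whose only degree-$\leq 0$ part is the scalars (negative bubbles vanish and the empty bubble is $1$, by (\ref{inv})); and the adjunction identity is exactly the biadjointness of $\Ei$ and $\Fi$ in $\tU$, namely $\HOM(\Ei u,v)\cong\HOM(u,\Fi v)$ up to the grading shift recorded in the degrees of the cups and caps. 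The restriction to $U_q^-(\fg)$ then agrees with the known graded $\Hom$-computation over $R$, and these properties force $\langle-,-\rangle=(-,-)$ on all of $\dot{U}$.
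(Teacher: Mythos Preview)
The paper does not supply its own proof of this statement: it is quoted from \cite{Webmerged}, with the paragraph after it recording the history (Khovanov--Lauda for $\mathfrak{sl}_n$; in finite type via Cautis--Lauda together with Kang--Kashiwara). Your outline is essentially the strategy carried out in that reference and its predecessors: define $\gamma$ on generators, verify the $\dot U$-relations in $K^0(\dU)$ using the KLR categorification of $U_q^-(\fg)$ and the $\eE_i\eF_i\cong\eF_i\eE_i\oplus\cdots$ isomorphisms, and establish nondegeneracy by letting $\dU$ act on categorified integrable highest-weight modules whose Grothendieck groups are identified independently with the $V^\Z_\la$.

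Two points in your sketch are loose. First, surjectivity of $\gamma$ does not follow as you claim: knowing that every indecomposable $M$ is a summand of some monomial $P$ in the $\eE_i,\eF_i$ gives $[M]+[N]=[P]$ but does not by itself place $[M]$ in the $\Z[q,q^{-1}]$-span of monomial classes. What actually closes this is the upper-triangularity of Proposition~\ref{unique} (each stringy monomial contributes exactly one new indecomposable summand, and every indecomposable arises this way), which lets one invert and express every indecomposable class as a $\Z[q,q^{-1}]$-combination of monomial classes; equivalently, the rank comparison you allude to is what supplies surjectivity once injectivity is in hand. Second, be careful with the normalization $(\dot 1_\la,\dot 1_\la)=1$: the space $\HOM_{\tU}(1_\la,1_\la)$ is the nontrivial positively graded ring of bubbles, so the graded Euler pairing on $1_\la$ is a genuine power series in $q^{-1}$, not $1$, and the form on $\dot U$ matched in the theorem (as in \cite{KLIII} and \cite[26.1.2]{Lusbook}) reflects this. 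The biadjunction and the restriction to $U_q^-(\fg)$ are the right characterizing data, but the idempotent normalization as you stated it does not hold.
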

This theorem was first conjectured by Khovanov and
Lauda \cite{KLIII} and proven by them in the special case of
$\mathfrak{sl}_n$.  While not explicitly stated in their paper, this
also follows easily from \cite[8.1]{CaLa} which was proved
independently of the work above, relying on the paper of Kang and
Kashiwara \cite{KK} in its stead.

We recall from \cite[\S 3.3.2]{KLIII} that we have an involution \[\tilde
\psi\colon \HOM(\eE_{i_1}\cdots \eE_{i_m}\la,\eE_{j_1}\cdots
\eE_{j_n}\la)\to \HOM(\eE_{j_1}\cdots
\eE_{j_n}\la,\eE_{i_1}\cdots \eE_{i_m}\la)\]
reflecting the diagrams of two morphisms through a horizontal line and
reversing orientation.
This extends to a 2-functor $\dU\to\dU$ which is covariant on
1-morphisms and contravariant on 2-morphisms, sending $\eE_i(k)\mapsto
\eE_i(-k), \eF_i(k)\mapsto
\eF_i(-k)$.

\begin{proposition}[\mbox{Khovanov-Lauda \cite[3.28]{KLIII}}]
  The 2-functor $\tilde{\psi}$ categorifies the bar involution of
  $\dot{U}_q(\fg)$ (denoted by $\psi$ in \cite{KLIII}).
\end{proposition}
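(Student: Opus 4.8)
The plan is to verify that the $\Z$-linear endomorphism $[\tilde\psi]$ which $\tilde\psi$ induces on $K^0(\dU)\cong\dot{U}$ (Theorem~\ref{categorify-U}) has the three properties that characterize Lusztig's bar involution: it is antilinear over $\Z[q,q^{-1}]$, it is a ring homomorphism for the product coming from horizontal composition, and it fixes $1_\la$, $E_i1_\la$ and $F_i1_\la$. Granting the already-cited fact that reflect-and-reverse-orientation really does define a $2$-functor $\dU\to\dU$ with the properties listed in the surrounding text, everything below is essentially formal.

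First I would check that $[\tilde\psi]$ is a well-defined antilinear automorphism. Reflecting twice and reversing orientation twice is the identity, so $\tilde\psi$ is a strict involution; being an additive $2$-functor, it restricts on each Hom-category to an additive (contravariant) self-equivalence, which carries indecomposables to indecomposables and direct sums to direct sums, and hence induces an automorphism of the Grothendieck group. That $\tilde\psi$ is \emph{contravariant} on $2$-morphisms is invisible here, since the Grothendieck group of an additive category agrees with that of its opposite. Antilinearity then follows from the rule $\tilde\psi(u(k))\cong\tilde\psi(u)(-k)$: grading shift categorifies multiplication by $q$, and $\tilde\psi$ inverts it, so $[\tilde\psi]\circ q = q^{-1}\circ[\tilde\psi]$ on classes of $1$-morphisms, and hence everywhere by additivity.

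Next I would use covariance on $1$-morphisms. A $2$-functor covariant on $1$-morphisms preserves horizontal composition up to coherent isomorphism, and under Theorem~\ref{categorify-U} horizontal composition is the multiplication of $\dot{U}$; hence $[\tilde\psi]$ is a ring homomorphism. It fixes the identity $1$-morphism at each $\la$, and by the displayed rule with $k=0$ it fixes $\eE_i\la$ and $\eF_i\la$, so (in the normalization of Theorem~\ref{categorify-U}) it fixes $1_\la$, $E_i1_\la$ and $F_i1_\la$. A $q$-antilinear ring endomorphism of $\dot{U}$ fixing all of these is unique: it extends uniquely to $\dot{U}_{\Q(q)}$, which is generated as a $\Q(q)$-algebra by these elements, so it must agree with the bar involution there and hence on the integral form; integrally one can also note that fixing $E_i1_\la$ forces fixing $E_i^{(n)}1_\la$, since $E_i^n1_\la=[n]_q!\,E_i^{(n)}1_\la$ with $[n]_q!$ bar-invariant and a non-zero-divisor in the free module $\dot{U}$. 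Thus $[\tilde\psi]=\psi$.

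The only genuinely non-formal inputs are the ones being cited: that reflecting a KL diagram and reversing orientations respects every defining relation of $\tU$ --- so that $\tilde\psi$ descends to a $2$-functor at all --- and that it inverts grading shifts. If one had to prove these from scratch (rather than quoting \cite[\S3.3.2]{KLIII} and adjusting for the mirrored conventions of this paper), the work would be a relation-by-relation inspection: reflecting the pitchfork relations \eqref{pitch1}--\eqref{pitch2}, the bubble inversion \eqref{inv}, the cup/cap relations \eqref{lollipop1}--\eqref{switch-2}, and the quiver-Hecke relations \eqref{first-QH}--\eqref{triple-smart}, keeping track of the scalars $t_{ij}$, the polynomials $Q_{ij}$, and the region labels; and, the subtlest part, the degree count, where the $\langle\la,\al_i\rangle$-dependent contributions at the cups and caps must reassemble so that total degree is negated. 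That degree bookkeeping is where I would expect to have to slow down.
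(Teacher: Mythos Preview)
The paper does not supply its own proof of this proposition; it is stated purely as a citation of Khovanov--Lauda \cite[Prop.~3.28]{KLIII}, so there is nothing to compare against. Your argument is the standard one and is correct: granting that $\tilde\psi$ is a well-defined $2$-functor with the stated variance and grading-shift behaviour, the induced map on $K^0(\dU)$ is a $q$-antilinear ring endomorphism fixing $1_\la$, $E_i1_\la$, $F_i1_\la$, and any such map is the bar involution. Your closing paragraph is also accurate in identifying where the real content lies --- the relation-by-relation and degree checks that make $\tilde\psi$ a $2$-functor in the first place --- and that is precisely the part being outsourced to the cited reference.
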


This inner product and involution are part of the pre-canonical
structure used by Lusztig to define the canonical basis of $\dot{U}$;
the role of the standard basis can be played by a number of different
bases of $\dot{U}$.  We will use one defined using string parametrizations of crystal
elements. This is perhaps less elegant on the
level of the quantum groups than the PBW basis defined via the braid
action used by Lusztig in \cite{LusCB} (in particular, it is not
balanced as defined in \ref{sec:pre-canon-struct}), but is easier to handle in
the categorification.


\section{The 2-category $\mathcal{T}$}
\label{sec:2-category-ct}

In the next three sections, we will present a construction of a
categorification of tensor products of highest and lowest weight
representations.  Almost all of the results which appear have
equivalents in the author's earlier paper \cite{Webmerged}, and in most
cases, the nature of the proofs is quite similar.  First, we present
an auxiliary category which generalizes that presented in
\cite[\S \ref{m-sec:double-tens-prod}]{Webmerged}.

\begin{definition}
  A {\bf blank tricolore diagram}\footnote{When drawn on a blackboard,
    this diagram involves red, white and blue colors.  Citizens of
    Australia, Cambodia, Chile, the Cook Islands, Costa Rica, Croatia,
    Cuba, the Czech Republic, the Dominican Republic, Faroe Islands, France,
    Haiti, Iceland, North Korea, Laos, Liberia, Luxembourg, the
    Netherlands, Norway, Panama, Paraguay, Russia, Samoa, Serbia, Sint
    Maarten, Slovakia, Slovenia, Taiwan, Thailand, the United Kingdom
    and the United States are all free to regard this patriotically
    according to their preferences.} is a collection of
  finitely many oriented curves in
  $\R\times [0,1]$. Each curve is either
  \begin{itemize}
  \item colored red and labeled with a dominant weight of $\fg$, or
  \item colored blue and labeled with an anti-dominant weight of
    $\fg$, or 
  \item colored black and labeled with $i\in \Gamma$ and decorated with finitely many dots.
  \end{itemize}
  The red strands are constrained to be oriented downwards and the
  blue strands to be oriented upwards; we will generally not draw the
  orientation on these strands.  Furthermore, red and blue strands are
  forbidden to intersect with any other red or blue strand.  The
black strands are allowed to close into circles, self-intersect,
intersect red and blue strands, etc.

Blank tricolore diagrams divide their complement in $\R^2\times [0,1]$ into finitely
many connected components, and we define a {\bf tricolore
  diagram} to be a blank tricolore
  diagram together with a labeling of these regions
by weights consistent with the rules 
\begin{equation*}
  \tikz[baseline,very thick]{
\draw[wei,postaction={decorate,decoration={markings,
    mark=at position .5 with {\arrow[scale=1.3]{<}}}}] (0,-.5) -- node[below,at start]{$\la$}  (0,.5);
\node at (-1,0) {$\mu$};
\node at (1,.05) {$\mu+\la$};
}\qquad \qquad   \tikz[baseline,very thick]{
\draw[awei,postaction={decorate,decoration={markings,
    mark=at position .7 with {\arrow[scale=1.3]{>}}}}] (0,-.5) -- node[below,at start]{$-\la$}  (0,.5);
\node at (-1,0) {$\mu$};
\node at (1,.05) {$\mu-\la$};
}\qquad \qquad 
  \tikz[baseline,very thick]{
\draw[postaction={decorate,decoration={markings,
    mark=at position .5 with {\arrow[scale=1.3]{<}}}}] (0,-.5) -- node[below,at start]{$\la$}  (0,.5);
\node at (-1,0) {$\mu$};
\node at (1,.05) {$\mu-\al_i$};
}
\end{equation*}
Since this labeling is fixed as soon as one region is labeled, we will
typically not draw in the weights in all regions in the interest of
simplifying pictures.
\end{definition}  

For example,
\[  a'=
\begin{tikzpicture}[baseline,very thick]
\draw [postaction={decorate,decoration={markings,
    mark=at position .7 with {\arrow[scale=1.3]{>}}}}] (-.5,-1) to[out=90,in=-90] node[below,at start]{$i$} (-1,0)
  to[out=90,in=90] node[below,at end]{$i$} (1,-1);
\draw [postaction={decorate,decoration={markings,
    mark=at position .4 with {\arrow[scale=1.3]{<}}}}] (.5,-1) to[out=90,in=-120] node[below,at start]{$j$} (1,.6)
  to[out=60,in=90] (1.3,.6) to [out=-90,in=-60] (1,.4)to[out=120,in=-90]  node[above,at end]{$j$} (1,1); 
  \draw[postaction={decorate,decoration={markings,
    mark=at position .5 with {\arrow[scale=1.3]{<}}}}]  (.5,1) to[out=-90,in=-90] node[above,at start]{$i$} node[above,at end]{$i$}(0,1);
 \draw[awei,postaction={decorate,decoration={markings,
    mark=at position .4 with {\arrow[scale=1]{>}}}}] (-1, -1) to[out=90,in=-90]node[below,at start]{$-\la_1$} (-.5,0) to[out=90,in=-90] (-1,1);
\draw[wei,postaction={decorate,decoration={markings,
    mark=at position .5 with {\arrow[scale=1]{<}}}}] (0,-1) to[out=90,in=-90]node[below,at start]{$\la_2$} (-.5,1);
\draw[postaction={decorate,decoration={markings,
    mark=at position .5 with {\arrow[scale=1.3]{<}}}}] (1.5,0) circle (6pt);  \node at (1.9,0){$i$};
\end{tikzpicture}
\]
is a blank tricolore diagram. Both the notion of {\bf KL diagrams} and {\bf double Stendhal
diagrams} from \cite{Webmerged} are special cases of
tricolore diagrams: a KLD is a tricolore diagram with no red
and blue strands and a DSD is a tricolore diagram with no blue
strands.

As usual, we will want to record the horizontal slices at $y=0$ and
$y=1$, the {\bf bottom} and {\bf top} of the diagram.  This will be
encoded as a {\bf tricolore quadruple}, consisting of 
\begin{itemize}
\item the sequence $\Bi\in (\pm \Gamma)^n$ of simple roots and their
  negatives on black strands, read from the left;
\item a sequence $\bla\in (\wela^{\pm})^\ell$ of dominant or
  anti-dominant weights on red and blue strands, read from the left;
\item the weakly increasing function $\kappa\colon [1,\ell]\to [0,n]$
  such that $\kappa(m)$ is the number of black strands left of $m$th red or blue strand (both counted
  from the left). By
convention, we write $\kappa(i)=0$, if the $i$th red or blue strand is left of all
black strands.  
\end{itemize}
We'll often condense these 3 items together into a single sequence
whose entries are both elements of $\pm\Gamma$ and $\wela^\pm$; we'll
typically denote such sequences with upper-case sans-serif letters,
such as $\sI$.
\begin{itemize}
\item the weights $\EuScript{L}$ and $\EuScript{R}$ at the far left and
  right of the diagram.  These are related by
\[\EuScript{L}+\sum_{k=1}^\ell\la_k+\sum_{m=1}^n\al_{i_m}=\EuScript{R}.\]
\end{itemize}

Tricolore diagrams are endowed with horizontal and vertical
composition operations, just like KL and DS diagrams; similarly
tricolore quadruples are endowed with a horizontal composition.  As in
\cite{Webmerged}, we maintain the dyslexic convention that the horizontal
composition $a\circ b$ places $a$ to the {\it right} of $b$ and we
read diagrams from bottom to top. 

\begin{definition}
  We let  $\doubletilde{\cT}$ be the strict 2-category where
 \begin{itemize}
  \item objects are weights in $X(\fg)$,
\item 1-morphisms $\mu\to \nu$ are tricolore quadruples with
  $\EuScript{L}=\mu,\EuScript{R}=\nu$ and composition is given by horizontal
  composition as above.
\item degraded 2-morphisms $h\to h'$ between tricolore quadruples are $\K$-linear combinations
  of tricolore diagrams  with $h$ as bottom
  and $h'$ as top, and vertical and horizontal composition of
  2-morphisms is defined above.  As with $\tU$, we should only take
  elements of 
  degree 0 as ``honest'' 2-morphisms.
\end{itemize}
\end{definition}

We can grade the 2-morphism spaces of this 2-category by endowing each
tricolore diagram with a degree.  KL diagrams are graded by the
degrees given in Section \ref{sec:2-category-cu}. The  red/black or
blue/black crossings have the following degrees, which are invariant under
reflection through a vertical line: \[
  \deg\tikz[baseline,very thick,scale=1.5]{\draw[->] (.2,.3) --
    (-.2,-.1) node[at end,below, scale=.8]{$i$}; \draw[wei] (.2,-.1) --
    (-.2,.3) node[at start,below,scale=.8]{$\la$};}
  =\langle\al_i,\la\rangle \qquad \deg\tikz[baseline,very thick,scale=1.5]{\draw[<-] (.2,.3) --
    (-.2,-.1) node[at end,below, scale=.8]{$i$}; \draw[wei] (.2,-.1) --
    (-.2,.3) node[at start,below,scale=.8]{$\la$};}
  =0 \qquad 
  \deg\tikz[baseline,very thick,scale=1.5]{\draw[->] (.2,.3) --
    (-.2,-.1) node[at end,below,scale=.8]{$i$}; \draw[awei] (.2,-.1) --
    (-.2,.3) node[at start,below,scale=.8]{$-\la$};}
  =0 \qquad 
  \deg\tikz[baseline,very thick,scale=1.5]{\draw[<-] (.2,.3) --
    (-.2,-.1) node[at end,below,scale=.8]{$i$}; \draw[awei] (.2,-.1) --
    (-.2,.3) node[at start,below,scale=.8]{$-\la$};}
  =\langle\al_i,\la\rangle \]
  \begin{definition}
    Let $\cT$ be\footnote{The author recognizes that the same symbol
      in used in \cite{Webmerged} to denote the subcategory of this one
      where no blue strands are allowed.  From context, we do not
      think that confusion is likely.} the quotient of $\doubletilde{\cT}$ by the following relations on 2-morphisms:
    \begin{itemize}
    \item All the relations of $\tU$ given in
      (\ref{pitch1}--\ref{triple-smart}) hold on black strands.
      \item \newseq 
Oppositely oriented crossings of differently labelled strands
  simply cancel, shown in (\ref{color-opp-cancel}).  This includes
  crossings of red/blue strands with black ones.
        \begin{equation}
\begin{tikzpicture} [scale=.9,baseline]

\node at (0,0){\begin{tikzpicture} [scale=1.1]
\node[scale=1.5] at (-.7,1){$\la$};
\draw[postaction={decorate,decoration={markings,
    mark=at position .5 with {\arrow[scale=1.3]{<}}}},very thick] (0,0) to[out=90,in=-90] node[at start,below]{$i$} (1,1) to[out=90,in=-90] (0,2) ;  
\draw[awei] (1,0) to[out=90,in=-90] node[at start,below]{$-\mu$} (0,1) to[out=90,in=-90] (1,2);
\end{tikzpicture}};

\node at (1.7,0) {$=$};
\node at (3.4,0){\begin{tikzpicture} [scale=1.3]

\node[scale=1.5] at (.3,1){$\la$};

\draw[postaction={decorate,decoration={markings,
    mark=at position .5 with {\arrow[scale=1.3]{<}}}},very thick] (1,0)--  (1,2) node[at start,below]{$i$};  
\draw[awei] (1.7,0) --  (1.7,2) node[at start,below]{$-\mu$};
\end{tikzpicture}};
\node at (7,0){\begin{tikzpicture} [scale=1.3]
\node[scale=1.5] at (-.7,1){$\la$};
\draw[postaction={decorate,decoration={markings,
    mark=at position .5 with {\arrow[scale=1.3]{>}}}},very thick] (0,0) to[out=90,in=-90] node[at start,below]{$i$} (1,1) to[out=90,in=-90] (0,2) ;  
\draw[wei] (1,0) to[out=90,in=-90] node[at start,below]{$\mu$} (0,1) to[out=90,in=-90] (1,2);
\end{tikzpicture}};

\node at (8.7,0) {$=$};
\node at (10.4,0){\begin{tikzpicture} [scale=1.3]

\node[scale=1.5] at (.3,1){$\la$};

\draw[postaction={decorate,decoration={markings,
    mark=at position .5 with {\arrow[scale=1.3]{>}}}},very thick] (1,0) -- (1,2) node[at start,below]{$i$};  
\draw[wei] (1.7,0) --  (1.7,2) node[at start,below]{$\mu$};
\end{tikzpicture}};

\end{tikzpicture}
\label{color-opp-cancel}
\end{equation}

\item  All black crossings and dots can pass through red or blue
  lines, with a correction term similar to Khovanov and Lauda's (for
  the relations of (\ref{fig:pass-through}-\ref{pitch}), we also include their mirror images through
  a vertical line), as shown below:\newseq
    \begin{equation*}\subeqn\label{blue-triple}
      \begin{tikzpicture}[very thick,baseline]
          \draw[postaction={decorate,decoration={markings,
    mark=at position .2 with {\arrow[scale=1.3]{>}}}}] (-3,0) +(1,-1) -- +(-1,1) node[at start,below]{$i$};
          \draw[postaction={decorate,decoration={markings,
    mark=at position .8 with {\arrow[scale=1.3]{>}}}}] (-3,0) +(-1,-1) -- +(1,1)node [at start,below]{$j$};
          \draw[awei](-3,-1) .. controls (-4,0) ..  (-3,1)
          node [at start, below]{$-\la$};
          \node at (-1,0) {=}; \draw[postaction={decorate,decoration={markings,
    mark=at position .8 with {\arrow[scale=1.3]{>}}}}] (1,0) +(1,-1) -- +(-1,1) node[at
          start,below]{$i$}; \draw[postaction={decorate,decoration={markings,
    mark=at position .2 with {\arrow[scale=1.3]{>}}}}] (1,0) +(-1,-1) -- +(1,1) node [at
          start,below]{$j$}; \draw[awei] (1,-1) .. controls
          (2,0) ..  (1,1)  node [at start, below]{$-\la$}; \node at (2.8,0) {$-$}; \draw[postaction={decorate,decoration={markings,
    mark=at position .2 with {\arrow[scale=1.3]{>}}}}] (6.5,0)
          +(1,-1) -- +(1,1) node[midway,circle,fill,inner
          sep=2.5pt,label=right:{$a$}]{} node[at start,below]{$i$};
          \draw[postaction={decorate,decoration={markings,
    mark=at position .2 with {\arrow[scale=1.3]{>}}}}] (6.5,0) +(-1,-1) -- +(-1,1)
          node[midway,circle,fill,inner sep=2.5pt,label=left:{$b$}]{}
          node [at start,below]{$j$}; \draw[awei] (6.5,0) +(0,-1) --
          +(0,1)  node [at start, below]{$-\la$}; \node at (3.8,-.2){$\displaystyle \sum_{a+b-1=\la^i}
            \delta_{i,j} $} ;
        \end{tikzpicture}
      \end{equation*}
   \begin{equation*}\subeqn\label{red-triple}
   \begin{tikzpicture}[very thick,baseline]
          \draw (-3,0)[postaction={decorate,decoration={markings,
    mark=at position .2 with {\arrow[scale=1.3]{<}}}}] +(1,-1) -- +(-1,1) node[at start,below]{$i$};
          \draw[postaction={decorate,decoration={markings,
    mark=at position .8 with {\arrow[scale=1.3]{<}}}}] (-3,0) +(-1,-1) -- +(1,1)node [at start,below]{$j$};
          \draw[wei] (-3,-1) .. controls (-4,0) ..  (-3,1)  node [at start, below]{$\la$};
          \node at (-1,0) {=}; \draw[postaction={decorate,decoration={markings,
    mark=at position .8 with {\arrow[scale=1.3]{<}}}}] (1,0) +(1,-1) -- +(-1,1) node[at
          start,below]{$i$}; \draw[postaction={decorate,decoration={markings,
    mark=at position .2 with {\arrow[scale=1.3]{<}}}}] (1,0) +(-1,-1) -- +(1,1) node [at
          start,below]{$j$}; \draw[wei] (1,-1) .. controls
          (2,0) ..  (1,1) node [at start, below]{$\la$}; \node at (2.8,0) {$+ $}; \draw[postaction={decorate,decoration={markings,
    mark=at position .8 with {\arrow[scale=1.3]{<}}}}] (6.5,0)
          +(1,-1) -- +(1,1) node[midway,circle,fill,inner
          sep=2.5pt,label=right:{$a$}]{} node[at start,below]{$i$};
          \draw[postaction={decorate,decoration={markings,
    mark=at position .8 with {\arrow[scale=1.3]{<}}}}] (6.5,0) +(-1,-1) -- +(-1,1)
          node[midway,circle,fill,inner sep=2.5pt,label=left:{$b$}]{}
          node [at start,below]{$j$}; \draw[wei] (6.5,0) +(0,-1) --
          +(0,1) node [at start, below]{$\la$}; \node at (3.8,-.2){$\displaystyle \sum_{a+b-1=\la^i}
            \delta_{i,j} $} ;
        \end{tikzpicture}
      \end{equation*}
\begin{equation*}\subeqn \label{fig:pass-through1}
   \begin{tikzpicture}[yscale=.9,baseline]
      \node at (3.6,1.5){    \begin{tikzpicture}[very thick]
          \draw[postaction={decorate,decoration={markings,
    mark=at position .2 with {\arrow[scale=1.3]{>}}}}] (-3,0) +(1,-1) -- +(-1,1);
          \draw[postaction={decorate,decoration={markings,
    mark=at position .8 with {\arrow[scale=1.3]{<}}}}] (-3,0) +(-1,-1) -- +(1,1);
          \draw[awei] (-3,0) +(0,-1) .. controls (-4,0) ..  +(0,1);
          \node at (-1,0) {=}; \draw[postaction={decorate,decoration={markings,
    mark=at position .8 with {\arrow[scale=1.3]{>}}}}] (1,0) +(1,-1) -- +(-1,1); \draw[postaction={decorate,decoration={markings,
    mark=at position .2 with {\arrow[scale=1.3]{<}}}}] (1,0) +(-1,-1) -- +(1,1); \draw[awei] (1,0) +(0,-1) .. controls
          (2,0) ..  +(0,1); 
        \end{tikzpicture}};
     \node at (-3.6,1.5){    \begin{tikzpicture}[very thick]
       \draw (-3,0)[postaction={decorate,decoration={markings,
    mark=at position .2 with {\arrow[scale=1.3]{<}}}}] +(1,-1) -- +(-1,1);
          \draw[postaction={decorate,decoration={markings,
    mark=at position .8 with {\arrow[scale=1.3]{>}}}}] (-3,0) +(-1,-1) -- +(1,1);
          \draw[wei] (-3,0) +(0,-1) .. controls (-4,0) ..  +(0,1);
          \node at (-1,0) {=}; \draw[postaction={decorate,decoration={markings,
    mark=at position .8 with {\arrow[scale=1.3]{<}}}}] (1,0) +(1,-1) -- +(-1,1); \draw[postaction={decorate,decoration={markings,
    mark=at position .2 with {\arrow[scale=1.3]{>}}}}] (1,0) +(-1,-1) -- +(1,1); \draw[wei] (1,0) +(0,-1) .. controls
          (2,0) ..  +(0,1); 
        \end{tikzpicture}};
  \node at (3.6,-1.5){    \begin{tikzpicture}[very thick]
          \draw[postaction={decorate,decoration={markings,
    mark=at position .2 with {\arrow[scale=1.3]{<}}}}] (-3,0) +(1,-1) -- +(-1,1);
          \draw[postaction={decorate,decoration={markings,
    mark=at position .8 with {\arrow[scale=1.3]{>}}}}] (-3,0) +(-1,-1) -- +(1,1);
          \draw[awei] (-3,0) +(0,-1) .. controls (-4,0) ..  +(0,1);
          \node at (-1,0) {=}; \draw[postaction={decorate,decoration={markings,
    mark=at position .8 with {\arrow[scale=1.3]{<}}}}] (1,0) +(1,-1) -- +(-1,1); \draw[postaction={decorate,decoration={markings,
    mark=at position .2 with {\arrow[scale=1.3]{>}}}}] (1,0) +(-1,-1) -- +(1,1); \draw[awei] (1,0) +(0,-1) .. controls
          (2,0) ..  +(0,1); 
        \end{tikzpicture}};
     \node at (-3.6,-1.5){    \begin{tikzpicture}[very thick]
       \draw (-3,0)[postaction={decorate,decoration={markings,
    mark=at position .2 with {\arrow[scale=1.3]{>}}}}] +(1,-1) -- +(-1,1);
          \draw[postaction={decorate,decoration={markings,
    mark=at position .8 with {\arrow[scale=1.3]{<}}}}] (-3,0) +(-1,-1) -- +(1,1);
          \draw[wei] (-3,0) +(0,-1) .. controls (-4,0) ..  +(0,1);
          \node at (-1,0) {=}; \draw[postaction={decorate,decoration={markings,
    mark=at position .8 with {\arrow[scale=1.3]{>}}}}] (1,0) +(1,-1) -- +(-1,1); \draw[postaction={decorate,decoration={markings,
    mark=at position .2 with {\arrow[scale=1.3]{<}}}}] (1,0) +(-1,-1) -- +(1,1); \draw[wei] (1,0) +(0,-1) .. controls
          (2,0) ..  +(0,1); 
        \end{tikzpicture}};
      \end{tikzpicture}
    \end{equation*}\begin{equation*}\subeqn \label{fig:pass-through}
   \begin{tikzpicture}[yscale=.9,baseline]
      \node at (3.6,1.5){    \begin{tikzpicture}[very thick]
          \draw[postaction={decorate,decoration={markings,
    mark=at position .2 with {\arrow[scale=1.3]{<}}}}] (-3,0) +(1,-1) -- +(-1,1);
          \draw[postaction={decorate,decoration={markings,
    mark=at position .8 with {\arrow[scale=1.3]{<}}}}] (-3,0) +(-1,-1) -- +(1,1);
          \draw[awei] (-3,0) +(0,-1) .. controls (-4,0) ..  +(0,1);
          \node at (-1,0) {=}; \draw[postaction={decorate,decoration={markings,
    mark=at position .8 with {\arrow[scale=1.3]{<}}}}] (1,0) +(1,-1) -- +(-1,1); \draw[postaction={decorate,decoration={markings,
    mark=at position .2 with {\arrow[scale=1.3]{<}}}}] (1,0) +(-1,-1) -- +(1,1); \draw[awei] (1,0) +(0,-1) .. controls
          (2,0) ..  +(0,1); 
        \end{tikzpicture}};
     \node at (-3.6,1.5){    \begin{tikzpicture}[very thick]
       \draw (-3,0)[postaction={decorate,decoration={markings,
    mark=at position .2 with {\arrow[scale=1.3]{>}}}}] +(1,-1) -- +(-1,1);
          \draw[postaction={decorate,decoration={markings,
    mark=at position .8 with {\arrow[scale=1.3]{>}}}}] (-3,0) +(-1,-1) -- +(1,1);
          \draw[wei] (-3,0) +(0,-1) .. controls (-4,0) ..  +(0,1);
          \node at (-1,0) {=}; \draw[postaction={decorate,decoration={markings,
    mark=at position .8 with {\arrow[scale=1.3]{>}}}}] (1,0) +(1,-1) -- +(-1,1); \draw[postaction={decorate,decoration={markings,
    mark=at position .2 with {\arrow[scale=1.3]{>}}}}] (1,0) +(-1,-1) -- +(1,1); \draw[wei] (1,0) +(0,-1) .. controls
          (2,0) ..  +(0,1); 
        \end{tikzpicture}};
    \node at (-3.6,-1.5){    \begin{tikzpicture}[very thick]
          \draw[postaction={decorate,decoration={markings,
    mark=at position .8 with {\arrow[scale=1.3]{<}}}}](-3,0) +(-1,-1) -- +(1,1); \draw[wei](-3,0) +(1,-1) --
          +(-1,1); \fill (-3.5,-.5) circle (3pt); \node at (-1,0) {=};
          \draw[postaction={decorate,decoration={markings,
    mark=at position .2 with {\arrow[scale=1.3]{<}}}}](1,0) +(-1,-1) -- +(1,1); \draw[wei](1,0) +(1,-1) --
          +(-1,1); \fill (1.5,0.5) circle (3pt);
        \end{tikzpicture}};
    \node at (3.6,-1.5){    \begin{tikzpicture}[very thick]
          \draw[postaction={decorate,decoration={markings,
    mark=at position .8 with {\arrow[scale=1.3]{<}}}}](-3,0) +(-1,-1) -- +(1,1); \draw[awei](-3,0) +(1,-1) --
          +(-1,1); \fill (-3.5,-.5) circle (3pt); \node at (-1,0) {=};
          \draw[postaction={decorate,decoration={markings,
    mark=at position .2 with {\arrow[scale=1.3]{<}}}}](1,0) +(-1,-1) -- +(1,1); \draw[awei](1,0) +(1,-1) --
          +(-1,1); \fill (1.5,0.5) circle (3pt);
        \end{tikzpicture}};
      \end{tikzpicture}
    \end{equation*}
 \begin{equation*}\subeqn
 \begin{tikzpicture}[very thick,baseline]\label{pitch}
      \draw[postaction={decorate,decoration={markings,
    mark=at position .9 with {\arrow[scale=1.3]{>}}}}] (-3,0)  +(1,-1)
to[out=90,in=0] +(0,0) to[out=180,in=90] +(-1,-1);
      \draw[wei] (-3,0)  +(0,-1) .. controls (-4,0) ..  +(0,1);
      \node at (-1,0) {=};
      \draw[postaction={decorate,decoration={markings,
    mark=at position .9 with {\arrow[scale=1.3]{>}}}}] (1,0)  +(1,-1)
to[out=90,in=0] +(0,0) to[out=180,in=90] +(-1,-1);
      \draw[wei] (1,0) +(0,-1) .. controls (2,0) ..  +(0,1);   
 \end{tikzpicture}\qquad    \begin{tikzpicture}[very thick,baseline]
      \draw[postaction={decorate,decoration={markings,
    mark=at position .9 with {\arrow[scale=1.3]{>}}}}] (-3,0)  +(1,-1)
to[out=90,in=0] +(0,0) to[out=180,in=90] +(-1,-1);
      \draw[awei] (-3,0)  +(0,-1) .. controls (-4,0) ..  +(0,1);
      \node at (-1,0) {=};
      \draw[postaction={decorate,decoration={markings,
    mark=at position .9 with {\arrow[scale=1.3]{>}}}}] (1,0)  +(1,-1)
to[out=90,in=0] +(0,0) to[out=180,in=90] +(-1,-1);
      \draw[awei] (1,0) +(0,-1) .. controls (2,0) ..  +(0,1);   
 \end{tikzpicture}
  \end{equation*}

\item The ``cost'' of a separating similarly oriented red/blue and
  black lines is adding $\la^i=\al_i^\vee(\la)$ dots to the black
  strand as shown in (\ref{cost}):
     \begin{equation}\label{cost}
      \begin{tikzpicture}[very thick,baseline=1.45cm]
        \draw[postaction={decorate,decoration={markings,
    mark=at position .51 with {\arrow[scale=1.3]{<}}}}]  (-2.8,-1) .. controls (-1.2,0) ..  (-2.8,1)
        node[below,at start]{$i$}; \draw[wei] (-1.2,0) +(0,-1)
        .. controls (-2.8,0) ..  +(0,1) node[below,at start]{$\la$};
        \node at (-.3,0) {=}; \draw[wei] (2.8,0) +(0,-1) -- +(0,1)
        node[below,at start]{$\la$}; \draw[postaction={decorate,decoration={markings,
    mark=at position .2 with {\arrow[scale=1.3]{<}}}}]  (1.2,0) +(0,-1) -- +(0,1)
        node[below,at start]{$i$}; \fill (1.2,0) circle (3pt)
        node[left=3pt]{$\la^i$}; \draw[wei] (-2.8,3) +(0,-1)
        .. controls (-1.2,3) ..  +(0,1) node[below,at start]{$\la$};
        \draw[postaction={decorate,decoration={markings,
    mark=at position .51 with {\arrow[scale=1.3]{<}}}}]  (-1.2,3) +(0,-1) .. controls (-2.8,3) ..  +(0,1)
        node[below,at start]{$i$}; \node at (-.3,3) {=}; \draw[postaction={decorate,decoration={markings,
    mark=at position .2 with {\arrow[scale=1.3]{<}}}}]  (2.8,3)
        +(0,-1) -- +(0,1) node[below,at start]{$i$}; \draw[wei]
        (1.2,3) +(0,-1) -- +(0,1) node[below,at start]{$\la$}; \fill
        (2.8,3) circle (3pt) node[right=3pt]{$\la^i$};
      \end{tikzpicture}\quad
      \begin{tikzpicture}[very thick,baseline=1.6cm]
        \draw[postaction={decorate,decoration={markings,
    mark=at position .5 with {\arrow[scale=1.3]{>}}}}]  (-2.8,-1) .. controls (-1.2,0) ..  (-2.8,1)
        node[below,at start]{$i$}; \draw[awei] (-1.2,0) +(0,-1)
        .. controls (-2.8,0) ..  +(0,1) node[below,at start]{$-\la$};
        \node at (-.3,0) {=}; \draw[awei] (2.8,0) +(0,-1) -- +(0,1)
        node[below,at start]{$-\la$}; \draw[postaction={decorate,decoration={markings,
    mark=at position .8 with {\arrow[scale=1.3]{>}}}}]  (1.2,0) +(0,-1) -- +(0,1)
        node[below,at start]{$i$}; \fill (1.2,0) circle (3pt)
        node[left=3pt]{$\la^i$}; \draw[awei] (-2.8,3) +(0,-1)
        .. controls (-1.2,3) ..  +(0,1) node[below,at start]{$-\la$};
        \draw[postaction={decorate,decoration={markings,
    mark=at position .5 with {\arrow[scale=1.3]{>}}}}]  (-1.2,3) +(0,-1) .. controls (-2.8,3) ..  +(0,1)
        node[below,at start]{$i$}; \node at (-.3,3) {=}; \draw[postaction={decorate,decoration={markings,
    mark=at position .8 with {\arrow[scale=1.3]{>}}}}]  (2.8,3)
        +(0,-1) -- +(0,1) node[below,at start]{$i$}; \draw[awei]
        (1.2,3) +(0,-1) -- +(0,1) node[below,at start]{$-\la$}; \fill
        (2.8,3) circle (3pt) node[right=3pt]{$\la^i$};
      \end{tikzpicture}
    \end{equation}
\end{itemize}
 \end{definition}

The category $\cT$ has a subcategory, which we will call the {\bf
  Polish\footnote{No slight to the residents of Austria, Bahrain,
    etc. intended.} subcategory} given by diagrams with no blue lines,
only red and black.  We have a complementary  {\bf
  Scottish\footnote{Similar apologies to Salvadore\~nos, Finns, etc.}
  subcategory} of diagrams with no red strands, only
blue and black.

As with $\tU$ and $\dU$, we let $\dT$ be the idempotent completion of
the Hom-categories of $\cT$.
We have an obvious 2-functor $\tU\to \cT$ (and thus $\dU\to \dT$),
thinking of a KL diagram as a tricolore diagram.
The 2-category $\cT$ thus carries an action of $\tU$ by horizontal
composition on the right and on the left.  

Unfortunately, as usual with a presentation by
generators and relations, it is far from obvious that 
the category
even has any non-zero objects.

The morphism space between any two sequences $(\bla,\Bi,\kappa)$ and
$(\bla,\Bi',\kappa')$ has an obvious spanning set $D$.  As usual,
there are actually many such spanning sets, depending on certain
choices.  The elements of $D$ are in canonical bijection with the
basis $B_{\Bi,\Bi'}$ of Khovanov and Lauda, defined in \cite[\S
2.2]{KLIII}.  Each basis vector is given by a KL diagram; we then choose a way to insert the
red and blue strands into this diagram which preserves the tricolore
conditions.  We can show that this set is a basis by using a
deformation of this category and reducing to the non-degeneracy for
cyclotomic quotients proven in \cite{Webmerged} and \cite{KK}.  This
is carried out in \cite{Webunfurl}\footnote{In the published version
  of this paper, a different proof was given.  This proof used a
  localization of $\tU$; proving that this localization has large
  enough morphism spaces to prove Theorem \ref{C-basis} referred to
  \cite{Webmerged}, but ultimately required some
  additional arguments and a modification of the definition.  These
  changes were carried out in \cite{Webunfurl}, thus we just refer to
  that paper for these results.  A corrigendum will be published,
  noting these changes in the published version.}:
\addtocounter{theorem}{1}
\begin{proposition}[\mbox{\cite[4.9]{Webunfurl}}]\label{C-basis}
  For all pairs of tricolore triples, the set $D$ is a basis for the morphism space
  $\HOM_{\cT} \big((\bla,\Bi,\kappa),(\bla,\Bi',\kappa')\big)$.
\end{proposition}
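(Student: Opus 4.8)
The proof will follow the standard two-step strategy for basis theorems of KLR-type categories, exactly as in \cite[\S 2.2]{KLIII} and its double Stendhal analogue in \cite{Webmerged}: first one shows that $D$ spans $\HOM_{\cT}\big((\bla,\Bi,\kappa),(\bla,\Bi',\kappa')\big)$, and then one uses the polynomial representation constructed above to show that the elements of $D$ are linearly independent.

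For the spanning statement, the plan is to reduce an arbitrary tricolore diagram $d$ with the prescribed top and bottom, modulo the relations of $\cT$, to a $\K$-linear combination of the chosen diagrams $d_\pi$ carrying dots only at the marked points. Since red and blue strands are oriented and never cross one another, their isotopy type relative to the boundary is determined by $\kappa$ and $\kappa'$, so all the genuine complexity is carried by the black strands and their crossings with the coloured ones. The relations (\ref{pitch1}--\ref{triple-smart}) on black strands are precisely the relations of $\tU$, which by \cite{KLIII} allow one to remove any black/black bigon and to slide dots along black strands, in each case at the cost of terms with strictly fewer black/black crossings. The relations (\ref{color-opp-cancel}), (\ref{blue-triple}--\ref{fig:pass-through}), (\ref{pitch}) and (\ref{cost}) allow any black strand to be pulled across any red or blue strand, removing any black/coloured bigon; the correction terms either have fewer black/black crossings or carry additional dots that can then be slid to the marked points (again at the cost of lower terms, via (\ref{red-triple}), (\ref{blue-triple}) and (\ref{cost})). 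One organizes this by a well-founded induction, say on the number of black/black crossings with ties broken by the number of black/coloured crossings and then by the number of dots lying off the marked points; since all of the relations are triangular for this order, the reduction terminates and exhibits $d$ as a combination of elements of $D$.

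For linear independence, the plan is to apply the representation of $\cT$ on $\bigoplus_{q\in\EuScript{Q}}\HOM_{\tU'}(\Bi,q)$ with $\EuScript{Q}$ chosen to contain the KL pair given by $\Bi$ itself with $\EuScript{R}=\mu$, where $\mu$ is the common right-hand weight of $(\bla,\Bi,\kappa)$ and $(\bla,\Bi',\kappa')$. As recorded via the diagrams $\theta^\kappa$, a tricolore diagram with top $(\bla,\Bi,\kappa)$ and bottom $(\bla,\Bi',\kappa')$ acts by deleting all red and blue strands and applying the resulting KL diagram, up to multiplication by a monomial in the dots of the form $y^{\pm\la_\pm^i}$, which is invertible in $\tU'$. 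Under this action the element of $D$ built from the matching $\pi$ with a prescribed dot placement is sent to the corresponding element of the Khovanov--Lauda spanning set $B_{\Bi,\Bi'}$, up to such an invertible monomial; in particular distinct elements of $D$ go to distinct Khovanov--Lauda basis vectors (times invertible monomials). Since $\tU$ is nondegenerate, so is its localization $\tU'$ by \cite{Webmerged}, so the $B_{\Bi,\Bi'}$ are linearly independent in $\HOM_{\tU'}$, and hence the images of the elements of $D$, and therefore the elements of $D$ themselves, are linearly independent.

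Putting the two steps together yields that $D$ is a basis. I expect the main obstacle to be the spanning step: one must verify that every correction term produced by the relations --- in particular the dot-creating relations (\ref{cost}), (\ref{red-triple}), (\ref{blue-triple}) and the triple-point relation (\ref{triple-smart}) --- is strictly smaller in the chosen partial order, so that the rewriting procedure is guaranteed to terminate and so that no relation is lost in the reduction. The independence step is essentially formal once the polynomial representation and its description through the diagrams $\theta^\kappa$ are available, together with the nondegeneracy of $\tU$.
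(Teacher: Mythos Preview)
Your spanning argument is correct and matches the paper's: the relations are triangular for a well-founded order on crossing counts and dot positions, so every diagram reduces to a combination of elements of $D$.

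The gap is in the linear independence step. Your claim that a tricolore diagram acts on the polynomial representation by ``deleting all red and blue strands and applying the resulting KL diagram, up to multiplication by a monomial in the dots'' is not correct for general $\kappa,\kappa'$. The coloured/black crossings in the interior of a minimal diagram insert dots at specific points \emph{along} the black strands, between black/black crossings; since dots do not commute with crossings in the nilHecke algebra, this cannot be factored as (underlying KL diagram) $\cdot$ (invertible dot monomial). So the images of distinct $d\in D$ are not simply distinct Khovanov--Lauda basis vectors times units; they are more complicated elements of $\HOM_{\tU'}$ whose linear independence is not immediate.

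The paper fixes this by an intermediate reduction: compose on top and bottom with the straightening diagrams $\dot\theta_{\kappa'}$ and $\theta_\kappa$ to land in $\HOM_{\cT}\big((\bla,\Bi,0),(\bla,\Bi',0)\big)$. Straightening introduces bigons whose removal via (\ref{cost}) adds a number of dots depending only on the shape of $d$ (not its original dot decoration), and the triple-point relations (\ref{blue-triple}--\ref{red-triple}) only contribute terms with fewer crossings. Thus the map $D \to D$ (for $\kappa=\kappa'=0$) is upper-triangular with injective diagonal, so a nontrivial relation stays nontrivial. Finally, for $\kappa=\kappa'=0$ every cup, cap, and black strand sits entirely to the right of all coloured strands, so $\la_\pm=0$ and the representation action really \emph{is} ``forget coloured strands and compose in $\tU$''; faithfulness then follows from the nondegeneracy of $\tU$. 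Your argument can be repaired either by inserting this reduction step, or by replacing the ``monomial'' claim with the honest triangularity statement: the action of $d_\pi$ with dot placement $\mathbf{a}$ has leading term the KL basis vector for $\pi$ with dots $\mathbf{a}+\mathbf{b}(\pi)$ for a shift $\mathbf{b}(\pi)$ depending only on the matching.
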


Just as on $\tU$, the 2-category $\cT$ has an autofunctor flipping diagrams which is
covariant on 1-morphisms and contravariant on 2-morphisms.  Abusing
notation, we
will also denote this $\tilde{\psi}$. 

\section{Tensor product algebras}
\label{sec:tens-prod-algebr}

The category $\cT$ is quite auxiliary from our
perspective.  The fundamental object of this paper is an
induced module category over this 2-category.  We wish to consider
representations of $\tU$; for our purposes, this means strict
2-functors $\tU\to \mathsf{Cat}$ to the strict 2-category of
categories, functors and natural transformations.  

Recall that the category $\tU$ has a ``trivial'' representation on
$\Vect_\K$.  Every 1-morphism corresponding to a KL pair
with $\Bi\neq \emptyset$ acts
by the zero functor, as does the identity 1-morphism of any
non-zero weight, while $\operatorname{id}_0\cdot V\cong V$ for all
vector spaces.
\begin{definition}
  We let $\hl$ denote the ``induction'' of this representation to $\cT$.
  That is, an object of $\hl$ is a sum of 1-morphisms of $\cT$ formally
  applied to objects of $\Vect_\K$.  In addition to the morphisms given
  by tensor products, we also add a natural isomorphism
\[tu\cdot V\cong t\cdot uV  \qquad t\in \Hom_{\cT}(\la,\mu), u\in
\Hom_\tU(\mu,\nu), V\in \Ob(\Vect_\K).\]
\end{definition}
Remember that our convention for switching between formulas and
diagrams is ``dyslexic;'' it switches left and right.  In essence,
thus $\hl$ is the quotient of all diagrams in $\cT$ (which we view as
objects in $\hl$ by tensoring with $\K$ itself) with a $\eF_i$ or
$\eE_i$ or a weight other than $0$ at the far {\em left}, since we can
move these over to act (trivially) on the vector space $\K$.  The
reader is free to imagine the object $\K$ as a horde of zombies at the
far left of the plane which hungrily eats any black strand or
non-zero weight it can reach, but which is unable to pass through red
or blue lines.

Obviously, the only tricolore quadruples will survive are those where
$\EuScript{L}=0$.  Thus, we define a {\bf tricolore triple} to be a
tricolore quadruple with the weight $\EuScript{L}$ left out and
understood to be 0.

The category $\hl$ still carries a $\tU$-action by horizontal
composition on the right; note that $\tU$ is unable to change the
labeling or ordering of the red and blue strands.

\begin{definition}
  Let $\hl^\bla$ denote the subcategory consisting of
  all 1-morphisms (now thought of as object of $\hl$) where the
  sequence of labels on red and blue lines is exactly $\bla$. This
  inherits an action of $\tU$ from $\hl$. Let
  $\hl^\bla_\mu$ be the subcategory of $\hl^\bla$ where the weight
  $\EuScript{R}$ is $\mu$.
\end{definition}

Recall that the author has already defined a categorification of the
tensor product of highest weight representations, based on certain
algebras $T^\bla$, defined in \cite[\S\ref{m-sec:defn}]{Webmerged}; these are, in fact, a
special case of the categorifications we discuss in this paper.

\begin{theorem}\label{red-tensor}
  If $\bla$ consists only of dominant weights, then $\hl^\bla\cong T^\bla\pmodu$.
\end{theorem}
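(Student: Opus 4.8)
The plan is to unwind both sides to the same diagrammatic description and then quote the author's earlier work. Since $\bla$ consists only of dominant weights, no anti-dominant weight is attached to any strand, so every $1$-morphism and every $2$-morphism occurring in $\hl^\bla$ lies in the Polish subcategory of $\cT$ (no blue strands). Thus $\hl^\bla$ is, up to the extra ``zombie'' relations coming from the trivial representation, exactly the category of double Stendhal diagrams of \cite[\S\ref{m-sec:double-tens-prod}]{Webmerged} with red labels $\bla$, and the theorem should reduce to the identification of that category with $T^\bla\pmodu$ from \cite[\S\ref{m-sec:defn}]{Webmerged}. I would organize the argument in three steps: identify the objects, match the morphism spaces via Proposition \ref{C-basis}, and then extend across the upward (i.e.\ $\eE_i$) strands.

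\emph{Objects.} An object of $\hl^\bla$ is a tricolore triple $(\bla,\Bi,\kappa)$ with $\EuScript{L}=0$. If $\kappa(1)>0$, some black strand lies to the left of the first red strand; it cannot pass the red strands but it can be slid all the way to the far left, where it represents $\eF_i$, $\eE_i$, or the identity of a nonzero weight applied to the vector space $\K$ at weight $0$, hence the zero functor, so the object is the zero object of $\hl^\bla$. Therefore the nonzero objects are precisely those with $\kappa(1)=0$, which is exactly the non-violating constraint in the definition of $T^\bla$; for objects whose black strands point only downward these configurations literally index the projective generators $T^\bla e_{\Bi,\kappa}$.

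\emph{Morphisms.} Define $F\colon\hl^\bla\to T^\bla\pmodu$ on the full subcategory of objects with only downward black strands by $(\bla,\Bi,\kappa)\mapsto T^\bla e_{\Bi,\kappa}$, sending a $2$-morphism to the $T^\bla$-module map given by composition with the corresponding Stendhal diagram. This is well defined because the defining relations of $T^\bla$ are exactly the relations of $\cT$ that survive restriction to downward Polish diagrams — relations (\ref{first-QH})--(\ref{triple-smart}) and (\ref{red-triple})--(\ref{cost}) — together with the violating relation, which we have just seen is imposed for free in $\hl^\bla$ (any diagram passing through a $\kappa(1)>0$ slice factors through the zero object). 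It is fully faithful on this subcategory: Proposition \ref{C-basis} gives a basis of the relevant $\HOM$ space in $\cT$ by minimal diagrams with dots at the chosen points, and between two all-downward configurations a matching is just a label-preserving bijection (no cups or caps occur in a minimal such diagram), so after discarding the violating diagrams this basis matches term by term the standard basis of $e_{\Bi,\kappa}T^\bla e_{\Bi',\kappa'}$ used in \cite{Webmerged}.

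\emph{Upward strands and completion.} Finally, objects of $\hl^\bla$ with upward $\eE_i$-strands are incorporated using that the cups and caps exhibit $\eE_i$ as a biadjoint of $\eF_i$ up to grading shift in $\cT$; combined with the categorical $\mathfrak{sl}_2$-relations (\ref{switch-1})--(\ref{opp-cancel2}) and the fact that, because $\bla$ is dominant, the tensor-product quotient has terminating $\mathfrak{sl}_2$-strings (as in \cite{Webmerged}), every object with upward strands is, after idempotent completion, a direct summand of a sum of all-downward objects. Since every finitely generated projective $T^\bla$-module is likewise a summand of a sum of the $T^\bla e_{\Bi,\kappa}$, this lets one extend $F$ to a functor on all of $\hl^\bla$ that is fully faithful and essentially surjective, giving $\hl^\bla\cong T^\bla\pmodu$. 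I expect the main obstacle to be this last step: keeping track of the objects with $\eE_i$-strands and the interaction with idempotent completion, which is exactly where dominance of $\bla$ is genuinely used (the analogous bookkeeping for blue strands fails, since the induction functors need not preserve projectivity, which is why the general $\hl^\bla$ is not handled this way). A secondary nuisance is reconciling the ``dyslexic'' left-to-right and orientation conventions of this paper with those of \cite{Webmerged} so that the matchings of bases and of relations are literally correct.
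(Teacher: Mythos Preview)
Your approach is correct and is in fact very close to what the author originally wrote (and then excised in favor of the two-line published proof). The published proof simply observes that summing over \emph{all} tricolore triples gives an object whose endomorphism algebra is the double Stendhal algebra $DT^\bla$ of \cite[Def.~\ref{m-defn-double-tensor}]{Webmerged}, and then cites the Morita equivalence $DT^\bla\sim T^\bla$ from \cite[Thm.~\ref{m-Morita}]{Webmerged}. Your three-step argument is essentially a reconstruction of that Morita equivalence inside this paper: match the downward-only part with $T^\bla$ directly, then show the downward-only objects generate. The advantage of your route is that it is more self-contained here; the advantage of the paper's route is brevity, since the work was already done in \cite{Webmerged}.

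One point in Step~3 deserves sharpening. The phrase ``terminating $\mathfrak{sl}_2$-strings'' is not the right mechanism, and invoking it risks circularity (you would need to already know the category is a highest-weight integrable categorification). The clean argument is purely local: an upward strand $\eE_i$ commutes with red strands for free by \eqref{color-opp-cancel}, and commutes with $\eF_j$'s via (\ref{switch-1})--(\ref{opp-cancel2}) at the cost of correction terms with strictly fewer upward strands. So you push each $\eE_i$ leftward until it reaches the zombies and is eaten; by induction on the number of upward strands, every object is a summand of a sum of all-downward objects. Dominance of $\bla$ enters precisely because \eqref{color-opp-cancel} lets $\eE_i$ pass red strands without cost; for blue strands it is $\eF_i$ that passes freely while $\eE_i$ does not, which is the actual obstruction in the anti-dominant case (rather than a failure of projectivity-preservation).
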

\begin{proof}
If one sums over all triciolore triples, then the resulting object has
endomorphism algebra given by the algebra $DT^\bla$ defined in
\cite[\S\ref{m-defn-double-tensor}]{Webmerged}.  This is Morita
equivalent to $T^\bla$ by \cite[\S\ref{m-Morita}]{Webmerged}. 
\excise{ Obviously, if one takes the direct sum over all tricolore triples using negative simple
  roots, then one finds an object whose endomorphisms are $T^\bla$.
  Thus, we need only show that such tricolore triples generate our category
  (as an additive, idempotent complete category).  

  Using the relations (\ref{switch-1}) and \eqref{color-opp-cancel} to pass all $\eE_i$'s leftward (in the
  diagram; toward the zombies), we can always write a tricolore triple as an
  summand in a sum of tricolore triples with $\eE_i$'s further left or with
  fewer $\eE_i$'s; here it is crucial that there are no blue strands,
  since $\eE_i$'s cannot pass through these.  By induction, every
  tricolore triple is a summand of tricolore triples where all $\eE_i$'s come at the
  far left.  Thus, the only ones that give non-zero objects are those
  with no $\eE_i$'s at all.}
\end{proof}
For future results, we must have a precise notion of equivariant
morphisms between representations of $\tU$.  Let
$\aleph_1,\aleph_2\colon \tU\to \mathsf{Cat}$ be two strict
2-functors.
\begin{definition}
  A {\bf strongly equivariant} functor $\be$ is a collection of
  functors $\be(\la)\colon \aleph_1(\la) \to \aleph_2(\la)$ together
  with natural isomorphisms of functors $c_u\colon \be\circ\aleph_1(u)\cong
  \aleph_2(u)\circ \be$ for every 1-morphism $u\in \tU$ such that 
 \[c_v\circ (\id_{\be}\otimes\,  \aleph_1(\al))= (\aleph_2(\al) \otimes
 \id_{\be}) \circ c_u\] for every 2-morphism $\al\colon u\to v$ in
 $\tU$. (Here we use $\otimes$ for horizontal composition, and $\circ$
 for vertical composition of 2-morphisms). 
\end{definition}
As usual, we let $\hl^\la=\hl^{(\la)}$.

\begin{theorem}\label{highest-lowest}
  If $\fg$ is finite dimensional, we have a strongly $\tU$-equivariant equivalence $\hl^{\la}\cong \hl^{w_0\la}$. 
\end{theorem}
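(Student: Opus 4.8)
The plan is to realize the equivalence as a categorification of the isomorphism $V_\la^\Z\cong V_{w_0\la}^\Z$ of $U_q^\Z(\fg)$-modules available in finite type, and to produce the functor from a universal property of lowest weight categorifications rather than by writing it out on diagrams. Under that isomorphism the lowest weight generator of $V_{w_0\la}^\Z$ goes to the lowest weight vector of $V_\la^\Z$, so the equivalence should carry the lone blue strand to a large object of $\hl^\la$ built by applying lowering functors to the red strand; this is why no diagram symmetry does the job directly.

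First I would record the universal property. The construction of $\hl^{w_0\la}$ and the arguments of \cite{Webmerged} underlying Theorem \ref{red-tensor} apply verbatim with blue strands in place of red, so $\hl^{w_0\la}$ is the universal lowest weight categorification of $V_{w_0\la}^\Z$: as an idempotent-complete additive $\K$-category with grading shift carrying a strongly $\tU$-equivariant action it is generated by the lone blue strand $\mathbb{L}$, which satisfies $\fF_i\mathbb{L}=0$ for all $i$ and $\HOM(\mathbb{L},\mathbb{L})=\K$ (by Proposition \ref{C-basis}, there being no black strands and hence no dots); and any object of a categorical $\tU$-module having these three properties generates a full subcategory strongly $\tU$-equivariantly equivalent to $\hl^{w_0\la}$.

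Next I would locate such an object inside $\hl^\la$. By Theorems \ref{red-tensor} and \ref{categorify-U}, $K^0(\hl^\la)=V_\la^\Z$, with the red strand $R$ categorifying the highest weight vector $v_\la$. Fix a reduced expression for $w_0$; since the lowest weight vector $v'=T_{w_0}v_\la$ is the iterated $\mathfrak{sl}_2$-string extremal vector, it equals a monomial in divided powers $F_i^{(a)}$ applied to $v_\la$, with unit coefficient and no lower-order corrections. Let $\mathbb{L}'$ be the corresponding monomial in the functors $\fF_i^{(a)}$ applied to $R$, so $[\mathbb{L}']=v'$ (up to a grading shift we fix). I claim $\mathbb{L}'$ has the three properties above:
\begin{itemize}
\item $\mathbb{L}'$ lies in $\hl^\la_{w_0\la}$, whose Grothendieck group is free of rank one since $\dim(V_\la)_{w_0\la}=1$; as $v'$ is extremal it is the canonical basis element of this weight space, so $\mathbb{L}'$, being self-dual up to shift and of class $v'$, is the unique self-dual indecomposable object there and $\HOM(\mathbb{L}',\mathbb{L}')=\K$.
\item $\fF_i\mathbb{L}'=0$: it lies in $\hl^\la_{w_0\la-\al_i}$, and $w_0\la-\al_i$ is not a weight of $V_\la$, its $w_0$-image $\la+\al_{i^*}$ strictly exceeding $\la$ in the root order (so no $W$-conjugate is $\preceq\la$); hence $K^0(\hl^\la_{w_0\la-\al_i})=0$ and, by Krull--Schmidt, $\hl^\la_{w_0\la-\al_i}=0$.
\item $\mathbb{L}'$ generates $\hl^\la$: the red strand $R$ generates $\hl^\la$ by Theorem \ref{red-tensor}, and applying to $\mathbb{L}'$ the monomial in the $\fE_i^{(a)}$ inverse to the one above gives an object of class $v_\la=[R]$, which by Krull--Schmidt must be $R$ itself.
\end{itemize}

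Finally, applying the universal property to $\mathbb{L}'$, and using that it generates all of $\hl^\la$, yields a strongly $\tU$-equivariant equivalence $\hl^{w_0\la}\cong\hl^\la$ carrying $\mathbb{L}$ to $\mathbb{L}'$. The main obstacle is the first step: formulating and proving the uniqueness/universal property of lowest (equivalently highest) weight categorifications in exactly the form used, namely that two such categorifications built around objects with the three listed properties are \emph{strongly equivariantly} equivalent. If one prefers to avoid this, the alternative is to define the functor by hand, sending the blue strand to $\mathbb{L}'$ and each blue-and-black tricolore diagram to the evident morphism built from $\mathbb{L}'$ in $\hl^\la$, and to verify that the defining relations of $\cT$ involving blue strands are satisfied; this is more elementary but longer, and the properties of $\mathbb{L}'$ established above are precisely what makes the verification go through.
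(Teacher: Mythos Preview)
Your strategy is sound, and in spirit it is the same as the paper's: both arguments ultimately reduce to Rouquier's uniqueness theorem for simple integrable 2-representations (\cite[Th.~5.8]{Rou2KM}) together with \cite[Cor.~\ref{m-universal}]{Webmerged}. The difference is the direction in which you run it. The paper analyzes $\hl^{w_0\la}$ directly: Rouquier's theorem identifies it with $\check{R}^\la\otimes_{\check{R}^\la_\la}A$-pmod for some local Artinian $A$, freeness of $\check{R}^\la$ over its deformation base forces every Hom space in $\hl^{w_0\la}$ to be free over $A$, and then the trivially verified fact that $\HOM_{\hl^{w_0\la}}(\mathbb{L},\mathbb{L})=\K$ (one diagram, no black strands) forces $A=\K$. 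You instead try to locate a lowest-weight object $\mathbb{L}'$ inside $\hl^\la$ and invoke universality from the other side.

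The genuine gap is your claim that $\HOM_{\hl^\la}(\mathbb{L}',\mathbb{L}')=\K$. You write that because the $w_0\la$-weight space has rank one and $\mathbb{L}'$ is the unique self-dual indecomposable there, its graded endomorphism ring is $\K$. That inference is invalid: a rank-one Grothendieck group only tells you there is one indecomposable up to shift, not that its endomorphism algebra has no positive-degree part. This is precisely the nontrivial point---it is exactly the statement ``$A=\K$'' that the paper works to establish, and which, once known, is \emph{equivalent} to the theorem you are trying to prove. You can certainly fill the gap (for instance, by using that the graded Euler form on $\hl^\la$ equals the Shapovalov form and computing $\langle v_{w_0\la},v_{w_0\la}\rangle_s=1$ as a Laurent polynomial, or by invoking that the cyclotomic KLR algebra at an extremal weight is a matrix algebra over $\K$), but you have not done so, and the paper's route avoids the issue entirely by checking the analogous condition on the side where it is a one-line diagram count.
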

\begin{proof}
By symmetry, it suffices to assume $\la$ is dominant.  

Consider the $\tU$-module $\hl^{w_0\la}$; since the Grothendieck group
of this category is an irreducible module, the Jordan-H\"older
filtration introduced by Rouquier in \cite[\S 5]{Rou2KM} must have a
single step.  That is, by \cite[5.8]{Rou2KM} we have that the category $\hl^{w_0\la}$ is
strongly equivariantly equivalent to a base change category, which Rouquier denotes
$\mathcal{L}(\la)\otimes_{\mathrm{End}(\bar 1_\la)}\hl^{w_0\la}_\la$.  
By
\cite[\ref{m-universal}]{Webmerged}, we can unpack this a bit more
explicitly: the category $\hl^{w_0\la}$ is equivalent to the category
of projective modules over an algebra
$\check{R}^\la\otimes_{\check{R}^\la_\la}A$ where 
\begin{itemize}
\item $\check{R}^\la$ is a free deformation of $T^\la$ defined and
  shown to be free in
  \cite[\S 2.6]{Webmerged}; the deformation base $\check{R}^\la_\la$ can be identified with a
  polynomial ring freely generated by the fake bubble endomorphisms of
  $\id_\la$ in $\tU$.  
\item $A$ is an Artinian algebra such that the weight space category
  $\hl^{w_0\la}_\la$ is equivalent to $A\pmodu$; this inherits a map
  $\check{R}^\la_\la\to A$ from the action of the endomorphisms of $\id_\la$ in $\tU$.
\end{itemize}
Since $\hl^{w_0\la}$ has a unique simple module, $A$ is in fact
local.  
By the freeness of $\check{R}^\la$ over $\check{R}^\la_\la$, the base
change $\check{R}^\la\otimes_{\check{R}^\la_\la}A$ is free over $A$,
and thus so is any projective over
$\check{R}^\la\otimes_{\check{R}^\la_\la}A$.  Thus, the morphism space
between any two objects in $\hl^{w_0\la}$ must be a free $A$ module.  

Since the homomorphism space between the tricolore triples with
$\Bi=\emptyset$ is 1-dimensional, this is only possible if $A=\K$.
Thus, we have that $\hl^{w_0\la}$ is strongly equivariantly equivalent
to $\hl^{\la}$.  
\end{proof}

One very interesting special case is when there is one red and one
blue line; assume that $-\la$ and $\mu$ are dominant and consider
$\hl^{(\la,\mu)}$.  
\begin{proposition}\label{lm-algebra}
  Every object in $\hl^{(\la,\mu)}$ is a summand of $(\la,\mu,\Bi)$
  for some $\Bi$.  The morphism space $(\la,\mu,\Bi)\to (\la,\mu,\Bj)$
  is the quotient of the morphisms $\HOM_{\tU}(\Bi,\Bj)$ by the  relations
  \begin{equation}
\tikz[baseline]{\draw[postaction={decorate,decoration={markings,
    mark=at position .8 with {\arrow[scale=1.3]{>}}}},very thick]
(0,-.5) -- node [below,at start]{$j$} node
[pos=.3,circle,fill=black,inner sep=2pt,label=below left:$-\la^j$]{}
(0,.5); \node at (.5,0){$\cdots$};}=0\hspace{.8in} \tikz[baseline]{\draw[postaction={decorate,decoration={markings,
    mark=at position .8 with {\arrow[scale=1.3]{<}}}},very thick]
(0,-.5) -- node [below,at start]{$j$} node
[pos=.3,circle,fill=black,inner sep=2pt,label=below left:$\mu^j$]{}
(0,.5); \node at (.5,0){$\cdots$};}=0
\hspace{.8in} \tikz[baseline]{\draw[postaction={decorate,decoration={markings,
    mark=at position .8 with {\arrow[scale=1.3]{<}}}},very thick]
(0,0) to[out=90,in=0] 
(-.5,.5) to[out=180,in=90] node [above,at start]{$j$} (-1,0) to[out=-90,in=180] node
[pos=.3,circle,fill=black,inner sep=2pt,label=below left:$a$]{} (-.5,-.5)
to[out=0,in=-90] (0,0);    \node at (.5,0){$\cdots$};}=0 \quad a\geq \mu^j +\la^j
\label{lm-relations}
\end{equation}
\end{proposition}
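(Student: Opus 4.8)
The plan is to prove the two halves of the statement in turn: first that every object is a summand of some $(\la,\mu,\Bi)$ (which pins down a small ``test family'' of objects), and then that the morphism spaces between these objects are the asserted quotients of $\HOM_{\tU}$. The generation step is essentially the argument used for Theorem \ref{red-tensor}, and the Hom-space step splits into checking that \eqref{lm-relations} holds in $\hl^{(\la,\mu)}$ and checking that nothing more is needed.

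\textbf{Generation.} Every object of $\hl^{(\la,\mu)}$ is a summand of a tricolore triple, and the ``zombie'' description of $\hl$ forces the blue strand of any surviving triple to be the leftmost strand (a black strand or a nonzero weight at the far left is absorbed into the trivial representation on $\Vect_\K$, so the triple is $0$). It therefore suffices to treat a triple $(\la,\Bj,\mu,\Bi)$ whose black part $\Bj$ lies between the blue and red strands. Using the biadjunction/Serre-type relations \eqref{opp-cancel1}, \eqref{opp-cancel2}, \eqref{switch-1}, \eqref{switch-2} one transposes two adjacent black strands at the cost of summands with strictly fewer black strands, so by induction on $|\Bj|$ it is enough to handle the leading term. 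In the leading term, an up-oriented black strand adjacent to the red strand has the orientation opposite to $\mu$, so by \eqref{color-opp-cancel} it can be slid across $\mu$ --- an isomorphism of $1$-morphisms --- into $\Bi$; a down-oriented black strand adjacent to the blue strand is opposite to $\la$ and is slid leftward past it to the far left, where the triple becomes $0$. Transposing strands so that one of these two moves is available at each stage empties $\Bj$, so the original object is a summand of some $(\la,\mu,\Bi')$.

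\textbf{The morphism spaces.} By Proposition \ref{C-basis} the space $\HOM_{\cT}\big((\la,\mu,\Bi),(\la,\mu,\Bj)\big)$ has basis $D$, the minimal tricolore diagrams with dots. Since the black endpoints all lie to the right of the red strand at top and bottom, each black strand crosses $\mu$ (and $\la$) an even number of times, hence zero times in a minimal diagram; so $D$ is exactly Khovanov--Lauda's basis $B_{\Bi,\Bj}$ with the two idle colored strands attached, and the natural map $\HOM_{\tU}(\Bi,\Bj)\to\HOM_{\cT}\big((\la,\mu,\Bi),(\la,\mu,\Bj)\big)$ is an isomorphism. Thus $\Hom_{\hl^{(\la,\mu)}}\big((\la,\mu,\Bi),(\la,\mu,\Bj)\big)$ is the quotient of $\HOM_{\tU}(\Bi,\Bj)$ by the span of those morphisms that, after isotopy and the relations of $\cT$, factor through a black morphism living to the left of the blue strand (where the trivial representation kills it). To see that the displayed relations lie in this span I would, for the first two relations in \eqref{lm-relations}, slide the marked up- (resp.\ down-) oriented black strand leftward past the red strand --- free by \eqref{color-opp-cancel} in the up case, and via \eqref{cost} at the cost of $\mu^j$ dots in the down case --- and then past the blue strand via \eqref{cost}, which absorbs $-\la^j$ dots (resp.\ is free by \eqref{color-opp-cancel}); the strand now lies left of the blue strand, hence the morphism is $0$ in $\hl$. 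The third relation is the same computation run on a small black circle encircling both colored strands: \eqref{red-triple}, \eqref{blue-triple} and \eqref{cost} pull it off past $\mu$ and $\la$, modulo lower-order terms it becomes a black bubble in the trivial region carrying $a-\mu^j-\la^j$ dots, and for $a\ge\mu^j+\la^j$ this bubble has non-negative degree and so, together with the lower terms, is killed in $\hl$ by the bubble relations \eqref{inv}.

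\textbf{The main obstacle.} What remains, and is the delicate part, is that these three relations \emph{generate} the whole kernel, i.e.\ that $\HOM_{\tU}(\Bi,\Bj)$ modulo \eqref{lm-relations} injects into $\Hom_{\hl^{(\la,\mu)}}$. The strategy I would follow mirrors the proof of Proposition \ref{C-basis}: relations \eqref{lm-relations}(1),(2) bound the dots allowed on the leftmost strand of each block and \eqref{lm-relations}(3) bounds how far black strands may wind around the colored strands, which cuts $B_{\Bi,\Bj}$ down to a spanning set of the quotient; one then produces a faithful representation of $\bigoplus_{\Bi,\Bj}\Hom_{\hl^{(\la,\mu)}}\big((\la,\mu,\Bi),(\la,\mu,\Bj)\big)$ --- a variant of the polynomial representation of Section \ref{sec:nondegeneracy} in which the blue and red strands act by the dot-shift operators used there, engineered so that \eqref{lm-relations} acts by zero and any black strand reaching the far left acts by zero --- and shows the spanning set acts linearly independently by reducing, as in Proposition \ref{C-basis}, to the faithfulness of the corresponding action in $\tU'$. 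When $\fg$ is of finite type one can instead invoke Theorem \ref{highest-lowest} to trade the blue strand $\la$ for the red strand $w_0\la$, identifying $\hl^{(\la,\mu)}$ with the already-understood category $T^{(w_0\la,\mu)}\pmodu$ of Theorem \ref{red-tensor} and \cite{Webmerged} and matching the relations by inspection; but for general symmetrizable $\fg$ the faithful-representation argument is the one that must be pushed through, and making the dot bookkeeping behind \eqref{lm-relations}(3) match the spanning-set count exactly is where the real work lies.
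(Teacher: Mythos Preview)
Your generation argument and your verification that the relations \eqref{lm-relations} hold are essentially the paper's proof. The divergence is in the ``main obstacle'': showing that \eqref{lm-relations} generates the whole kernel.

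First, the finite-type shortcut is circular. Theorem \ref{highest-lowest} only gives $\hl^{\la}\cong\hl^{w_0\la}$ for a \emph{single} strand. Upgrading this to $\hl^{(\la,\mu)}\cong\hl^{(w_0\la,\mu)}$ is exactly Proposition \ref{prop:hl-tensor}, whose proof explicitly invokes Proposition \ref{lm-algebra} to construct the comparison functor. So you cannot ``trade the blue strand for $w_0\la$'' here without assuming the thing you are proving.

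Second, the faithful-representation route is viable in principle but is not what the paper does, and it is a harder path than necessary. You have already observed that the kernel is the span of morphisms which, after the $\cT$-relations, factor through a tricolore triple with a black strand to the left of the blue strand. The paper simply runs an induction on the configuration of strands to the left: if there are several such strands, use the relations (much as in \cite[Th.\ \ref{m-cyclotomic}]{Webmerged}) to peel them off until only one survives, and that one is an instance of the first or second relation in \eqref{lm-relations}; if no open strand goes to the left but a closed black bubble does, clear it past the colored strands and you land on the third relation. No auxiliary representation and no dimension count are needed --- the reduction is purely combinatorial on the spanning set $D$ of Proposition \ref{C-basis}. Your ``engineered polynomial representation'' would have to encode exactly this combinatorics to succeed, so it buys nothing over the direct argument.
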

\begin{proof}
  By definition, any object is a summand of some sequence $(\la,\Bi',\mu,\Bi'')$.
  Now let us prove the first statement by induction on the number of pairs in
  $\Bi'$ of entries where $i_k\in -\Gamma,i_{k'}\in \Gamma$ and
  $k<k'$.

If this number is zero, then we can move all elements in $-\Gamma$
left past the blue strand labeled $\la$ by the Scottish relation in \eqref{color-opp-cancel},
and similarly all elements in $\Gamma$ right past the strand labeled
$\mu$ by the Polish version of the same relation.  

On the other hand, if it is not 0, there is a pair where $k$ and $k'$
are consecutive.  We can apply the relation (\ref{switch-1}) or
(\ref{opp-cancel1}) to rewrite $e_{(\la,\Bi',\mu,\Bi'')}$ as factoring
through quadruples with a lower number of such pairs.  

The relations \eqref{lm-relations} follow immediately from
\eqref{color-opp-cancel} and (\ref{cost}).  The reduction to these
relations is essentially the same as the proof of
\cite[\ref{m-cyclotomic}]{Webmerged}. The kernel is spanned by diagrams with
a strand left of both the red and blue strands.  If there are multiple
such strands, at least one attached to the top or bottom, we can use
the relations much as in the proof of \cite[\ref{m-cyclotomic}]{Webmerged}
to remove these until we're left with a single strand left of this
point which thus is a
consequence of  one of the first 2 relations of \eqref{lm-relations}.  Otherwise, we can reduce to the case where a single bubble of
positive degree is left of all red and blus strands, which thus is a
consequence of the last relation of  \eqref{lm-relations}.
\end{proof}

\begin{proposition}\label{prop:hl-tensor}
   If $\fg$ is finite dimensional, we have  a strongly $\tU$-equivariant equivalence $\hl^{(\la,\mu)}\cong \hl^{(w_0\la,\mu)}$. 
\end{proposition}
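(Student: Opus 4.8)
The plan is to rerun the argument of Theorem~\ref{highest-lowest}, the only novelty being the presence of an extra red strand labelled $\mu$. We may assume $\la$ is anti-dominant (the case $\la$ dominant is the same statement after replacing $\la$ by $w_0\la$), so that $w_0\la$ is dominant and, by Theorem~\ref{red-tensor}, the right-hand side $\hl^{(w_0\la,\mu)}$ is the honest two--red--line category $T^{(w_0\la,\mu)}\pmodu$. On the other hand, Proposition~\ref{C-basis} exhibits $\hl^{(\la,\mu)}$ as a categorical $\tU$-module whose Grothendieck group is the tensor product $V_\la^\Z\otimes V_\mu^\Z$; since $\fg$ is finite dimensional, $V_\la^\Z\cong V_{w_0\la}^\Z$ as $U_q^\Z(\fg)$-modules, so this is the same based module as the one categorified by $\hl^{(w_0\la,\mu)}$.

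Next I would apply Rouquier's classification of categorical $\tU$-module structures exactly as in Theorem~\ref{highest-lowest}: by \cite[Th.~5.8]{Rou2KM} together with its explicit form \cite[Cor.~\ref{m-universal}]{Webmerged}, now applied to the two--term sequence $(w_0\la,\mu)$ rather than a single dominant weight, $\hl^{(\la,\mu)}$ is strongly $\tU$-equivariantly equivalent to the category of finitely generated projective modules over $\check R^{(w_0\la,\mu)}\otimes_{\check R^{(w_0\la,\mu)}_{w_0\la+\mu}}A$, where $\check R^{(w_0\la,\mu)}$ is the free deformation of the tensor product algebra from \cite[Cor.~\ref{m-check-free}]{Webmerged} and $A$ is the Artinian algebra with $\hl^{(\la,\mu)}_{w_0\la+\mu}\cong A\pmodu$. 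Because $\check R^{(w_0\la,\mu)}$ is free over its deformation base, every $\Hom$-space in $\hl^{(\la,\mu)}$ is free as an $A$-module; applying this to $\End_{\hl^{(\la,\mu)}}\big((\la,\mu,\emptyset)\big)$, which is one dimensional by Proposition~\ref{C-basis} (equivalently, by Proposition~\ref{lm-algebra} applied with $\Bi=\Bj=\emptyset$), forces $A=\K$. Setting the deformation parameters to zero then identifies $\hl^{(\la,\mu)}$ with the category of projectives over $T^{(w_0\la,\mu)}$, that is, with $\hl^{(w_0\la,\mu)}$, and the equivalence just produced is strongly $\tU$-equivariant.

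The main obstacle is not any single computation but ensuring that this machinery is genuinely available in the present, mildly more general, setting. Theorem~\ref{highest-lowest} only used the case of a \emph{simple} Grothendieck group, where Rouquier's Jordan--H\"older filtration of \cite[\S5]{Rou2KM} collapses to a single step and the controlling algebra is automatically local; here $V_\la^\Z\otimes V_\mu^\Z$ is reducible, so one must work with the full filtration (with one step of type $\sigma$ for each $V_\sigma$ appearing, of multiplicity $\dim\Hom_{U_q(\fg)}(V_\sigma,V_\la\otimes V_\mu)$) and the version of \cite[Cor.~\ref{m-universal}]{Webmerged} for tensor--product categorifications, checking that the extreme weight space is still governed by the single one--dimensional algebra $\End\big((\la,\mu,\emptyset)\big)$. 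The remaining point, that the relations of Proposition~\ref{lm-algebra} cutting out $\hl^{(\la,\mu)}$ become the (deformed) cyclotomic relations defining $T^{(w_0\la,\mu)}$ after replacing $\la$ by $w_0\la$, is the sort of bookkeeping carried out in \cite{Webmerged} and presents no difficulty.
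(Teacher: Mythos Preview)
Your strategy---rerun Theorem~\ref{highest-lowest} with the pair $(w_0\la,\mu)$ in place of a single dominant weight---is the natural first thought, but the gap you yourself flag in the final paragraph is real and is not mere bookkeeping. In Theorem~\ref{highest-lowest}, Rouquier's \cite[Th.~5.8]{Rou2KM} bites because the Grothendieck group is \emph{irreducible}: the Jordan--H\"older filtration collapses to one step, so the whole category is a base change of a single universal categorification by a single local algebra $A$, and freeness over $A$ of \emph{every} Hom-space then lets one test $A$ against any convenient one-dimensional Hom. For a reducible tensor product there is no such single controlling $A$: the filtration has one subquotient per irreducible constituent, and the extensions between those subquotients are genuine extra data that the highest weight space does not see. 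The ``version of \cite[Cor.~\ref{m-universal}]{Webmerged} for tensor-product categorifications'' you invoke---together with freeness of a putative $\check R^{(w_0\la,\mu)}$ over its deformation base---is not something you can simply cite; a uniqueness theorem of that flavor does exist (Losev--Webster), but it requires as input the standardly stratified structure that in this paper only appears later (Corollary~\ref{standard-strat}), and even then the argument has a different shape from the one you sketch.

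The paper avoids this entirely by a direct construction. It uses Theorem~\ref{highest-lowest} only for the single factor $\hl^{w_0\la}$ to obtain the unique indecomposable $(w_0\la,\Bi_\la)$ in weight $\la$, and then builds an explicit functor $\hl^{(\la,\mu)}\to\hl^{(w_0\la,\mu)}$ by sending $(\la,\mu)\mapsto(w_0\la,\Bi_\la,\mu)$; existence follows by checking the presentation of Proposition~\ref{lm-algebra} against the relations~\eqref{lm-relations}. The real work is a hands-on diagrammatic proof of \emph{fullness}: any morphism between $(w_0\la,\Bi_\la,\mu,\Bj)$ and $(w_0\la,\Bi_\la,\mu,\Bk)$ with a strand running from $\Bi_\la$ across the $\mu$-line can be isotoped (using the basis of Proposition~\ref{C-basis} and relation~\eqref{switch-1}) to factor through a triple of the form $(w_0\la,\Bi_\la,\pm\al_i,\dots)$, which vanishes because the $(\la\mp\al_i)$-weight space of $V_{w_0\la}$ is zero. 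Matching Euler forms (Theorem~\ref{th:shapovalov-1}) then promotes the full, essentially surjective functor to an equivalence.
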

\begin{proof}
We may as well assume that $-\la$ and $\mu$ are dominant, since all
other cases follow from this one by symmetry.

We've proven in Theorem \ref{highest-lowest} above that
$\hl^{w_0\la}_\la$ is equivalent to $\Vect_\K$. Let $(w_0\la,\Bi_\la)$ be the unique
  indecomposable object in the $\la$-weight space of $\hl^{w_0\la}$.

  The equivalence must send the tricolore triple $(\la, \mu)$ to
  $(w_0\la,\Bi_\la,\mu)$.
  Such a functor exists since $(w_0\la,\Bi_\la,\mu, i)$ is killed by
  the $-\la^i$th power of the dot the last $\eE_i$, and similarly,
  $(w_0\la,\Bi_\la,\mu, -i)$ is killed by the $\mu^i$th power of the
  dot on last $\eF_i$; this confirms \eqref{lm-relations}, so a
  functor exists by Proposition \ref{lm-algebra}.  Since the ungraded Euler forms of the 2
  categories coincide by Theorem \ref{th:shapovalov-1}, we need only
  prove that this functor is full.

Now, consider a morphism between  $(w_0\la,\Bi_\la,\mu, \Bj)$ and
$(w_0\la,\Bi_\la,\mu, \Bk)$, where $\Bj$ and $\Bk$ are arbitrary
KL pairs with $\EuScript{L}=\mu+\la$.  We wish to show that this is induced by a
2-morphism in $\tU$ from $\Bj\to\Bk$.

When we draw the diagram of such a morphism then a terminal in
$\Bi_\la$ at the bottom might connect to one in $\Bj$ or $\Bk$
(corresponding to either a $\eE_i$ or a $\eF_i$ respectively).   We
must show we can write a diagram of either type in terms of ones where
strands do not cross the second red strand.  A
diagram where we have a connect to $\Bk$ must have a crossing between  the strand
passing from $\Bi_\la$ to $\Bk$ and one passing
from $\Bj$ to the copy of $\Bi_\la$ at the top.  By the freedom we
have to choose the spanning set $D$, this crossing can
be assumed to occur left of the red line for $\mu$ by Theorem \ref{C-basis}. Thus
this diagram factors through a tricolore triple of the form
$(w_0\la,\Bi_\la,-\al_i,\dots)$ for some $\al_i$; but
$(w_0\la,\Bi_\la,-\al_i)\cong 0$ since the $\la-\la_i$-weight space of
$V_{w_0\la}^\Z$ is trivial.  Thus, this diagram is 0, and by
induction, we can write our diagram with no strands from $\Bi_\la$
connecting to $\Bk$.  This argument is represented schematically in
the first picture of Figure \ref{fig:fullness}.

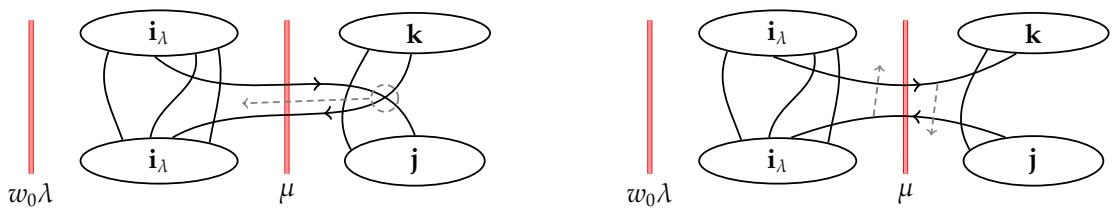
\begin{figure}
  \centering
 \scalebox{.85}{ \begin{tikzpicture}[thick]
    \node[ellipse,inner xsep=20pt,inner ysep=2pt,draw,black] (a) at
    (-2,1){$\Bi_\la$};
  \node[ellipse,inner xsep=20pt,inner ysep=2pt,draw,black] (b) at
  (-2,-1){$\Bi_\la$};
  \node[ellipse,inner xsep=20pt,inner ysep=2pt,draw,black] (c) at
    (2,1){$\Bk$};
  \node[ellipse,inner xsep=20pt,inner ysep=2pt,draw,black] (d) at
  (2,-1){$\Bj$};
\draw[wei](-4,-1.2) -- node[below,at start]{$w_0\la$} (-4,1.2);
\draw[wei](0,-1.2) -- node[below,at start]{$\mu$} (0,1.2);
\draw[postaction={decorate,decoration={markings,
    mark=at position .6 with {\arrow[scale=1.3]{<}}}}] (b.60) to[out=30,in=-100] (c.-100);
\draw[postaction={decorate,decoration={markings,
    mark=at position .6 with {\arrow[scale=1.3]{>}}}}] (a.-100) to[out=-40,in=110] (d.90);
\draw (b.20) to[out=80,in=-80] (a.-15);
\draw (b.110) to[out=80,in=-80] (a.-30);
\draw (b.150) to[out=110,in=-110] (a.-160);

\draw (d.170) to[out=120,in=-120] (c.-160);
\draw[densely dashed,gray] (1.53,-0.02) circle (6pt);
\node[left=-2pt] (e) at (1.53,-0.02){};
\draw[densely dashed,gray,->] (e) to (-.7,-.1);
  \end{tikzpicture}}\qquad \qquad 
\scalebox{.85}{\begin{tikzpicture}[thick]
    \node[ellipse,inner xsep=20pt,inner ysep=2pt,draw,black] (a) at
    (-2,1){$\Bi_\la$};
  \node[ellipse,inner xsep=20pt,inner ysep=2pt,draw,black] (b) at
  (-2,-1){$\Bi_\la$};
  \node[ellipse,inner xsep=20pt,inner ysep=2pt,draw,black] (c) at
    (2,1){$\Bk$};
  \node[ellipse,inner xsep=20pt,inner ysep=2pt,draw,black] (d) at
  (2,-1){$\Bj$};
\draw[wei](-4,-1.2) -- node[below,at start]{$w_0\la$} (-4,1.2);
\draw[wei](0,-1.2) -- node[below,at start]{$\mu$} (0,1.2);
\draw[postaction={decorate,decoration={markings,
    mark=at position .6 with {\arrow[scale=1.3]{<}}}}]  (b.60) to[out=20,in=160]  (d.140);
\draw[postaction={decorate,decoration={markings,
    mark=at position .6 with {\arrow[scale=1.3]{>}}}}] (a.-100) to[out=-20,in=-150] (c.-130);
\draw (b.20) to[out=80,in=-80] (a.-15);
\draw (b.110) to[out=80,in=-80] (a.-30);
\draw (b.150) to[out=110,in=-110] (a.-160);

\draw(d.170) to[out=120,in=-120] (c.-160);
\draw[densely dashed,gray,->]  (-.5,-.3) to (-.4,.5);
\draw[densely dashed,gray,->] (.5,.2) to (.4,-.6);
  \end{tikzpicture}}
\caption{The argument for fullness in Proposition
  \ref{prop:hl-tensor}}
\label{fig:fullness}
\end{figure}

Thus we can assume that there is a strand from $\Bi_\la$ at the
bottom connecting to $\Bj$.  In this case, there must be at
least one strand opposite it which arcs from the top copy of $\Bi_\la$
to $\Bk$.  We can push these strands together using relation
\eqref{switch-1} left of the strand labeled $\mu$.  There are two
terms: the 
correction terms have fewer strands pass from $\Bi_\la$ to $\Bj$, and
the resulting diagram with a bigon factors
through $(w_0\la,\Bi_\la,-\al_i,\dots)$, and we can thus use the
argument from above to see that this diagram is 0.  This argument is represented schematically in
the second picture of Figure \ref{fig:fullness}. This shows the
fullness of the functor and completes the proof.
\end{proof}

 As in
\cite[\ref{m-stringy-defn}]{Webmerged}, we fix an infinite list
$\mathbf{p}=\{p_1,p_2,\dots\}\in \Gamma$ of simple roots such
that each element of $\Gamma$ appears infinitely often.  For any element $v$ of a
highest weight crystal $\mathcal{B}^{\la}$, there are unique integers $\{a_1,\dots\}$ such
that $\cdots \te_{p_2}^{a_2}\te_{p_1}^{a_1}v=v_{high}$ and
$\te_k^{a_k+1}\cdots \te_{p_1}^{a_1}v=0$. The parametrization of the
elements of the crystal by this tuple is called the {\bf string
parametrization.}  We can associate this to a sequence with
multiplicities $(\dots, p_2^{(a_2)},p_1^{(a_1)})$. While this is {\it a priori}
infinite, $a_j=0$ for all but finitely many $j$, so deleting entries
with multiplicity 0, we obtain a finite sequence, which we'll call the
{\bf  string parametrization} of the corresponding crystal element.  By
convention, we'll let $|\mathbf{a}|=\sum a_i<\infty$.

Let $\boldsymbol{\epsilon}$ be the sign vector such that $\ep_k=-1$ if
$\la_k$ is dominant, and $\ep_k=1$ if it is anti-dominant.
For $\ell$-tuple of words
$(\mathbf{a}^{(1)},\dots,\mathbf{a}^{(\ell)})$, we let
$\sI(\mathbf{a}^{(1)},\dots,\mathbf{a}^{(\ell)})$ be the tricolore
triple such that the black block after the $j$th red or blue strand is
the sequence associated to the word $a^{(j)}$, with upward strands if
$\la_j$ is anti-dominant or downward strands if it is dominant.  More
formally, this is the tricolore triple with our chosen $\bla$,
\[\Bi=(\dots,\ep_1p_2^{(a_2^{(1)})},\ep_1p_1^{(a_1^{(1)})},\dots,\ep_2p_2^{(a_2^{(2)})},\ep_2p_1^{(a_1^{(2)})},\dots,\ep_\ell
p_2^{(a_2^{(\ell)})},\ep_\ell p_1^{(a_1^{(\ell)})}),\] and
$\kappa(j)=|\mathbf{a}^{(1)}|+\cdots+|\mathbf{a}^{(j-1)}|$.

\begin{definition}
  We define an ordering on compositions of length $\ell$ called {\bf
    reverse dominance order} by $\nu\geq \nu'$ if and only if
  $\sum_{k=j}^\ell \nu'_k\geq \sum_{k=j}^\ell \nu_k$ for all $j\in
  [1,\ell]$.  If $|\nu|=|\nu'|$, then this coincides with the usual
  dominance order.  

We'll order $\ell$-tuples of words by reverse dominance order on the
composition $|\mathbf{a}^\bullet|$ given by taking sums of each word,
with ties broken by lexicographic order on $\mathbf{a}^{(\ell)}$, then
lexicographic order on $\mathbf{a}^{(\ell-1)}$, etc.
\end{definition}

We define
a {\bf stringy tricolore triple} for the sequence $\mathbf{p}$ to be
$\sI(\mathbf{a}^{(1)},\dots,\mathbf{a}^{(\ell)})$ for
$\mathbf{a}^{(k)}$ the string parameterization for an element of the crystal of
highest or lowest weight $\la$.

Since we will use this fact many times, let us remind the
reader that in a graded category where the degree 0 part of the
endomorphisms of any object are finite dimensional (a condition
satisfied by $\tU,\cT$ and $\hl^\bla$), an object is indecomposable if
and only if its endomorphism algebra is graded local, i.e. has a
unique maximal homogeneous ideal.

\begin{lemma}
  Every indecomposable object of $\hl^\bla$ is isomorphic to a summand of
  $\sI(\mathbf{a}^{(1)},\dots,\mathbf{a}^{(\ell)})$ for some
  $\ell$-tuple $(\mathbf{a}^{(1)},\dots,\mathbf{a}^{(\ell)})$.
\end{lemma}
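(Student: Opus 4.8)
The plan is to argue at the level of objects, starting from an arbitrary tricolore triple containing $M$ as a summand and refining it in two stages --- first straightening the black strands in each block so that their orientation agrees with the dominance of the adjacent red or blue label, and then rearranging the label sequence of each block to follow the fixed list $\mathbf{p}$ --- everything controlled by an induction on a suitable complexity measure of a tricolore triple (the total number of black strands, refined by the number of mis-oriented strands, and, once all blocks are consistently oriented, by the reverse-dominance order on the induced word-tuple with lexicographic tie-breaking). By the Krull--Schmidt property and the construction of $\hl^\bla$, and since the zombie object at the far left kills any tricolore triple carrying a black strand or a non-zero weight to the left of the first red or blue strand, $M$ is a summand of a tricolore triple whose black strands are distributed among the $\ell$ blocks cut out by the red and blue strands labelled $\bla$.

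For the first stage I would push mis-oriented strands leftward. Using the relations of $\cT$ --- principally \eqref{color-opp-cancel}, \eqref{cost}, the triple-point relations \eqref{blue-triple}--\eqref{red-triple}, the relations of $\tU$ on black strands, and the biadjunctions --- any black strand whose orientation disagrees with the one prescribed for its block can be slid to the left, each move being an identity modulo tricolore triples of strictly smaller complexity. Such a strand eventually either reaches the far left, where the zombie devours it and the object becomes $0$, or annihilates against an oppositely oriented strand of the same colour, again producing only strictly smaller error terms; this is the mechanism already behind Theorem~\ref{red-tensor} and the proof of Proposition~\ref{lm-algebra}. Feeding the error terms into the inductive hypothesis reduces $M$ to a summand of a tricolore triple in which every block is consistently oriented.

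For the second stage I would normalise the labels block by block. A consistently oriented block realises a composite of functors $\eF_{i}$ (or a composite of $\eE_i$'s); grouping consecutive equal labels and using that $\eF_i^{n}$ is a direct sum of grading shifts of $\eF_i^{(n)}$, together with the categorified Serre relations inherited from $\tU$, one writes such a block as a summand of a direct sum of blocks whose label sequence has the form $(\dots,p_2^{(a_2)},p_1^{(a_1)})$, with error terms that use strictly fewer black strands or are lexicographically smaller. (Decategorified, this is the statement that the monomials in divided powers following the repetitive sequence $\mathbf{p}$ span $V_\bla^\Z$, and the categorical lift is exactly parallel to the stringy reduction in \cite{Webmerged}.) Applying this in each block and invoking the inductive hypothesis on the error terms exhibits $M$ as a summand of some $\sI(\mathbf{a}^{(1)},\dots,\mathbf{a}^{(\ell)})$; the base case is the vacuum triple $\bla$, which is $\sI$ of the tuple of empty words.

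I expect the main obstacle to be the bookkeeping in the first stage: one must check both that a mis-oriented strand can always be driven all the way to a point where it either annihilates or cancels, and that every correction term arising en route is genuinely smaller in the chosen complexity order, so the induction terminates. The simultaneous presence of highest-weight ($\eF$) and lowest-weight ($\eE$) blocks --- absent from \cite{Webmerged} --- is what makes this step require the most care, though the mechanics are the same as there.
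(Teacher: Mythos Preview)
Your Stage~1 is essentially the paper's argument, though the paper organises the induction more simply: it inducts on the number $m$ of rightmost black blocks already brought to the correct orientation, and in the inductive step pushes any wrongly-oriented strand in the $m$th block leftward out of that block, across the $m$th red or blue strand. That crossing is always free by \eqref{color-opp-cancel}, since a strand mis-oriented for its block has exactly the orientation that passes that colour of boundary at no cost. This sidesteps a small gap in your complexity measure --- a strand pushed into an adjacent block with the same dominance type is still mis-oriented there, so neither the total strand count nor the mis-oriented count drops --- and it avoids tracking annihilations or zombies altogether: the strand need only leave the current block, not disappear.

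Your Stage~2, however, is entirely unnecessary, and this is the key observation you missed. Because every element of $\Gamma$ appears infinitely often in the fixed list $\mathbf{p}$, \emph{every} finite sequence in $\epsilon_p\Gamma$ is already literally the sequence attached to some word $\mathbf{a}^{(p)}$: given $(j_1,\dots,j_k)$, pick indices $m_1<\cdots<m_k$ with $p_{m_r}=j_{k+1-r}$ and set $a_{m_r}=1$, all other $a_s=0$. So once each block is consistently oriented the tricolore triple \emph{is} $\sI(\mathbf{a}^{(1)},\dots,\mathbf{a}^{(\ell)})$ on the nose --- no Serre relations, no divided-power rewriting, no further induction. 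The paper's proof is accordingly a single short paragraph.
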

\begin{proof}
  Since every element of $\Gamma$ occurs infinitely often, every
  sequence in $\epsilon_p\Gamma$ occurs attached to some word $\mathbf{a}^{(p)}$.
Thus we need to show that every indecomposable is a summand of a
  object where in the black block right of a red strand is all
  downward strands and that after a blue strand is all upward strands.
  This is essentially the proof of Theorem \ref{red-tensor} with a
  slightly more delicate induction. We induct on the statement that
  every indecomposable is a summand of an tricolore triple where the
  $m$ rightmost black blocks have the desired form.  We can start with
  a tricolore triple where this is true of the $m-1$ rightmost black
  blocks and concentrate on $m$th from the right.  As in the proof of
  Theorem \ref{red-tensor}, the relations
  (\ref{switch-1}-\ref{switch-2}) and (\ref{color-opp-cancel}) allow
  us to push badly oriented strands further left until they are out of
  the $m$th black block.  Thus, we are done.
\end{proof}

\begin{lemma}\label{stringy-summand}\label{bla-unique}
The object  $\sI(\mathbf{a}^{(1)},\dots,\mathbf{a}^{(\ell)})$ has at
most one summand that is not a summand of the tricolore triple for a
greater word, and no such summand unless this triple is stringy.
\end{lemma}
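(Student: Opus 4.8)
The plan is to push the statement into the Grothendieck group $K^0(\hl^\bla)$, reduce it to a triangularity property of string monomials inside $V_\bla^\Z$, and then run an induction on the ordering of $\ell$-tuples of words. For the decategorification step: by Theorem \ref{th:shapovalov-1} the graded Euler form on $K^0(\hl^\bla)$ agrees with the Shapovalov form on $V_\bla^\Z$. The empty tricolore triple $(\la_1,\dots,\la_\ell)$ maps to the tensor product $v_{\la_1}\otimes\cdots\otimes v_{\la_\ell}$ of extremal vectors, which generates $V_\bla^\Z$ as a $U_q^\Z(\fg)$-module (an induction on weight, using the coproduct formulas) and on which the Shapovalov form is nondegenerate; hence the $\tU$-action identifies $K^0(\hl^\bla)\cong V_\bla^\Z$. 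Under this identification $[\sI(\mathbf{A})]$, for $\mathbf{A}=(\mathbf a^{(1)},\dots,\mathbf a^{(\ell)})$, is the element obtained by acting on $v_{\la_1}\otimes\cdots\otimes v_{\la_\ell}$ with the divided-power products of $F$'s (or $E$'s, when $\la_j$ is anti-dominant) read off the words $\mathbf a^{(j)}$, the $j$th block acting on the first $j$ tensor factors through the iterated coproduct — exactly as the black strands of $\sI(\cdots)$ sit between the red and blue ones.

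The combinatorial heart, and the step I expect to be the main obstacle, is to prove that for the reverse-dominance-then-lexicographic order of the preceding Definition one has
\[[\sI(\mathbf A)]-b_{\mathbf A}\in\!\!\sum_{\substack{\mathbf B>\mathbf A\\\mathbf B\ \mathrm{stringy}}}\!\!\Z[q,q^{-1}]\,b_{\mathbf B}\qquad(\mathbf A\ \mathrm{stringy}),\]
while $[\sI(\mathbf A)]$ lies in the $\Z[q,q^{-1}]$-span of such $b_{\mathbf B}$ (with $\mathbf B>\mathbf A$ stringy) when $\mathbf A$ is not stringy; here $\{b_{\mathbf B}\}_{\mathbf B\ \mathrm{stringy}}$ is Lusztig's canonical basis of the tensor product $V_\bla$, available for all Kac--Moody types in the highest-$\otimes$-lowest setting by \cite{LusTen}. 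For a single tensor factor this is the fact (Lusztig, Kashiwara; see \cite{Lusbook}) that the $F$- and $E$-monomials indexed by string data are unitriangular against the canonical basis of $V_{\la_j}$, and that a monomial whose exponent tuple fails the string-cone inequalities expands into strictly greater string monomials. Assembling the factors, the real work is to control the correction terms produced by $\Delta(F_i)=F_i\otimes\tilde K_{-i}+1\otimes F_i$ (resp.\ $\Delta^{\op}$): these transfer lowering weight from the rightward tensor factors to the leftward ones, which is precisely the move that makes the exponent tuple strictly larger in reverse-dominance order, so the per-factor triangularity propagates to the assembled monomial. This is where the unusual ordering — and the convention putting lowest weights on the left and highest on the right — is used essentially; matching the bookkeeping of the coproduct corrections to the order is the delicate point.

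Finally I would pass from $K^0$ back to objects. The order is well founded (reverse dominance admits no infinite ascending chains), so I can induct, proving simultaneously for every stringy $\mathbf B$ that $\sI(\mathbf B)$ has a unique indecomposable summand $P_{\mathbf B}$, up to grading shift, not occurring in any $\sI(\mathbf B')$ with $\mathbf B'>\mathbf B$; that $\sI(\mathbf B)$ has no such summand when $\mathbf B$ is not stringy; that every indecomposable summand of $\sI(\mathbf B)$ is $P_{\mathbf C}$ for some stringy $\mathbf C\geq\mathbf B$; and that $[P_{\mathbf C}]\in b_{\mathbf C}+\sum_{\mathbf D>\mathbf C}\Z[q,q^{-1}]\,b_{\mathbf D}$. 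Granting these for all $\mathbf B'>\mathbf A$, any summand of $\sI(\mathbf A)$ that occurs in some $\sI(\mathbf B')$ with $\mathbf B'>\mathbf A$ is one of the $P_{\mathbf C}$ with $\mathbf C>\mathbf A$ stringy. Since $\hl^\bla$ is Krull--Schmidt and the classes of non-isomorphic indecomposables are $\Z[q,q^{-1}]$-linearly independent in $K^0$, the multiset of indecomposable summands of $\sI(\mathbf A)$ is pinned down by $[\sI(\mathbf A)]$; comparing leading terms in the canonical basis — using the triangularity of the second step together with $[P_{\mathbf C}]=b_{\mathbf C}+(\text{strictly greater})$ — forces every summand of $\sI(\mathbf A)$ to be some $P_{\mathbf C}$ with $\mathbf C\geq\mathbf A$, with $\mathbf C=\mathbf A$ occurring, exactly once after normalizing the grading shift, precisely when $\mathbf A$ is stringy. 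That summand is the desired $P_{\mathbf A}$, and the remaining inductive properties of $P_{\mathbf A}$ follow by subtracting the classes of the (strictly greater) old summands from $[\sI(\mathbf A)]$. Since the previous Lemma guarantees the objects $\sI(\mathbf A)$ exhaust all indecomposables of $\hl^\bla$, the induction delivers the statement.
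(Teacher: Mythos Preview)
Your approach has a genuine circularity. You invoke Theorem \ref{th:shapovalov-1} to identify $K^0(\hl^\bla)\cong V_\bla^\Z$, but in the paper's logical order that theorem comes \emph{after} this lemma and its proof uses it: one needs Lemma \ref{bla-unique} to know that the stringy triples span $K^0$, and then ``spanning set maps to a basis $\Rightarrow$ isomorphism.'' Your independent argument for the isomorphism---nondegeneracy of the Shapovalov form together with Euler/Shapovalov compatibility---only shows that the kernel of the map $K^0(\hl^\bla)\to V_\bla^\Z$ lies in the radical of the Euler form. That radical is not known to vanish a priori; almost-orthonormality of indecomposables is equivalent to mixedness, which is exactly what the paper is working toward. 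Without injectivity, your ``comparing leading terms in the canonical basis'' step collapses: nothing rules out extra indecomposable summands whose classes die under the map to $V_\bla^\Z$, and any such object would be a second ``new'' summand invisible to your decategorified bookkeeping.

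There is a second gap: you assert that Lusztig's canonical basis of $V_\bla$ is available for arbitrary $\bla$ and arbitrary Kac--Moody type, citing \cite{LusTen}. That reference treats a single tensor product $V_{-\la}\otimes V_\mu$, not an arbitrary mixed sequence; the existence of such bases in general is part of what this paper is establishing.

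The paper's proof avoids all of this by arguing purely categorically: it shows that $\End(\sI(\mathbf a^\bullet))$ modulo the ideal of morphisms factoring through strictly greater triples is graded local, via an induction that strips off the rightmost block of like-labelled strands and exhibits a surjection from a tensor product of two graded local rings (the endomorphisms of the truncated triple modulo its analogous ideal, tensored with symmetric polynomials). The key point is that any diagram not in the image of this surjection can be chosen, using the freedom in the spanning set $D$, to factor through a triple for a strictly greater word. No Grothendieck group, no canonical basis, no form is needed.
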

\begin{proof}
  First, we claim that it is sufficient to show this for $\K$ a field.
  We let $\mathbbm{r}=\K/\mathfrak{m}$ be the residue field of $\K$,
  and assume that the theorem holds in this case.  We have a natural
  functor $\hl^\bla_\K\to \hl_\mathbbm{r}^\bla$ given by simply
  killing $\mathfrak{m}$.   By Hensel's lemma, this functor induces a
  bijection between indecomposable projectives and between summands of
  a given tricolore sequence.  This reduces us to the case where $\K$
  is a field.  

Now, fix an $\ell$-tuple of words $(\mathbf{a}^{(1)},\dots,\mathbf{a}^{(\ell)})$.
Let $I$ be the 2-sided ideal in $\End({\sI}(\mathbf{a}^\bullet))$ generated by elements factoring
through $\sI(\mathbf{b}^{(1)},\dots,\mathbf{b}^{(\ell)})$ such that $\mathbf{b}^\bullet>\mathbf{a}^\bullet$.  We wish to show by
induction that the quotient $\End({\sI}(\mathbf{a}^\bullet))/I$ is
graded local.  

We can assume that $\mathbf{a}^{(\ell)}\neq 0$, since if  $\mathbf{a}^{(\ell)}= 0$, we can
simply remove the rightmost red or blue strand without changing
$\End({\sI}(\mathbf{a}^\bullet))$ or $I$, and we reduce to the case
where we have $\ell-1$ red and blue strands.  Let $q$ be minimal such that
$a=a^{(\ell)}_q\neq 0$.  Thus, the rightmost part of the tricolore
triple   ${\sI}(\mathbf{a}^\bullet)$ is $a$ strands labeled $p_q$.  
Let $\mathbf{c}^\bullet$ be the sequence of words which coincides with
$\mathbf{a}^\bullet$, except that $\mathbf{c}^{(\ell)}_q=0$.  By
induction, we can assume that
$\End({\sI}(\mathbf{c}^\bullet))/I_{\mathbf{c}}$ is graded local.
Let $R_{a}$ be the ring of symmetric functions in $a$ variables, which
acts by natural endomorphisms on the functor $\eF_{i_q}^a$ as
symmetric polynomials in the dots; this is
also graded local.  Thus, we have a natural map
$\phi\colon \End({\sI}(\mathbf{c}^\bullet))/I_{\mathbf{c}}\otimes
R_{a}\to \End({\sI}(\mathbf{a}^\bullet))/I$ where the first term acts on
all but the rightmost $a$ strands, and the second on the last $a$.  If
we prove that $\phi$ is surjective, then we will know that
$ \End({\sI}(\mathbf{a}^\bullet))/I$ is necessarily graded local.

We
can divide the matchings with top and bottom
${\sI}(\mathbf{a}^\bullet)$ whose associated diagrams are not in the
image of $\phi$ fit into 2 categories:
\begin{enumerate}
\item those where one of the last $a$ strands is connected by a cap to the
  bottom.
\item those where there is no such cap, but one of these $a$ strands
  is connected to another node at the top.
\end{enumerate}
If we choose our basis vectors $D$ carefully, then we can assume that
the diagrams associated to these matchings must lie in $I$.  
In case (1), this because any diagram with a single cup connecting one
of the
last $a$ strands with the bottom of the diagram and no other crossings
has bottom smaller than the top in reverse dominance order.  For any
matching of type (1), we can choose the basis vector so that the
bottom portion of the diagram is of this form.  
In case (2), we can push every crossing between strands from the
rightmost $a$ at the top and the rightmost $a$ at the bottom to the
far right; after doing this, there will be a slice through the middle
of the diagram which corresponds to a tricolore triple for a word with $a^{(\ell)}_q>a$; this is higher
in our order.   Thus, $\phi$ is indeed surjective, and we find that we
have at most one new projective.

Now, we need only show that if ${\sI}(\mathbf{a}^\bullet)$ is not
stringy, then this quotient is trivial.  If $\mathbf{a}^{(k)}$ is not
a string parametrization, the identity of
${\sI}(\mathbf{a}^\bullet)$ can be rewritten as factoring through
triples where $\mathbf{a}^{(k)}$ is replaced by higher words in
lexicographic order, or where one of the strands is pulled left
through the $k$th red or blue strand.  Both these are higher in the
order, so this triple has no ``new'' summands.
\end{proof}

We call an object of a $\K$-linear category {\bf
  absolutely indecomposable} if it remains indecomposable after
extension by any local ring homomorphism $\K\to \K'$ between complete
local rings.  If $\K$ is a field, this just means that this object
remains indecomposable under field extensions.
\begin{corollary}\label{absolutely-indecomposable}
  Every indecomposable object of $\hl^\bla$ is absolutely indecomposable.
\end{corollary}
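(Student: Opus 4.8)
The plan is to reduce the statement to the assertion that, for $\K$ a field, the endomorphism ring of every indecomposable object of $\hl^\bla$ is graded local with residue field exactly $\K$ — equivalently, that its algebra of degree-$0$ endomorphisms is a finite-dimensional local $\K$-algebra with residue field $\K$. This property is visibly preserved under a field extension (the radical is nilpotent and remains so after $\otimes_\K\mathbb{K}$), and, together with the Hensel's-lemma reduction used at the start of the proof of Lemma \ref{bla-unique}, it yields absolute indecomposability over an arbitrary complete local base $\K$. So from now on assume $\K$ is a field. By the preceding lemmata, every indecomposable object is, up to grading shift, isomorphic to the distinguished summand $P_{\mathbf{a}^\bullet}$ of a stringy tricolore triple $\sI(\mathbf{a}^\bullet)$ — the unique summand not occurring in the triple of any greater word — so it suffices to treat $P := P_{\mathbf{a}^\bullet}$.

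First I would re-examine the proof of Lemma \ref{bla-unique}. Write $E = \End_{\hl^\bla}(\sI(\mathbf{a}^\bullet))$ and let $I \trianglelefteq E$ be the ideal of endomorphisms factoring through $\sI(\mathbf{b}^\bullet)$ for words $\mathbf{b}^\bullet > \mathbf{a}^\bullet$. That proof produces a surjection $\phi\colon \End(\sI(\mathbf{c}^\bullet))/I_{\mathbf{c}} \otimes_\K R_a \twoheadrightarrow E/I$, with $R_a$ the ring of symmetric functions in $a$ variables, and deduces by induction on $\mathbf{a}^\bullet$ that $E/I$ is graded local. The only additional bookkeeping needed is that it is graded local \emph{with residue field $\K$}: the base case of the induction gives $\End(\sI(0)) = \K$; the ring $R_a$ is graded local with residue field $\K$; the graded tensor product over $\K$ of two graded-local $\K$-algebras with residue field $\K$ is again one (a positive-degree homogeneous element lies in the evident maximal homogeneous ideal, and the degree-$0$ part is a tensor product of local $\K$-algebras with residue field $\K$); and any nonzero quotient of such an algebra inherits the property, since the kernel, being a proper homogeneous ideal of a graded local ring, lies in the maximal homogeneous ideal. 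Hence $E/I$ is graded local with residue field $\K$.

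Next I would transfer this to the summand $P$. Decompose $\sI(\mathbf{a}^\bullet) \cong P \oplus Q$ with orthogonal idempotents $e$, $f = \id - e$. By Lemma \ref{bla-unique}, $Q$ is a direct sum of summands of triples of words $> \mathbf{a}^\bullet$, so $f \in I$, hence the two-sided ideal $EfE \subseteq I$ and $E/I$ is a quotient of $E/EfE$. A Peirce-decomposition computation gives $E/EfE \cong eEe/eEfEe = \End(P)/N$, where $N$ is the ideal of endomorphisms of $P$ that factor through $Q$. Since $P$ occurs in $\sI(\mathbf{a}^\bullet)$ with multiplicity one and is indecomposable, no summand of $Q$ is isomorphic to $P$ or to any grading shift of it; therefore every homogeneous element of $N$ is non-invertible, so $N$ is contained in the unique maximal homogeneous ideal of the graded local ring $\End(P)$. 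Consequently $\End(P)/N$ is graded local with the same residue field as $\End(P)$, and its nonzero quotient $E/I$ has that residue field as well. Combining with the previous paragraph, $\End(P)$ has residue field $\K$, which is what was to be shown; unwinding the Hensel reduction then gives the corollary for an arbitrary complete local ring $\K$.

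The step I expect to be the main obstacle is the second-to-last one: one must use the multiplicity-one part of Lemma \ref{bla-unique} with care to be sure that $Q$ contains no grading shift of $P$ (so that $N$ genuinely lands in the radical of $\End(P)$), and one must be vigilant that the residue field carried through the induction in Lemma \ref{bla-unique} is $\K$ itself and not merely some finite division $\K$-algebra — this being precisely the content of absolute indecomposability. The remaining ingredients (the Peirce-decomposition identity, the graded-locality bookkeeping, and idempotent lifting over a complete local ring) are routine.
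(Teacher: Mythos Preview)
Your proof is correct but takes a genuinely different route from the paper's. The paper argues by contradiction: it observes that the proof of Lemma \ref{bla-unique} works verbatim over any base, fixes a local homomorphism $\K\to\K'$, chooses $\mathbf{a}^\bullet$ maximal among stringy sequences whose associated ``new'' indecomposable $M$ splits after extension, and notes that then every indecomposable summand of $\sI(\mathbf{b}^\bullet)\otimes_\K\K'$ for $\mathbf{b}^\bullet>\mathbf{a}^\bullet$ is of the form $M'\otimes_\K\K'$ with $M'$ indecomposable over $\K$; a short splitting argument then shows no piece of $M\otimes_\K\K'$ can appear among these, so $\sI(\mathbf{a}^\bullet)\otimes_\K\K'$ acquires at least two ``new'' summands over $\K'$, contradicting Lemma \ref{bla-unique} applied over $\K'$. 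This is shorter and avoids the Peirce-decomposition bookkeeping entirely.

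Your approach instead strengthens the induction inside Lemma \ref{bla-unique} to track that the residue field of $E/I$ is exactly $\K$, and then transfers this to $\End(P)$ via $E/EfE\cong\End(P)/N$. This is more work, but it has the virtue of directly establishing that $\End(P)/\rad\cong\K$, which is precisely the absolute-indecomposability condition in the definition of a humorous category and is what is invoked (without separate proof) in Theorem \ref{KS-duality}. One small point to watch: your claim that no \emph{shift} of $P$ occurs in $Q$ is correct, but it relies on interpreting ``summand of $\sI(\mathbf{b}^\bullet)$'' in the degraded sense (equivalently, on $I$ being a graded ideal in the full $\HOM$-algebra), which is how Lemma \ref{bla-unique} is set up.
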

\begin{proof}
  Ring extension sends the object  ${\sI}(\mathbf{a}^\bullet)$ to the
  same object over the same ring.  Fix a local
  homomorphism $\K\to \K'$ and let $\mathbf{a}^\bullet$ be
  the greatest stringy sequence whose associated indecomposable $M$ splits
  after extension to $\K'$.  None of the summands of
  $M\otimes_{\K}\K'$ appears in a greater stringy sequence, so either
  ${\sI}(\mathbf{a}^\bullet)$ has two new indecomposable summands, or
  a new summand with multiplicity $>1$.  Both of these are impossible
  by Lemma \ref{bla-unique}, so we have arrived at a contradiction.
\end{proof}

This result can easily be extended to the 2-category $\tU$.  We call a
1-morphism $(\Bi,-\Bj)$ in $\dU$ {\bf stringy} if $\Bi,\Bj$ are both
positive string parametrizations for elements of the crystal
$B(-\infty)$.  We can endow these with a similar variant of
lexicographic order: if we fix the weight of the 1-morphism, then
$(\Bi,-\Bj)>(\Bi',-\Bj')$
\begin{itemize}
\item if $|\Bi|<|\Bi'|$, or 
\item if $|\Bi|=|\Bi'|$ and $\Bj>\Bj'$ in
  lexicographic order on the corresponding words or
\item if $\Bj=\Bj'$ and $\Bi>\Bi'$ in lexicographic order on the
  corresponding words.
\end{itemize}
The proof can be extended to show that:
\begin{proposition}\label{unique}
  The 1-morphism $(\Bi,-\Bj)$ in $\dU$ has at most one summand that is
  not a summand of any higher
  stringy sequence, and unless  $(\Bi,-\Bj)$ is itself stringy.  This summand is absolutely indecomposable and
  every indecomposable appears this way for a unique stringy sequence.
\end{proposition}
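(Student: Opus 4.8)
The plan is to extend, essentially verbatim, the arguments proving Lemma~\ref{stringy-summand}, Corollary~\ref{absolutely-indecomposable} and the lemma preceding it on summands of stringy triples, now carried out inside $\tU$ and then passed to $\dU$ by idempotent completion. First I would reduce to $\K$ a field exactly as in Corollary~\ref{absolutely-indecomposable}: ring extension along a local homomorphism sends the $1$-morphism $(\Bi,-\Bj)$ to the ``same'' $1$-morphism over the new ring, and Hensel's lemma matches summands before and after reduction to the residue field, so the field case yields everything, including absolute indecomposability of the new summand (a splitting after extension would produce either two summands occurring in no strictly higher stringy sequence, or one of multiplicity $>1$, both excluded by the uniqueness clause). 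The statement that every indecomposable of $\dU$ is a summand of some stringy sequence is obtained as in the proof of Theorem~\ref{red-tensor}: using \eqref{switch-1}, \eqref{switch-2}, \eqref{opp-cancel1}, \eqref{opp-cancel2} together with the quiver-Hecke relations \eqref{first-QH}--\eqref{triple-smart}, one pushes badly oriented strands leftward and rewrites each block of $\eE$'s (resp.\ $\eF$'s) into its string-parametrized normal form, introducing only summands of $1$-morphisms that are higher in the order.

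Over a field, the core is to prove, by induction on $|\Bi|+|\Bj|$, that the quotient $\End_\tU\big((\Bi,-\Bj)\big)/I$ is graded local, where $I$ is the two-sided ideal generated by $2$-morphisms factoring through a strictly higher stringy $1$-morphism (the base case $|\Bi|+|\Bj|=0$ being the graded-local bubble ring $\End_\tU(\id_\mu)$, with $I=0$). Uniqueness of the new summand and the ``no new summand unless stringy'' clause then follow as in Lemma~\ref{stringy-summand}: a graded-local ring has a single primitive idempotent modulo its maximal graded ideal, and if some block of $(\Bi,-\Bj)$ fails to be a string parametrization, the identity $2$-morphism can be rewritten via the quiver-Hecke relations and \eqref{switch-1} as a sum of $2$-morphisms factoring through strictly higher stringy $1$-morphisms (either a block is replaced by a lexicographically larger word, or a strand is pulled past an $\eE$/$\eF$ so as to lower $|\Bi|$), so $\End_\tU\big((\Bi,-\Bj)\big)/I=0$. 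For the inductive step one lets the rightmost block of $(\Bi,-\Bj)$ consist of $a$ identically oriented strands labelled $p_q$; deleting it gives a $1$-morphism of strictly smaller total length, graded local modulo its ideal $I'$ by induction, and one forms the map $\phi\colon\big(\End_\tU(\text{truncation})/I'\big)\otimes R_a\to\End_\tU\big((\Bi,-\Bj)\big)/I$ with $R_a$ the graded-local ring of symmetric functions in $a$ variables acting through dots on the last $a$ strands; surjectivity of $\phi$ forces the target to be graded local.

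The main obstacle is precisely the surjectivity of $\phi$: one must show that every Khovanov--Lauda basis diagram for $\HOM_\tU\big((\Bi,-\Bj),(\Bi,-\Bj)\big)$ in which the rightmost block interacts with the rest of the diagram already lies in $I$. Following the $\hl^\bla$ argument, one distinguishes diagrams in which one of the last $a$ strands is capped toward the bottom from those in which one of them connects to a different terminal at the top; in the first case, choosing the basis vector so the bottom is a single minimal cup, the resulting bottom $1$-morphism is strictly higher in the order (deleting a strand lowers $|\Bi|$ or passes to a lexicographically smaller word, which counts as higher), and in the second case one pushes every crossing between the top and bottom copies of the last block to the far right, exposing a horizontal slice whose $1$-morphism has more than $a$ strands in its last block. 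Relative to $\hl^\bla$ the genuinely new point is the $\eE$/$\eF$ interaction: one must additionally absorb cups and caps among the last strands using the inversion relation~\eqref{inv} or \eqref{switch-1}--\eqref{switch-2}, but each such move strictly decreases $|\Bi|$ (or $|\Bj|$) and so again lands in a strictly higher stringy sequence. The cleanest way to see nothing is lost is to note, via Proposition~\ref{lm-algebra} and Proposition~\ref{prop:hl-tensor}, that for $-\la,\mu$ sufficiently dominant the quotient $\End_\tU\big((\Bi,-\Bj)\big)/I$ is realized inside the tensor-product category $\hl^{(w_0\la,\mu)}$, where Lemma~\ref{stringy-summand} already supplies the conclusion --- the same ``asymptotic'' mechanism as in Lemma~\ref{asymptotic}.
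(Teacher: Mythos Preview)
Your main argument is correct and is precisely the paper's intended approach: the paper offers no separate proof of Proposition~\ref{unique} beyond ``the proof can be extended to show that,'' meaning one is to rerun Lemma~\ref{stringy-summand} and Corollary~\ref{absolutely-indecomposable} inside $\tU$, which is what you do. The reduction to a field, the induction on total length with the graded-local base case $\End_\tU(\id_\mu)$ (the bubble polynomial ring), the map $\phi$ from the truncation tensored with $R_a$, and the case split on whether a strand from the rightmost block caps off or crosses into an earlier block all carry over as you describe; the one genuinely new feature, caps between the $\eE$-block and the $\eF$-block, lands in $I$ because any horizontal slice through such a cap has strictly smaller $|\Bi|$, hence is higher in the order.

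One caution about your final paragraph. The ``cleanest way'' via Proposition~\ref{prop:hl-tensor} and the asymptotic comparison with $\hl^{(w_0\la,\mu)}$ is only available when $\fg$ is finite-dimensional, since that hypothesis is essential in both Theorem~\ref{highest-lowest} and Proposition~\ref{prop:hl-tensor}. Proposition~\ref{unique}, however, is stated and used for arbitrary symmetrizable Kac--Moody $\fg$ (it feeds into Theorem~\ref{KS-duality} and Corollary~\ref{cor:exist}, neither of which carries a finite-type restriction), so this shortcut cannot replace the direct argument in general. Even in finite type the correspondence between stringy $1$-morphisms in $\tU$ and stringy triples in $\hl^{(w_0\la,\mu)}$, together with the matching of the two orderings, would require more work than you indicate. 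Treat that paragraph as motivation only; your direct diagrammatic argument is what actually proves the result.
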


As in \cite[\S \ref{m-sec:decat}]{Webmerged}, we can
  define vectors $v_{\Bi}^\kappa=v_{\sI}$ in $V_\bla^\Z$ inductively by 
\begin{itemize}
\item if $\kappa(\ell)=n$, then
  $v_{\Bi}^\kappa=v_{\Bi}^{\kappa^-}\otimes v_\ell$ where $v_\ell$ is
  the highest (lowest) weight vector of $V_{\la_\ell}$ if $\la_\ell$
  is dominant (anti-dominant), and $\kappa^-$ is the
  restriction to $[1,\ell-1]$.
\item If $\kappa(\ell)\neq n$, so $v_{\Bi}^\kappa=E_{i_n}v^{\kappa}_{\Bi^-}$, where
  $\Bi^-=(i_1,\dots,i_{n-1})$, using the convention that $F_{i}=E_{-i}$. 
\end{itemize}

\begin{theorem}\label{th:shapovalov-1}
  The 
ungraded Grothendieck group 
$K^0(\hl^\bla)$ is isomorphic to $\bar V_\bla^\Z$ via the map sending
$[P^\kappa_\Bi]\to v_{\Bi}^\kappa$.  This map intertwines the Euler form with the factorwise Shapovalov
  form $\langle-,-\rangle_s$ on the tensor product.
\end{theorem}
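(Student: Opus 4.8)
The plan is to run the decategorification argument of \cite{Webmerged} (which settles the case of all-dominant $\bla$, i.e. $T^\bla$) in the present, more general, setting, feeding in the structural results proved above. First I would reduce to the case that $\K$ is a field: idempotents lift along $\K\to\K/\mathfrak m$, and by Corollary \ref{absolutely-indecomposable} the indecomposable objects of $\hl^\bla$ and their multiplicities in any tricolore triple are unchanged, so $K^0(\hl^\bla)$ and its Euler form are unaffected. When every entry of $\bla$ is dominant, Theorem \ref{red-tensor} identifies $\hl^\bla$ with $T^\bla\pmodu$ and the statement is exactly the decategorification theorem of \cite{Webmerged}; the inductive definition of $v_\Bi^\kappa$ specializes to the one used there, and since after reduction at $q=1$ the distinction between $\Delta$ and $\Delta^{\op}$ — and hence the ordering of the tensor factors — disappears, the word ``factorwise'' costs nothing. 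The remaining task is to bootstrap to arbitrary $\bla$, by an argument of the same shape.

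Next, the right action of $\tU$ on $\hl^\bla$ makes $K^0(\hl^\bla)$ a module over $K^0(\dU)=\dot U$ (at $q=1$) by Theorem \ref{categorify-U}, and $\bar V_\bla^\Z$ is such a module via the iterated coproduct. I would check that adding a black strand decategorifies correctly: a rightmost black strand is the functor $\eE_i$ or $\eF_i$ from $\tU$, so it acts on $K^0$ by the corresponding Chevalley generator, while a black strand to the left of the last colored strand can be slid past that strand using \eqref{color-opp-cancel}, \eqref{cost}, \eqref{blue-triple}, \eqref{red-triple}, which is precisely what produces the coproduct correction terms. Hence the classes $[(\bla,\Bi,\kappa)]$ obey exactly the recursion (append an extreme-weight vector, or apply a Chevalley generator) that defines the $v_\Bi^\kappa$. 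Since every indecomposable of $\hl^\bla$ is a summand of some $\sI(\mathbf a^\bullet)$ (shown above), the classes $[(\bla,\Bi,\kappa)]$ span $K^0(\hl^\bla)$ over $\Z$, so the assignment $[(\bla,\Bi,\kappa)]\mapsto v_\Bi^\kappa$ extends to a well-defined $\Z$-linear map $\Psi\colon K^0(\hl^\bla)\to\bar V_\bla^\Z$ — the content being that every $\Z$-linear relation among the standard classes decategorifies to an identity among the $v_\Bi^\kappa$, which is where the relations of $\cT$ and the categorical $\mathfrak{sl}_2$-relations of $\tU$ are invoked, as in \cite{Webmerged}.

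To see $\Psi$ is an isomorphism it is enough, in a fixed $\EuScript R=\mu$ weight space, that $K^0(\hl^\bla)_\mu$ and $(\bar V_\bla^\Z)_\mu$ are free $\Z$-modules of the same finite rank: $\Psi$ identifies the $\Z$-span of the standard classes with the $\Z$-span of the $v_\Bi^\kappa$, and both are full-rank sublattices whose saturations are the respective modules. On the categorical side, Corollary \ref{absolutely-indecomposable} and Lemma \ref{bla-unique} give a $\Z$-basis of $K^0(\hl^\bla)$ indexed by stringy tricolore triples, i.e. by $\ell$-tuples of string parametrizations of elements of the crystals $\mathcal B^{\la_k}$; on the quantum-group side, the tensor product of the canonical (crystal) bases of the $V_{\la_k}^\Z$ gives, at $q=1$, a $\Z$-basis of $\bar V_\bla^\Z$ indexed by the same set, compatibly with weight. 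For the pairing: the Euler form of $[(\bla,\Bi,\kappa)]$ and $[(\bla,\Bj,\tau)]$ is the $\K$-dimension of the Hom space between them, and a basis of this space is produced from Proposition \ref{C-basis} together with the ``zombie'' quotient defining $\hl$, by sweeping all black strands away from the colored strands exactly as in the proof of Proposition \ref{C-basis}. This exhibits the dimension as a product, over the regions cut out by the colored strands, of $\tU$-/KLR-type Hom dimensions, which is precisely the expansion of $\langle v_\Bi^\kappa,v_\Bj^\tau\rangle_s$ under the iterated coproduct at $q=1$. (In finite type one may instead transport the form statement along the strongly $\tU$-equivariant equivalences of Theorem \ref{highest-lowest} and Proposition \ref{prop:hl-tensor}, which preserve Euler forms and realize the Shapovalov isometry $V_{w_0\mu}\cong V_\mu$; but since $\fg$ is only assumed symmetrizable here, the direct computation is the one to carry out.)

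I expect the main obstacle to be the well-definedness of $\Psi$, i.e. checking that the $\cT$-relations governing black strands passing colored strands, together with the categorical $\mathfrak{sl}_2$-relations of $\tU$, decategorify term by term to the relations defining $\bar V_\bla^\Z$ through the iterated coproduct; this matching of the coproduct combinatorics against \eqref{color-opp-cancel}--\eqref{cost} is the technical heart of the theorem, while the rank count and the Hom-space factorization are comparatively routine bookkeeping.
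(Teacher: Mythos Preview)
Your overall strategy is close to the paper's, but you have inverted the logical order in a way that creates a real gap. The paper proves the Euler--Shapovalov equality \emph{first}, by a short induction using the biadjunction of $\eE_i$ and $\eF_i$: move a rightmost black strand from one side of the Hom-pairing to the other (matching the adjointness of $E_i$ and $F_i$ under the Shapovalov form), slide it left via (\ref{switch-1}) and \eqref{color-opp-cancel} until the last black block is empty, then strip the last colored strand and recurse. Well-definedness of $\Psi$ is then a \emph{consequence}: any $\Z$-linear relation among the classes $[(\bla,\Bi,\kappa)]$ forces the corresponding combination of $v_\Bi^\kappa$ into the radical of the factorwise Shapovalov form, hence to zero. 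You instead propose to verify well-definedness by matching the relations of $\cT$ and the categorical $\mathfrak{sl}_2$ relations term by term. That is not what \cite{Webmerged} does, and it is not obviously sufficient: relations in $K^0$ of an idempotent-complete Krull--Schmidt category come from arbitrary direct-sum decompositions, not merely from the 2-morphism relations of the presenting 2-category, so there is no finite list of relations to check.

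Your pairing argument has a genuine hole. Proposition~\ref{C-basis} gives a basis of $\HOM_{\cT}$, not of the quotient $\hl^\bla$; passing to the ``zombie'' quotient does not hand you a basis for free, since you would need to know exactly which diagrams survive the violating ideal. Even granting a basis, the claimed factorization ``as a product over the regions cut out by the colored strands'' is false as stated: the basis diagrams in $D$ freely cross colored strands, and neither $v_\Bi^\kappa$ nor $v_\Bj^\tau$ is a pure tensor, so their Shapovalov pairing has no reason to factor as a product. The biadjunction induction is precisely the device that replaces this nonexistent factorization; once you run it, both the form equality and the well-definedness fall out together, and the rank count via stringy triples (which you state correctly) finishes the job.
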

\begin{remark}
Note that the comparable theorem for $\bla$ all highest weight
(\cite[\ref{m-Uq-action}]{Webmerged}) did not require setting $q=1$.
This is because we already knew the form on the quantum tensor product
we expected to match the Euler form with, whereas here we do not know
{\it a priori} that a suitable bilinear form exists on a tensor
product.  We'll establish later that this result holds with $q$
not specialized.
\end{remark}

\begin{proof}
Once we have proved the equality of dimensions \begin{equation}\label{Euler-match}
\sum \dim \HOM(\sI,\sI')=\langle
v_{\sI},v_{\sI'}\rangle_s,
\end{equation}
the proof is precisely the same as that of
\cite[\ref{m-Uq-action}]{Webmerged} with its associated lemmata, which we
leave as an exercise to the reader.

The proof of \eqref{Euler-match} is also quite similar, but requires
small changes.  We need only prove that \eqref{Euler-match} holds when the
tricolore triples are of the form $\sI(\mathbf{a}^{\bullet})$.  As in
\cite[\ref{m-Uq-action}]{Webmerged}, the induction is easier to swing
if we allow one extra strand which points in the ``wrong'' direction.   

We induct on the reverse dominance order on words.  If $\sI$ and
$\sI'$ both have $\mathbf{a}^{(\ell)}=0$, then \eqref{Euler-match}
will hold after we remove the
rightmost strand, which is equivalent the desired case of
\eqref{Euler-match}.  If one of $\sI$ and
$\sI'$ have $\mathbf{a}^{(\ell)}\neq 0$, then we can apply the
adjunction to pull a strand from one side to the other, and then slide
it left using the relations (\ref{switch-1}) and
(\ref{color-opp-cancel}).  
\excise{We claim that 
\begin{equation}\label{Euler-match}
\dim \HOM(P^{\kappa}_{\Bi},P^{\kappa'}_{\Bi'})=\langle
v_{\Bi'}^{\kappa'},v_{\Bi'}^{\kappa'}\rangle_s.
\end{equation}
We prove \eqref{Euler-match} by induction on $n$ and $\ell$.  Unless
$n=\kappa(\ell)=\kappa'(\ell)$, we can move a $\fF_i$ from one side to
become a $\fE_i$ on the other (up to shift).  This has the same effect
on the Shapovalov form, since $E_i$ and $F_i$ are biadjoint under
$\langle-,-\rangle_s$. The decompositions of $\fE_iP^{\kappa}_{\Bi}$ into $P^{\kappa''}_{\Bi''}$'s matches that of the vector since both are done using the commutation relations between $\fE_i$ and $\fF_i$ or $E_i$ and $F_i$, which we already know match.

If  $n=\kappa(\ell)=\kappa'(\ell)$, then the dimension of the
$\HOM$-space and the inner product are both unchanged by simply
removing the red line.  This shows the equality \eqref{Euler-match}.

Thus, if we are given any linear relation satisfied by
$[P^\kappa_\Bi]$'s, the corresponding linear combination of
$v_{\Bi}^\kappa$'s is in the kernel of this form, and thus 0 in
$V_\bla$.  Thus, $[P^{\kappa}_{\Bi}]\mapsto v_\Bi^\kappa$ defines a
surjective map.

Thus, we need only show that this map is injective. By Lemma
\ref{bla-unique}, the stringy sequences span $K^0(\hl^\bla)$; on
the other hand, the are clearly sent to a basis of $V^\Z_\bla$ since
they are upper triangular with any crystal basis.  Of course, any
linear map sending a spanning set to a basis is an isomorphism.}
\end{proof}
This in particular shows that the classes of stringy sequences are
linearly independent, so the ``new'' summand of each stringy sequence
must be non-zero.
\begin{corollary}\label{cor:exist}
    Each stringy sequence in $\hl^\bla$ or $\tU$ has exactly one indecomposable
    non-zero summand
  which is not isomorphic to any summand of a larger sequence.
\end{corollary}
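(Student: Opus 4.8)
The plan is the following. By Lemma~\ref{stringy-summand} (for $\hl^\bla$) and Proposition~\ref{unique} (for $\tU$), each sequence has \emph{at most} one indecomposable summand that is not isomorphic to a summand of any larger sequence, and has none at all unless it is stringy. So the only content of Corollary~\ref{cor:exist} is an \emph{existence} statement: for a stringy sequence $\sigma$, its ``new'' indecomposable summand $P_\sigma$ is non-zero. I would deduce this from the Grothendieck-group computation already indicated in the remark preceding the corollary.

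First I would record the combinatorial input. Writing $\sigma=\sI(\mathbf{a}^{(1)},\dots,\mathbf{a}^{(\ell)})$ and decomposing it into indecomposables, every summand is either $P_\sigma$ itself or, by Lemma~\ref{stringy-summand} / Proposition~\ref{unique}, a summand of some strictly larger sequence. Iterating this --- on each fixed weight (resp.\ fixed pair $(\la,\la')$) component the order on sequences has only finitely many elements above any given one, and no infinite ascending chains --- every summand of $\sigma$ other than $P_\sigma$ is the new summand $P_\tau$ of some strictly larger \emph{stringy} sequence $\tau>\sigma$. Hence in the Grothendieck group
\[
[\sigma]=m_\sigma\,[P_\sigma]+\sum_{\tau>\sigma\ \text{stringy}}c_{\sigma\tau}\,[P_\tau],
\]
a ``triangular'' expansion with respect to the order.

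Then I would run the induction over the order, from the largest stringy sequences downwards (legitimate by the finiteness just noted). Assuming $P_\tau\neq 0$ for all stringy $\tau>\sigma$, suppose for contradiction that $m_\sigma=0$; then $[\sigma]$ lies in the span of $\{[P_\tau]:\tau>\sigma\ \text{stringy}\}$. By the inductive hypothesis the displayed expansion is, on the finite set of stringy sequences lying above any fixed one, triangular with non-zero diagonal entries, so that span coincides over $\Q$ with the span of $\{[\tau]:\tau>\sigma\ \text{stringy}\}$. But the classes of all stringy sequences are linearly independent --- for $\hl^\bla$ this is exactly what Theorem~\ref{th:shapovalov-1} yields, the classes mapping to a crystal-unitriangular family in $\bar V_\bla^\Z$, and for $\tU$ it follows from Theorem~\ref{categorify-U} by the analogous argument (the stringy classes being unitriangular with respect to a crystal basis of $\dot{U}$) --- so $[\sigma]$ cannot lie in the span of the other stringy classes. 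This contradiction forces $m_\sigma\neq 0$, i.e.\ $P_\sigma\neq 0$. Moreover, comparing leading terms in the crystal-unitriangular expansion $[\sigma]\mapsto v_\sigma$ pins the multiplicity down to $m_\sigma=1$.

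The main obstacle I anticipate is bookkeeping rather than mathematics: one must verify that the order used on stringy sequences --- reverse dominance with lexicographic tie-breaking on the words in the $\hl^\bla$ case, and its analogue for $\tU$ --- is co-Noetherian on each relevant graded piece, so that the downward induction and the finite triangular change-of-basis step make sense, and one must make the step ``every non-new summand of a stringy sequence is the new summand of a strictly larger stringy sequence'' precise (for $\tU$ this is the content of Proposition~\ref{unique}, and for $\hl^\bla$ it is Lemma~\ref{stringy-summand} iterated). The single substantive ingredient, linear independence of the stringy classes, is already supplied by Theorems~\ref{th:shapovalov-1} and~\ref{categorify-U}, so no genuinely new difficulty arises.
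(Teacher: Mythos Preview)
Your proposal is correct and follows essentially the same approach as the paper. The paper's own argument is the one-sentence remark immediately preceding the corollary: Theorem~\ref{th:shapovalov-1} shows that the classes of stringy sequences are linearly independent (they map to a crystal-unitriangular family in $\bar V_\bla^\Z$), and combined with the triangular decomposition from Lemma~\ref{stringy-summand}/Proposition~\ref{unique} this forces each new summand to be nonzero. You have simply unpacked this into an explicit downward induction and articulated the finiteness conditions that make it run; the mathematical content is identical.

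Two minor comments. First, the multiplicity $m_\sigma=1$ already falls out of the proof of Lemma~\ref{stringy-summand} (the quotient $\End(\sI(\mathbf a^\bullet))/I$ being graded \emph{local} rules out $m_\sigma>1$), so your extra leading-term comparison is unnecessary though harmless. Second, for the $\tU$ case the paper is terse; your appeal to Theorem~\ref{categorify-U} together with the fact that the stringy classes $E_{\Bi}F_{\Bj}1_\mu$ are a basis of $\dot U$ (via the string parametrization of $B(-\infty)\times B(-\infty)$) is the natural way to supply the missing linear-independence input, and is what the paper implicitly intends.
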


The autofunctor $\tilde\psi$ obviously preserves violating morphisms,
and thus descends to an involution on $\hl^\bla$ which we 
denote $\tilde \psi^\bla$.  

This functor defines an involution on $V^\bla$ for  any $\bla$.  We will
denote this involution $\psi^\bla$. 

\begin{proposition}
  For each indecomposable projective $P$ in $\hl^\bla$ (resp. $\dU$), there is a unique grading
  shift $P(n)$ such that $\tilde\psi^\bla(P(n))\cong P(n)$ (resp.
  $\tilde\psi(P(n))\cong P(n)$)
\end{proposition}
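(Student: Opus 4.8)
The plan is to prove the sharper statement that $\tilde\psi^\bla(P)\cong P$ for every indecomposable projective $P$ of $\hl^\bla$ (and $\tilde\psi(P)\cong P$ for every indecomposable $1$-morphism $P$ of $\dU$), so that $n=0$ gives a self-dual shift. Uniqueness is then immediate: $\tilde\psi^\bla(P(n))\cong\tilde\psi^\bla(P)(-n)\cong P(-n)$, and $P(-n)\cong P(n)$ forces $n=0$ since the grading shift acts freely on indecomposables.

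First I would record what is needed about $\tilde\psi^\bla$: it is an involutive autofunctor of $\hl^\bla$, contravariant on morphisms, with $\tilde\psi^\bla(M(k))\cong\tilde\psi^\bla(M)(-k)$; and, since the underlying $2$-functor is covariant on $1$-morphisms and fixes each generating $1$-morphism up to negating grading shifts, it fixes every tricolore triple \emph{as an object}, i.e. $\tilde\psi^\bla(\sI(\mathbf a^{(1)},\dots,\mathbf a^{(\ell)}))\cong\sI(\mathbf a^{(1)},\dots,\mathbf a^{(\ell)})$ (these objects carry trivial grading shift). By Lemma~\ref{bla-unique} and Corollary~\ref{cor:exist}, each indecomposable of $\hl^\bla$ is, up to grading shift, the unique non-zero ``new'' summand $P_{\mathbf a^\bullet}$ of a unique stringy triple $\sI(\mathbf a^\bullet)$; in particular distinct stringy triples have disjoint grading-shift orbits. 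Hence it suffices to prove $\tilde\psi^\bla(P_{\mathbf a^\bullet})\cong P_{\mathbf a^\bullet}$ for every stringy $\mathbf a^\bullet$.

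I would prove this by descending induction on the order on stringy sequences defined above (in each case that order has no infinite strictly increasing chains — for $\hl^\bla$ it restricts to a finite poset on each weight space, which $\tilde\psi^\bla$ preserves — so the induction is legitimate). Fix a stringy $\mathbf a^\bullet$ and assume $\tilde\psi^\bla(P_{\mathbf b^\bullet})\cong P_{\mathbf b^\bullet}$ for all stringy $\mathbf b^\bullet>\mathbf a^\bullet$. Write a Krull--Schmidt decomposition $\sI(\mathbf a^\bullet)\cong P_{\mathbf a^\bullet}\oplus R$; by Lemma~\ref{stringy-summand}, every indecomposable summand of $R$ is a grading shift of some $P_{\mathbf b^\bullet}$ with $\mathbf b^\bullet>\mathbf a^\bullet$, so by the inductive hypothesis $\tilde\psi^\bla(R)$ is again a direct sum of grading shifts of such $P_{\mathbf b^\bullet}$. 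Applying $\tilde\psi^\bla$ and using $\tilde\psi^\bla(\sI(\mathbf a^\bullet))\cong\sI(\mathbf a^\bullet)$ gives
\[\sI(\mathbf a^\bullet)\;\cong\;\tilde\psi^\bla(P_{\mathbf a^\bullet})\oplus\tilde\psi^\bla(R).\]
Comparing with $\sI(\mathbf a^\bullet)\cong P_{\mathbf a^\bullet}\oplus R$ and using uniqueness of Krull--Schmidt decompositions: the grading-shift orbit of $P_{\mathbf a^\bullet}$ is disjoint from all orbits occurring in $R$ and in $\tilde\psi^\bla(R)$, and it occurs in $\sI(\mathbf a^\bullet)$ with multiplicity exactly one (two summands in that orbit would give two distinct ``new'' summands, contradicting Lemma~\ref{stringy-summand}); hence $\tilde\psi^\bla(P_{\mathbf a^\bullet})$ must be precisely that one summand, namely $P_{\mathbf a^\bullet}$ itself. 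The base case (maximal $\mathbf a^\bullet$) is the same computation with $R=0$, where $\sI(\mathbf a^\bullet)=P_{\mathbf a^\bullet}$ is indecomposable by Corollary~\ref{cor:exist}.

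For $\dU$ the argument is word-for-word the same, using that $\tilde\psi$ fixes each $1$-morphism $(\Bi,-\Bj)$ and replacing Lemma~\ref{stringy-summand}/Corollary~\ref{cor:exist} by Proposition~\ref{unique}. The only delicate point is the Krull--Schmidt bookkeeping: one must rule out $\tilde\psi^\bla$ moving a shift of some $P_{\mathbf b^\bullet}$ ($\mathbf b^\bullet>\mathbf a^\bullet$) out of the ``old'' orbits, which is exactly what the inductive hypothesis gives once one observes that $\tilde\psi^\bla$ sends summands of the $\tilde\psi^\bla$-stable object $\sI(\mathbf a^\bullet)$ to summands of $\sI(\mathbf a^\bullet)$. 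This is bookkeeping rather than anything deep, but it is where all the content lies.
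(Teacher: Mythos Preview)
Your proof is correct and follows essentially the same strategy as the paper: both use that each stringy triple $\sI(\mathbf a^\bullet)$ is $\tilde\psi^\bla$-fixed, together with the uniqueness of its ``new'' summand from Lemma~\ref{bla-unique}/Proposition~\ref{unique}, to force $\tilde\psi^\bla(P_{\mathbf a^\bullet})\cong P_{\mathbf a^\bullet}$.

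The one difference is packaging. The paper argues directly without induction: since $\tilde\psi^\bla(P(n))$ is a summand of the self-dual $\sI(\mathbf a^\bullet)$, it cannot be an ``old'' summand (else, as higher stringy triples are also self-dual, applying $\tilde\psi^\bla$ again would make $P(n)$ a summand of a higher triple), so it must be the unique new one. Your descending induction reaches the same conclusion by first proving self-duality for all higher $P_{\mathbf b^\bullet}$ and then doing Krull--Schmidt bookkeeping. Both are fine; the paper's route is shorter, yours is more explicit.

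One small correction: your justification that the induction is legitimate (``finite poset on each weight space'') fails when $\bla$ mixes dominant and anti-dominant entries in infinite type. The correct reason the induction terminates is that reverse dominance order on compositions has no infinite strictly ascending chains (the right-partial sums are non-negative integers that weakly decrease along an ascending chain), and for a fixed composition there are only finitely many $\ell$-tuples of words. This is what you need, and it holds in general.
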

\begin{proof}
  Such a shift is obviously unique, so we need only prove it exists.
  There is a unique $n$ such that $P(n)$ is a summand 
  of the corresponding stringy sequence.  Since the latter module is
  self-dual,  $\tilde\psi^\bla(P(n))$ is a summand of it, and by the
  uniqueness of Proposition \ref{unique}, we must have $\tilde\psi^\bla(P(n))\cong P(n)$.
\end{proof}

These results show that:
\begin{theorem}\label{KS-duality}
  The categories $\hl^\bla$ and $\tU$ are humorous categories
  with the obvious grading shift, and dualities given by $\tilde
  \psi^\bla$ and $\tilde \psi$. \end{theorem}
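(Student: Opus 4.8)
The plan is to verify, clause by clause, the definition of a humorous category for $\hl^\bla$ and for the idempotent completion $\dU$ of $\tU$, assembling the structural results already established; essentially all of the real work is done, and this theorem is the harvest. Additivity and $\K$-linearity are immediate from the constructions. For the Krull--Schmidt and idempotent-completeness clauses one passes to the idempotent completions — for $\hl^\bla$ this is built into Theorem \ref{red-tensor}, which identifies it with projective $T^\bla$-modules, and for $\tU$ this is the passage to $\dU$ — and then invokes finite-dimensionality of the degree-$0$ part of every endomorphism ring: for $\tU$ this was recorded just after the definition of $\dU$, and for $\cT$, hence for $\hl^\bla$, it follows from the explicit basis of Proposition \ref{C-basis}.

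Next comes the grading shift $(1)$, which is the obvious functor in both cases. The two finiteness conditions on $\HOM$-spaces — that $\HOM(M,N(i))=0$ for $i\ll 0$ and $\dim\Hom(M,N)<\infty$ — follow from the bases: in $\hl^\bla$ the cyclotomic-type relations \eqref{lm-relations} make dots nilpotent, so the spaces described by Proposition \ref{C-basis} are finite-dimensional outright; in $\dU$ they are finitely generated over the positively graded commutative ring of bubbles, which gives boundedness below together with finite-dimensional graded pieces. Freeness of the $\Z$-action on indecomposables is then formal: an isomorphism $M\xrightarrow{\ \sim\ }M(n)$ with $n\neq 0$ would produce homogeneous invertible elements of $\HOM(M,M)$ in every degree $kn$, $k\in\Z$, contradicting boundedness below of $\HOM(M,M)$. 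Finiteness of the set of orbits is to be read within a fixed weight space $\hl^\bla_\mu$ (respectively fixed source and target weights for $\dU$): by Corollary \ref{cor:exist} and Lemma \ref{bla-unique} (respectively Proposition \ref{unique}) the indecomposables up to isomorphism are in bijection with the stringy tricolore triples (respectively stringy $1$-morphisms), and only finitely many of these lie in a given weight space; the full category is the direct sum over weights of these humorous pieces, which is the sense in which the statement holds.

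For the duality, take $M^\circledast=\tilde\psi^\bla(M)$, respectively $\tilde\psi(M)$. That $\tilde\psi$ is an involutive autoequivalence, covariant on $1$-morphisms and contravariant on $2$-morphisms, is part of its construction, and since it sends $\eE_i(k)\mapsto\eE_i(-k)$ and $\eF_i(k)\mapsto\eF_i(-k)$ it reverses grading shifts, giving $(M(1))^\circledast\cong M^\circledast(-1)$. The required unique $\circledast$-fixed point on each grading-shift orbit is exactly the content of the Proposition just above: every indecomposable has a unique self-dual shift, its proof resting on the self-duality of the stringy sequences together with the uniqueness in Corollary \ref{cor:exist} and Proposition \ref{unique}. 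The last clause, that $\End(P_c)$ is local with residue field precisely $\K$, is absolute indecomposability, which is Corollary \ref{absolutely-indecomposable} for $\hl^\bla$ and Proposition \ref{unique} for $\dU$.

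The one point requiring a little care — and the only place the argument is not pure bookkeeping — is extracting boundedness below and graded-finiteness of $\HOM$-spaces from the bases, and correctly localizing the finiteness-of-orbits clause to weight spaces. The substantive inputs (absolute indecomposability, uniqueness of the stringy summand, and the existence of a self-dual shift) have all been proved already, so beyond these two organizational checks the proof is an assembly of cited statements.
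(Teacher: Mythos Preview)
Your approach matches the paper's: the theorem is stated as an immediate corollary of the preceding results, and you have correctly identified most of the relevant inputs (Lemma \ref{bla-unique}, Corollary \ref{absolutely-indecomposable}, Proposition \ref{unique}, Corollary \ref{cor:exist}, and the Proposition immediately preceding the theorem on self-dual shifts).

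Two of your citations, however, are to results that only cover special cases of $\bla$, and this leaves genuine gaps in the bookkeeping. Theorem \ref{red-tensor} identifies $\hl^\bla$ with $T^\bla\pmodu$ \emph{only} when all $\la_i$ are dominant; it says nothing about the mixed highest/lowest case, so you cannot use it for idempotent completeness or Krull--Schmidt in general. Likewise, the relations \eqref{lm-relations} are specific to $\hl^{(\la,\mu)}$ with a single blue and a single red strand; they do not give nilpotency of dots for arbitrary $\bla$. The correct argument for finiteness of $\HOM$-spaces in $\hl^\bla$ is the equality \eqref{Euler-match} established inside the proof of Theorem \ref{th:shapovalov-1}, which shows outright that $\sum_i\dim\HOM(\sI,\sI')$ is finite (equal to a Shapovalov pairing). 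This gives both boundedness below and finite-dimensionality of each graded piece at once, and Krull--Schmidt then follows from finite-dimensionality of the degree-$0$ part of endomorphisms, as the paper itself notes just before Lemma \ref{stringy-summand}. With these two citations replaced, your assembly is correct.
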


\section{Representation categories and standard modules}
\label{sec:repr-categ}

As in \cite{Webmerged}, it will be useful to deal with an abelian
category, not just an additive one.  In particular, (as far as the
author is aware) this is necessary to check that the Grothendieck
group of $\hl^\bla$ is the tensor product representation as a
representation of $U_q(\fq)$; we have thus far only checked that this
holds at $q=1$.

\begin{definition}
  We let $\cata^\bla:=\operatorname{Rep}(\hl^\bla)$.  Let $\Yon\colon\hl^\bla\to \cata^\bla$
  be the Yoneda embedding $\sI\mapsto \Hom(\sI,-)$.
\end{definition}
Note that we do {\it not} require an object in $\cata^\bla$ to be
finitely generated.
\begin{definition}
  Let $H^\bla$ be the subring of the opposite endomorphism ring
  $\End_{\hl^\bla}\left(\bigoplus_{\sI}\sI\right)^{op}$ which kills
  all but finitely many summands.
\end{definition}
Note that this is a non-unital ring; by a $H^\bla$-module $M$, we
always mean one which is the direct sum of the images of the
idempotents $e_{\Bi,\bla,\kappa}$.  Since $H^\bla$ is locally unital
for the system of idempotents $e_{\Bi,\bla,\kappa}$, this is the
natural generalization of the condition that the identity of a ring
must act by the identity on a module. 

We can interpret an object in
$\cata^\bla$ as a module over $H^\bla$ using the obvious functor
$\hl^\bla\to H^\bla\pmodu$ given by the morphism space $X\mapsto
\HOM_{\hl}(\bigoplus_{\sI}\sI,-)$.
Of course, $\cata^\bla$ is an abelian category, and since $\Yon(P)$ is
projective for any $P\in \Ob(\hl^\bla)$, $\cata^\bla$ has
enough projectives.  However, it is not clear that if $M$ is finitely
generated, and $P$ is a finitely generated projective with a
surjection $P\to M$, then the kernel of this map is finitely
generated.  

We can define an action of
$\tU$ on $\cata^\bla$ by exact functors using the biadjunction between $\eE_i$ and
$\eF_i$ as a definition, i.e.
\begin{align*}
  \eF_i\cdot M(\sI)&:= M\big(\eE_i\sI(\langle-\al_i,\mu\rangle+1)\big)\\
  \eE_i\cdot M(\sI)&:= M\big(\eF_i\sI(\langle-\al_i,\mu\rangle+1)\big).
\end{align*}
Note the switch of $\eE_i$ and $\eF_i$ above; this is what is required
so the the Yoneda embedding intertwines the categorification functors,
since $\eE_i$ and $\eF_i$ are biadjoint on $\hl^\bla$.

\begin{theorem}
  The Grothendieck group $K^0(\cata^\bla)$ is isomorphic to the
  lattice dual to $\bar V_\bla^\Z$, with the map induced by $\Yon$ given by the
  Shapovalov form. 
\end{theorem}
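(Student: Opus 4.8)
The plan is to deduce the statement from Theorem~\ref{th:shapovalov-1} together with the identification of a restricted dual space with $K^0(\operatorname{Rep}(-))$ recorded just after Lemma~\ref{finite-hom}: essentially all the content is the Euler-form computation already carried out there, so what follows is mostly bookkeeping with weight spaces.

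First I would record the structural facts. No tricolore diagram has bottom and top of different weight, so $\hl^\bla$ splits as an additive category into the direct sum $\bigoplus_\mu\hl^\bla_\mu$ of its weight spaces, and hence $\cata^\bla$ splits into a product of the $\cata^\bla_\mu=\operatorname{Rep}(\hl^\bla_\mu)$, with $K^0(\cata^\bla)=\prod_\mu K^0(\cata^\bla_\mu)$. Each $\hl^\bla_\mu$ is a humorous category (Theorem~\ref{KS-duality}), and its Hom spaces are finite dimensional: this is exactly what \eqref{Euler-match} delivers in the proof of Theorem~\ref{th:shapovalov-1}, where $\sum\dim\HOM(\sI,\sJ)$ is identified with a value of the factorwise Shapovalov form on $\bar V_\bla^\Z$. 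Therefore, working at $q=1$, $\hl^\bla_\mu$ is equivalent to the finitely generated projective modules over a finite-dimensional algebra $A_\mu$, and $\cata^\bla_\mu$ to the finite-dimensional $A_\mu$-modules, so that $K^0(\cata^\bla_\mu)$ is free on the classes $[L_c]$ of the simple modules, indexed --- by the analysis around Lemma~\ref{bla-unique} and Corollary~\ref{cor:exist} --- by the stringy tricolore triples of weight $\mu$, the same set that indexes the self-dual indecomposable projectives $P_c$.

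Next I would pair $K^0(\hl^\bla)$ against $K^0(\cata^\bla)$ by $\{[P],[R]\}=\sum_i q^{-i}\dim R(P(i))$, specialized at $q=1$. Since $P_c$ is the projective cover of $L_c$, $\{[P_c],[L_d]\}=\dim\Hom(P_c,L_d)=\delta_{c,d}$, so the $[L_d]$ are the basis dual to the $[P_c]$; thus this pairing identifies $K^0(\cata^\bla)$ with the lattice of $\Z$-linear functionals on $K^0(\hl^\bla)$ vanishing on all but finitely many $[P_c]$. Transporting this along the isomorphism $K^0(\hl^\bla)\cong\bar V_\bla^\Z$ of Theorem~\ref{th:shapovalov-1}, under which a tricolore triple $\sI$ goes to $v_{\sI}$ (and $[P_d]$ to a basis vector I will write $v_{[P_d]}$), realizes $K^0(\cata^\bla)$ as the lattice dual to $\bar V_\bla^\Z$, with $[L_d]$ corresponding to the functional dual to $v_{[P_d]}$.

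Finally I would identify the map $K^0(\hl^\bla)\to K^0(\cata^\bla)$ induced by $\Yon$. For a tricolore triple $\sI$, $\Yon(\sI)=\HOM(\sI,-)$ is projective, so the coefficient of $[L_d]$ in $[\Yon(\sI)]$ is $\{[P_d],[\Yon(\sI)]\}=\dim\HOM(\sI,P_d)=\langle v_{\sI},v_{[P_d]}\rangle_s$, the last equality by Theorem~\ref{th:shapovalov-1}. Hence $[\Yon(\sI)]$ is the functional $v_{[P_d]}\mapsto\langle v_{\sI},v_{[P_d]}\rangle_s$, that is $\langle v_{\sI},-\rangle_s$; extending $\Z$-linearly, the map induced by $\Yon$, transported through the two identifications, is the map $\bar V_\bla^\Z\to(\bar V_\bla^\Z)^*$ sending $v\mapsto\langle v,-\rangle_s$, the Shapovalov form, as claimed. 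Given Theorem~\ref{th:shapovalov-1} no single step here is hard; the only thing requiring genuine care, rather than formal manipulation, is the finite-dimensionality of the Hom spaces of $\hl^\bla$ --- the mixed highest/lowest-weight analogue of the finiteness of the cyclotomic-type algebras $T^\bla$ --- which is what makes $K^0(\cata^\bla)$ free on simples and perfectly paired with $K^0(\hl^\bla)$; this is already in place from Theorem~\ref{th:shapovalov-1}. If one also wanted the map induced by $\Yon$ to be an isomorphism, one would need $\cata^\bla$ to have finite projective dimension and could then invoke Lemma~\ref{proj-dim}, but that goes beyond the present statement.
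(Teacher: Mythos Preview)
The paper states this theorem without proof, treating it as an immediate consequence of Theorem~\ref{th:shapovalov-1} and the general duality between $K^0(\mathcal{C})$ and $K^0(\operatorname{Rep}(\mathcal{C}))$ set up after Lemma~\ref{finite-hom}. Your argument spells out exactly that deduction, and in essence it is correct and is what the paper has in mind.

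One point to tighten: you assert that each $\hl^\bla_\mu$ is the projectives over a \emph{finite-dimensional} algebra $A_\mu$. This holds when the weight space $(\bar V_\bla^\Z)_\mu$ is finite-dimensional (e.g.\ finite type, or $\bla$ purely dominant or purely anti-dominant), but in the mixed highest/lowest case for infinite-type $\fg$ a single weight space can support infinitely many stringy triples, so $A_\mu$ need not be finite-dimensional. The paper flags this right after the theorem: $\bar V_\bla^\Z$ is a direct \emph{sum} of copies of $\Z$ while $K^0(\cata^\bla)$ is a direct \emph{product}. Your pairing argument $\{[P_c],[L_d]\}=\delta_{c,d}$ and the computation $[\Yon(\sI)]=\langle v_\sI,-\rangle_s$ survive unchanged; what needs adjustment is only the sentence identifying $K^0(\cata^\bla_\mu)$ as ``free on the $[L_c]$'' --- it should be the full $\Z$-linear dual (product rather than sum), exactly as the paper says. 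With that wording fix your proof matches the paper's intended argument.
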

Since we have twisted our action by the Cartan involution, the
Shapovalov pairing defines a map of representations.
If $\fg$ is infinite dimensional, then we take the full dual; that is, as
abstract abelian groups, $\bar V_\bla^\Z$ is a direct {\it sum} of
copies of $\Z$, while $K^0(\cata^\bla)$ is a direct {\it product}.  We
let $\widehat V_\bla=K^0(\cata^\bla)\otimes_\Z\C$; this is a $\fg$
representation defined by taking direct product of the weight spaces
of a highest weight representation, rather than direct sum.

Even in finite type, the Shapovalov form is not always unimodular over
the integers, so
this will not usually coincide with $K^0(\hl^\bla)$; this will only happen if all entries in $\bla$ are
minuscule and $\fg$ is finite dimensional.  
In
particular, this shows that $\cata^\bla$ extremely rarely has finite
global dimension, since that is only possible when these lattices coincide.

For a tricolore triple
$\mathsf{I}=(\Bi,\bla,\kappa)$, the number of black strands in each
black block define a composition,
which we denote by $\nu_\sI$.   We also have a function
$\bal_\sI\colon [1,\ell]\to \rola$
given by the sum of the roots labeling each black block.  
For each such function $\bal$, we have a map $\wp\colon
R_{\al(1)}\otimes\cdots\otimes R_{\al(\ell)}\to H^\bla$ sending an $\ell$-tuple
of KL diagrams to their horizontal composition with red and blue lines
added, as in \cite[\eqref{m-wp-map}]{Webmerged}.
 \excise{Attached to every indecomposable object $P$ in $\hl^\bla$, there is a unique
stringy sequence, the smallest in lexicographic order in which it
appears as a summand; we let $\nu_{P}$ be the composition attached to
this stringy sequence.  If $\sI$ is a stringy sequence, then every
indecomposable summand $P$ of $\sI$ has $\nu_\sI$}

\begin{definition}
  The standard representation $S_\sI$ is the maximal quotient of
  $\Yon(\sI)$ such that $S_{\sI}(\sI')=0$ if $\nu_{\sI'}> \nu_\sI$ in the
  reverse dominance order on compositions.  

  More generally, we let the {\bf standardization} $S_M$ of an object
  $M$ in $\hl^{\la_1}_{\la_1-\al(1)}\times \cdots \times
  \hl^{\la_\ell}_{\la_\ell-\al(\ell)}$ for some fixed $\bal$ be the
  initial object in $\mathfrak{V}^\bla$ amongst those such that
  $S_M(\oplus_{\bal_{\sI}=\bal}\sI)\cong M$ as
  $R_{\al(1)}\otimes\cdots\otimes R_{\al(\ell)} $-modules (via $\wp$)
  and $S_M(\sI')=0$ if $\nu_{\sI'}> \nu_\sI$.
\end{definition}
We think of the relation induced from reverse dominance order on
compositions as a preorder of the set of sequences $\sI$. 
In terms of $H^\bla$-modules, this module has a presentation much like
that of standard modules of \cite{Webmerged}: one can define $S_{\sI}$
as a quotient of $\Yon(\sI)$ by the image of every map from
$\Yon(\sI')$ with $\sI'>\sI$.   In terms of Stendhal diagrams, this
means we quotient $H^\bla e_{\sI}$ by all diagrams which are
{\bf standardly violated}\footnote{This is the reflection of the definition of
  standardly violated from \cite{Webmerged}, since we are looking at
  left standard modules rather than right modules. }, that is, diagrams whose bottom is $\sI$,
and where some horizontal slice $y=a$ is $\sI'$ with
$\sI'>\sI$.

As in \cite{Webmerged}, we let ${}_k\eE_i$ and ${}_k\eF_i$ denote the
categorification functors on the $k$th factor of $\hl^{\la_1}\times \cdots \times
  \hl^{\la_\ell}$.
\begin{lemma}\label{prop:act-filter}
  The module $\eE_iS_M$ has a natural filtration $Q_1\supset
  Q_2\supset \cdots$ such that \[Q_k/Q_{k+1}\cong
  S_{{}_k\eE_iM}.\]

 The module $\eF_iS_M$ has a natural filtration $O_m\supset
  O_{m-1}\supset \cdots$ such that \[O_k/O_{k-1}\cong
  S_{{}_k\eF_iM}.\]
\end{lemma}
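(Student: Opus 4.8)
The plan is to argue diagrammatically, in parallel with the corresponding statement for highest weight tensor products in \cite{Webmerged}. Recall the presentation of the standardization $S_M$ as an $H^\bla$-module: fixing $\bal$, it is the quotient of $\bigoplus_{\bal_\sI=\bal}\Yon(\sI)$ by the image of every map from $\Yon(\sI')$ with $\sI'>\sI$ (the standardly violated diagrams), together with the identification, via the map adding red and blue lines, of $S_M\bigl(\bigoplus_{\bal_\sI=\bal}\sI\bigr)$ with $M$ as a module over $R_{\al(1)}\otimes\cdots\otimes R_{\al(\ell)}$. The functor $\eE_i$ on $\cata^\bla$ is built from the functor $\eF_i(\langle-\al_i,\mu\rangle+1)$ on $\hl^\bla$, which on diagrams attaches one extra black strand at the far right; hence $\eE_iS_M$ is presented by the same diagrams with one extra black strand, the grading shift recording the $\tilde K_i$ factors in $\Delta(E_i)$.

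First I would build the filtration. Label the regions cut off by the consecutive red/blue strands, together with the region left of all of them, by $0,1,\dots,\ell$ from left to right, so that the new strand is created in region $\ell$. Reading a diagram for $\eE_iS_M$ from the bottom, the new strand traces a path; using isotopy and the black triple-point relations (\ref{triple-dumb}--\ref{triple-smart}) together with (\ref{blue-triple}--\ref{red-triple}) and (\ref{switch-1}--\ref{switch-2}), and working modulo diagrams with strictly fewer black--black crossings, one may assume it is essentially vertical, hence residing in a well-defined region. Let $Q_k\subseteq\eE_iS_M$ be spanned by the images of diagrams whose new strand can be isotoped into a region of index $\ge k$, so that $Q_1=\eE_iS_M$ and $Q_{\ell+1}=0$. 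The content of relations (\ref{color-opp-cancel}) and especially (\ref{cost}) is that pushing the new strand one region to the left costs only correction terms with fewer crossings or already lying in the next filtration term; since the $H^\bla$-action adds black strands only at the far right and never moves red or blue strands, it follows that each $Q_k$ is a submodule. The filtration terminates because moving the strand into region $0$ forces, by (\ref{cost}), enough dots on it to land in a standardly violated configuration, exactly as in the cyclotomic-quotient arguments of \cite{Webmerged}.

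Next I would identify $Q_k/Q_{k+1}$. In this subquotient the new strand is confined to region $k$, i.e.\ it joins the $k$th black block, entering (by (\ref{cost})) with precisely the dot corrections matching the $\tilde K_i$ on the factors to its left in $\Delta(E_i)$. Comparing the surviving relations with the presentation of $S_{{}_k\eE_iM}$, in which ${}_k\eE_i$ adds a strand to the $k$th factor, produces a surjection $Q_k/Q_{k+1}\twoheadrightarrow S_{{}_k\eE_iM}$; choosing the spanning set $D$ of Proposition \ref{C-basis} so that the crossings past the $k$th red/blue strand are exposed exhibits an inverse, so the map is an isomorphism. As a cross-check, Theorem \ref{th:shapovalov-1} identifies the Euler form on $\hl^\bla$ with the factorwise Shapovalov form, under which $E_i$ and $F_i$ are biadjoint, so the total dimension $\sum_k\dim S_{{}_k\eE_iM}$ agrees with $\dim\eE_iS_M$. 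The statement for $\eF_i$ is proved identically, except that the opposite coproduct $\Delta(F_i)=F_i\otimes\tilde K_{-i}+1\otimes F_i$ reverses the preferred direction of motion of the new strand, which is why the $O_\bullet$ are indexed in decreasing order; the vanishing at the two ends again comes from (\ref{cost}) and the standardly violated relations.

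The step I expect to be the main obstacle is the middle one: checking that each $Q_k$ (resp.\ $O_k$) is genuinely a submodule, and that the comparison map $Q_k/Q_{k+1}\to S_{{}_k\eE_iM}$ (resp.\ $O_k/O_{k-1}\to S_{{}_k\eF_iM}$) is an isomorphism rather than merely a surjection. This demands a careful matching of the standardly violated relations of the ambient module with those defining $S_{{}_k\eE_iM}$, and careful tracking of the grading shifts and of the dots produced by (\ref{cost}); the basis $D$ of Proposition \ref{C-basis}, together with the dimension identity coming from Theorem \ref{th:shapovalov-1}, is what makes the injectivity work.
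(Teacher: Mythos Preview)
Your construction of the filtration and of the surjections $Q_k/Q_{k+1}\twoheadrightarrow S_{{}_k\eE_iM}$ is the standard one, and the paper treats that step the same way (by reference to \cite{Webmerged}). The gap is entirely in your argument for injectivity. Two separate problems:

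\textbf{The ``inverse via $D$'' does not work as written.} Proposition~\ref{C-basis} gives a basis for morphism spaces in $\cT$, i.e.\ before imposing the violating relations and before passing to the standard quotient. Saying that a clever choice of $D$ ``exhibits an inverse'' is exactly the statement you need to prove: you must show that the standardly violated relations for $S_M$ together with the $Q_{k+1}$-relations cut down $D$ to something no larger than a basis for $S_{{}_k\eE_iM}$. This is not automatic, and no amount of rearranging $D$ produces it for free.

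\textbf{The dimension check via Theorem~\ref{th:shapovalov-1} is circular (and ill-posed).} That theorem computes graded dimensions of $\HOM$-spaces between projectives, not dimensions of the modules $S_M(\sI')$. To go from one to the other you need to know the class $[S_M]$ in $K^0$, which is precisely what this lemma is used to establish (via Proposition~\ref{q-iso}). Moreover, outside finite type the standards are typically infinite dimensional, so ``total dimension $\sum_k\dim S_{{}_k\eE_iM}$'' is not a number you can compare.

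The paper's actual argument avoids both issues by proving, for each fixed tricolore triple $\sI$, the equalities
\[
\dim \eE_iS_M(\sI)=\sum_k \dim S_{{}_k\eE_iM}(\sI),\qquad
\dim \eF_iS_M(\sI)=\sum_k \dim S_{{}_k\eF_iM}(\sI)
\]
by induction on $\sI$ in reverse dominance order. The induction step peels off the rightmost black strand of $\sI$: writing $\sI=\eE_j\sI'$ (or $\eF_j\sI'$, depending on whether $\la_\ell$ is dominant or anti-dominant), one has $\dim \eF_iS_M(\sI)=\dim \eE_j\eF_iS_M(\sI')$, and the surjections already constructed give
\[
\dim \eE_j\eF_iS_M(\sI')\le \sum_{p,k}\dim S_{{}_p\eE_j\,{}_k\eF_iM}(\sI'),
\]
with equality iff both of the desired equalities hold at $\sI$. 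But the right-hand side equals $\sum_{k,p}\dim S_{{}_k\eF_i\,{}_p\eE_jM}(\sI')$ by the commutation ${}_p\eE_j\,{}_k\eF_i\cong{}_k\eF_i\,{}_p\eE_j$ on the factorwise side, and the inductive hypothesis (applied to $\eE_jS_M$, whose relevant triples are strictly higher in reverse dominance order) gives equality there. This forces equality in the first inequality as well. The case where $\la_\ell$ is anti-dominant is handled symmetrically, swapping the roles of $\eE$ and $\eF$; this case split is the one genuine novelty over the highest-weight argument in \cite{Webmerged}, and your proposal does not address it.
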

\begin{proof}
  Since this is quite close to the proof of
  \cite[\ref{m-prop:act-filter}]{Webmerged}, we will only give a short
  sketch covering the points to be changed (which themselves are
  almost the same as the changes made for the proof of Theorem
  \ref{th:shapovalov-1}).  The construction of the filtrations and the
  surjective maps from standardizations is
  exactly as shown in  \cite[Figures \ref{m-horiz-map} \&
  \ref{m-sta-filt}]{Webmerged}; thus we need only show that the
  successive quotients have the correct dimensions.   That is, we need
  to prove for any $\sI$ that 
  \begin{align}
    \dim \eE_iS_M(\sI)&=\sum \dim S_{{}_k\eE_iM}(\sI)\label{S-E}\\
    \dim \eF_iS_M(\sI)&=\sum \dim S_{{}_k\eF_iM}(\sI)\label{S-F}
  \end{align}
This must be
  shown via an induction over $\sI$ in reverse dominance order (rather
  than weight).  Note that both $\sI$ and $S_M$ have associated
  compositions, which don't coincide.  Our induction step is that we
  prove the equations \eqref{S-E} and \eqref{S-F} where either the
  composition for $\sI$ or $S_M$ coincides with $\nu$, assuming that
  they hold when either of the compositions is $>\nu$ in reverse
  dominance order.

As usual, we can assume that if $\la_\ell$ is
  (anti-)dominant, all strands after the last red or blue strand are
  downwards (upwards).  If the left-most strand in $\sI$ is red or
  blue, the equations \eqref{S-E} and \eqref{S-F} follow from the case
  where this strand is removed, so we can assume that the leftmost
  strand is black.

  If $\la_\ell$ is dominant, then we can apply the argument given in
  the proof of 
  \cite[\ref{m-prop:act-filter}]{Webmerged}: we note that 
  \begin{equation}
 \dim \eF_iS_M(\sI)=\dim \eE_j\eF_iS_M(\sI')\leq \sum\dim
S_{{}_p\eE_j{}_k\eF_iM}(\sI')\label{eq:3}
\end{equation}
which implies that
\begin{equation}
\dim \eF_i\eE_jS_M(\sI')\leq \sum\dim
S_{{}_k\eF_i{}_p\eE_jM}(\sI')\label{eq:2}
\end{equation}
with equality holding in either both
\eqref{eq:3} and \eqref{eq:2} or neither.  It holds in
\eqref{eq:2} by induction, since $\eE_i$ applied to a tricolore
triple with $\la_\ell$ dominant can always be rewritten as a summand
of triples higher in reverse dominance order. Thus, we also have
equality in \eqref{eq:3}, which is only
possible if \eqref{S-E} and \eqref{S-F} hold as well (since we already
know they are inequalities).  

If $\la_\ell$ is anti-dominant map, we just apply the same argument
with $\eF_i$ and $\eE_i$ reversed.  This is possible since now $\eF_i$
applied to this triple will be a summand of triples higher in reverse
dominance order.
\end{proof}

If we write $\sI$ as the concatenation of tricolore triples $\sI_i$ consisting
of one red or blue strand and then the black block, then we
let \[s_\sI=v_{\sI_1}\otimes \cdots \otimes v_{\sI_\ell}.\]
\excise{\begin{proposition}
  There is an isomorphism of $U_q(\fg)$-representations
  $K^0(\hl^\bla)\otimes_\Z\C\cong \bar V_\bla$,
  such that $[S_\sI]=s_\sI$.  
\end{proposition}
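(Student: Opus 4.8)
The plan is to promote Theorem~\ref{th:shapovalov-1} to an isomorphism of representations and then to compute the images of the classes of standard modules by an induction fed by Lemma~\ref{prop:act-filter}.

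First I would observe that Theorem~\ref{th:shapovalov-1} already hands us the isomorphism $\phi\colon K^0(\hl^\bla)\otimes_\Z\C\xrightarrow{\sim}\bar V_\bla$, $[(\bla,\Bi,\kappa)]\mapsto v_\Bi^\kappa$: it is $\fg$-equivariant essentially by construction, since the object $(\bla,\Bi,\kappa)$ and the vector $v_\Bi^\kappa$ are built up by the very same sequence of $\tU$-operations (the inductive rule defining $v_\Bi^\kappa$: append an extremal weight vector in the last tensor factor, or apply $E_{i_n}$ with $\eF_i=\eE_{-i}$, mirroring Theorems~\ref{red-tensor} and \ref{highest-lowest} and Proposition~\ref{prop:hl-tensor}), and the commutation relation $E_iF_j-F_jE_i$ that ties these moves together is already categorified inside $\tU$; the graded, $U_q(\fg)$-linear refinement is postponed until the form on the graded tensor product is in hand. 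For the standard module $S_\sI$, I would first note that it has a finite resolution by the projectives $\Yon(\sI')$ --- constructed exactly as in \cite{Webmerged} --- so that $[S_\sI]$ is a well-defined class in $K^0(\hl^\bla)$; combined with the standard filtration of $\Yon(\sI)$ (top $S_\sI$, lower subquotients $S_{\sI'}$ with $\sI'>\sI$) this makes $[\Yon(\sI)]$ and $[S_\sI]$ unitriangular with respect to each other, and since the stringy sequences generate $\hl^\bla$ (Lemma~\ref{bla-unique}), the classes $[S_\sI]$ span $K^0(\hl^\bla)\otimes_\Z\C$.

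The heart of the argument is then an induction on reverse dominance order proving $\phi([S_\sI])=s_\sI$. Writing $\sI=\sI_1\circ\cdots\circ\sI_\ell$ as the concatenation of a red/blue strand with its black block, one has $s_\sI=v_{\sI_1}\otimes\cdots\otimes v_{\sI_\ell}$ with $v_{\sI_k}$ the single-factor vector in $\bar V_{\la_k}$. The base case is $\sI$ with all black blocks empty, the maximum of the order: then $S_\sI=\Yon(\sI)$ and $\phi([S_\sI])=v_\emptyset^\kappa=v_{\la_1}\otimes\cdots\otimes v_{\la_\ell}=s_\sI$. For the inductive step I would pick a black strand in the last nonempty block of $\sI$, labeled $i$ say, remove it to obtain a tricolore triple $\sI'$ that is strictly higher in reverse dominance order, and --- choosing between $\eE_i$ and $\eF_i$ according to the variance of the corresponding red/blue label, exactly as in the proof of Lemma~\ref{prop:act-filter} --- realize $S_\sI$ as one subquotient in the filtration of $\eE_iS_{\sI'}$ (resp.\ $\eF_iS_{\sI'}$) provided by Lemma~\ref{prop:act-filter}. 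Applying the equivariant $\phi$ gives $E_i\cdot s_{\sI'}$ (using $\phi([S_{\sI'}])=s_{\sI'}$ by induction), whose expansion through the iterated coproduct is, at $q=1$, the Leibniz sum over the tensor factors; expanding $E_i v_{\sI'_k}$ in each factor by the decategorification of the single highest/lowest weight categorification of \cite{Webmerged} reproduces, term by term, the classes $[S_{{}_k\eE_i\sI'}]$ of Lemma~\ref{prop:act-filter}, every term other than the one equal to $[S_\sI]$ being either strictly higher in the order (hence already known) or absorbed into correction terms that are strictly higher (the point of choosing $\eE_i$ versus $\eF_i$ correctly). Reading off the matching term yields $\phi([S_\sI])=s_\sI$, and since $\{s_\sI:\sI\text{ stringy}\}$ is unitriangular with respect to a crystal basis of $\bar V_\bla$, hence a basis, the map $[S_\sI]\mapsto s_\sI$ is forced to be $\phi$ itself.

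The main obstacle is not conceptual but is a matter of reconciling conventions: the Cartan-involution twist in the $\tU$-action, the ``dyslexic'' left/right convention on diagrams, and the mixed coproduct conventions $\otimes^{\ep_k}$ all have to be lined up so that the Leibniz rule coming out of Lemma~\ref{prop:act-filter} reproduces the coproduct on $\bar V_\bla$ precisely --- in particular so that, for a dominant versus an anti-dominant label $\la_k$, it is the filtration of $\eE_iS_M$ versus $\eF_iS_M$ whose subquotients land on the $k$th factor in the ``positive'' direction, which is exactly what makes the unwanted terms in the inductive step land strictly higher in reverse dominance order. A secondary point to be careful about is the precise sense in which $[S_\sI]$ is an element of $K^0(\hl^\bla)$ (via the finite projective resolutions), which is needed for the statement even to be formulated.
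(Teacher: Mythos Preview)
Your approach uses the right tools---Theorem~\ref{th:shapovalov-1} for the underlying isomorphism and Lemma~\ref{prop:act-filter} as the inductive engine---and is in the same spirit as the argument the paper defers to in \cite{Webmerged}. But the inductive step as you have set it up does not close. When you remove a strand from the last \emph{nonempty} block $m$ of $\sI$ to form $\sI'$ and reapply $\eF_i$ (say $\la_m$ is dominant), Lemma~\ref{prop:act-filter} produces subquotients $S_{{}_k\eF_i\sI'}$ for \emph{all} $k\in[1,\ell]$, not only $k\le m$. For $k>m$ the composition of ${}_k\eF_i\sI'$ has a strand in the previously empty block $k$ and one fewer in block $m$; this lies strictly \emph{below} $\sI$ in reverse dominance order. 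It is nonzero whenever $\la_k$ is dominant as well (then ${}_k\eF_i$ applied to the highest-weight object of $\hl^{\la_k}$ is nonzero), so ``absorbed into correction terms that are strictly higher'' does not dispose of it, and you cannot isolate the $k=m$ term by the induction you describe. The remedy is to organize the induction on $\ell$ first: if the \emph{last} block of $\sI$ is empty, strip off the $\ell$th factor (both $[S_\sI]$ and $s_\sI$ arise from the $(\ell{-}1)$-factor case by $\otimes\,v_{\la_\ell}$); if it is nonempty, run your argument with $m=\ell$, where there are no $k>m$ to worry about. This is exactly how the paper organizes the graded refinement immediately afterward, via the functor $\mathbb{S}\colon \hl^{(\la_1,\dots,\la_{\ell-1})}\times\hl^{\la_\ell}\to\hl^\bla$.

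A secondary issue: you place $[S_\sI]$ in $K^0(\hl^\bla)$ via a ``finite projective resolution constructed exactly as in \cite{Webmerged}'', but in the present generality the category is only standardly stratified (Corollary~\ref{standard-strat}), not quasi-hereditary, and standards can be infinite dimensional outside finite type; finite projective dimension of $S_\sI$ is not established here. The correct device---which you also mention---is the finitely unitriangular change of basis between $\{[\Yon(\sI)]\}$ and $\{[S_\sI]\}$ coming from the standard filtration of each $\Yon(\sI)$; this is how the paper defines the classes $[S_\sI]$ in the graded argument.
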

\begin{proof}
The proof is identical with \cite[\ref{m-standard-class}]{Webmerged},
so we leave it to the reader.
\end{proof}}
However, if we calculate the grading shifts of these filtrations, a
new wrinkle appears.  If the new line we add is blue, the grading
shifts will behave as though we have tensored the represenations in
the opposite order.  That is:
\begin{proposition}
  As representations of $U_q(\fg)$, we have an isomorphism 
  \[K^0_q(\hl^{(\la_1,\dots,\la_\ell)})\cong
  \begin{cases}
  K^0_q(\hl^{(\la_1,\dots,\la_{\ell-1})})\otimes
  K^0_q(\hl^{(\la_\ell)}) & \la_\ell\text{ dominant}\\
  K^0_q(\hl^{(\la_1,\dots,\la_{\ell-1})})\otimes^{\op}
  K^0_q(\hl^{(\la_\ell)}) & -\la_\ell\text{ dominant}\\
\end{cases}\]
induced by the functor \[\mathbb{S}\colon
\hl^{(\la_1,\dots,\la_{\ell-1})}\times \hl^{(\la_\ell)}\to \hl^\bla.\]
\end{proposition}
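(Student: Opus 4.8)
The plan is to prove the Proposition by induction on $\ell$, separating the underlying $\Z[q,q^{-1}]$-module statement from the identification of the $U_q(\fg)$-action. Since $\mathbb{S}$ is additive in each of its two variables it induces a $\Z[q,q^{-1}]$-bilinear, hence $\Z[q,q^{-1}]$-linear, map $[\mathbb{S}]\colon K^0_q(\hl^{(\la_1,\dots,\la_{\ell-1})})\otimes_{\Z[q,q^{-1}]}K^0_q(\hl^{(\la_\ell)})\to K^0_q(\hl^\bla)$. Two things then have to be checked: (i) $[\mathbb{S}]$ is an isomorphism of $\Z[q,q^{-1}]$-modules, and (ii) when the source is given the module structure coming from $\Delta$ in case $\la_\ell$ is dominant, and from $\Delta^{\op}$ in case $\la_\ell$ is anti-dominant, then $[\mathbb{S}]$ intertwines the $\dU$-actions.

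For (i) I would argue with stringy triples. Concatenating a stringy triple for $(\la_1,\dots,\la_{\ell-1})$ with one for $(\la_\ell)$ again yields a stringy triple for $\bla$ (a concatenation of string parametrizations for the fixed list $\mathbf{p}$ is again such a parametrization), and every stringy triple for $\bla$ arises from a unique such pair; hence $[\mathbb{S}]$ carries the product of the stringy bases of the two factors onto a set related to the stringy basis of $K^0_q(\hl^\bla)$ by a change of basis that is unitriangular for the reverse dominance and lexicographic order of Lemma \ref{bla-unique}. By Corollary \ref{cor:exist} these classes are $\Z[q,q^{-1}]$-bases, so $[\mathbb{S}]$ is an isomorphism; its reduction at $q=1$ is the factorization $\bar V^\Z_\bla\cong\bar V^\Z_{(\la_1,\dots,\la_{\ell-1})}\otimes_\Z\bar V^\Z_{(\la_\ell)}$ of Theorem \ref{th:shapovalov-1}.

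For (ii) it suffices to check compatibility with the Chevalley generators $E_i$ and $F_i$ (compatibility with the $K_\mu$ is automatic, since $\mathbb{S}$ adds the weights of its two arguments and the $K_\mu$ are grouplike for both coproducts). The key tool is Lemma \ref{prop:act-filter}: applying $\eF_i$ to $\mathbb{S}$ of a product and expanding in standard classes yields a filtration which splits into the part where the new black strand stays in the $\ell$th (rightmost) block and the part where it is dragged leftward past the coloured strand labelled $\la_\ell$; the new ingredient is the grading shift on each piece. Re-running the proof of Lemma \ref{prop:act-filter} while recording degrees, the step in which the new $\eF_i$-strand is pulled past $\la_\ell$ (and the intervening black strands of the $\ell$th block) acquires a shift by $q$ to the power of the degree of those crossings. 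By the crossing degrees recorded in Section \ref{sec:tricolore-diagram}, a downward black $i$-strand crossing a \emph{red} strand has degree $\langle\al_i,\la_\ell\rangle$ but crossing a \emph{blue} strand has degree $0$, and for an upward ($\eE_i$) strand these two values are interchanged. Assembled with the weight-dependent normalization built into the $\dU$-action on the graded Grothendieck group, these shifts reproduce $\Delta(F_i)=F_i\otimes\tilde K_{-i}+1\otimes F_i$ and $\Delta(E_i)=E_i\otimes 1+\tilde K_i\otimes E_i$ when $\la_\ell$ is dominant, and $\Delta^{\op}(F_i)$, $\Delta^{\op}(E_i)$ when $\la_\ell$ is anti-dominant: the interchange of red and blue crossing degrees is exactly what converts $\Delta$ into $\Delta^{\op}$ in the last tensor slot. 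This parallels the corresponding step for highest-weight tensor products in \cite{Webmerged} (cf.\ the proof of \cite[Prop.\ \ref{m-prop:act-filter}]{Webmerged} and the decategorification statements that follow it), the only change being the substitution of blue crossing degrees for red ones.

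The step I expect to be the main obstacle is precisely this graded refinement of Lemma \ref{prop:act-filter}: one must re-examine its proof with gradings present and confirm that the correction terms produced by relations (\ref{red-triple}), (\ref{blue-triple}) and (\ref{cost}) as the new strand is moved across the coloured lines are strictly lower in reverse dominance order (so they lie in an earlier filtration step and do not perturb the leading count), and that the homological and grading shifts of the two parts combine with the normalization conventions to give exactly the coefficients of $\Delta$ (resp.\ $\Delta^{\op}$). Granting this, (i) and (ii) together prove the Proposition, and feeding the result back into the induction — with base case Theorem \ref{red-tensor} together with the computation of $K^0_q(T^\bla\pmodu)$ in \cite{Webmerged} — upgrades the $q=1$ statement of Theorem \ref{th:shapovalov-1} to an isomorphism of $K^0_q(\hl^\bla)$ with $V^\Z_\bla$ as a $U_q(\fg)$-module, with the ordering conventions fixed in the Notation section.
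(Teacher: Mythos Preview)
Your proposal is essentially correct and follows the same architecture as the paper: establish the $\Z[q,q^{-1}]$-module isomorphism via a basis argument, then verify the $E_i,F_i$-action by analyzing the two-step filtration on $\eF_i\mathbb{S}(M_1,M_2)$ and $\eE_i\mathbb{S}(M_1,M_2)$ and reading off the grading shifts.

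Two points of comparison are worth noting. First, for part (i) the paper works with the \emph{standard} classes $[S_\sI]$ rather than the stringy-triple classes directly: it observes that the multiplicity matrix from stringy triples to their standard quotients is unitriangular, inverts it to define $[S_\sI]\in K^0_q(\hl^\bla)$, and then uses that $\mathbb{S}$ literally sends pairs of standard quotients to standard quotients. Your phrasing in terms of concatenated stringy triples is equivalent, but be careful: $\mathbb{S}$ as used in the proof is standardization (landing in $\cata^\bla$), not simply horizontal composition of projectives, so the basis you want to track is the standard one.

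Second, and more substantively, for the anti-dominant case the paper does \emph{not} carry out the direct blue-crossing degree computation you sketch. Instead it invokes the equivalence $\hl^{\bla}\cong\hl^{-\bla}$ given by reversing all black orientations and swapping red with blue (with a sign on black/black crossings); this equivalence interchanges $\eE_i$ and $\eF_i$ and is compatible with the Cartan involution $\omega$, so the identity $(\omega\otimes\omega)\circ\Delta\circ\omega=\Delta^{\op}$ immediately converts the dominant computation into the anti-dominant one. Your direct approach via the interchange of red/blue crossing degrees would also work, but the symmetry argument avoids the bookkeeping you correctly flag as the ``main obstacle'' --- in particular it sidesteps having to re-verify that all correction terms from \eqref{blue-triple} and \eqref{cost} land in lower filtration pieces with the right shifts.
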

\begin{proof}
 We obtain a basis of $K^0(\hl^\bla)$ by taking the stringy
  sequences.  Each of these has a standard quotient, and the matrix
  giving the multiplicities of standards in the stringy basis is
  upper-triangular, with 1's on the diagonal.  Thus, it has an inverse
  with the same property, which we can use to define classes $[S_\sI]$
  in $K^0(\hl^\bla)$ which form a basis.  Since $\mathbb{S}$ sends
  pairs of these standard quotients to standard quotients, it sends a
  basis to a basis, and thus defines an isomorphism.  Thus, we need
  only check how $E_i$ and $F_i$ act.

The proof for $\la_\ell$ dominant is essentially the same as in
\cite{Webmerged}: one can consider the action on a standardization
$\mathbb{S} (M_1, M_2)$
from $\hl^{(\la_1,\dots,\la_{\ell-1})}\times \hl^{(\la_\ell)}$.  The
module $\eF_i \mathbb{S} (M_1, M_2)$ has a submodule $N$ generated by the element 
     \begin{equation}\label{crossing-element}
      \begin{tikzpicture}[very thick,yscale=1.5,baseline]
        \draw[wei] (1,-.5) -- +(0,1.5) node[at
        start,below]{$\la_1$} node[at end,above]{$\la_1$};

   \node at (2,.25){$\cdots$};
   \draw[wei] (3,-.5) -- (3,1) node[at start,below]{$\la_{\ell}$} 
  node[at   end,above]{$\la_{\ell}$}; 
   \draw (3.5,-.5) -- (3.5,1); 
   \node at (4,.25){$\cdots$};
   \draw (4.5,-.5) -- (4.5,1); 
\draw[rdir] (5,-.5) to[out=135,in=-90] node [below, at start]{$i$} (2.5,1);

      \end{tikzpicture}
    \end{equation}
    The submodule $N$ is isomorphic to the standardization $\mathbb{S}(\eF_i
    M_1, M_2)$ but with a shift by the degree of the element
    \eqref{crossing-element}, which is
    $\mu_\ell^i=\al_i^\vee(\mu_\ell)$ where $\mu_\ell$ is the weight
    of $M_2$.  The quotient $\eF_i \mathbb{S}(M_1, M_2)/N$ is isomorphic to
    $\mathbb{S}(M_1, \eF_i M_2)$.  That is, we have an
    equality
    \begin{multline*} [\eF_i \mathbb{S}(M_1,
      M_2)]=q^{\mu_\ell^i}[\mathbb{S}(\eF_i M_1, M_2)]+[\mathbb{S}(M_1, \eF_i
      M_2)]\\=q^{\mu_\ell^i}(F_i[M_1])\otimes [M_2]+[M_1]\otimes (F_i
      [M_2])=\Delta(F_i)( [M_1]\otimes [M_2]).
    \end{multline*}
On the other hand, the module $\eE_i \mathbb{S}(M_1, M_2)$ has a similar
submodule generated by      \begin{equation}\label{crossing-element2}
      \begin{tikzpicture}[very thick,yscale=1.5,baseline]
        \draw[wei] (1,-.5) -- +(0,1.5) node[at
        start,below]{$\la_1$} node[at end,above]{$\la_1$};

   \node at (2,.25){$\cdots$};
   \draw[wei] (3,-.5) -- (3,1) node[at start,below]{$\la_{\ell}$} 
  node[at   end,above]{$\la_{\ell}$}; 
   \draw (3.5,-.5) -- (3.5,1); 
   \node at (4,.25){$\cdots$};
   \draw (4.5,-.5) -- (4.5,1); 
\draw[dir] (5,-.5) to[out=135,in=-90] node [below, at start]{$i$} (2.5,1);

      \end{tikzpicture}
    \end{equation}
Since the element \eqref{crossing-element} has degree 0, the submodule
$N$ it generates is isomorphic to $ \mathbb{S}(\eE_i M_1, M_2)$ with no grading
shift, whereas the quotient is isomorphic to $\eE_i \mathbb{S}(M_1, M_2)/N\cong \mathbb{S}(M_1, \eE_i
M_2) (\mu^i-\mu^i_\ell)$, since every diagram in the quotient has a
cap to the right of the leftmost red strand, and the degree of this
cap decreases by $\mu^i-\mu^i_\ell$ when the label on the surrounding
region changes from $\mu$ to $\mu_\ell$.  Thus, we have that
 \begin{multline*} [\eE_i \mathbb{S}(M_1,
      M_2)]=[\mathbb{S}(\eE_i M_1, M_2)]+q^{\mu_\ell^i-\mu^i}[\mathbb{S}(M_1, \eE_i
      M_2)]\\=(E_i[M_1])\otimes [M_2]+q^{\mu_\ell^i-\mu^i}[M_1]\otimes (E_i
      [M_2])=\Delta(E_i)( [M_1]\otimes [M_2]).
    \end{multline*}
This shows the result for $\la$ dominant.  The result when $\la$ is
anti-dominant follows from the same argument with $\eE_i$ and $\eF_i$
switching places.  In fact, there's an
equivalence of categories $\hl^{\bla}\cong \hl^{-\bla}$ which reverses
the orientation on every black strand, and switches red and black
strands (one must multiply all black/black crossings by $-1$).  
This functor interchanges the action of $\eE_i$ and $\eF_i$, and thus is compatible
with the
Cartan involution $\omega$.  Since $(\omega\otimes \omega)\circ
\Delta \circ \omega=\Delta^{\op}$, this shows that the action when
$\la$ is anti-dominant is given by the opposite coproduct.
\end{proof}
\excise{
These filtrations categorify the equations
 \begin{multline*}
\Delta^{(\ell)}(E_i)=E_i\otimes 1\otimes \cdots \otimes 1+\tilde K_i\otimes E_i\otimes 1\otimes \cdots \otimes 1+ \cdots + \\ \tilde K_i\otimes\cdots \otimes \tilde K_i\otimes E_i \otimes 1+\tilde K_i\otimes \cdots\otimes \tilde K_i\otimes E_i.\end{multline*}
\begin{multline*}  
\Delta^{(\ell)}(F_i)=F_i\otimes \tilde K_{-i}  \otimes \cdots \otimes \tilde K_{-i}+1\otimes F_i\otimes \tilde K_{-i}  \otimes \cdots \otimes \tilde K_{-i}+\cdots +\\ 1\otimes \cdots \otimes 1\otimes F_i\otimes \tilde K_{-i}+ 1\otimes \cdots \otimes 1\otimes F_i.
\end{multline*}
since the grading shifts given in Proposition \ref{prop:act-filter}
are precisely the same as the power of $q$ introduced by the addition
of $K_{\pm i}$'s in the coproduct.}
Thus, applying this formula inductively, we obtain that:
\begin{proposition}\label{q-iso}
 There is a unique isomorphism of $U_q(\fg)$-modules
 $K^0(\hl^\bla)\cong V_\bla^\Z$ sending \([\sI]\mapsto v_{\sI} \)
 and $[S_\sI]\mapsto s_\sI$.
\end{proposition}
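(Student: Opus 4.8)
The plan is to prove this by induction on $\ell$ using the preceding Proposition, with the single highest- or lowest-weight case as the base of the induction. Throughout, $K^0(\hl^\bla)$ denotes the graded Grothendieck group, a free $\Z[q,q^{-1}]$-module whose (suitably normalized) indecomposable classes are indexed by stringy triples by Corollary \ref{cor:exist}; uniqueness of the asserted isomorphism is then automatic, since the classes $[\sI]$ span $K^0(\hl^\bla)$, so only existence and bijectivity require argument.

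For the base case $\ell=1$, if $\la_1$ is dominant then $\hl^{(\la_1)}=\hl^{\la_1}\cong T^{\la_1}\pmodu$ by Theorem \ref{red-tensor}, and the statement is exactly the graded decategorification theorem for highest-weight modules from \cite[Th. \ref{m-Uq-action}]{Webmerged}. If $\la_1$ is anti-dominant, I would transport that isomorphism along the equivalence $\hl^{(\la_1)}\cong\hl^{(-\la_1)}$ recorded at the end of the proof of the preceding Proposition, which reverses black orientations, swaps red and blue strands, and interchanges $\eE_i$ with $\eF_i$, together with the Cartan involution $\omega$. Because $\omega(E_i)=F_i$ and $(\omega\otimes\omega)\circ\Delta\circ\omega=\Delta^{\op}$, this produces $K^0_q(\hl^{(\la_1)})\cong V_{\la_1}^\Z$ with $[\sI]\mapsto v_\sI$ and $[S_\sI]\mapsto s_\sI$; the clause $v_{\Bi}^\kappa=E_{i_n}v_{\Bi^-}^\kappa$ (with the convention $F_i=E_{-i}$) in the definition of $v_\sI$ is precisely the $\omega$-image of its counterpart for $-\la_1$.

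For the inductive step, assume the result for $(\la_1,\dots,\la_{\ell-1})$. The preceding Proposition supplies a $U_q(\fg)$-module isomorphism $K^0_q(\hl^\bla)\cong K^0_q(\hl^{(\la_1,\dots,\la_{\ell-1})})\otimes^{\ep_\ell}K^0_q(\hl^{(\la_\ell)})$ induced by the standardization functor $\mathbb{S}$, and by its construction (via the filtrations of Lemma \ref{prop:act-filter}) it carries the class of a standardization $\mathbb{S}(M_1,M_2)$ to $[M_1]\otimes[M_2]$, so $[S_\sI]\mapsto[S_{\sI'}]\otimes[S_{\sI_\ell}]$ where $\sI$ is the concatenation of $\sI'$ (the first $\ell-1$ red/blue strands with their black blocks) and $\sI_\ell$ (the last red/blue strand with its block). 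Composing with the inductive hypothesis and the $\ell=1$ case, and reading off the bracketing convention defining $V_\bla^\Z$, gives $K^0_q(\hl^\bla)\cong V_\bla^\Z$ with $[S_\sI]\mapsto s_{\sI'}\otimes s_{\sI_\ell}=s_\sI$. Finally, $[\sI]\mapsto v_\sI$ follows because the transition matrix expressing each $[\sI]$ through the standardization classes $[S_{\sI'}]$, namely the multiplicities of standardizations inside the iterated $\eF_i$-objects furnished by Lemma \ref{prop:act-filter}, matches coefficient by coefficient and with the same powers of $q$ the transition matrix expressing $v_\sI$ through the pure tensors obtained by iterating $\Delta(E_i)$ and $\Delta^{\op}(F_i)$; this identification of matrices is exactly the content of the submodule-and-quotient computations \eqref{crossing-element} and \eqref{crossing-element2} of the preceding proof. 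Specializing $q=1$ recovers Theorem \ref{th:shapovalov-1}.

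I expect the main obstacle to be this last matching of transition matrices: checking that sliding a categorification functor applied at the far left past the rightmost red or blue strand introduces precisely the power of $q$ that distinguishes $\Delta(F_i)$ from $\Delta^{\op}(F_i)$, i.e.\ the degree $\mu_\ell^i$ of \eqref{crossing-element} in the dominant case against the degree $\mu^i-\mu^i_\ell$ of the cap in \eqref{crossing-element2}, together with their mirror statements when $\la_\ell$ is anti-dominant. Granting the degree-compatibility already established in the preceding proof, the induction then closes formally.
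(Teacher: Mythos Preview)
Your proof is correct and follows exactly the approach the paper takes: the paper's entire argument for Proposition~\ref{q-iso} is the single sentence ``Thus, applying this formula inductively, we obtain that:'' preceding the statement, so your proposal is simply a faithful unpacking of that induction on $\ell$ via the preceding Proposition. One small simplification: rather than arguing $[\sI]\mapsto v_\sI$ through matching transition matrices, you can deduce it directly from the $U_q(\fg)$-equivariance of the isomorphism together with the recursive definition of $v_\sI$ (apply $E_{i_n}$ or tensor with $v_\ell$), which is the same recursion satisfied by $[\sI]$ on the categorical side; your concern about the degree computations in \eqref{crossing-element} and \eqref{crossing-element2} is already fully addressed in the proof of the preceding Proposition and need not be re-verified here.
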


This result also shows that the category of objects filtered by
standards is invariant under the action of $\eF_i,\eE_i$ and the
addition of red and blue lines. In
particular, this shows that the objects $\Yon(\sI)$ all have standard
filtrations.

We refer to \cite[1.2.4]{DPS} for the definition of a {\bf strict
  stratifying system}.  While in that paper, they only consider the
case of a finite quasi-poset, their definition makes sense even for an
countably infinite quasi-poset such as compositions endowed with reverse
dominance order.  Similarly, for us a {\bf standard stratification}
will be allowed to be indexed by a countably infinite quasi-poset such that the
interval between any two elements is finite; in terms of 
\cite{CPS96}, this means allowing a stratification of an algebra to be
an infinite chain of ideals $\cdots \subset J_i \subset J_{i+1}\subset \cdots$ with
$i\in \Z$ such that $\bigcup J_i=A$ and $\bigcap J_i=\{0\}$, rather
than a finite chain of this type.

\begin{corollary}\label{standard-strat}
   The objects $S_\sI$ with the induced preorder define a strict stratifying system of
   $\cata_\bla$; thus, they define a standard stratification of the
   algebra $H^\bla$.  In particular, every indecomposable projective
   in $\hl^\bla$ has a filtration by standardizations of projective
   indecomposables in $\hl^{\la_1}_{\la_1-\al(1)}\times \cdots \times
  \hl^{\la_\ell}_{\la_\ell-\al(\ell)}$.
\end{corollary}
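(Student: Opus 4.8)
The plan is to verify that the standard modules $\{S_\sI\}$, ordered by reverse dominance order on the associated compositions $\nu_\sI$, constitute a strict stratifying system on $\cata^\bla$ in the sense of \cite[1.2.4]{DPS} (read with the countable quasi-poset conventions fixed above), and then to translate this into a standard stratification of the locally unital algebra $H^\bla$. Throughout, the argument will run parallel to the corresponding result of \cite{Webmerged}, using Lemma \ref{prop:act-filter} and Proposition \ref{q-iso} in place of their analogues there; the only genuinely new ingredient is the order-theoretic bookkeeping with reverse dominance.

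First I would establish that each projective generator $\Yon(\sI)$ admits a finite filtration whose subquotients are standardizations $S_M$ of \emph{projective} objects $M$ of the product category $\hl^{\la_1}_{\la_1-\al(1)}\times\cdots\times\hl^{\la_\ell}_{\la_\ell-\al(\ell)}$, with $S_\sI$ itself occurring as the top quotient and every other subquotient $S_M$ having associated composition strictly greater than $\nu_\sI$. The base case is $\Yon((\bla))$: the triple $(\bla)$ with no black strands has $\End_{\hl^\bla}((\bla))=\K$, and $\Yon((\bla))$ is by the definition of standardization (with $\bal=0$, so $R_0\otimes\cdots\otimes R_0=\K$) the standardization of the unique projective of $\hl^{\la_1}_{\la_1}\times\cdots\times\hl^{\la_\ell}_{\la_\ell}$. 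Building an arbitrary $\Yon(\sI)$ up from this one by repeatedly applying $\eE_i$, $\eF_i$ and adjoining a red or blue strand, Lemma \ref{prop:act-filter} together with the remark following Proposition \ref{q-iso} produces such a filtration; projectivity of the $M$ that occur is preserved since $\eE_i,\eF_i$ are biadjoint (hence exact and projective-preserving) and the objects of the product category in play are images under $\Yon$ of objects of the factor categories $\hl^{\la_k}$. Since standardization is additive, each $S_M$ appearing then splits as a direct sum of standardizations of indecomposable projectives.

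Next I would check the triangularity conditions. Hom-vanishing is immediate: a morphism $S_\sI\to S_{\sI'}$ pulls back along the surjection $\Yon(\sI)\twoheadrightarrow S_\sI$ to an element of $S_{\sI'}(\sI)=\Hom(\Yon(\sI),S_{\sI'})$, which vanishes whenever $\nu_\sI>\nu_{\sI'}$ by the defining property of $S_{\sI'}$, so $\Hom(S_\sI,S_{\sI'})=0$ unless $\nu_\sI\le\nu_{\sI'}$. Within a fixed stratum, restricting a standardly filtered module to the triples with composition $\nu$ and applying $\wp$ identifies $\Hom(S_M,S_N)$ with $\Hom_{R_{\al(1)}\otimes\cdots\otimes R_{\al(\ell)}}(M,N)$ for $M,N$ in that layer, so $S_{\Yon(P)}$ has graded-local endomorphism ring for $P$ indecomposable projective. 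Combined with $\Ext^1_{\cata^\bla}(\Yon(\sI),S_{\sI'})=0$ (projectivity of $\Yon(\sI)$) and the filtration of the previous step, this is exactly the list of axioms of a strict stratifying system. The standard stratification of $H^\bla$ --- a chain of ideals $\cdots\subset J_i\subset J_{i+1}\subset\cdots$ ($i\in\Z$) with $\bigcup J_i=H^\bla$, $\bigcap J_i=0$, generated by the idempotents $e_{\sI'}$ with $\nu_{\sI'}\le\nu$ --- then follows from the infinite form of the dictionary of \cite{CPS96} recalled above. The final clause of the corollary drops out because the full subcategory of $\cata^\bla$ consisting of objects filtered by standardizations of projectives of the product category is closed under direct summands (a formal consequence of the stratifying-system axioms), so each indecomposable projective $\Yon(P)$, being a summand of some $\Yon(\sI)$, inherits such a filtration, which refines via the additive splitting of its subquotients to a filtration by standardizations of indecomposable projectives of $\hl^{\la_1}_{\la_1-\al(1)}\times\cdots\times\hl^{\la_\ell}_{\la_\ell-\al(\ell)}$.

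The main obstacle will be the claim in the first step that every ``extra'' subquotient $S_M$ of the filtration of $\Yon(\sI)$ has composition strictly above $\nu_\sI$ in reverse dominance order: this is precisely where reverse dominance (rather than ordinary dominance or the weight order) is forced, and it must be extracted from Lemma \ref{prop:act-filter} in the same way reverse dominance was used inside that lemma's proof. Once this is in hand, the identification of the diagonal endomorphism rings and the closure of the standardly filtered subcategory under summands are routine, so I do not anticipate difficulties beyond those already handled in Lemma \ref{prop:act-filter} and in \cite{Webmerged}.
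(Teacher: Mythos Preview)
Your proposal is correct and follows precisely the route the paper intends: the paper gives no explicit proof for this corollary, instead relying on the sentence just before it (``the category of objects filtered by standards is invariant under the action of $\eF_i,\eE_i$ and the addition of red and blue lines. In particular, this shows that the objects $\Yon(\sI)$ all have standard filtrations'') together with Lemma \ref{prop:act-filter} and Proposition \ref{q-iso}, which is exactly your inductive build-up of $\Yon(\sI)$ from $\Yon((\bla))$. Your identification of the reverse-dominance ordering of the extra subquotients as the key technical point is accurate, and your verification of the remaining stratifying-system axioms is a faithful unpacking of what the paper leaves implicit.
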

We note that outside finite type, the standards will
typically be infinite dimensional (assuming both red and blue strands
are used). However  there are only
finitely many compositions of any size larger than a fixed one in
reverse dominance order, and only finitely many sequences with a given
composition.  Thus, only finitely many standards will occur in the stratification of $\Yon(\sI)$.

\section{Orthodox bases}
\label{sec:orthodox-bases}

In this paper, we have developed the theory of categorifications of
$\dot{U}$ and its representations with a particular application in
mind: constructing bases of these representations. We remind the reader
that 
we have fixed a complete local ring $\K$ and polynomials $Q_{ij}$. We should note that the
bases we consider depend in an essential way on the ring and
polynomials chosen.  

\begin{definition}
Let $C$ denote the set of indecomposable $\tilde \psi$-invariant 1-morphisms (up to shift) in
$\dU$ and $C_\bla$ be the set of $\tilde \psi^\bla$-invariant objects
of $\hl^\bla$. 

Let the {\bf orthodox basis} $\{o_P=[P]\}_{P\in C}$ of $\dot{U}$ be defined by classes of
  $\tilde \psi$-invariant indecomposables under
  the isomorphism $K^0(\dU)\cong \dot{U}$.  Similarly, the orthodox
  basis $\{o_P=[P]\}_{P\in C_\bla}$ of $V_\bla^\Z$ is that defined by
  $\tilde{\psi}^\bla$-invariant indecomposable classes of $\hl^\bla$. 
\end{definition}
The orthodox bases of $\dot{U}$ and its representations carry over a
surprising amount of structure which occur for canonical bases.  

We've already shown in Theorem \ref{KS-duality} that $\dU$ and
$\hl^\bla$ are humorous categories.  Thus, by
Lemma \ref{category-pre-canonical}, we have a pre-canonical structure on the
Grothendieck groups of these categories:
\begin{definition} The {\bf orthodox pre-canonical structure}  on
  the vector spaces $\dot{U}$ and $V_\bla^\Z$ is defined as in Lemma \ref{category-pre-canonical} by:
  \begin{itemize}
  \item The bar-involution is given by $\psi$ ($\psi^\bla$).
  \item The inner product given by the Euler form $\langle
    [M],[N]\rangle=\sum_{i}q^{-i}\dim\Hom(M,N).$
  \item The standard basis $a_c$ is given the classes of tricolore triples attached to stringy sequences.
  \end{itemize}
\end{definition}

In several cases, the bar involutions and inner products of these
pre-canonical strctures include previously
defined structures.
Lusztig has defined bar-involutions and inner products on:
\begin{enumerate}
\item the modified quantized enveloping algebra $\dot{U}$. The bar involution
  $\bar{\phantom{x}}$ is defined in \cite[23.1.8]{Lusbook}.  In \cite[26.1.2]{Lusbook}, he defines a bilinear form
  $(-,-)$; we'll wish to consider the induced sesquilinear form
  $\langle u,v\rangle:=(\bar u,v)$.
\item the tensor product $V_{-\la,\mu}^\Z$ for $\la,\mu$ both
  dominant.  The bar involution $\Psi$ is defined in
  \cite[24.3.2]{Lusbook}, and the bilinear form $(-,-)_{\la,\mu}$ in
  \cite[26.2.1]{Lusbook}; as above, we take the induced sesquilinear
  form $\langle u,v\rangle=(\Psi(u),v)_{\la,\mu}$.
\item the tensor product $V_{\bla}^\Z$ if all $\la_i$ are dominant and $\fg$ is finite dimensional.
 We can use the bar-involution $\Psi$ defined in
  \cite[27.3.1]{Lusbook} for the tensor product of these as based
  modules.  We define a bilinear form on these varieties as the
  tensor product of the bilinear forms  $(-,-)_{0,\la_i}$, and then
  turn this into a sesquilinear form using $\Psi$ as above. 
\end{enumerate}

\begin{proposition}
  The orthodox bar-involution and inner product agree with Lusztig's in
  the cases (1-3).
\end{proposition}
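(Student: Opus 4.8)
The plan is to prove each of the three identities by matching the orthodox structure against the properties that characterize Lusztig's, using the module identifications already established. Case~(1) is almost immediate: the agreement of bar involutions is the Khovanov--Lauda proposition \cite[Prop.~3.28]{KLIII} recalled above, and the agreement of the forms is Theorem~\ref{categorify-U}, once one observes (as in Section~\ref{sec:pre-canon-struct}) that the graded Euler form is sesquilinear, so that its identification there with $(-,-)$ is exactly its identification with $\langle u,v\rangle=(\bar u,v)$. The substance is therefore in the tensor-product cases (2) and (3).

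Before anything else I would pin down the conventions: in case~(3) all $\la_i$ are dominant and $V_\bla^\Z$ is built with the usual coproduct throughout, which is the setting of \cite[27.3.1]{Lusbook}; in case~(2), $V_{-\lambda,\mu}^\Z$ is a lowest weight module tensored on the left with a highest weight module on the right via the usual coproduct, matching \cite[24.3.2, 26.2.1]{Lusbook}. For the bar involution I would then use the recursive characterization of Lusztig's $\Psi$: it is the unique $\Z[q,q^{-1}]$-antilinear map with $\Psi(u\cdot x)=\bar u\cdot\Psi(x)$ for $u\in\dot U$ which, on the subspace $\eta_1\otimes V_{\la_2}^\Z\otimes\cdots\otimes V_{\la_\ell}^\Z$ ($\eta_1$ the extreme weight vector of the first factor), acts as $\id\otimes\Psi^{(\la_2,\dots,\la_\ell)}$, iterated down to the one-factor case where $\Psi$ is pinned by fixing the extreme vector. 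Under Proposition~\ref{q-iso}, the involution $\psi^\bla$ coming from $\tilde\psi^\bla$ is antilinear (a duality reversing grading shift, Theorem~\ref{KS-duality}); it is $\bar{\phantom{x}}$-semilinear for the $\tU$-action because the flip functor commutes with $\tilde\psi$ on $\dU$ under horizontal composition---the flip does not exchange left and right---while $[\tilde\psi(u)]=\bar{[u]}$ by \cite[Prop.~3.28]{KLIII} (and one translates the right $\tU$-action into the left $\dot U$-action through the dyslexic convention); and it restricts correctly on the objects with no black strands left of the first red or blue strand, which map onto $\eta_1\otimes(\text{rest})$ and on which $\tilde\psi^\bla$ is compatible with deleting that strand. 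The recursive uniqueness then gives $\psi^\bla=\Psi$.

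For the form in cases (2) and (3) I would use that Lusztig's sesquilinear form $\langle u,v\rangle=(\Psi(u),v)$ on the tensor product is determined by the normalization $\langle\eta,\eta\rangle=1$ on the product of extreme vectors together with the adjunction identities relating $E_i$ and $F_i$ up to the $\tilde K$-twists of \cite[\S26]{Lusbook}; in fact this sesquilinear tensor form is the factorwise one \cite[\S26]{Lusbook}, hence coincides with the factorwise Shapovalov form appearing in Theorem~\ref{th:shapovalov-1}. Categorically, the Euler form $\langle[M],[N]\rangle=\sum_iq^{-i}\dim\Hom(M,N)$ has $\langle\sI,\sI\rangle=1$ on the trivial stringy sequence (its degree-$0$ endomorphisms are $\K$, none in other degrees), and the biadjunction $\eE_i\dashv\eF_i\dashv\eE_i$ on $\hl^\bla$---with the grading shifts built into the $\tU$-action---categorifies Lusztig's adjunction identities on Grothendieck groups; together with the $U_q(\fg)$-module identification of Proposition~\ref{q-iso}, uniqueness forces the Euler form to equal $\langle u,v\rangle=(\Psi(u),v)$. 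Case~(3) can also be deduced from \cite{Webmerged} via Proposition~\ref{q-iso}.

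The main obstacle is precisely this convention matching: ensuring that Lusztig's ordering and coproduct in \cite[\S24, \S26, \S27]{Lusbook} agree with the dyslexic, sign-vector-indexed conventions used here, and---the delicate point---that the grading shifts on the biadjunctions $\eE_i\dashv\eF_i\dashv\eE_i$ reproduce the $\tilde K$-powers in Lusztig's form and the $q$-powers in his bar involution on the nose, not merely up to an overall normalization. Checking the uniqueness statements themselves is routine even when the weight spaces are infinite-dimensional (case~(2) outside finite type), since the generation and adjointness arguments are triangular and involve only finitely much data in each weight space.
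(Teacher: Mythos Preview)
Your treatment of case~(1) and the forms is fine and matches the paper; the substantive issue is in your recursive characterization of Lusztig's $\Psi$ in case~(3).

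The claim that $\Psi$ restricts to $\id\otimes\Psi^{(\la_2,\dots,\la_\ell)}$ on $\eta_1\otimes V_{\la_2}^\Z\otimes\cdots\otimes V_{\la_\ell}^\Z$ is false. With Lusztig's quasi-$R$-matrix description, $\Psi(m\otimes m')=\Theta(\bar m\otimes\bar{m'})$ with $\Theta\in\widehat{U^-\otimes U^+}$; since $\eta_1$ is a \emph{highest} weight vector it is not killed by $U^-$, so $\Theta(\eta_1\otimes\bar{m'})$ does not lie in $\eta_1\otimes V^{(\ell-1)}$. The correct recursion is on the other side: $\Psi$ commutes with the inclusion $M\hookrightarrow M\otimes M'$, $m\mapsto m\otimes v_{\la_\ell}$, so it acts as $\Psi^{(\la_1,\dots,\la_{\ell-1})}\otimes\id$ on $V^{(\ell-1)}\otimes\eta_\ell$. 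Your categorical counterpart is correspondingly misplaced: ``objects with no black strands left of the first red or blue strand'' is in fact every nonzero object of $\hl^\bla$ (anything to the left of all red/blue strands is eaten), and this does not pick out $\eta_1\otimes(\text{rest})$. The subcategory you want is the one with no black strands to the \emph{right} of the \emph{last} red strand, for which deleting that strand gives the functor to $\hl^{(\la_1,\dots,\la_{\ell-1})}$.

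Once the side is corrected your recursion works, but the paper's argument is shorter and avoids the recursion altogether: the tricolore triples $\sI$ are manifestly $\tilde\psi^\bla$-invariant (the vertical flip fixes them), so the vectors $v_\sI$ are fixed by $\psi^\bla$; one checks from the recursive definition of $v_\sI$ and the compatibility of Lusztig's $\Psi$ with $m\mapsto m\otimes v_{\la_\ell}$ and with the $\dot U$-action that the $v_\sI$ are also $\Psi$-invariant. Since they span $V_\bla^\Z$, two antilinear maps agreeing on them must coincide. For the form in case~(3), the paper simply cites the identification of the Euler form with the factorwise Shapovalov form already proved in \cite{Webmerged}, which you also invoke. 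For case~(2) the paper uses exactly the uniqueness argument you sketch (cyclicity from $v_{-\la}\otimes v_\mu$ and the $\tau$-adjunction), so there your approach matches.
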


\begin{proof}\hfill
  \begin{enumerate}
  \item The involution $\bar{\phantom{x}}$ on $\dot{U}$ is distinguished by
    fixing monomials in $E_i$ and $F_i$.  The same is true of that
    induced by $\tilde{\psi}$.  The agreement of forms follows from Theorem \ref{categorify-U}.
\item 
  The involution $\Psi$ on $V_{-\la}\otimes V_{\mu}$ is the unique
  involution which satisfies \[\Psi(u\cdot (v_{-\la}\otimes
  v_{\mu}))=\bar{u}\cdot  (v_{-\la}\otimes
  v_{\mu}).\] Thus we need only show that $\tilde\psi^{-\la,\mu}$
  satisfies this property. Flipping over a diagram commutes with acting on
  $(-\la,\mu)$ with it, so $\tilde\psi^{-\la,\mu}$ and $\tilde \psi$
  (on $\tU$) are compatible.

Similarly, the agreement of forms follows from the fact that for both
forms $\langle v_{-\la}\otimes
  v_{\mu},v_{-\la}\otimes
  v_{\mu}\rangle=1$ and  $\langle uv,w\rangle=\langle
  v,\tau(u)w\rangle$ where $\tau$ is the $q$-antilinear antiautomorphism defined by $$\tau(E_i)=q_i^{-1}\tilde{K}_{-i}F_i \qquad \tau(F_i)=q_i^{-1}\tilde{K}_{i}E_i \qquad \tau(K_\mu)=K_{-\mu}.$$
\item Lusztig's bar involution $\Psi$ on a tensor product is compatible with the
  action of $U_q(\fg)$ on the tensor product, and if $v\in M'$ is a
  highest weight vector, then it also commutes with the inclusion
  $M\to M\otimes M'$ sending $m\mapsto m\otimes v$.  Furthermore, if
  $M'$ is irreducible, $\Psi$ is uniquely characterized by these
  properties, since $M\otimes \{v\}$ generates $M\otimes M'$ over
  $U_q(\fg)$.   Note that in particular, the vectors $v_{\sI}$ defined
  above are all $\Psi$-invariant, and span the space $V_{\bla}$; thus
  any other anti-linear map that fixes these vectors must be $\Psi$.

  Thus, we need only check that $\tilde \psi^{\bla}$ categorifies a bar
  involution that fixes the same
  vectors $v_{\sI}$.  The tricolore triples $\sI$ are obviously
  $\tilde \psi^{\bla}$-invariant (essentially by definition) and so
  the result follows from Theorem \ref{th:shapovalov-1}.  The match of
  Euler form and Lusztig's form in this case is precisely \cite[2.30]{Webmerged}.\qedhere
  \end{enumerate}
\end{proof}

Our standard bases, however, are not the same as those typically used
by Lusztig; luckily, as we noted before, the dependence of canonical
bases on standard bases is very weak, so we can still show that
Lusztig's bases are canonical bases in our sense.

\begin{theorem}\label{thm:lusztig}
  The bases defined by Lusztig in cases (1-3) discussed
  above are canonical bases for the orthodox pre-canonical structure,
  in the sense of Definition \ref{def:canonical}.
\end{theorem}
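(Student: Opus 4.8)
The plan is to check that, in each of the cases (1)--(3), Lusztig's basis --- call it $\{b^L_c\}$ --- satisfies the three conditions \textit{(I)}, \textit{(II)}, \textit{(III)} of Definition \ref{def:canonical} relative to the orthodox pre-canonical structure, keeping the categorical input to a minimum. Recall that this structure has bar-involution $\psi$ (for $\dot U$) or $\psi^\bla$ (for $V_\bla^\Z$), inner product the graded Euler form $\langle-,-\rangle$, and standard basis $\{a_c\}$ given by the classes of stringy sequences, ordered by reverse dominance with the lexicographic tie-breaking introduced in Section \ref{sec:tens-prod-algebr}. The preceding Proposition already identifies $\psi$ (resp.\ $\psi^\bla$) with Lusztig's bar-involution and $\langle-,-\rangle$ with his inner product (after sesquilinearizing and, implicitly, matching his variable $v$ with our $q$ and fixing the normalization of the form); I would begin by making that dictionary explicit, since the remaining bookkeeping is where conventions could bite.

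Given the dictionary, conditions \textit{(I)} and \textit{(III)} are direct citations. Bar-invariance, $\psi(b^L_c)=b^L_c$, is built into Lusztig's definitions (\cite[23.1, 24.3, 27.3]{Lusbook}). Almost-orthonormality, $\langle b^L_c,b^L_{c'}\rangle\in\delta_{c,c'}+q^{-1}\Z[[q^{-1}]]$, is Lusztig's orthogonality theorem for these bases (\cite[26.1, 26.2]{Lusbook} and \cite[\S27.3]{Lusbook}). I would stress that Definition \ref{def:canonical} asks only for this, not for the sharper positive form \eqref{eq:4}; this is precisely why the theorem still holds for the non-symmetric finite types, where Lusztig's structure constants can be negative.

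The only point with content is condition \textit{(II)}: the triangularity of $\{b^L_c\}$ with respect to the stringy standard basis $\{a_c\}$. Here I would use that Lusztig's bases are global crystal bases, so their elements are indexed by crystal elements and the string parametrizations furnish exactly the coordinates used to order the $a_c$. Under the identifications of Theorem \ref{categorify-U} and Proposition \ref{q-iso}, $a_c$ is the explicit monomial in the $E_i$'s and $F_i$'s (applied to extremal weight vectors, resp.\ multiplied by a weight idempotent) attached to the corresponding stringy sequence; the relation between string parametrizations and the canonical basis (\cite{Lusbook}, Kashiwara) says that such a monomial equals the canonical basis element of the same string parametrization plus a combination of crystal-lower ones, i.e.\ $a_c=b^L_c+\sum_{c'<c}(\cdots)\,b^L_{c'}$ for the reverse-dominance-plus-lexicographic order. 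Inverting this unitriangular transition matrix gives $b^L_c\in a_c+\sum_{c'<c}\Z[q,q^{-1}]\,a_{c'}$, which is \textit{(II)}. The main obstacle is exactly verifying this triangularity with the \emph{correct} order --- that $b^L_c$, and no other canonical basis element, occurs as the leading term of $a_c$; for $\dot U$ one argues one pair of weight idempotents at a time (these spaces are infinite-dimensional in infinite type, so Theorem \ref{canonical-sign} does not apply directly, although each graded piece of the transition is finite), and in the tensor product cases one may additionally split into finite-dimensional weight spaces.

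Combining the three verifications, $\{b^L_c\}$ satisfies \textit{(I)}--\textit{(III)} for the orthodox pre-canonical structure and is therefore a canonical basis in the sense of Definition \ref{def:canonical}; by Proposition \ref{basis-unique} it is the unique such basis, and it coincides with the orthodox basis itself in those cases where Theorem \ref{main} applies.
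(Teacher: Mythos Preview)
Your proposal is essentially correct and follows the same overall strategy as the paper: verify conditions \textit{(I)} and \textit{(III)} by citation to Lusztig, and then establish the triangularity \textit{(II)} with respect to the stringy standard basis. You correctly identify \textit{(II)} as the crux and flag the ordering issue.

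The one place where the paper is more explicit than you are is precisely at that crux. Rather than appealing to crystal-basis theory in general for the triangularity $a_c = b^L_c + (\text{lower})$, the paper isolates the single property of Lusztig's basis it needs, namely
\[
B_c \;\in\; E_i^{(n)}B_{c'} \;+\; \sum_{d<c}\Z[q,q^{-1}]\,B_d,
\]
where $c'$ is obtained by deleting the last divided-power block $i^{(n)}$ in the stringy sequence (this is \cite[14.3.2(c) \& 14.4.2]{Lusbook} for $U^+$, extended routinely to cases (1)--(3)). From this, an easy minimal-counterexample induction gives $B_c\in v_{\sI_c}+\sum_{d<c}\Z[q,q^{-1}]\,v_{\sI_d}$ directly, with no need to invert a transition matrix. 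For case~(3) the paper also makes explicit a nested induction on the number of tensor factors, using that $-\otimes v_{\la_\ell}$ intertwines $\Psi$, preserves the form, and sends stringy sequences to stringy sequences. Your ``main obstacle'' paragraph is right to worry about the order, and the paper's induction is exactly the mechanism that pins the leading term to the correct $c$; if you spell that step out with the property above, your argument and the paper's coincide.
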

It's worth noting: unlike Theorem \ref{main}, Theorem
\ref{thm:lusztig} does not require a symmetric Cartan matrix. It holds
for any symmetrizable Kac-Moody algebra, in particular for finite
dimensional Lie algebras of type BCDFG.

\begin{proof}
  In cases (1-3), the canonical basis of Lusztig satisfies the almost
  orthogonality conditions.  This is proven for cases (1-2) in
  \cite[26.3.1]{Lusbook}, and the same argument extends easily to (3).
  Thus, as argued in Lemma \ref{canonical-sign}, any canonical basis
  vector for the orthodox pre-canonical structure must either lie in
  Lusztig's basis, or its negative must.

As in \cite[14.4.2]{Lusbook}, the property that distinguishes
Lusztig's basis from its more easily found signed version is
compatibility with the action of $E_i$ and $F_i$.
First, as a base case, this basis contains (in the respective cases)
\begin{enumerate}
\item the vector $1_\nu$
\item the vector $v_{-\la}\otimes v_{\mu}$
\item all vectors of the form $B_{d}\otimes v_{\la_\ell}$ for the
  canonical basis of $V_{\la_1}^\Z\otimes \cdots \otimes V_{\la_{\ell-1}}^\Z$.
\end{enumerate}
There is also an inductive piece of the definition: the indexing set
of canonical basis for cases (1-3) can be identified with the
corresponding set of stringy sequences.  If this sequence does not end
with $\pm$ a simple root (that is, with a black strand at its far
right), then it belongs to one of the base cases above.  Otherwise,
its last term is of the form $ i^{(n)}$ for some $n\geq 0$, and $i\in
\pm \Gamma$.  If $c$ is an element of this indexing set, let $c'$ be
the object indexed by the stringy sequence with this last appearance
of $i^{(n)}$ deleted. 

Using the convention that $F_i=E_{-i}$, the positivity condition for
the corresponding Lusztig basis vector $B_c$ is that:
\[ B_c\in E_i^{(n)}B_{c'} +\sum_{d< c} \Z[q,q^{-1}]
B_{d}.\]  This is explicit in the
case for a basis vector in $U^+$ by \cite[14.3.2(c) \&
14.4.2]{Lusbook}, and easily carries over to the more general cases (1-3)
which use the basis of $U^+$ in their definition.

The base case is easily established for the canonical bases of the
orthodox pre-canonical structure.  The only case that is not
tautological is (3) where we must work by induction, and assume that
we have proven the theorem for tensor products with $\ell-1$ factors.
Once this is assumed, we need only note that $-\otimes v_\la$
commutes with $\Psi$, sends stringy sequences to the stringy sequences
and preserves the inner product, and thus sends canonical basis
vectors to canonical basis vectors.

Now, consider a minimal Lusztig canonical basis vector such that
$B_c\notin v_{\sI_c}+\sum_{d<c}\Z[q,q^{-1}]\cdot v_{\sI_d}$ where we let $\sI_d$
denote the stringy sequence for a crystal element $d$.  By minimality, we must
have $B_{c'}  \in v_{\sI_{c'}}+\sum_{d'<c'}\Z[q,q^{-1}]\cdot
v_{\sI_d'}$.  Then we have \[E_i^{(n)}B_{c'} \in  E_i^{(n)}v_{\sI_{c'}}+\sum_{d'<c'}\Z[q,q^{-1}]\cdot
E_i^{(n)}v_{\sI_d'}.\]  Now, by definition, $E_i^{(n)}v_{\sI_{c'}}=v_{\sI_c}$
and the use of lexicographic ordering implies that \[\sum_{d'<c'}\Z[q,q^{-1}]\cdot
E_i^{(n)}v_{\sI_d'}\subset \sum_{d<c}\Z[q,q^{-1}]\cdot v_{\sI_d}. \]  Thus, we must have
that \[B_c-v_{\sI_c}=(B_c-E_i^{(n)}B_{c'})+(E_i^{(n)}B_{c'}-v_{\sI_c})\in \sum_{d<c}\Z[q,q^{-1}]\cdot v_{\sI_d}\] which
is a contradiction.  This shows that Lusztig's basis is also canonical
in our sense.
\end{proof}

Orthodox bases and canonical bases sometimes coincide, and sometimes
do not.  For future comparison between them, we'll note that the only
issue is the condition {\it (III)}:
\begin{proposition}\label{I-II}
  The bases $o_P$ satisfy conditions {\it (I)} and {\it (II)} of
  Definition \ref{def:canonical}
  for the orthodox pre-canonical structure.
\end{proposition}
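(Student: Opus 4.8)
The plan is to verify conditions \emph{(I)} and \emph{(II)} of Definition~\ref{def:canonical} directly from the setup, invoking the structural results already established for $\hl^\bla$ and $\dU$. Both conditions are essentially immediate consequences of work done earlier in the paper, so this is a short bookkeeping argument rather than a substantial new proof.

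First I would dispose of condition \emph{(I)}, the $\psi$-invariance of each orthodox basis vector. By definition, the orthodox basis consists of the classes $[P]$ of $\tilde\psi^\bla$-invariant (resp.\ $\tilde\psi$-invariant) indecomposable $1$-morphisms, and we chose these representatives precisely so that $\tilde\psi^\bla(P)\cong P$ (resp.\ $\tilde\psi(P)\cong P$) as graded objects. Since the bar-involution $\psi$ (resp.\ $\psi^\bla$) of the orthodox pre-canonical structure is by construction the decategorification of $\tilde\psi$ (resp.\ $\tilde\psi^\bla$) --- see the Khovanov--Lauda result recalled before Theorem~\ref{categorify-U} and the discussion following Theorem~\ref{KS-duality} --- we get $\psi(o_P)=[\tilde\psi(P)]=[P]=o_P$, and similarly in the tensor-product case. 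So \emph{(I)} holds.

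For condition \emph{(II)} I would use the triangularity of indecomposables against stringy sequences. Recall that the standard basis $a_c$ of the orthodox pre-canonical structure is given by the classes $[\sI(\mathbf{a}^\bullet)]$ of tricolore triples attached to stringy sequences (resp.\ the classes of stringy $1$-morphisms in $\dU$), ordered by the variant of reverse-dominance/lexicographic order introduced in Section~\ref{sec:tens-prod-algebr}. By Lemma~\ref{stringy-summand} and Corollary~\ref{cor:exist} (resp.\ Proposition~\ref{unique}), each stringy sequence $\sI(\mathbf{a}^\bullet)$ has exactly one indecomposable summand $P_c$ not occurring in any strictly larger stringy sequence, and every indecomposable arises this way for a unique stringy $c$. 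Decomposing $\sI(\mathbf{a}^\bullet)$ into indecomposables and using that all the other summands appear in strictly larger stringy sequences (and hence, inductively, their classes lie in the $\Z[q,q^{-1}]$-span of $a_{c'}$ with $c'>c$ --- wait, I need the correct direction), one obtains that the transition matrix between $\{a_c\}$ and $\{o_{P_c}\}$ is unitriangular for the relevant order. Concretely: $a_c=[\sI(\mathbf{a}^\bullet_c)]=o_{P_c}+\sum$ (classes of summands in larger stringy sequences), and an induction on the order, peeling off the ``new'' summand of each larger stringy sequence, expresses each such class in terms of $o_{P_d}$ with $d>c$; inverting the resulting unitriangular matrix gives $o_{P_c}\in a_c+\sum_{d<c}\Z[q,q^{-1}]\,a_d$, which is exactly \emph{(II)}. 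One must be a little careful to match the ordering conventions of Definition~\ref{def:canonical} (where $b_c\in a_c+\sum_{c'<c}\Z[q,q^{-1}]a_{c'}$) with the order in which the summands appear; since the ``new summand'' of a larger stringy sequence is indexed by a larger element, the inversion lands in the span of \emph{smaller}-indexed $a$'s, as required.

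The main (and really only) obstacle is confirming that the bookkeeping with orders is consistent: that the lexicographic/reverse-dominance order used to index stringy sequences is the partial order $(C,<)$ underlying the orthodox pre-canonical structure, and that ``new summand of a larger stringy sequence'' translates into ``higher in $<$'' so that the unitriangular inversion lands in the downward span demanded by \emph{(II)}. Everything else is a direct appeal to Lemma~\ref{stringy-summand}, Corollary~\ref{cor:exist}, Proposition~\ref{unique}, Theorem~\ref{KS-duality}, and the compatibility of $\tilde\psi$ with decategorification. I would close by remarking that no claim about \emph{(III)} is made here; that is precisely the content that requires mixedness and is the subject of the subsequent sections.
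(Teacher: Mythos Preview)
Your approach is exactly the paper's: condition \emph{(I)} is immediate from the definition of the orthodox basis, and \emph{(II)} follows from the unitriangularity supplied by Lemma~\ref{stringy-summand} (which is the same as Proposition~\ref{bla-unique}) and Proposition~\ref{unique}. The paper's proof is literally two sentences citing these results.

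One remark on the ordering, since you flagged it: your final claim that ``the inversion lands in the span of \emph{smaller}-indexed $a$'s'' does not follow from what you wrote. If $a_c = o_{P_c} + \sum_{d>c} m_{cd}\, o_{P_d}$ in the stringy order, then inverting an upper-triangular matrix keeps it upper-triangular, so $o_{P_c} \in a_c + \sum_{d>c}\Z[q,q^{-1}]\, a_d$, not $d<c$. The resolution is that the partial order $(C,<)$ underlying the pre-canonical structure is implicitly the one from Lemma~\ref{category-pre-canonical}(1), which requires $M_c \cong P_c \oplus \bigoplus_{c'<c} P_{c'}^{m_{c'}}$; matching this against the stringy decomposition forces the pre-canonical order to be the \emph{opposite} of the stringy order. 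With that convention, everything lines up and \emph{(II)} holds.
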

\begin{proof}
  Condition {\it (I)} is clear from the definition.  Condition {\it (II)} follows
  immediately from Propositions \ref{bla-unique} \& \ref{unique}.
\end{proof}

Furthermore, there is at least one property in which orthodox
bases are an improvement over canonical bases.

\begin{proposition}
  For any $\K$ and $Q_{ij}$, the structure coefficients of
  multiplication in $\dot{U}$ and matrix coefficients for the action
  on $V_\bla^\Z$ where the orthodox basis is used in $\Z_{\geq
    0}[q,q^{-1}]$.  
\end{proposition}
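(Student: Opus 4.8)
The plan is to deduce this positivity directly from the categorical meaning of the orthodox basis, exactly as the introduction advertises: structure and matrix coefficients lift to functors, so they must have nonnegative coefficients when written in the basis of indecomposable objects. I would first set up the categorical model. By Theorem \ref{categorify-U}, multiplication on $\dot U$ is categorified by horizontal composition of $1$-morphisms in $\dU$: if $[P]$ and $[P']$ are two orthodox basis vectors (classes of $\tilde\psi$-invariant indecomposables), then $[P][P'] = [P\circ P']$ in $K^0(\dU)\cong \dot U$, where $\circ$ denotes horizontal composition. Since $\dU$ is Krull--Schmidt (every Hom-category has finite-dimensional degree $0$ part, as recalled after Theorem \ref{categorify-U}), the $1$-morphism $P\circ P'$ decomposes as a finite direct sum $\bigoplus_Q Q(n_Q)^{\oplus d_{Q,n_Q}}$ of grading shifts of indecomposables, with $d_{Q,n_Q}\in\Z_{\geq 0}$. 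Each indecomposable $Q$ has, by the Proposition preceding Theorem \ref{KS-duality}, a unique shift $Q(m_Q)$ which is $\tilde\psi$-invariant, i.e.\ lies in the orthodox basis; so $[Q(n_Q)] = q^{\,m_Q - n_Q}\, o_{Q(m_Q)}$. Collecting terms gives $[P][P'] = \sum_{R\in C} c_R(q)\, o_R$ with $c_R(q) = \sum q^{\,m} d_{\cdot}\in\Z_{\geq 0}[q,q^{-1}]$, since it is a sum of monomials with nonnegative integer coefficients. That is the whole argument for multiplication.

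For the action on $V_\bla^\Z$, I would argue identically using $\hl^\bla$ in place of $\dU$. The action of $\dot U$ on $V_\bla^\Z$ is categorified by the $\tU$-action (hence $\dU$-action) on $\hl^\bla$ via horizontal composition on the right; concretely, for $P\in C$ (an orthodox basis vector of $\dot U$) and an object $M\in\hl^\bla$ whose class is an orthodox basis vector $o_M$ of $V_\bla^\Z$, the class of $M\circ P$ in $K^0(\hl^\bla)\cong V_\bla^\Z$ (Proposition \ref{q-iso}) equals $o_M \cdot o_P$ up to the bookkeeping of $\tilde\psi$ versus $\tilde\psi^\bla$. Again $M\circ P$ is a finite sum of grading shifts of indecomposables in the Krull--Schmidt category $\hl^\bla$ (Theorem \ref{KS-duality} shows $\hl^\bla$ is humorous, hence Krull--Schmidt), and each indecomposable has a unique $\tilde\psi^\bla$-self-dual shift belonging to $C_\bla$ (same Proposition). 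Reading off coefficients as above yields $o_M\cdot o_P = \sum_{N\in C_\bla} c_N(q)\, o_N$ with all $c_N(q)\in\Z_{\geq 0}[q,q^{-1}]$. (One should note that it suffices to treat the generators $E_i^{(n)}, F_i^{(n)}, K_\mu$; the orthodox basis of $\dot U$ contains, or at least the structure-coefficient statement reduces to, divided powers of Chevalley generators, but in fact the cleaner route is to observe that \emph{any} $o_P$ is an honest $1$-morphism in $\dU$ and horizontal composition with it is an exact functor $\hl^\bla\to\hl^\bla$, so no special treatment of generators is needed.)

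The only genuine subtlety — and the step I expect to be the real obstacle — is the compatibility of the two duality functors and hence the precise normalization of grading shifts. On $K^0(\dU)\cong\dot U$ the orthodox basis is pinned down by $\tilde\psi$-invariance; on $K^0(\hl^\bla)\cong V_\bla^\Z$ it is pinned down by $\tilde\psi^\bla$-invariance; and I need to know that $\tilde\psi^\bla$ applied to $M\circ P$ agrees with $(\tilde\psi^\bla M)\circ(\tilde\psi P)$, so that self-dual times self-dual lands in a sum of grading shifts of self-dual objects with \emph{symmetric} shift patterns — which is exactly what forces the coefficients $c_N(q)$ to be genuine Laurent polynomials (a priori finite sums of monomials) rather than, say, Laurent series. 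This compatibility is essentially the statement, used already in the proof of the Proposition just before Theorem \ref{KS-duality}, that $\tilde\psi^\bla$ is induced by flipping tricolore diagrams through a horizontal line, an operation that intertwines horizontal composition; and that the analogous statement holds for $\tilde\psi$ on $\dU$ (Khovanov--Lauda, the cited Proposition \ref{KLIII} 3.28 and the discussion around it). So the plan is: (i) record that horizontal composition with a fixed $1$-morphism is an additive functor between Krull--Schmidt categories, hence sends finite direct sums to finite direct sums; (ii) invoke the uniqueness of the self-dual shift to write each summand's class as a monomial times an orthodox vector; (iii) use the diagram-flip description of $\tilde\psi,\tilde\psi^\bla$ to see that the set of shifts occurring is finite and stable, so the total coefficient is a Laurent polynomial with nonnegative integer coefficients. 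Steps (i) and (ii) are routine once the framework is in place; step (iii), the duality compatibility, is where all the content sits, and I would simply cite the diagrammatic descriptions already established rather than reprove them.
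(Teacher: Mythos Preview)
Your proposal is correct and matches the paper's (implicit) approach: the proposition is stated without proof precisely because it follows immediately from the general principle advertised in the introduction, namely that any operation lifting to a functor between Krull--Schmidt categories has nonnegative structure coefficients in the basis of indecomposables. Your steps (i) and (ii) are exactly the argument.

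However, your step (iii) is an unnecessary detour. You write that duality compatibility is needed to ensure the coefficients $c_N(q)$ are ``genuine Laurent polynomials (a priori finite sums of monomials) rather than, say, Laurent series'' --- but a finite sum of monomials \emph{is} a Laurent polynomial. The Krull--Schmidt decomposition of $P\circ P'$ (resp.\ $M\circ P$) is finite by definition, so the coefficient of each orthodox basis vector is a finite sum of nonnegative integers times powers of $q$, hence lies in $\Z_{\geq 0}[q,q^{-1}]$ automatically. No appeal to $\tilde\psi$ or $\tilde\psi^\bla$ is needed for this proposition; duality only enters in pinning down \emph{which} shift of each indecomposable is the orthodox basis vector, and that was already handled in (ii). The compatibility $\tilde\psi^\bla(M\circ P)\cong (\tilde\psi^\bla M)\circ(\tilde\psi P)$ is true and is what makes the coefficients bar-invariant, but bar-invariance is not part of the statement being proved.
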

Similarly, by Corollary \ref{standard-strat}, we have that:
\begin{proposition}\label{standard-positive}
   For any $\K$ and $Q_{ij}$, the coefficients of any orthodox basis
   vector for $V_\bla^\Z$ in terms of pure tensor products of orthodox
   basis vectors in the factors has coefficients in $\Z_{\geq
    0}[q,q^{-1}]$.
\end{proposition}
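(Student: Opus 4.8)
The plan is to read this off directly from the standard stratification of Corollary~\ref{standard-strat} and the identification $K^0(\hl^\bla)\cong V_\bla^\Z$ of Proposition~\ref{q-iso}. Fix an orthodox basis vector $o_P=[P]$ of $V_\bla^\Z$, so $P$ is a $\tilde\psi^\bla$-invariant indecomposable (hence projective) object of $\hl^\bla$. By Corollary~\ref{standard-strat} the object $P$ carries a filtration whose subquotients are grading shifts of standardizations $S_M$, with $M$ ranging over indecomposable projective objects of $\hl^{\la_1}_{\la_1-\al(1)}\times\cdots\times\hl^{\la_\ell}_{\la_\ell-\al(\ell)}$; by the last sentence of that Corollary only finitely many $M$ occur, and for each only finitely many shifts, so the filtration is finite in every graded degree. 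Passing to the Grothendieck group gives
\[
[P]=\sum_{M} m_{P,M}(q)\,[S_M],\qquad m_{P,M}(q)\in\Z_{\geq 0}[q,q^{-1}],
\]
where $m_{P,M}(q)$ is the graded multiplicity of $S_M$ in a standard filtration of $P$.

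The next step is to identify each class $[S_M]$ with a pure tensor of orthodox basis vectors of the factors. Because each indecomposable of $\hl^{\la_k}$ is absolutely indecomposable (Corollary~\ref{absolutely-indecomposable}), an indecomposable projective $M$ of the product category is an external tensor $M\cong P_{c_1}\boxtimes\cdots\boxtimes P_{c_\ell}$ of indecomposable projectives $P_{c_k}\in\hl^{\la_k}$, and after normalizing each $P_{c_k}$ to be $\tilde\psi^{\la_k}$-invariant we have $o_{P_{c_k}}=[P_{c_k}]$ in $V_{\la_k}^\Z$. Iterating the factorization of $K^0_q(\hl^\bla)$ established in the proposition immediately preceding Proposition~\ref{q-iso}---which identifies $K^0_q(\hl^\bla)$ with the (possibly opposite-ordered) tensor product of the groups $K^0_q(\hl^{\la_k})$ by a map induced by the standardization functor $\mathbb{S}$, sending a $\boxtimes$-product of standard quotients to a standard quotient---one obtains that under $K^0(\hl^\bla)\cong V_\bla^\Z$ the class $[S_{P_{c_1}\boxtimes\cdots\boxtimes P_{c_\ell}}]$ corresponds to $q^{\,n}\, o_{P_{c_1}}\otimes\cdots\otimes o_{P_{c_\ell}}$ for some integer $n$ depending only on the grading normalizations. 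Substituting this into the displayed sum expresses $[P]$ as a sum of pure tensors $o_{P_{c_1}}\otimes\cdots\otimes o_{P_{c_\ell}}$ with coefficients $m_{P,M}(q)\,q^{\,n}$; since $\Z_{\geq 0}[q,q^{-1}]$ is closed under sums and products and contains every Laurent monomial, all coefficients lie in $\Z_{\geq 0}[q,q^{-1}]$, which is the claimed positivity.

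The step I expect to be the main obstacle is the second one: one must verify that the standardization of an indecomposable projective of the product category really is, up to a Laurent monomial, an honest pure tensor of orthodox basis vectors, and not merely some unitriangular transform of it in the stringy basis. The key is that $\mathbb{S}$ preserves the class of standard-filtered objects (as noted just after Proposition~\ref{q-iso}) and restricts there to an exact functor sending $S_{\sigma}\boxtimes S_{\tau}$ to a standardization in $\hl^\bla$, so that the Grothendieck-group computation reduces by induction on filtration length to the case of standards, which is precisely Proposition~\ref{q-iso}. The grading-shift ambiguity is harmless because a Laurent monomial always lies in $\Z_{\geq 0}[q,q^{-1}]$, and the remaining ingredients---positivity of the multiplicities $m_{P,M}(q)$, finiteness of the filtration, and the external-tensor description of indecomposables of the product category---are immediate from Corollaries~\ref{standard-strat} and~\ref{absolutely-indecomposable}.
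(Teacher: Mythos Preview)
Your proposal is correct and follows essentially the same approach as the paper. The paper's proof is the single line ``Similarly, by Corollary~\ref{standard-strat}, we have that:'' followed by the proposition; you have simply unpacked what that citation entails---the standard filtration of an indecomposable projective by standardizations of indecomposable projectives in the product category, together with the identification of the class of such a standardization with a pure tensor under Proposition~\ref{q-iso} and its preceding proposition. Your caution about a possible Laurent-monomial shift from normalizing the factor projectives to their self-dual representatives is appropriate and harmless, exactly as you note.
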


Very loosely, the difference between orthodox and canonical bases is
to trade off positivity in coefficients for positivity in exponents of
$q$; Lusztig's basis is defined in a way that depends
strongly on the latter, at the cost of positivity of coefficients in
the non-symmetric case. 

In fact, the dependence of this basis on the base ring $\K$ is quite crude; the
corresponding basis only depends on the characteristic of the residue field.
\begin{theorem} \label{th:overfield}
For any overfield $\mathbb{R}\supset \mathbbm{r}\cong
\K/\mathfrak{m}$, the orthodox basis of $\hl^\bla$ or $\dU$ for $\K$ coincides with that for $\mathbb{R}$.
\end{theorem}
\begin{proof}
  First, we consider the reduction functor $R\colon \dU_\K\to
  \dU_{\mathbbm{r}}$; this sends indecomposables to indecomposables,
  since $\End(R(P))\cong \End(P)/\mathfrak{m}$, which sends graded
  local rings to graded local rings.  Thus, we can reduce to the case
  where $\K=\mathbbm{r}$ for $\dU$, and by the same proof for
  $\hl^\bla$.

  By Corollary \ref{absolutely-indecomposable} and Proposition \ref{unique}, each
  indecomposable object in $\dU$ or $\hl^\bla$ is absolutely
  indecomposable; thus it remains indecomposable on base extension to
  $\mathbb{R}$, so we have the same orthodox basis.
\end{proof}
Thus, we can assume that $\K$ is generated the coefficients of
$Q_{ij}$ over a prime field. If these coefficients are integers, then we need
only consider the prime fields.

  For $U_q^+(\widehat{\mathfrak{sl}}_n)$, the orthodox bases of
  the basic representation $V_{\omega_0}$ (for the
  choice of $Q_{ij}$ fixed in \cite[\S \ref{m-sec:type-A}]{Webmerged}) over
  $\K=\mathbb{F}_p$ were defined by Grojnowski as ``$p$-canonical
  bases'' \cite[\S 14.1]{Groslp}.  The equivalence of our approach and
  Grojnowski's is shown by the ``Main Theorem'' of Brundan and
  Kleshchev \cite{BKKL}.  This provides a wealth of examples where
  orthodox and canonical bases do not coincide.  

  \begin{example}\label{ex:affine}
    Perhaps the easiest example is when
    $\fg=\widehat{\mathfrak{sl}}_2$; in this case, we have chosen the
    polynomial $Q_{01}(u,v)=u^2-2uv+v^2$.  Consider the object in
    $\tU$ given by $\eF_1\eF_0\eF_1\eF_0$.  This has only a
    2-dimensional space of degree 0 endomorphisms, spanned by
\[ \tikz{  
\node[label=left:{$1=$}] at (-3,0){
\tikz[very thick]{\draw (0,0) -- node [above, at end]{$0$} node [below, at start]{$0$} (0,1); \draw (.5,0) -- node [above, at end]{$1$}  node [below, at start]{$1$} (.5,1); \draw (1,0)
  -- node [above, at end]{$0$}   node [below, at start]{$0$} (1,1); \draw (1.5,0) -- node [above, at end]{$1$}  node [below, at start]{$1$} (1.5,1);  
}
};
\node[label=left:{$\psi_2\psi_3\psi_1\psi_2=$}] at (3,0){
\tikz[very thick]{\draw (0,0) -- node [above, at end]{$0$}  node [below, at start]{$0$} (1,1); \draw (.5,0) -- node [above, at end]{$1$}  node [below, at start]{$1$} (1.5,1); \draw (1,0)
  -- node [above, at end]{$0$}  node [below, at start]{$0$} (0,1); \draw (1.5,0) -- node [above, at end]{$1$}  node [below, at start]{$1$} (.5,1);
}
};
}
\]
One can easily calculate that $(\psi_2\psi_3\psi_1\psi_2)^2=2
\psi_2\psi_3\psi_1\psi_2$.  

Thus, if $2$ is a unit in the ring $\K$,
we have that $\nicefrac{1}{2}( \psi_2\psi_3\psi_1\psi_2)$ is a
primitive idempotent, and $\End_0( \eF_1\eF_0\eF_1\eF_0)\cong \K
\oplus \K$ (so this object is the sum of two distinct summands).  On
the other hand, if $\K$ has characteristic $2$, then the same
calculation shows that $\psi_2\psi_3\psi_1\psi_2$ is nilpotent.  
We have an isomorphism $\End_0( \eF_1\eF_0\eF_1\eF_0)\cong
\K[t]/(t^2)$, so this object is indecomposable.

This example equally shows the dependence of the orthodox basis on the
choice of $Q_{ij}$: for any ring $\K$, if $Q_{01}(u,v)=u^2+v^2$, then $\eF_1\eF_0\eF_1\eF_0$ is
indecomposable, as in the characteristic 2 case.  Note that in this
case, the diagram $\psi_3\psi_1\psi_2$ gives a degree -2 map from
$\eF_1\eF_0\eF_1\eF_0$ to $\eF^{(2)}_1\eF_0^{(2)}$, which is
indecomposable for any choice of $Q_{01}$.  This shows that there is a
negative degree map between indecomposable $\tilde\psi$-invariant projectives.

The same example is treated by Tingley and the author in \cite[\S
3.5]{TW} and by Kashiwara
in \cite[Example 3.3]{Kashnote} from the dual
perspective (in terms of simples rather than projectives).  A
variant displaying similar behavior
was considered by Khovanov and Lauda in \cite[3.25]{KLI}.
  \end{example}

Using familiar methods from modular representation theory, we can relate orthodox bases for the same
algebra or module from characteristic 0 and 
characteristic $p$.  Fix polynomials  $Q_{ij}\in
\Z[u,v]$, and let $m=\operatorname{lcm}(t_{ij})$.
\begin{proposition}
Assume $p\nmid m$.
  Each $\mathbb{F}_p$-orthodox basis vector for the reduction of
  $Q_{ij}$ mod $p$ is a positive linear
  combination of $\mathbb Q$-orthodox basis vectors for $Q_{ij}$.
\end{proposition}
Of course, this theorem is easily generalized to values of $Q_{ij}$ lying in
the algebraic integers, replacing $\mathbb{F}_p$ by a larger finite field.
\begin{proof}
For simplicity, we only discuss $\dU$; the case of $\hl^\bla$ is
precisely the same.

We use the category $\dU$ over the ring $\K=\Z_p$, the $p$-adic
integers as a bridge between characteristics $p$ and $0$.

As usual, we have extension and reduction
functors \[\dU_{\Q_p}\overset{E}\longleftarrow
\dU_{\Z_p}\overset{R}\longrightarrow \dU_{\mathbb{F}_p}.\] As noted in
the proof of Lemma \ref{stringy-summand}, the functor $R$ sends indecomposables to indecomposables.  Thus, the $\mathbb{F}_p$-orthodox basis
coincides with the $\Z_p$-orthodox basis.  On the other hand, if $M$
is an indecomposable 1-morphism of $\dU_{\Z_p}$, then its extension of
scalars $E(M)$ is a sum of indecomposables in $\dU_{\Q_p}$.  The result
follows.
\end{proof}

This categorification framework provides us with a wealth of bases, which are actually quite
difficult to study in general.  As mentioned above, Brundan and
Kleshchev \cite{BKKL} have shown that the simple modules
over the symmetric groups over a field of characteristic $p$ give the
dual orthodox basis of the basic representation of $\mathfrak{sl}_p$
over the field $\mathbb{F}_p$;
the determination of these classes is one of the most important
questions in modular representation theory.

Similarly, in finite type, examples were recently described by
Williamson \cite{WilJames} where the canonical and orthodox bases do not coincide; in fact, for any prime $p$, the orthodox basis of $U(\mathfrak{sl}_{8p-1})$
from characteristic $p$ differs from the canonical basis.

Of course, if we are given any representation which seems to have a
natural choice of categorification, we can use this to define an
orthodox basis.  At the moment, the most obvious example is when
$\mathfrak{\widehat{sl}}_e$, and the associated representation is a
higher level Fock space.  As shown by Shan \cite{ShanCrystal}, the
category $\cO$'s of symplectic reflection algebras provide one such
categorification.  By recent independent work of the author
\cite{WebRou}, Rouquier, Shan, Varagnolo and Vasserot \cite{RSVV} and
Losev \cite{LoVV}, these have a graded lift where the classes of
projectives give a canonical basis; in \cite{WebRou}, we also give a
diagrammatic description of these categories more in the philosophy of
this paper.

\section{Canonical bases}
\label{sec:canonical-bases}

In this section, we consider the question of when the orthodox and
canonical basis coincide.

\begin{definition}
  We call an orthodox basis of $\dot{U}$ or $V_\bla^\Z$ {\bf
    canonical} if its elements are {\bf almost orthonormal}, that is
  $\langle o_P,o_{P'}\rangle\in
  \delta_{P,P'}+q^{-1}\Z_{\geq 0}[[q^{-1}]]$ for all $P,P'\in C$ or
  $C_\bla$.  That is, when the orthodox basis is canonial in the sense
  of Section \ref{sec:pre-canon-struct}.  
\end{definition}
Note that this is {\it a priori} stronger than {\it (III)}, but in
fact, for any orthodox basis, we have $\langle o_P,o_{P'}\rangle\in
  \Z_{\geq 0}((q))$, so in fact, it is equivalent. By Theorem
  \ref{thm:lusztig}, this is also the same as requiring the orthodox
  bases to match Lusztig's basis in the cases (1-3) we discussed
  there.
\begin{remark}
  We should note that the methods of Kashiwara \cite[3.1-2]{Kashnote}
  can be easily extended to show that if the orthodox basis of a
  $\dot{U}$ or $V^{\Z}_\bla$ is canonical for some value of $Q_{*,*}$
  over $\K$, then the
  same holds for generic $Q_{*,*}$, that is, when we take the coefficients of
  $Q_{*,*}$ to be formal variables on a space $\mathfrak{Q}$ of
  possible choices, and replace $\K$ by $\K(\mathfrak{Q})$.
\end{remark}
\excise{
\begin{proposition}
  The orthodox basis of $\dot{U}$, $V_{-\la}^\Z$, $V_\mu^\Z$ or
  $V_{-\la,\mu}^\Z$ is Lusztig's canonical basis if and only if it is
  canonical in the sense defined above.
\end{proposition}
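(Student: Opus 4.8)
The plan is to read this off directly from the results already assembled, namely Theorem~\ref{thm:lusztig}, Proposition~\ref{I-II}, and the uniqueness of canonical bases (Proposition~\ref{basis-unique}), together with the elementary observation that the graded Euler form of an orthodox basis always has nonnegative coefficients.

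For the implication ($\Leftarrow$) I would argue as follows. Suppose the orthodox basis $\{o_P\}$ is canonical in the sense of Section~\ref{sec:canonical-bases}, i.e.\ almost orthonormal; in particular it satisfies condition \emph{(III)} of Definition~\ref{def:canonical}. By Proposition~\ref{I-II} it also satisfies \emph{(I)} and \emph{(II)}, so $\{o_P\}$ is a canonical basis for the orthodox pre-canonical structure. On the other hand, Theorem~\ref{thm:lusztig} tells us that Lusztig's basis in each of the relevant cases is also a canonical basis for that same pre-canonical structure, and Proposition~\ref{basis-unique} guarantees there is at most one. Hence $\{o_P\}$ must coincide with Lusztig's basis.

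For the implication ($\Rightarrow$) I would run this in reverse: if $\{o_P\}$ equals Lusztig's basis, then by Theorem~\ref{thm:lusztig} it satisfies \emph{(III)}, i.e.\ $\langle o_P,o_{P'}\rangle\in\delta_{P,P'}+q^{-1}\Z[[q^{-1}]]$. Since the Euler form evaluated on an orthodox basis always lies in $\Z_{\geq 0}((q))$ (the remark just after the definition in Section~\ref{sec:canonical-bases}), this immediately upgrades to $\langle o_P,o_{P'}\rangle\in\delta_{P,P'}+q^{-1}\Z_{\geq 0}[[q^{-1}]]$, which is exactly almost orthonormality; so the orthodox basis is canonical in the sense defined above.

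There is no substantial obstacle here: the content is entirely carried by the earlier theorems, and the proposition is essentially a repackaging of that remark. The only point that requires a moment's care is the compatibility of the two notions of ``canonical'' for orthodox bases — the notion of Section~\ref{sec:canonical-bases} has a built-in positivity that condition \emph{(III)} of Definition~\ref{def:canonical} does not — and this is precisely where the nonnegativity $\langle o_P,o_{P'}\rangle\in\Z_{\geq 0}((q))$ of the Euler form is used; without it the two notions would only be linked by an implication in one direction. One should also double-check that the single irreducibles $V_{-\la}^{\Z}$ and $V_\mu^{\Z}$ genuinely fall under the cases treated in Theorem~\ref{thm:lusztig} (as the $\ell=1$ instances of the tensor-product statement), but this is routine.
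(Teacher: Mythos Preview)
Your argument is correct. It differs from the paper's own (excised) proof mainly in the $(\Leftarrow)$ direction: the paper first invokes \cite[26.3.1]{Lusbook} to see that Lusztig's basis is canonical, then uses Theorem~\ref{canonical-sign} to conclude that each orthodox vector is $\pm$ a Lusztig basis vector, and finally runs an inductive sign-fixing argument based on the positivity of the $E_i,F_i$ structure coefficients to rule out the minus sign. You instead appeal directly to Theorem~\ref{thm:lusztig} and the full uniqueness statement Proposition~\ref{basis-unique}, which short-circuits the sign issue entirely. Your route is cleaner and more internal to the paper; the paper's route is more self-contained with respect to Lusztig's book and does not need the upper-triangularity verification hidden inside Theorem~\ref{thm:lusztig}. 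For $(\Rightarrow)$ the two arguments are essentially the same, just packaged differently (the paper cites \cite[26.3.1(a)]{Lusbook} directly rather than going through Theorem~\ref{thm:lusztig}). Your remark about $V_{-\la}^\Z$ and $V_\mu^\Z$ being the degenerate cases of $V_{-\la,\mu}^\Z$ is exactly right.
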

\begin{proof}
In any of the cases above, the result \cite[26.3.1]{Lusbook} shows that Lusztig's
basis is canonical. On the other hand, Theorem
\ref{canonical-sign} shows that any element of a
canonical basis for a precanonical structure and inner product
agreeing with Lusztig's is $\pm 1$ times one of Lusztig's canonical basis vectors.

Thus, we need only rule out that $o=-b_c$ for some $c$. Assume $b_c$ is
the minimal element of the canonical basis which is the negative of an
orthodox basis vector; this is not $1_\nu$, since this is both a
canonical and an orthodox vector.  Then, there is some $c'<c$ such
that $E_ib_{c'}\in f_c(q)b_c+\sum_{d}f_d(q) b_{d}$ or $F_ib_{c'}\in
g_c(q)b_c+\sum_{d}g_d(q) b_{d}$ for some $i$, with $f_c$ or $g_c$
having positive coefficients.  Since the same is true of orthodox
vectors, these vectors must coincide, not differ by a sign.

On the other hand, if {\it (III)} fails for some pair of elements of the
orthodox basis of $ \dot{U}$ or $V_{-\la,\mu}^\Z$, then \cite[26.3.1(a)]{Lusbook}  shows
that they cannot both be in Lusztig's canonical basis.  
\end{proof}
}
\begin{proposition}
  The orthodox basis  of $V_{-\la}^\Z$, $V_\mu^\Z$ or
  $V_{-\la,\mu}^\Z$ is a crystal basis if and only if it is canonical.
\end{proposition}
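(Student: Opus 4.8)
The statement asserts, for $V=V_{-\la}^\Z$, $V_\mu^\Z$, or $V_{-\la,\mu}^\Z$, that the orthodox basis $\{o_P\}$ is a crystal basis (equivalently, Lusztig's global crystal basis) if and only if it is canonical in the sense of almost-orthonormality. One direction is essentially free: if the orthodox basis is canonical, then by Theorem~\ref{canonical-sign} each $o_P$ is $\pm$ a canonical basis vector for the orthodox pre-canonical structure, and Theorem~\ref{thm:lusztig} identifies the canonical basis for this structure with Lusztig's basis (which in these irreducible and two-factor cases is defined and is a crystal/global basis by \cite{Lusbook}). So the plan is first to dispose of the sign ambiguity — showing $o_P=+b_c$ rather than $-b_c$ — using the inductive compatibility with the action of $E_i$, $F_i$ exactly as in the proof of Theorem~\ref{thm:lusztig}: the base vectors $1_\nu$, $v_{-\la}\otimes v_\mu$ etc.\ are simultaneously orthodox and Lusztig basis vectors, and the positivity condition $o_P\in E_i^{(n)}o_{P'}+\sum_{d<P}\Z[q,q^{-1}]\,o_d$ propagates the sign forward along stringy sequences. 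This yields $\{o_P\}=$ Lusztig's basis, hence a crystal basis.

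For the converse, the plan is to argue: if $\{o_P\}$ is a crystal basis, it must coincide with Lusztig's canonical basis, because in these cases (single highest/lowest weight module, or a highest-tensor-lowest pair) the global crystal basis is \emph{unique} once one fixes the bar-involution and the $\mathcal{A}_0$-lattice spanned by the $a_c$ (here the stringy tricolore triples), cf.\ Kashiwara's and Lusztig's uniqueness theorems; the orthodox basis satisfies conditions {\it (I)} and {\it (II)} by Proposition~\ref{I-II}, so a crystal basis lifting these is pinned down. Then Lusztig's basis is known to be almost orthonormal — this is \cite[26.3.1]{Lusbook} for $\dot U$ and $V_{-\la,\mu}^\Z$, and the same argument covers $V_{-\la}^\Z$ and $V_\mu^\Z$ as special cases — so the orthodox basis is canonical. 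The cleanest route is therefore: crystal basis $\Rightarrow$ (by uniqueness + {\it (I),(II)}) equals Lusztig's basis $\Rightarrow$ (by \cite[26.3.1]{Lusbook}) almost orthonormal $\Rightarrow$ canonical.

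\textbf{Main obstacle.} The delicate point is the uniqueness input in the converse: one must be careful that ``crystal basis'' in the sense used here really does force agreement with Lusztig's global basis, given only the bar-invariance and the upper-triangularity against the stringy standard basis $\{a_c\}$ rather than against Lusztig's PBW-type standard basis. Proposition~\ref{basis-unique} says a pre-canonical structure has at most one canonical basis, and Proposition~\ref{basis-unique} together with the fact (from Theorem~\ref{thm:lusztig}'s proof) that the change-of-basis matrix between the stringy $a_c$ and Lusztig's standard basis is unitriangular lets one transfer uniqueness; but a crystal basis is a priori only required to satisfy {\it (I),(II)} plus the crystal-lattice/crystal-operator conditions, not {\it (III)}. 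So the argument must show that a bar-invariant basis satisfying {\it (II)} and the crystal conditions is already Lusztig's basis — i.e.\ that the crystal conditions substitute for almost-orthonormality in characterizing it. This is standard in the quantum-group literature (it is exactly how global crystal bases are constructed), but I would need to cite the precise uniqueness statement (Kashiwara, or \cite[\S14.4,\S27.3]{Lusbook}) and check that the lattice $\sum_c \mathcal{A}_0\, a_c$ coincides with Kashiwara's crystal lattice $\mathcal{L}$ — the latter verification, using that the $a_c$ are the classes of stringy triples indexed by string parametrizations of crystal elements, is where the real work lies and is the step I expect to be most technical.
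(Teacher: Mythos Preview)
Your proposal is correct and follows essentially the same strategy as the paper, which reduces both directions to the single fact that the orthodox basis equals Lusztig's basis. However, you are overcomplicating the ``canonical $\Rightarrow$ crystal'' direction: there is no need for Theorem~\ref{canonical-sign} or any sign-fixing argument, since Proposition~\ref{basis-unique} already gives uniqueness of the canonical basis, and Theorem~\ref{thm:lusztig} says Lusztig's basis is canonical for the orthodox pre-canonical structure --- so a canonical orthodox basis is automatically equal to Lusztig's basis, which is a (global) crystal basis.

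For the converse, your plan is exactly the paper's, and your ``main obstacle'' dissolves: the paper simply invokes the standard fact that each of these modules has at most one \emph{bar-invariant} crystal basis, namely Lusztig's. Since the orthodox basis is bar-invariant by Proposition~\ref{I-II}, if it is a crystal basis it must equal Lusztig's, hence is canonical. You do not need to verify that the stringy lattice agrees with Kashiwara's $\mathcal{L}$; the uniqueness of the bar-invariant global basis is taken as a known input from the quantum-group literature rather than reproved.
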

\begin{proof}
  Any of these modules has at most
  one bar-invariant crystal basis, which in these cases coincides with
  Lusztig's basis.  Since the orthodox basis is bar-invariant by
  Proposition \ref{I-II}, the result follows.
\end{proof}

It follows immediately from 
Lemma \ref{mixed-canonical} that: 
\begin{proposition}\label{prop:mixed}
  The orthodox basis of $\dot{U}$ or $V_\bla^\Z$ is canonical if and
  only if the categorification $\dU$ or $\hl^\bla$ is mixed in the
  sense of Definition \ref{mixed}.
\end{proposition}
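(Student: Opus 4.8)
The plan is to deduce Proposition \ref{prop:mixed} directly from the machinery already assembled, so the "proof" is really a bookkeeping exercise identifying which earlier results apply. First I would recall that Theorem \ref{KS-duality} tells us $\dU$ and $\hl^\bla$ are humorous categories, with grading shift the obvious one and duality given by $\tilde\psi$ (respectively $\tilde\psi^\bla$). So to invoke Lemma \ref{mixed-canonical} I need only verify that these categories, together with the orthodox pre-canonical structure, satisfy the hypotheses of Lemma \ref{category-pre-canonical} — specifically, that one of conditions (1) or (2) there holds with $a_c$ being the classes of stringy sequences.

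The key step is checking condition (1): for each $\tilde\psi^\bla$-invariant (resp. $\tilde\psi$-invariant) indecomposable $P_c$, there is a self-dual object $M_c$ (the tricolore triple attached to a stringy sequence) with $M_c\cong P_c\oplus(\oplus_{c'<c}P_{c'}^{\oplus m_{c'}})$, where $<$ is the order on stringy sequences. This is exactly the content of Corollary \ref{cor:exist} together with Propositions \ref{bla-unique} and \ref{unique}: each stringy sequence is self-dual (its diagram is manifestly $\tilde\psi$-invariant), it has exactly one "new" indecomposable summand not appearing in any larger stringy sequence, and by induction on the order every summand of a stringy sequence decomposes into $P_c$'s indexed by smaller or equal stringy sequences. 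Thus the stringy sequence $\sI_c$ plays the role of $M_c$ and condition (1) holds. The proposition I cited as \ref{I-II} (whose proof already invokes \ref{bla-unique} and \ref{unique}) confirms that the orthodox basis satisfies (I) and (II), which is precisely the assertion that this is a legitimate pre-canonical structure with the stringy-sequence standard basis.

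Having established that Lemma \ref{category-pre-canonical}(1) applies, Lemma \ref{mixed-canonical} gives immediately: the orthodox basis $\{[P_c]\}$ is canonical for the orthodox pre-canonical structure if and only if the category is mixed. Unwinding the definition of "canonical orthodox basis" given at the start of Section \ref{sec:canonical-bases} — almost orthonormality $\langle o_P, o_{P'}\rangle\in\delta_{P,P'}+q^{-1}\Z_{\geq 0}[[q^{-1}]]$ — and noting, as the text does, that $\langle o_P,o_{P'}\rangle\in\Z_{\geq 0}((q))$ automatically so that positivity is free, this is exactly condition (III) of Definition \ref{def:canonical}. Hence the orthodox basis of $\dot U$ (resp. $V_\bla^\Z$) is canonical precisely when $\dU$ (resp. $\hl^\bla$) is mixed.

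The only place where care is needed — and what I expect to be the main obstacle — is making sure the partial order used in applying Lemma \ref{category-pre-canonical} is legitimate: the order on stringy sequences is only a preorder a priori (reverse dominance order with lexicographic tiebreaks), and one must check that distinct $\tilde\psi$-invariant indecomposables correspond to distinct stringy sequences, so that the quotient to a genuine partial order on $C$ (resp. $C_\bla$) is well-behaved. This follows from Corollary \ref{cor:exist} (each stringy sequence contributes at most one new indecomposable) combined with the linear independence of the classes $[\sI]$ of stringy sequences noted just after Theorem \ref{th:shapovalov-1}. With that observation in place, the rest is a direct citation chain and no further work is required.
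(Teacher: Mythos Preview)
Your proposal is correct and follows essentially the same approach as the paper: the paper simply states that the proposition ``follows immediately from Lemma \ref{mixed-canonical},'' relying on the fact that the hypotheses of that lemma (via Lemma \ref{category-pre-canonical}(1), Theorem \ref{KS-duality}, and Proposition \ref{I-II}) have already been established in the preceding discussion of the orthodox pre-canonical structure. You have simply made this citation chain explicit, which is entirely appropriate.
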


Thus, the question of when orthodox bases are canonical reduces to
computing when categorifications are mixed. As suggested by the name,
typically this is proven using relations to geometry; one shows that
there is a functor with nice properties sending $\tilde
\psi$-invariant objects in one's categorification to perverse sheaves
on some space, and deduces positivity of the grading from the fact
that perverse sheaves are the heart of a $t$-structure.  This connection
with geometry holds in only certain situations.   In Example
\ref{ex:affine}, we showed that if $\fg=\mathfrak{\widehat{sl}}_2$ and
$Q_{01}(u,v)=u^2+v^2$, then the category $\tU$ cannot be mixed.

For the rest of the
paper, we will assume $\fg$ has symmetric Cartan matrix (so that we
may use quiver varieties), $\K$ is a field of
characteristic 0 (so we may use the Decomposition Theorem), and we fix a particular choice of $Q_{*,*}$, which
coincides with the choice used in \cite[\S 3.3]{VV} and \cite[\S
3.2.4]{Rou2KM}.  This choice is forced on us by geometry and is of the
following nature: we choose an orientation $\Omega$ on our Dynkin
diagram, let $\epsilon_{ij}$ denote the number of edges
oriented from $i$ to $j$, and
fix
\begin{equation}
Q_{ij}(u,v)=(-1)^{\ep_{ij}}(u-v)^{c_{ij}}.\label{eq:Q}
\end{equation}

Note that these hypotheses include $\fg=\mathfrak{\widehat{sl}}_2$,
but with $\pm Q_{01}(u,v)=u^2-2uv+v^2$.

Assuming these hypotheses, the result \cite[4.5]{VV} says (in different language) that 
\begin{theorem}\label{dom}
  The orthodox basis of $V_\la^\Z$ is
  canonical.
\end{theorem}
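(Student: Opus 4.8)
The plan is to deduce Theorem~\ref{dom} from Proposition~\ref{prop:mixed} together with Lemma~\ref{lem:full-essential}, exactly as sketched in the introduction. By Proposition~\ref{prop:mixed}, it suffices to show that the categorification $\hl^\la$ (the case $\bla=(\la)$ with $\la$ dominant, so a highest weight representation) is mixed in the sense of Definition~\ref{mixed}. The strategy is to exhibit a full, essentially surjective, grading-shift- and duality-compatible functor $a\colon \mathcal{C}\to \hl^\la$ from a category $\mathcal{C}$ that is \emph{already known} to be mixed, and then invoke Lemma~\ref{lem:full-essential}.

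First I would recall, from \cite{WebwKLR} (building on \cite[Th.~4.5]{VV}), the geometric incarnation of $T^\la$: the cyclotomic KLR algebra $T^\la$ is realized (after the choice \eqref{eq:Q} of $Q_{*,*}$, over a characteristic-$0$ field) as an $\Ext$-algebra of a direct sum of IC sheaves for trivial local systems on a suitable quiver variety $\fM^\la$ (or a space of representations of the quiver with the appropriate framing/stability), with the grading coming from homological degree and the duality from Verdier duality. Concretely, one takes $\EuScript{J}$ to be the full subcategory of the derived category of constructible complexes on this variety generated by the relevant IC sheaves, equipped with the perverse $t$-structure and Verdier duality; by Lemma~\ref{J-mixed}, the associated additive category $\mathcal{J}$ of shifts of simple perverse sheaves is a mixed humorous category. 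The content of the geometric results of \cite{WebwKLR,VV} is precisely that there is a functor from $\mathcal{J}$ (or its idempotent completion) to the category of graded projective $T^\la$-modules --- equivalently to $\hl^\la$ by Theorem~\ref{red-tensor} --- which is full and essentially surjective, sends grading shift to homological shift, and intertwines Verdier duality with $\tilde\psi^\la$. The matching of the Euler/Shapovalov forms (Theorem~\ref{th:shapovalov-1}, and its $q$-refinement Proposition~\ref{q-iso}) with the geometric one is what guarantees these functors see the right Grothendieck-group data.

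With such a functor $a\colon \mathcal{J}\to \hl^\la$ in hand, Lemma~\ref{lem:full-essential} applies verbatim: since $\mathcal{J}$ is mixed and $a$ is full, essentially surjective, and compatible with $(1)$ and duality, the target $\hl^\la$ is mixed as well, and moreover each orthodox basis vector of $K^0(\hl^\la)$ is the image of a unique orthodox (here: IC) basis vector of $K^0(\mathcal{J})$. Then Proposition~\ref{prop:mixed} immediately gives that the orthodox basis of $V_\la^\Z$ is canonical, which is Theorem~\ref{dom}. (This also recovers Theorem~\ref{main}(a) in the one-factor case; the general tensor-product statement is handled the same way using the multi-framed quiver varieties of \cite{WebwKLR}.)

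The main obstacle is establishing the existence and the required properties of the comparison functor $a$ --- in particular its \emph{fullness} and its compatibility with the duality and grading. Essential surjectivity and compatibility with grading shifts are comparatively formal once one knows that the indecomposable projectives correspond to summands of the complexes built from IC sheaves (using Proposition~\ref{C-basis} and the stringy-summand analysis of Lemma~\ref{stringy-summand} on one side, and the Decomposition Theorem on the other). Fullness, however, is exactly the statement that the hom-spaces of $T^\la$ are not ``too small'' compared to the $\Ext$-groups of the geometric side; this is the technical heart of \cite{WebwKLR} (and of \cite{VV}), and it is where the restriction to symmetric Cartan matrix, characteristic $0$, and the specific $Q_{*,*}$ of \eqref{eq:Q} is genuinely used. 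So the proof proper will consist of quoting the geometric realization of \cite{WebwKLR,VV} in the precise form ``there is a full, essentially surjective, $(1)$- and $\circledast$-compatible functor from the mixed category $\mathcal{J}$ to $\hl^\la$'' and then running Lemma~\ref{lem:full-essential} and Proposition~\ref{prop:mixed}; the delicate inputs are imported rather than reproved.
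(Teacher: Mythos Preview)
Your proposal is correct and follows essentially the same approach as the paper: reduce to mixedness of $\hl^\la$ via Proposition~\ref{prop:mixed}, produce a full, essentially surjective, grading- and duality-compatible functor from a geometric mixed category $\mathcal{J}$ (constructed via Lemma~\ref{J-mixed}), and conclude by Lemma~\ref{lem:full-essential}.

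One minor difference in packaging: the paper does not invoke a direct geometric realization of $T^\la$ from \cite{WebwKLR}. Instead it uses Varagnolo--Vasserot's result \cite[3.6]{VV} that $\mathcal{J}$ (built from constructible sheaves on Lusztig's moduli of quiver representations, no framing) is \emph{equivalent} to graded projectives over the KLR algebra $R$; the full, essentially surjective functor $\mathcal{J}\to\hl^\la$ is then simply the cyclotomic quotient $R\twoheadrightarrow T^\la$, whose fullness and essential surjectivity are immediate from being a surjection of rings. This avoids appealing to the more elaborate framed geometry of \cite{WebwKLR}, which the paper reserves for the genuine tensor-product case (Theorem~\ref{tensor}). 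Your route via \cite{WebwKLR} also works and is in fact a special case of the argument for Theorem~\ref{tensor}, but it imports more than is needed here.
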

The proof of this fact can be rephrased in terms of Lemma
\ref{lem:full-essential}.  We apply Lemma \ref{J-mixed} to the
category $\EuScript{J}$ of constructible sheaves on a
moduli space of quiver representations {\it \`a la} Lusztig generated
by certain pushforwards.  Vasserot and Varagnolo show that associated
mixed category $\mathcal{J}$ is equivalent to the projective modules over
the KLR algebra \cite[3.6]{VV}, so the functor of dividing by the
cyclotomic ideal defines a functor $\mathcal{J}\to \hl^\la$
intertwining Verdier duality with the duality $\tilde\psi$, to which
we can apply Lemma
\ref{lem:full-essential}.

Very similar arguments were also used by the author and Stroppel in
the study of quiver Schur algebras \cite{SWschur}, in order to show
that the indecomposable projectives over these algebras correspond to
canonical bases of higher-level Fock spaces; in fact, Proposition
\ref{prop:mixed} and Lemma \ref{lem:full-essential} seem to be applicable
in essentially any categorical $\fg$-module yet dreamed up.  The
difficult part is to understand the relevant pre-canonical
structure in terms of previously understood representation theory.  

We can extend this proof to the tensor product of highest weight
representations using an extension of Vasserot and Varagnolo's
geometric techniques.  This relies on a more general result on certain
generalizations of KLR algebras called {\bf weighted KLR algebras}.
\begin{proposition}[\mbox{\cite[4.9]{WebwKLR}}]\label{wKLR}
 Assume $\fg$ has symmetric Cartan matrix, $\K$ is  a field of
  characteristic 0, and $Q_{*,*}$ is as
  in \eqref{eq:Q}. Then, the algebra $\tilde T^\bla_\mu$ defined in \cite{Webmerged} is isomorphic
  to the Ext-algebra of an object $Y$ in the constructible derived
  category of a moduli space of quiver representations, denoted
  $E_{\la-\mu}/G_{\la-\mu}'$ which is a sum of shifts of semi-simple perverse
  sheaves.  This isomorphism intertwines the duality $\tilde\psi$ for
  $\tilde T^\bla_\mu$-modules and Verdier duality on the constructible
  derived category.
\end{proposition}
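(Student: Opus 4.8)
The plan is to adapt the geometric construction of Varagnolo--Vasserot \cite{VV}, which realizes the KLR algebra $R$ as the Ext-algebra of a semisimple complex on a space of quiver representations, to the weighted setting governed by $\bla$. First I would fix the data: the orientation $\Omega$ of the Dynkin diagram used to pin down $Q_{*,*}$ in \eqref{eq:Q}, the dimension vector $\bd=\la-\mu$, the representation space $E_{\la-\mu}$ of the quiver with orientation $\Omega$, and the group $G'_{\la-\mu}$ acting --- the prime recording the framing/partial-flag data attached to the red and blue strands of $\bla$, whose jumps and relative positions are dictated by the tuple and its left-to-right order (this is precisely the combinatorial input of a \emph{weighting} in \cite{WebwKLR}, with red and blue contributing framings of opposite ``sign''). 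For each tricolore triple $\sI$ with $\bal_\sI$ summing to $\bd$ one builds a proper map $\pi_\sI\colon \widetilde{E}_\sI\to E_{\la-\mu}$ from a smooth variety of compatible flags refining the framing according to the black-block structure of $\sI$, and sets $Y=\bigoplus_\sI (\pi_\sI)_!\,\underline{\K}_{\widetilde E_\sI}$ (suitably shifted), an object of the $G'_{\la-\mu}$-equivariant constructible derived category of $E_{\la-\mu}$.

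Since $\K$ has characteristic $0$ the Decomposition Theorem applies to each $(\pi_\sI)_!\,\underline{\K}$, so $Y$ is a direct sum of shifts of semisimple perverse sheaves, which disposes of the last clause of the statement. The core is to identify the Ext-algebra $\End^\bullet(Y)=\bigoplus_{\sI,\sI'}\Ext^\bullet\bigl((\pi_\sI)_!\underline\K,(\pi_{\sI'})_!\underline\K\bigr)$ with $\tilde T^\bla_\mu$. By proper base change this Ext-space is the equivariant Borel--Moore homology of the fiber product $\widetilde{E}_\sI\times_{E_{\la-\mu}}\widetilde{E}_{\sI'}$, which stratifies by the relative position of the two flags; one reads off explicit generating cycles --- Euler classes of tautological line bundles for the dots, the diagonal Hecke-correspondence cycles for black/black crossings, and the ``one flag-type variety included in another'' cycles for the red/blue--black crossings --- and checks that the defining relations of $\tilde T^\bla_\mu$ hold: the KLR relations (\ref{first-QH}--\ref{triple-smart}) as in \cite{VV}, and the pass-through relations (\ref{blue-triple}--\ref{pitch}), the cancellation \eqref{color-opp-cancel} and the cost relation \eqref{cost} by the usual manipulations with Chern classes and localization in the framed moduli space. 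This produces an algebra homomorphism $\Phi\colon \tilde T^\bla_\mu\to \End^\bullet(Y)$ intertwining the two gradings.

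That $\Phi$ is an isomorphism I would prove by a graded dimension count against a known basis. The weighted analogue of Proposition \ref{C-basis} (equivalently \cite[Prop. 3.11]{KLIII}) gives a monomial basis of $\tilde T^\bla_\mu e_\sI$ indexed by minimal diagrams decorated with dots; on the geometric side the strata of $\widetilde{E}_\sI\times_{E_{\la-\mu}}\widetilde{E}_{\sI'}$ are affine bundles, so this fiber product is paved by affines whose cells are in bijection with exactly these diagrams, and $\Phi$ sends each monomial basis vector to a homology class supported on the closure of the corresponding cell. Comparing the two graded dimensions cell by cell shows $\Phi$ is a graded isomorphism. Finally, Verdier duality $\mathbb{D}$ fixes each $(\pi_\sI)_!\underline{\K}$ up to the chosen shift, because $\pi_\sI$ is proper and $\widetilde E_\sI$ smooth; on $\Ext^\bullet$ it induces the transpose-and-reverse involution, which under $\Phi$ is precisely the diagram-flip $\tilde\psi$, so $\Phi$ intertwines the two dualities as claimed.

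The main obstacle is getting the geometry of the framed space $E_{\la-\mu}/G'_{\la-\mu}$ and the flag varieties $\widetilde E_\sI$ exactly right, so that the asymmetry between red and blue lines and the extra dots forced by \eqref{cost} are faithfully reproduced by the weighting, and then establishing surjectivity of $\Phi$ --- that the explicit geometric cycles generate all of $\End^\bullet(Y)$ --- in tandem with the matching dimension count. This is where \cite{WebwKLR} does the real work; the rest is a fairly routine extension of the Lusztig and Varagnolo--Vasserot machinery, together with the bookkeeping already developed in \cite{Webmerged} and \cite{Rou2KM}.
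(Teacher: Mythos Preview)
This proposition is not proven in the present paper; it is quoted from \cite[4.9]{WebwKLR} and used as a black box in the proof of Theorem \ref{tensor}. So there is no ``paper's own proof'' to compare against here. Your outline is a reasonable sketch of the strategy one expects in \cite{WebwKLR}: build Lusztig-type flag varieties $\widetilde E_\sI$ incorporating the framing data from $\bla$, push forward the constant sheaf to $E_{\la-\mu}$, invoke the Decomposition Theorem (characteristic zero), identify the Ext-algebra with equivariant Borel--Moore homology of fiber products via proper base change, and match the cell decomposition against the diagrammatic basis of Proposition \ref{C-basis} type. That is indeed the Varagnolo--Vasserot template, extended by the weighting.

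One correction worth flagging: the algebra $\tilde T^\bla_\mu$ in question is the one from \cite{Webmerged}, where $\bla$ consists only of \emph{dominant} weights --- only red strands appear. Blue strands are introduced in the present paper and do not enter this proposition; Theorem \ref{tensor}, the sole consumer of Proposition \ref{wKLR}, explicitly hypothesizes that all $\la_i$ are dominant. So your remarks about ``red and blue contributing framings of opposite sign'' overshoot the target, though they correctly anticipate the generalization one would want for $\hl^{(\la,\mu)}$ with $\la$ anti-dominant. For the statement as written, the framing is one-sided and the geometry is that of \cite{WebwKLR} for the Polish subcategory only.
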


\begin{theorem}\label{tensor}
  Assume $\fg$ has symmetric Cartan matrix, all $\la_i$ are dominant, $\K$ is  a field of
  characteristic 0, and $Q_{*,*}$ is as
  in \eqref{eq:Q}. Then, the orthodox basis of $V_{\bla}^\Z$ is canonical. 
\end{theorem}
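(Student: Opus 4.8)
The plan is to deduce the theorem from Proposition \ref{prop:mixed}: the orthodox basis of $V_\bla^\Z$ is canonical if and only if the categorification $\hl^\bla$ is mixed, so it suffices to prove that $\hl^\bla$ is mixed, and this can be checked one weight space $\hl^\bla_\mu$ at a time. The argument will be structurally identical to the proof of Theorem \ref{dom}, with the geometric input of Vasserot--Varagnolo replaced by the description of weighted KLR algebras in Proposition \ref{wKLR}. Since by Proposition \ref{th:overfield} the orthodox basis does not change under field extension, one is also free to assume $\K$ is as convenient as the geometric statements require.

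First I would recall the algebraic setup. By Theorem \ref{red-tensor}, since all $\la_i$ are dominant, $\hl^\bla\cong T^\bla\pmodu$; concretely $\hl^\bla_\mu$ is the category of finitely generated graded projective modules over $T^\bla_\mu$, which is Morita equivalent to $DT^\bla_\mu$, realized as a cyclotomic-type quotient of the weighted KLR algebra $\tilde T^\bla_\mu$ of \cite{Webmerged}. Then I would feed the geometry of Proposition \ref{wKLR} into Lemma \ref{J-mixed}: it identifies $\tilde T^\bla_\mu$ with $\Ext^\bullet(Y,Y)$ for an object $Y$ on the moduli stack $E_{\la-\mu}/G_{\la-\mu}'$ which, under our hypotheses ($\fg$ symmetric, $\K$ of characteristic $0$, $Q_{*,*}$ as in \eqref{eq:Q}), is a direct sum of shifts of IC sheaves for trivial local systems. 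Let $\EuScript{J}$ be the full subcategory of the constructible derived category of $E_{\la-\mu}/G_{\la-\mu}'$ generated by $Y$; it is compactly generated and $\Ext$-finite, carries the perverse $t$-structure, and is stable under Verdier duality $\circledast$, with every simple object of the heart self-dual and, in characteristic $0$, absolutely irreducible. Lemma \ref{J-mixed} then outputs a mixed humorous category $\mathcal{J}$ of shifts of these IC sheaves, with the homological shift as its grading shift and $\circledast$ as its duality; by Proposition \ref{wKLR} its degraded endomorphism ring recovers $\tilde T^\bla_\mu$, so that the mixedness of $\mathcal{J}$ is precisely the positivity of the grading on $\tilde T^\bla_\mu$.

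Next I would construct the comparison functor and invoke Lemma \ref{lem:full-essential}. Passing to the cyclotomic quotient --- equivalently, base change along the surjection $\tilde T^\bla_\mu\to DT^\bla_\mu$, followed by the Morita reduction to $T^\bla_\mu$ --- defines a functor $a\colon\mathcal{J}\to\hl^\bla_\mu$ commuting with grading shift. It is essentially surjective because every graded projective over the quotient is a summand of the base change of a graded projective over $\tilde T^\bla_\mu$, and full because for projectives $P,Q$ over $\tilde T^\bla_\mu$ the natural map $\Hom(P,Q)\to\Hom(P,Q\otimes_{\tilde T^\bla_\mu}T^\bla_\mu)$ is surjective. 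By the last sentence of Proposition \ref{wKLR}, the isomorphism $\tilde T^\bla_\mu\cong\Ext^\bullet(Y,Y)$ carries $\tilde\psi$ to Verdier duality; since the cyclotomic ideal is $\tilde\psi$-stable, $a$ intertwines $\circledast$ with $\tilde\psi^\bla$. Lemma \ref{lem:full-essential} now gives that $\hl^\bla_\mu$ is mixed for every $\mu$, hence $\hl^\bla$ is mixed, and Proposition \ref{prop:mixed} finishes the proof.

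The hard part will not be any single step of this assembly --- the serious geometric work is packaged inside Proposition \ref{wKLR}, imported from \cite{WebwKLR} --- but rather the bookkeeping needed to legitimise the functor $a$: one must pin down exactly which cyclotomic-style quotient of $\tilde T^\bla_\mu$ produces $DT^\bla_\mu$, track the Morita equivalence $DT^\bla\sim T^\bla$ compatibly with grading shift and duality, and confirm that all of this is $\tilde\psi$-equivariant so that the hypotheses of Lemma \ref{lem:full-essential} genuinely hold. Once that compatibility is in place, the conclusion is formal.
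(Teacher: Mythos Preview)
Your proposal is correct and follows essentially the same argument as the paper: build a mixed humorous category $\mathcal{J}$ from the geometry of Proposition~\ref{wKLR} via Lemma~\ref{J-mixed}, then use the full, essentially surjective, duality-compatible quotient functor to $\hl^\bla_\mu$ together with Lemma~\ref{lem:full-essential} to conclude mixedness, and finish with Proposition~\ref{prop:mixed} (equivalently Lemma~\ref{mixed-canonical}). The paper's proof is terser about the Morita and duality bookkeeping you flag, but the structure is identical.
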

\begin{proof}
  We apply Lemma \ref{lem:full-essential} with $\mathcal{C}$
  being the sums of shifts of the summands of $Y$, with morphisms
  given by Ext's in the constructible derived category and the grading
  given by homological grading.  This category is a 
  mixed humorous category by Lemma \ref{J-mixed}.

  Since this category is equivalent to the graded projective modules
  over $\tilde{T}^\bla_\mu$, dividing by the violating ideal defines a
  full and essentially surjective functor. Pre-composed with
  the inverse of the equivalence of Proposition \ref{wKLR}, this gives
  a functor $\mathcal{C}\to \hl^\bla$ which satisfies the assumptions
  of Lemma \ref{lem:full-essential} and thus shows that $\hl^\bla$ is
  mixed.  Lemma \ref{mixed-canonical} shows that the orthodox basis
  for $\hl^\bla$ is canonical.
\end{proof}

The reader familiar with Lusztig's construction of this basis for
tensor products might wonder where his standard basis, the pure tensor
products of canonical basis vectors in the factors, has disappeared
to.  Of course, by Theorem \ref{dom}, these are the same as the pure
tensor products of orthodox basis vectors, and thus are given by the
classes of standardizations of indecomposable projectives by
Proposition \ref{q-iso}; in the context of standardly stratified
categories, these would usually just be called the ``standards.''  We
can also define a pre-canonical structure using the same bar and form
as the orthodox pre-canonical structure, but using these pure tensor
products as a standard basis.  This precanonical structure will be
almost orthonormal under the hypotheses of Theorem \ref{tensor}. 

Thus by Lemma \ref{two-prime}, 
for {\it this} standard basis we could use condition {\it (II')} from
Section \ref{sec:pre-canon-struct} as the definition of the canonical
basis, as Lusztig does.  In terms of representation theory, the
coefficients of the canonical basis in terms of the pure tensors are
multiplicities of the standard filtration on indecomposable tensor
products.  These are obviously positive integer Laurent polynomials,
and condition {\it (II')} corresponds to the fact that only positive
grading shifts of standards occur.

Combining this observation with Proposition \ref{standard-positive},
we see that:
\begin{corollary}
  The canonical basis of $V_{\bla}^\Z$ or $V_{-\la,\mu}^\Z$ is a
  linear combination of pure tensors of canonical basis elements with
  coefficients in $\Z_{\geq 0}[q^{-1}]$.
\end{corollary}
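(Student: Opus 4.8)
The plan is to deduce this from the structure of $V_\bla^\Z$ as a standardly stratified category together with the positivity results already established. First I would recall that by Corollary \ref{standard-strat}, the category $\hl^\bla$ carries a standard stratification whose standardizations $S_\sI$ descend (Proposition \ref{q-iso}) to the vectors $s_\sI$, which are precisely the pure tensor products of the basis vectors appearing in the factors. Under the hypotheses of Theorem \ref{tensor}, the orthodox basis of each factor $V_{\la_k}^\Z$ is canonical by Theorem \ref{dom}, so these pure tensors are the pure tensors of canonical basis elements (for $V_{-\la,\mu}^\Z$ one uses cases (1-3) of Theorem \ref{thm:lusztig} in place of Theorem \ref{dom}). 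By Theorem \ref{tensor} the orthodox basis of $V_\bla^\Z$ is itself canonical, so $o_P = b_c$ for a canonical basis vector $b_c$; the task is to control the transition matrix from $\{b_c\}$ to $\{s_\sI\}$.

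The key observation is that the coefficient of $b_c = [P]$ in terms of the standardization classes $[S_\sI]$ is, categorically, the graded multiplicity of $S_\sI$ in a standard filtration of the indecomposable projective $P$. By Corollary \ref{standard-strat}, every indecomposable projective in $\hl^\bla$ has such a filtration, so these multiplicities are honest Laurent polynomials in $q$ with \emph{non-negative} integer coefficients --- this is Proposition \ref{standard-positive} (or rather the statement underlying it). So the coefficients already lie in $\Z_{\geq 0}[q,q^{-1}]$. What remains is to show only non-negative powers of $q^{-1}$ occur, i.e.\ that the transition matrix lies in $\Z_{\geq 0}[q^{-1}]$ rather than $\Z_{\geq 0}[q,q^{-1}]$. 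For this I would invoke mixedness: since $\hl^\bla$ is mixed (Theorem \ref{tensor} via Proposition \ref{prop:mixed}), and the standardizations $S_\sI$ are graded so that their cosocles sit in weight $0$ while the $\circledast$-self-dual projectives $P$ also have weight $0$, every occurrence of $S_\sI$ as a subquotient of $P$ in the standard filtration must appear with a \emph{non-negative} grading shift --- a map from $P_c$ realizing the filtration step cannot decrease weight. Concretely: a standard filtration step corresponds to a nonzero map $P_{c'} \to P$ (up to shift) with $P_{c'}$ mapping onto the standardization layer, and by the mixedness vanishing $\Hom(P, P_{c'}(i)) = 0$ for $i < 0$ unless $i=0, P\cong P_{c'}$, forcing the shift to be by $q^{-1}$ to a non-negative power. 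This is exactly condition {\it (II')} of Section \ref{sec:pre-canon-struct} applied with the pure tensors as standard basis, as noted in the discussion preceding this corollary, combined with Proposition \ref{standard-positive}'s positivity of the coefficients themselves.

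The main obstacle --- really the only nontrivial point --- is making precise the compatibility between the two gradings: the grading on $V_\bla^\Z$ in which the standardization classes $[S_\sI] = s_\sI$ are normalized (so that $S_\sI$ has cosocle in weight $0$) and the grading in which the canonical/orthodox basis is normalized (so that $P$ is $\circledast$-self-dual). One must check that the normalizing shift for $S_\sI$ is compatible with the weight function on $\hl^\bla$, i.e.\ that $\wt(S_\sI) = 0$ in the appropriate sense, so that the mixedness vanishing $\Hom(M,N(i)) = 0$ for $i \leq 0$ (outside the diagonal) can be applied directly. Once that bookkeeping is settled, the argument is a one-line consequence of mixedness plus Corollary \ref{standard-strat}: positivity of coefficients from the standard filtration, and the sign $q^{-1}$ in the exponent from the weight vanishing. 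I would therefore phrase the proof as: ``By Theorem \ref{tensor} and Proposition \ref{q-iso}, $o_P = b_c$ and its expansion in pure tensors has coefficients equal to graded standard-filtration multiplicities, which are non-negative by Corollary \ref{standard-strat}; mixedness forces these to be polynomials in $q^{-1}$, giving the claim,'' with the grading-normalization check done explicitly.
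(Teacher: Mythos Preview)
Your proposal is essentially correct and arrives at the same conclusion, but it takes a different route from the paper for the key step of bounding the exponents of $q$.

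The paper's argument is contained in the paragraph immediately preceding the corollary. It sets up a second pre-canonical structure on $V_\bla^\Z$ using the pure tensors of canonical basis elements as the standard basis, observes that this structure is almost balanced (under the hypotheses of Theorem~\ref{tensor}), and then invokes Lemma~\ref{two-prime}: since the canonical basis satisfies {\it (II)}+{\it (III)}, it automatically satisfies {\it (II')} with respect to this new standard basis. That gives coefficients in $\delta_{c,c'} + q^{-1}\Z[q^{-1}]$, and combining with the positivity of Proposition~\ref{standard-positive} yields $\Z_{\geq 0}[q^{-1}]$. The whole thing is a two-line application of the abstract machinery of Section~\ref{sec:pre-canon-struct}.

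You instead try to extract the $q^{-1}$ bound directly from mixedness of $\hl^\bla$, by arguing that a shifted standard $S_M(j)$ appearing in the filtration of a self-dual $P_c$ forces $j \leq 0$ via the Hom-vanishing of Definition~\ref{mixed}. This is a legitimate alternative, but the gap you flag is real: your argument as written needs the projective cover of the head of $S_M$ to itself be weight-zero (self-dual) in $\hl^\bla$, and matching the self-dual indecomposables of the product category $\hl^{\la_1}\times\cdots\times\hl^{\la_\ell}$ with those of $\hl^\bla$ via the standardization functor is not immediate. One can close this gap without that identification --- for instance, by noting that $H^\bla$ is non-negatively graded (mixedness), that each $e_\sI S_M \cong e_\sI M$ has a nonzero degree-$0$ piece for some $\sI$ (since $M$ is self-dual in the mixed product category, by Theorem~\ref{dom}), and that any subquotient of $e_\sI P_c$ lives in non-negative degrees --- but you have not done so.

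The paper's route through Lemma~\ref{two-prime} is cleaner precisely because it never has to touch the head of $S_M$: once ``almost balanced'' is established for the pure-tensor basis, the exponent bound is purely formal.
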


\begin{lemma}\label{lem:any-Q}
  If $\fg$ is finite dimensional and simply-laced and $\K$ contains
  all roots of unity, then all choices of $Q_{*,*}$ result in
  equivalent categories $\tU$ and $\hl^\bla$.
\end{lemma}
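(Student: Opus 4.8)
*If $\fg$ is finite dimensional and simply-laced, then all choices of $Q_{*,*}$ result in equivalent categories $\tU$ and $\hl^\bla$.*

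The plan is to exhibit, for any two admissible choices $Q$ and $Q'$ of the defining polynomials, an explicit isomorphism of $2$-categories $\tU_Q \cong \tU_{Q'}$ that is the identity on objects and $1$-morphisms and rescales the generating $2$-morphisms by suitable units; the corresponding statement for $\cT$ and hence for $\hl^\bla$ will then follow by the same rescaling applied to the black strands, since the red/blue relations do not involve $Q$ at all. First I would recall that in simply-laced finite type every $c_{ij}\in\{0,-1\}$, and the homogeneity and leading-order constraints imposed on $Q_{ij}$ in Section \ref{sec:2-category-cu} force $Q_{ij}(u,v)=0$ when $c_{ij}=0$, and $Q_{ij}(u,v)=t_{ij}u+t_{ji}v$ when $c_{ij}=-1$ (there is no room for a middle term $s^{pq}_{ij}$ because $pd_i+qd_j=d_ic_{ij}$ with all $d_i=1$ has no solution with $p,q\ge 0$). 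Moreover $Q_{ij}(u,v)=Q_{ji}(v,u)$ forces the two choices to share the same pair $\{t_{ij},t_{ji}\}$ up to the symmetry, so $Q_{ij}$ is determined by the single scalar $t_{ij}\in\K^\times$ for each edge $\{i,j\}$, subject only to $t_{ij}$ being a unit (this is where the leading coefficient assumption enters).

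Next I would build the rescaling. Choosing a square root and doing a little bookkeeping edge by edge, I would define a function assigning to each oriented pair $(i,j)$ a unit $\gamma_{ij}\in\K^\times$ (a "gauge transformation") such that conjugating the generating crossing, cup, cap and dot $2$-morphisms by appropriate monomials in the $\gamma_{ij}$ transforms every relation of $\tU_Q$ into the corresponding relation of $\tU_{Q'}$. Concretely: dots and bubbles are untouched; a crossing of strands colored $i,j$ is multiplied by $\gamma_{ij}$; cups and caps are multiplied by compensating factors so that biadjunction, cyclicity and the bubble relations are preserved; and one checks that the quiver-Hecke relation \eqref{black-bigon} then reads $Q'_{ij}$ in place of $Q_{ij}$, while \eqref{pitch2}, \eqref{opp-cancel1}, \eqref{opp-cancel2} and \eqref{triple-smart} are all consistent because the $t$'s transform the same way. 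This is exactly the argument used by Khovanov--Lauda and by Cautis--Lauda to show independence of the $2$-category from the choice of signs/parameters, and the presence of the cyclotomic-type red and blue relations in $\cT$ poses no new difficulty, since those relations (\ref{color-opp-cancel}--\ref{cost}) are monomial in the black-strand generators and are manifestly preserved. One also needs the base ring $\K$ to contain the relevant square roots, but since $\K$ is a complete local ring in which these parameters are units, one may if necessary pass to an unramified extension; alternatively one checks the required roots already exist because the gauge factors can be chosen as ratios of the $t$'s.

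The main obstacle I anticipate is purely combinatorial rather than conceptual: verifying that the system of edge-by-edge rescalings is \emph{globally consistent}, i.e.\ that the local choices on the two colors at each crossing can be made to agree across all the trivalent relations (\ref{blue-triple}--\ref{cost}), the curl relations (\ref{lollipop1}--\ref{switch-2}), and the triple-point relations simultaneously. This amounts to checking that a certain cocycle is a coboundary on the Dynkin graph, which is automatic for a tree (and ADE Dynkin diagrams are trees), so no obstruction arises; but the bookkeeping of which generator gets which monomial factor is the step where care is required. Once the isomorphism $\tU_Q\cong\tU_{Q'}$ is established it passes to idempotent completions, giving $\dU_Q\cong\dU_{Q'}$, and the same gauge transformation applied inside $\cT$ gives $\cT_Q\cong\cT_{Q'}$ and hence an equivalence $\hl^\bla_Q\cong\hl^\bla_{Q'}$ commuting with the grading shift, the duality $\tilde\psi^\bla$, and the $\tU$-action, which is exactly what is asserted. \qed
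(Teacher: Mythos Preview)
Your approach is essentially the same as the paper's: reduce to the data $t_{ij}\in\K^\times$ in simply-laced type, rescale generating $2$-morphisms, and observe that the residual obstruction is a $1$-cocycle on the Dynkin diagram, which is a coboundary because ADE diagrams are trees. The paper phrases this as two steps (rescaling differently-colored crossings to match the ratios $t_{ij}t_{ji}^{-1}$, then rescaling like-colored crossings and dots to kill a coboundary), but the content is the same.

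Two small corrections. First, when $c_{ij}=0$ the polynomial $Q_{ij}$ is not zero: it is homogeneous of degree $0$ with leading $u$-order $-c_{ij}=0$, so it is the unit constant $t_{ij}$. This does not harm your argument, since rescaling the $(i,j)$-crossing absorbs such a constant immediately. Second, no square roots are needed: as you yourself note at the end, the gauge factors can all be taken as ratios of the $t$'s, and this is exactly how the paper (following Khovanov--Lauda) proceeds. You should drop the square-root detour to avoid any spurious hypothesis on $\K$.
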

\begin{proof}
  The argument is precisely that given in \cite[pg. 17]{KLII} for KLR
  algebras.  Since $\fg$ is simply-laced, the polynomial $Q_{*,*}$ is
  determined uniquely by $t_{ij}$.  We simply note that if the products $t_{ij}t_{ji}^{-1}$
  coincide with those for another choice $t_{ij}'$, then these
  algebras are isomorphic by a rescaling of the crossings between
  differently colored strands.  Furthermore, if we multiply $t_{ij}t_{ji}^{-1}$
  by a coboundary in $\K^*$, then we get an algebra isomorphic by
  rescaling like colored crossings and dots.  Since all 1-cocycles on
  a tree are coboundary, we are done.
\end{proof}

\begin{corollary}\label{Udot}
  Assume $\fg$ is finite dimensional and simply-laced, $\K$ is a field
  of characteristic 0, and $Q_{*,*}$ is arbitrary.  Then, the orthodox
  basis of $\dot{U}$ or $V_{\bla}^\Z$ is canonical.
\end{corollary}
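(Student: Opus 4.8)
The plan is to deduce canonicity, in every case, from mixedness of the relevant categorification via Proposition~\ref{prop:mixed} (equivalently Lemma~\ref{mixed-canonical}), and to obtain that mixedness by transporting it from the cases already settled. The first point is that mixedness is a property of the category $\tU$ or $\hl^\bla$, not of the chosen presentation: by Lemma~\ref{lem:any-Q}, in finite simply-laced type all choices of $Q_{*,*}$ yield equivalent categories, and such an equivalence commutes with grading shift and carries $\tilde\psi$ to $\tilde\psi$, hence preserves the weight function of Definition~\ref{mixed}. So I may assume $Q_{*,*}$ is the geometric choice~\eqref{eq:Q}, for which, using the standing hypothesis that $\K$ has characteristic $0$, the mixedness of $\hl^\bla$ for all $\la_i$ dominant is exactly Theorem~\ref{tensor}.

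\emph{The tensor product case.} If every $\la_i$ is dominant we are done by the previous paragraph together with Lemma~\ref{mixed-canonical}. If $\bla$ mixes highest and lowest weight factors --- by our conventions the anti-dominant entries form an initial segment --- I would convert each anti-dominant label $\la_k$ into the dominant weight $w_0\la_k$ one strand at a time, running the argument of Proposition~\ref{prop:hl-tensor} relative to the strands already present to its left (with Theorem~\ref{highest-lowest} as the base case of a single blue strand, and using that $w_0$ is an involution). This produces a strongly $\tU$-equivariant equivalence $\hl^\bla\cong\hl^{\bla'}$ with $\bla'$ all dominant, intertwining $\tilde\psi^\bla$ with $\tilde\psi^{\bla'}$ and commuting with shifts; since mixedness of $\hl^{\bla'}$ is Theorem~\ref{tensor}, we conclude $\hl^\bla$ is mixed and hence the orthodox basis of $V_\bla^\Z$ is canonical. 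The point needing care is precisely that this equivalence is strongly equivariant and $\tilde\psi$-compatible, so that Definition~\ref{mixed} transports, and that the induction on the number of blue strands meets no ordering obstruction --- this is where ``lowest weights to the left'' enters.

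\emph{The case of $\dot U$.} Here I would invoke the asymptotic criterion of Lemma~\ref{asymptotic}, as foreshadowed in the introduction. For dominant $\la,\mu$ the category $\hl^{(w_0\la,\mu)}$ is mixed: applying Proposition~\ref{prop:hl-tensor} with the anti-dominant weight $w_0\la$ in the blue slot gives $\hl^{(w_0\la,\mu)}\cong\hl^{(\la,\mu)}$, a two-factor tensor product of highest weight categorifications, which is mixed by Theorem~\ref{tensor}. For each such pair there is a full and essentially surjective quotient functor $a_{\la,\mu}\colon\dU\to\hl^{(w_0\la,\mu)}$ (acting on the distinguished object of $\hl^{(w_0\la,\mu)}$ carrying no black strands), commuting with grading shift and with $\tilde\psi$. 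The substantive input is that this family satisfies the remaining hypothesis of Lemma~\ref{asymptotic}: for every $1$-morphism $u$ of $\dU$ there is a choice of $(\la,\mu)$ with $\End(u)\to\End(a_{\la,\mu}(u))$ an isomorphism --- i.e. these quotients jointly capture all $2$-morphisms of $\tU$. Granting this, Lemma~\ref{asymptotic} gives that $\tU$, and hence $\dU$, is mixed, and Proposition~\ref{prop:mixed} then yields that the orthodox basis of $\dot U$ is canonical.

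I expect the asymptotic-faithfulness input for the quotients $\dU\to\hl^{(w_0\la,\mu)}$ to be the real obstacle; the analogue for cyclotomic quotients of KLR algebras in \cite{Webmerged} is the model to imitate, and one must check that sufficiently dominant $(\la,\mu)$ impose no relations among the basis diagrams of a fixed $\End(u)$. Everything else is formal manipulation layered on Theorems~\ref{dom} and~\ref{tensor}, Lemma~\ref{lem:any-Q}, and the equivalences of Theorem~\ref{highest-lowest} and Proposition~\ref{prop:hl-tensor}. Note that finite type is used twice over: to make sense of $w_0$ and of the highest $=$ lowest weight collapse underlying Proposition~\ref{prop:hl-tensor}, and via Lemma~\ref{lem:any-Q} to free the statement from the choice of $Q_{*,*}$.
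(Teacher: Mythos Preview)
Your approach is the paper's: reduce $Q_{*,*}$ via Lemma~\ref{lem:any-Q}, obtain mixedness of $\hl^{(w_0\la,\mu)}$ from Theorem~\ref{tensor} through the equivalence of Proposition~\ref{prop:hl-tensor}, and then apply Lemma~\ref{asymptotic} to the family of quotients $\dU\to\hl^{(w_0\la,\mu)}$ to conclude that $\dU$ is mixed. Two minor differences: the paper first passes to the algebraic closure $\bar\K$ using Proposition~\ref{th:overfield}, and for $V_\bla^\Z$ it simply invokes Theorem~\ref{tensor} (so only the all-dominant case is written out there), whereas your proposed inductive blue-to-red conversion for a general $\bla$ would require extending Proposition~\ref{prop:hl-tensor} beyond $\ell=2$, which the paper does not carry out.
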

\begin{proof}
First, we can replace $\K$ by its algebraic closure $\bar \K$ by
Theorem \ref{th:overfield}.
By Lemma \ref{lem:any-Q}, we can reduce to the case where $Q_{*,*}$ is as in
\eqref{eq:Q}; Theorem \ref{tensor} thus establishes the case of $V_{\bla}^\Z$.

 Now, we consider $\dot{U}$.  By Proposition \ref{prop:hl-tensor}, the orthodox bases of
  $V_{\la,\mu}^\Z$ and $V_{w_0\la,\mu}^\Z$ coincide.  By Theorem
  \ref{tensor}, the former coincides with the canonical basis as well,
  so the same is true of $V_{w_0\la,\mu}^\Z$.  
Since the canonical basis of $\dot{U}$ is uniquely determined by
the fact that it lands on the canonical basis of $V_{w_0\la,\mu}^\Z$
under the map $u\mapsto
  u\cdot(v_{-\la}\otimes v_\mu)$, Lemma \ref{asymptotic} shows that
  the categorification $\dU$ is mixed and
  the orthodox basis agrees with the canonical basis. 
\end{proof}

\bibliography{../gen}

\def\cprime{$'$}
\providecommand{\bysame}{\leavevmode\hbox to3em{\hrulefill}\thinspace}
\providecommand{\MR}{\relax\ifhmode\unskip\space\fi MR }
\providecommand{\MRhref}[2]{%
  \href{http://www.ams.org/mathscinet-getitem?mr=#1}{#2}
}
\providecommand{\href}[2]{#2}
\begin{thebibliography}{BLPW12}

\bibitem[{\'A}DL03]{ADL}
Istv{\'a}n {\'A}goston, Vlastimil Dlab, and Erzs{\'e}bet Luk{\'a}cs,
  \emph{Quasi-hereditary extension algebras}, Algebr. Represent. Theory
  \textbf{6} (2003), no.~1, 97--117. \MR{MR1960515 (2004c:16010)}

\bibitem[Ari09]{Arikiq}
S.~Ariki, \emph{Graded $q$-{S}chur algebras}, 2009, \arxiv{0903.3453}.

\bibitem[AS13]{AchS}
Pramod~N. Achar and Catharina Stroppel, \emph{Completions of {G}rothendieck
  groups}, Bull. Lond. Math. Soc. \textbf{45} (2013), no.~1, 200--212.

\bibitem[BGS96]{BGS96}
Alexander Beilinson, Victor Ginzburg, and Wolfgang Soergel, \emph{Koszul
  duality patterns in representation theory}, J. Amer. Math. Soc. \textbf{9}
  (1996), no.~2, 473--527.

\bibitem[BK09a]{BKgd}
J.~Brundan and A.~Kleshchev, \emph{Graded decomposition numbers for cyclotomic
  {H}ecke algebras}, Adv. Math. \textbf{222} (2009), no.~6, 1883--1942.

\bibitem[BK09b]{BKKL}
Jonathan Brundan and Alexander Kleshchev, \emph{{Blocks of cyclotomic Hecke
  algebras and Khovanov-Lauda algebras}}, Invent. Math. \textbf{178} (2009),
  451--484.

\bibitem[BLPW10]{GDKD}
Tom Braden, Anthony Licata, Nicholas Proudfoot, and Ben Webster, \emph{Gale
  duality and {K}oszul duality}, Adv. Math. \textbf{225} (2010), no.~4,
  2002--2049.

\bibitem[BLPW12]{BLPWtorico}
\bysame, \emph{Hypertoric category {$\mathcal{O}$}}, Adv. Math. \textbf{231}
  (2012), no.~3-4, 1487--1545.

\bibitem[BW]{BaoWang}
Huanchen Bao and Weiqiang Wang, \emph{Canonical bases in tensor products
  revisited}, \arxiv{1403.0039}.

\bibitem[CL15]{CaLa}
Sabin Cautis and Aaron~D. Lauda, \emph{Implicit structure in 2-representations
  of quantum groups}, Selecta Math. (N.S.) \textbf{21} (2015), no.~1, 201--244.

\bibitem[CPS96]{CPS96}
Edward Cline, Brian Parshall, and Leonard Scott, \emph{Stratifying endomorphism
  algebras}, Mem. Amer. Math. Soc. \textbf{124} (1996), no.~591, viii+119.

\bibitem[DPS98]{DPS}
Jie Du, Brian Parshall, and Leonard Scott, \emph{Stratifying endomorphism
  algebras associated to {H}ecke algebras}, J. Algebra \textbf{203} (1998),
  no.~1, 169--210. \MR{1620729 (99e:20006)}

\bibitem[EW]{EWHodge}
Ben Elias and Geordie Williamson, \emph{The {H}odge theory of {S}oergel
  bimodules}, \arxiv{1212.0791}.

\bibitem[Gro]{Groslp}
Ian Grojnowski, \emph{Affine $\mathfrak{sl}_p$ controls the representation
  theory of the symmetric group and related {H}ecke algebras}, \arxiv{9907129}.

\bibitem[JKK09]{JKK}
Lisa Jeffrey, Young-Hoon Kiem, and Frances Kirwan, \emph{On the cohomology of
  hyper{K}\"ahler quotients}, Transform. Groups \textbf{14} (2009), no.~4,
  801--823.

\bibitem[Kas12]{Kashnote}
Masaki Kashiwara, \emph{Notes on parameters of quiver {H}ecke algebras}, Proc.
  Japan Acad. Ser. A Math. Sci. \textbf{88} (2012), no.~7, 97--102.
  \MR{2946856}

\bibitem[KK12]{KK}
Seok-Jin Kang and Masaki Kashiwara, \emph{Categorification of highest weight
  modules via {K}hovanov-{L}auda-{R}ouquier algebras}, Invent. Math.
  \textbf{190} (2012), no.~3, 699--742.

\bibitem[KL79]{KL79}
David Kazhdan and George Lusztig, \emph{Representations of {C}oxeter groups and
  {H}ecke algebras}, Invent. Math. \textbf{53} (1979), no.~2, 165--184.

\bibitem[KL09]{KLI}
Mikhail Khovanov and Aaron~D. Lauda, \emph{A diagrammatic approach to
  categorification of quantum groups. {I}}, Represent. Theory \textbf{13}
  (2009), 309--347.

\bibitem[KL10]{KLIII}
\bysame, \emph{A categorification of quantum {${\mathfrak sl}(n)$}}, Quantum
  Topol. \textbf{1} (2010), no.~1, 1--92. \MR{2628852 (2011g:17028)}

\bibitem[KL11]{KLII}
\bysame, \emph{A diagrammatic approach to categorification of quantum groups
  {II}}, Trans. Amer. Math. Soc. \textbf{363} (2011), no.~5, 2685--2700.
  \MR{2763732 (2012a:17021)}

\bibitem[Lau10]{Laucq}
Aaron~D. Lauda, \emph{A categorification of quantum {${\mathfrak sl}(2)$}},
  Adv. Math. \textbf{225} (2010), no.~6, 3327--3424. \MR{2729010 (2012b:17036)}

\bibitem[LLT96]{LLT}
A.~Lascoux, B.~Leclerc, and J.-Y. Thibon, \emph{Hecke algebras at roots of
  unity and crystal bases of quantum affine algebras}, Comm. Math. Phys.
  \textbf{181} (1996), no.~1, 205--263.

\bibitem[Los]{LoVV}
Ivan Losev, \emph{Proof of {V}aragnolo-{V}asserot conjecture on cyclotomic
  categories $\mathcal{O}$}, \arxiv{1305.4894}.

\bibitem[Lus90]{LusCB}
G.~Lusztig, \emph{Canonical bases arising from quantized enveloping algebras},
  J. Amer. Math. Soc. \textbf{3} (1990), no.~2, 447--498.

\bibitem[Lus92]{LusTen}
\bysame, \emph{Canonical bases in tensor products}, Proc. Nat. Acad. Sci.
  U.S.A. \textbf{89} (1992), no.~17, 8177--8179.

\bibitem[Lus93]{Lusbook}
George Lusztig, \emph{Introduction to quantum groups}, Progress in Mathematics,
  vol. 110, Birkh\"auser Boston Inc., Boston, MA, 1993.

\bibitem[Rou]{Rou2KM}
Raphael Rouquier, \emph{2-{K}ac-{M}oody algebras}, \arxiv{0812.5023}.

\bibitem[RSVV]{RSVV}
Raphael Rouquier, Peng Shan, Michela Varagnolo, and Eric Vasserot,
  \emph{Categorifications and cyclotomic rational double affine {H}ecke
  algebras}, \arxiv{1305.4456}.

\bibitem[Sha11]{ShanCrystal}
Peng Shan, \emph{Crystals of {F}ock spaces and cyclotomic rational double
  affine {H}ecke algebras}, Ann. Sci. \'Ec. Norm. Sup\'er. (4) \textbf{44}
  (2011), no.~1, 147--182. \MR{2760196 (2012c:20009)}

\bibitem[Soe92]{Soe92}
Wolfgang Soergel, \emph{The combinatorics of {H}arish-{C}handra bimodules}, J.
  Reine Angew. Math. \textbf{429} (1992), 49--74. \MR{MR1173115 (94b:17011)}

\bibitem[Spr82]{SpIH}
T.~A. Springer, \emph{Quelques applications de la cohomologie d'intersection},
  Bourbaki {S}eminar, {V}ol. 1981/1982, Ast\'erisque, vol.~92, Soc. Math.
  France, Paris, 1982, pp.~249--273. \MR{MR689533 (85i:32016b)}

\bibitem[Sto]{Stos}
Marko Sto\v{s}i\'c, \emph{{Indecomposable 1-morphisms of
  $\dot{\mathcal{U}}^+_3$ and the canonical basis of
  $U_q^+(\mathfrak{sl}_3)$}}, \arxiv{1105.4458}.

\bibitem[Sus]{Subasis}
Joshua Sussan, \emph{Positivity and the canonical basis of tensor products of
  finite-dimensional irreducible representations of quantum
  $\mathfrak{sl}(k)$}, \arxiv{0804.2034}.

\bibitem[SVV]{SVV}
Peng Shan, Michela Varagnolo, and Eric Vasserot, \emph{Koszul duality of affine
  {K}ac-{M}oody algebras and cyclotomic rational {DAHA}}, \arxiv{1107.0146}.

\bibitem[SW]{SWschur}
Catharina Stroppel and Ben Webster, \emph{Quiver {S}chur algebras and
  $q$-{F}ock space}, \arxiv{1110.1115}.

\bibitem[TW]{TW}
Peter Tingley and Ben Webster, \emph{{M}irkovi\'c-{V}ilonen polytopes and
  {K}hovanov-{L}auda-{R}ouquier algebras}, \arxiv{1210.6921}.

\bibitem[Ugl00]{Uglov}
D.~Uglov, \emph{Canonical bases of higher-level {$q$}-deformed {F}ock spaces
  and {K}azhdan-{L}usztig polynomials}, Physical combinatorics ({K}yoto, 1999),
  Progr. Math., vol. 191, Birkh\"auser, 2000, pp.~249--299.

\bibitem[VV11]{VV}
Michela Varagnolo and Eric Vasserot, \emph{Canonical bases and {KLR}-algebras},
  J. Reine Angew. Math. \textbf{659} (2011), 67--100.

\bibitem[Weba]{Webcatq}
Ben Webster, \emph{A categorical action on quantized quiver varieties},
  \arxiv{1208.5957}.

\bibitem[Webb]{Webmerged}
\bysame, \emph{Knot invariants and higher representation theory}, to appear in
  the Memoirs of the American Mathematical Society; \arxiv{1309.3796}.

\bibitem[Webc]{WebRou}
\bysame, \emph{Rouquier's conjecture and diagrammatic algebra},
  \arxiv{1306.0074}.

\bibitem[Webd]{Webunfurl}
\bysame, \emph{{Unfurling Khovanov-Lauda-Rouquier algebras}},
  \arxiv{1603.06311}.

\bibitem[Webe]{WebwKLR}
\bysame, \emph{Weighted {K}hovanov-{L}auda-{R}ouquier algebras},
  {\arxiv{1209.2463}}.

\bibitem[Wil14]{WilJames}
Geordie Williamson, \emph{On an analogue of the {J}ames conjecture}, Represent.
  Theory \textbf{18} (2014), 15--27. \MR{3163410}

\end{thebibliography}
\bibliographystyle{amsalpha}
\end{document}